\newtheorem{thm}{Theorem}[section]
\newtheorem{cor}[thm]{Corollary}
\newtheorem{lem}[thm]{Lemma}
\theoremstyle{definition}
\newtheorem{defn}[thm]{Definition}
\theoremstyle{remark}
\newtheorem{rem}[thm]{Remark}
\theoremstyle{conclusion}
\theoremstyle{conjecture}
\newtheorem{conj}[thm]{Conjecture}
\numberwithin{equation}{section}
\newcommand{\R}{\mathbb{R}}
\newcommand{\N}{\mathbb{N}}
\newcommand{\be}{\begin{equation}}
\newcommand{\ee}{\end{equation}}
\begin{document}
\title[$D^{1,p}$-critical quasi-linear Schr\"{o}dinger-Hartree equation with $-\Delta_{p}$]{Radial symmetry and sharp asymptotic behaviors of nonnegative solutions to $D^{1,p}$-critical quasi-linear static Schr\"{o}dinger-Hartree equation involving $p$-Laplacian $-\Delta_{p}$}

\author{Wei Dai, Yafei Li, Zhao Liu}

\address{School of Mathematical Sciences, Beihang University (BUAA), Beijing 100191, P. R. China, and Key Laboratory of Mathematics, Informatics and Behavioral Semantics, Ministry of Education, Beijing 100191, P. R. China}
\email{weidai@buaa.edu.cn}

\address{School of Mathematical Sciences, Beihang University (BUAA), Beijing 100191, P. R. China}
\email{yafeili@buaa.edu.cn}

\address{School of Mathematics and Computer Science, Jiangxi Science and Technology Normal University, Nanchang 330038, P. R. China}
\email{liuzhao@mail.bnu.edu.cn}

\thanks{Wei Dai is supported by the NNSF of China (No. 12222102), the National Science and Technology Major Project (2022ZD0116401), the NNSF of China (No. 11971049) and the Fundamental Research Funds for the Central Universities. Yafei Li is supported by the National Science and Technology Major Project (2022ZD0116401) and the Fundamental Research Funds for the Central Universities. Zhao Liu is supported by the NNSF of China (No. 12261041) and the Natural Science Foundation of Jiangxi Province (No. 20232ACB211002).}

\begin{abstract}
In this paper, we mainly consider nonnegative weak solution to the $D^{1,p}(\R^{N})$-critical quasi-linear static Schr\"{o}dinger-Hartree equation with $p$-Laplacian $-\Delta_{p}$ and nonlocal nonlinearity:
\begin{align*}
-\Delta_p u =\left(|x|^{-2p}\ast |u|^{p}\right)|u|^{p-2}u \qquad &\mbox{in} \,\, \mathbb{R}^N,
\end{align*}
where $1<p<\frac{N}{2}$, $N\geq3$ and $u\in D^{1,p}(\R^N)$. Being different to the $D^{1,p}(\R^{N})$-critical local nonlinear term $u^{p^{\star}-1}$ with $p^{\star}:=\frac{Np}{N-p}$ investigated in \cite{CFR,LDSMLMSB,GV,Ou,BS16,VJ16} etc., since the nonlocal convolution $|x|^{-2p}*u^p$ appears in the Hartree type nonlinearity, it is impossible for us to use the scaling arguments and the Doubling Lemma as in \cite{VJ16} to get preliminary estimates on upper bounds of asymptotic behaviors for any positive solutions $u$. Moreover, it is also quite difficult to obtain the boundedness of the quasi-norm $\|u \|_{L^{s,\infty}(\R^N)}$ and hence derive the sharp estimates on upper bounds of asymptotic behaviors from the preliminary estimates as in \cite{VJ16}. Fortunately, by showing a better preliminary estimates on upper bounds of asymptotic behaviors through the De Giorgi-Moser-Nash iteration method and combining the result from \cite{XCL}, we are able to overcome these difficulties and establish regularity and the sharp estimates on both upper and lower bounds of asymptotic behaviors for any positive solution $u$ to more general equation $-\Delta_p u=V(x)u^{p-1}$ with $V\in L^{\frac{N}{p}}(\mathbb{R}^{N})$. Then, by using the arguments from \cite{BS16,VJ16}, we can deduce the sharp estimates on both upper and lower bounds for the decay rate of $|\nabla u|$. Finally, as a consequence, we can apply the method of moving planes to prove that all the nontrivial nonnegative solutions are radially symmetric and strictly decreasing about some point $x_0\in\R^N$. Our results will lead to a complete classification of nonnegative solution provided that the uniqueness of radially symmetric solution holds. The radial symmetry and sharp asymptotic estimates for more general nonlocal quasi-linear equations were also included.
\end{abstract}

\maketitle {\small {\bf Keywords:} Quasilinear elliptic equations with $p$-Laplacian;  Nonlocal Hartree type nonlinearity;  Radial symmetry;  Sharp estimates on asymptotic behaviors;  The method of moving planes.\\

{\bf 2020 MSC} Primary: 35J92; Secondary: 35B06, 35B40.}

\section{Introduction}
\subsection{Background and setting of the problem}
In this paper, we are mainly concerned with nonnegative $D^{1,p}(\R^{N})$-weak solution to the following $D^{1,p}(\R^{N})$-critical quasi-linear static Schr\"{o}dinger-Hartree equation with $p$-Laplacian $-\Delta_{p}$ and nonlocal nonlinearity:
\begin{align}\label{eq1.1}
\left\{ \begin{array}{ll} \displaystyle
-\Delta_p u =\left(|x|^{-2p}\ast u^{p}\right)u^{p-1}  \quad\,\,\,\,&\mbox{in}\,\, \R^N, \\ \\
u \in D^{1,p}(\R^N),\quad\,\,\,\,\,\,u\geq0 \qquad\,\,\,\,&  \mbox{in}\,\, \R^N,
\end{array}
\right.\hspace{1cm}
\end{align}
where the full range of $p$ is $1<p<\frac{N}{2}$, $N\geq3$, $V(x):=\left(|x|^{-2p}\ast u^{p}\right)(x):=\int_{\R^N} \frac{u^p(y)}{|x-y|^{2p}}\mathrm{d}y$ and $\Delta_p u:=div (|\nabla u|^{p-2} \nabla u)$ is the usual $p$-Laplace operator. The function space
$$ D^{1,p}(\R^N):=\left\{ u\in L^{p^{\star}}(\R^N) \, \bigg| \, \int_{\R^N} |\nabla u|^p \mathrm{d}x <+\infty \right\}$$
is the completion of $C_0^\infty$ with respect to the norm $\| u \|:=\| \nabla u \|_{L^p(\R^N)}$. For $1<p<2$ or $p>2$, the $p$-Laplace operator $-\Delta_p$ is singular elliptic or degenerate elliptic, respectively. Moreover, the Hartree type nonlinearity in equation \eqref{eq1.1} is nonlocal. Thus it would be an quite interesting and challenging problem to study the sharp asymptotic estimates, radial symmetry, strictly radial monotonicity and classification of solutions to \eqref{eq1.1}.

\smallskip

The nonlocal quasi-linear equation \eqref{eq1.1} is $D^{1,p}(\R^{N})$-critical in the sense that both \eqref{eq1.1} and the $D^{1,p}$-norm $\| \nabla u \|_{L^p(\R^N)}$ are invariant under the scaling $u\mapsto u_{\lambda}(\cdot):=\lambda^{\frac{N-p}{p}}u(\lambda\cdot)$. Equation \eqref{eq1.1} is also invariant under arbitrary rotations and translations. For \eqref{eq1.1}, the finite $D^{1,p}(\R^N)$-energy assumption $u\in D^{1,p}(\R^N)$ is necessary to guarantee $V(x):=|x|^{-2p}\ast u^{p}\in L^{\frac{N}{p}}(\mathbb{R}^{N})$ and hence $V(x)$ is finite and well-defined almost everywhere. In fact, by Hardy-Littlewood-Sobolev inequality (see \cite{FL1,FL2,Lieb,Lions2}), one has $V(x):=\left(\frac{1}{|x|^{2p}}\ast |u|^{p}\right)(x)\in L^{\frac{N}{p}}(\mathbb{R}^{N})$, more precisely,
\begin{equation}\label{conv}
  \|V\|_{L^{\frac{N}{p}}(\mathbb{R}^{N})}\leq C\|u\|^{p}_{L^{p^{\star}}(\mathbb{R}^{N})}, \qquad \forall \,\, u\in D^{1,p}(\mathbb{R}^{N}).
\end{equation}
Thus it follows from H\"{o}lder's inequality and Sobolev embedding inequality that, for any $u\in D^{1,p}(\mathbb{R}^{N})$,
$$\int_{\mathbb{R}^{N}}\left(\frac{1}{|x|^{2p}}\ast |u|^{p}\right)(x) \cdot |u|^{p}(x)\mathrm{d}x\leq \|V\|_{L^{\frac{N}{p}}(\R^N)}\|u\|^{p}_{L^{p^{\star}}(\R^N)}\leq C\|\nabla u\|^{2p}_{L^{p}(\R^N)},$$
that is,
\begin{equation}\label{HLS}
  \|u\|_{V}:=\left(\int_{\mathbb{R}^{N}}\int_{\mathbb{R}^{N}}\frac{|u|^{p}(x)|u|^{p}(y)}{|x-y|^{2p}}\mathrm{d}x\mathrm{d}y\right)^{\frac{1}{2p}}\leq C_{N,p}\| \nabla u \|_{L^p(\R^N)},
\end{equation}
where $C_{N,p}$ is the best constant for the Hardy-Littlewood-Sobolev inequality \eqref{HLS}, which can be characterized by the following minimization problem
\begin{equation}\label{mini}
  \frac{1}{C_{N,p}}:=\inf\left\{\| \nabla u \|_{L^p(\R^N)}\,\Big|\,u\in D^{1,p}(\mathbb{R}^{N}), \,\, \|u\|_{V}=1\right\}.
\end{equation}
The $D^{1,p}(\mathbb{R}^{N})$-critical nonlocal quasi-linear equation \eqref{eq1.1} is the Euler-Lagrange equation for the minimization problem \eqref{mini}.

\smallskip

The general Choquard-Pekar equations use the limiting embedding inequality of the type \eqref{HLS} (c.f. e.g. \cite{LE76,LPL80,Lions1}). By using the concentration-compactness Lemma I.1 in \cite{Lions1} with $|u_{n}|^{p^{\star}}$ replaced by $\left(|x|^{-2p}\ast |u_{n}|^p\right)|u_{n}|^p$, Lions \cite{Lions1} proved that the minimization problem \eqref{mini} has a minimum (c.f. iv) in page 169 of \cite{Lions1}), and hence the Hardy-Littlewood-Sobolev inequality \eqref{HLS} is sharp in the sense that the best constant $C_{N,p}$ can be attained by some (positive) extremal function $u\in D^{1,p}(\mathbb{R}^{N})$. As a consequence, we have the following existence result on positive weak solution to the Euler-Lagrange equation \eqref{eq1.1}.
\begin{thm}[Existence of positive solutions, \cite{Lions1}]\label{thm0}
Every minimizing sequence $\{u_{n}\}_{n\geq 1}\subset D^{1,p}(\mathbb{R}^{N})$ of the minimization problem \eqref{mini} is relatively compact in $D^{1,p}(\mathbb{R}^{N})$ up to a translation and delation, i.e., there exist sequences $\{y_{n}\}_{n\geq 1}\subset\mathbb{R}^{N}$ and $\{\sigma_{n}\}_{n\geq 1}\subset(0,+\infty)$ such that the new minimizing sequence $\widetilde{u_{n}}:=\sigma_{n}^{-\frac{N-p}{p}}u\left(\frac{\cdot-y_{n}}{\sigma_{n}}\right)$ ($\forall \, n\geq1$) is relatively compact in $D^{1,p}(\mathbb{R}^{N})$ for $1<p<\frac{N}{2}$. In particular, there exists a minimum in $D^{1,p}(\mathbb{R}^{N})$ of the minimization problem \eqref{mini}, i.e., the best constant $C_{N,p}$ in inequality \eqref{HLS} can be attained by some (positive) extremal function $u\in D^{1,p}(\mathbb{R}^{N})$. Consequently, the nonlocal quasi-linear Hartree equation \eqref{eq1.1} possesses (at least) a positive weak solution $u\in D^{1,p}(\mathbb{R}^{N})$.
\end{thm}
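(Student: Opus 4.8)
The plan is to derive the relative compactness (and hence the existence of an extremal) from Lions' concentration--compactness principle applied to the nonlocal Hartree energy, and then to obtain the positive weak solution of \eqref{eq1.1} from the extremal via the Lagrange multiplier rule together with the strong maximum principle for $-\Delta_p$. Write $m:=C_{N,p}^{-p}\in(0,+\infty)$ (finite and positive by \eqref{HLS}), and fix a minimizing sequence $\{u_n\}\subset D^{1,p}(\R^N)$, $\|u_n\|_V=1$, $\|\nabla u_n\|_{L^p(\R^N)}^p\to m$. Under the scaling $u\mapsto\sigma^{-\frac{N-p}{p}}u\!\left(\frac{\cdot-y}{\sigma}\right)$ that leaves \eqref{eq1.1} invariant, \emph{both} functionals $\|\nabla u\|_{L^p}^p$ and $\|u\|_V^{2p}=\iint\frac{|u(x)|^p|u(y)|^p}{|x-y|^{2p}}\,dx\,dy$ are individually invariant, so both the translation and the dilation symmetry must be factored out; this is why the statement only asserts compactness up to a translation and a dilation.

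First I would freeze the scale. Using the L\'evy concentration functions $Q_n(R):=\sup_{y\in\R^N}\int_{B_R(y)}|\nabla u_n|^p\,dx$, which are continuous in $R$ with $Q_n(0^+)=0$ and $Q_n(+\infty)=\|\nabla u_n\|_{L^p}^p\to m$, I pick dilations $\sigma_n>0$ so that the rescaled functions $\widetilde u_n(\cdot):=\sigma_n^{-\frac{N-p}{p}}u_n(\cdot/\sigma_n)$ satisfy $\sup_{y}\int_{B_1(y)}|\nabla\widetilde u_n|^p\,dx=\tfrac m2$ for all large $n$, and I then translate by suitable $y_n$ so that this supremum is almost attained over balls centered at the origin. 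This normalization places the sequence ``at scale one'': it rules out the vanishing regime (energy density spreading to zero) as well as the collapse of all the gradient energy into a point. Next I would apply the concentration--compactness alternative (Lemma I.1 of \cite{Lions1}) to the bounded measures $\mu_n:=|\nabla\widetilde u_n|^p\,dx$, of total mass $\to m$, with $|u_n|^{p^\star}$ replaced by the nonlocal density $\bigl(|x|^{-2p}\ast|\widetilde u_n|^p\bigr)|\widetilde u_n|^p$ exactly as indicated in the Introduction. Vanishing is excluded by the normalization $Q_n(1)=\tfrac m2>0$. Dichotomy is excluded by the strict subadditivity of the rescaled infimum: the scaling $u\mapsto tu$ gives, for $I_\lambda:=\inf\{\|\nabla u\|_{L^p}^p:\|u\|_V^{2p}=\lambda\}$, the identity $I_\lambda=\lambda^{1/2}I_1=\lambda^{1/2}m$, so that $\beta^{1/2}+(1-\beta)^{1/2}>1$ for every $\beta\in(0,1)$; since the nonlocal interaction energy between the two separating pieces of a dichotomy (one supported in a fixed ball, the other at distance $\to\infty$) tends to $0$ by a Hardy--Littlewood--Sobolev estimate with a slightly subcritical kernel, a dichotomy would force the whole nonlocal mass onto one of the two pieces, which is incompatible with the splitting parameter lying in $(0,1)$. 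Hence the ``compactness'' alternative holds, and after absorbing a final translation into $y_n$ a subsequence of $\widetilde u_n$ converges weakly in $D^{1,p}(\R^N)$, strongly in $L^{p^\star}_{\mathrm{loc}}$, and a.e.\ to some $u\in D^{1,p}(\R^N)$.

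To identify $u$ and upgrade the convergence, I would use the Brezis--Lieb splittings $\|\nabla\widetilde u_n\|_{L^p}^p=\|\nabla u\|_{L^p}^p+\|\nabla(\widetilde u_n-u)\|_{L^p}^p+o(1)$ and $\|\widetilde u_n\|_V^{2p}=\|u\|_V^{2p}+\|\widetilde u_n-u\|_V^{2p}+o(1)$ (the latter valid because $\{\widetilde u_n\}$ is bounded in $D^{1,p}$, via Hardy--Littlewood--Sobolev and the Brezis--Lieb lemma for $|\widetilde u_n|^p$ in $L^{p^\star/p}$). Writing $a:=\|u\|_V^{2p}\in[0,1]$, these give $m\ge I_a+I_{1-a}=(a^{1/2}+(1-a)^{1/2})m\ge m$, hence $a\in\{0,1\}$; the scale normalization (together with the compactness alternative) forces $a=1$, whence $\|\widetilde u_n-u\|_V\to0$ and $\|\nabla(\widetilde u_n-u)\|_{L^p}\to0$, i.e.\ $\widetilde u_n\to u$ strongly in $D^{1,p}(\R^N)$ with $\|u\|_V=1$ and $\|\nabla u\|_{L^p}=1/C_{N,p}$. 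Thus $u$ attains \eqref{mini}; since replacing $u$ by $|u|$ alters neither functional we may take $u\ge0$, $u\not\equiv0$. The Lagrange multiplier rule then yields some $\mu>0$ with $-\Delta_p u=\mu\bigl(|x|^{-2p}\ast u^p\bigr)u^{p-1}$ weakly, so $\bar u:=\mu^{1/p}u\ge0$, $\bar u\not\equiv0$, is a weak solution of \eqref{eq1.1} in $D^{1,p}(\R^N)$. Finally, since $-\Delta_p\bar u=V\bar u^{p-1}\ge0$ with $V\in L^{N/p}(\R^N)$, the function $\bar u$ is a nonnegative $p$-supersolution, and the strong maximum principle (weak Harnack inequality) for $-\Delta_p$ gives $\bar u>0$ in $\R^N$.

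The step I expect to be the main obstacle is the genuinely nonlocal bookkeeping --- the Brezis--Lieb type decomposition for the Hartree energy $\|\cdot\|_V^{2p}$ and the asymptotic vanishing of the interaction energy between the two pieces of a dichotomy --- since these replace the purely local cutoff computations available for the critical local nonlinearity $u^{p^\star-1}$ and are exactly where the Riesz-potential structure $|x|^{-2p}\ast u^p$ has to be controlled; the rest is the standard scale-normalized concentration--compactness scheme, as carried out in \cite{Lions1}.
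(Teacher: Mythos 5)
Your proposal is correct and follows exactly the route the paper attributes to Lions \cite{Lions1}; the paper gives no independent proof of Theorem~1.1 but simply cites Lions' concentration--compactness Lemma~I.1 (with $|u_n|^{p^\star}$ replaced by the nonlocal density $(|x|^{-2p}\ast|u_n|^p)|u_n|^p$), and your sketch --- scale normalization via L\'evy concentration functions, the concentration--compactness alternative, exclusion of dichotomy by the strict subadditivity $I_\lambda=\lambda^{1/2}I_1$, Brezis--Lieb splitting of both $\|\nabla\cdot\|_{L^p}^p$ and $\|\cdot\|_V^{2p}$, then Lagrange multipliers and the strong maximum principle --- is precisely the structure of Lions' argument fleshed out. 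Two minor remarks: the kernel $|x|^{-2p}$ is \emph{exactly} critical rather than ``slightly subcritical,'' and the vanishing of the cross-interaction in the dichotomy case comes simply from the two pieces of disjoint support having diverging distance (so that $|x-y|^{-2p}\to0$ uniformly there, together with HLS control of the two factors), not from subcriticality; and the positivity $u>0$ via Lemma~\ref{hopf} requires $V\in L^\infty_{loc}$, which strictly speaking is only available after the regularity results of Section~3 are established, exactly as the paper itself orders things.
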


\smallskip

%One should observe that the $p$--Laplace operator is singular or degenerate elliptic (respectively if $1<p<2$ or $p>2$)

%One should observe that if $1<p<2$ or $p>2$, then $p$--Laplace  operator is a singular or degenerate elliptic, respectively. And that the Hartree type nonlinearity is nonlocal in our equation \eqref{eq1.1}. It is an interesting and challenging problem to understand if, under our assumptions, all the solutions to \eqref{eq1.1} are radial and radially decreasing.

In the semi-linear elliptic case $p=2$, \eqref{eq1.1} becomes the following $H^{1}$-energy-critical nonlinear Hartree equation
\begin{align}\label{ciHeq}
-\Delta u =\left(|x|^{-4}\ast u^{2}\right)u  \quad\,\,\,\,&\mbox{in}\,\, \R^N,
\end{align}
where $N \geq 5$. Equation \eqref{eq1.1} is the quasi-linear counterpart of \eqref{ciHeq}. PDEs of the type \eqref{ciHeq} arise in the Hartree-Fock theory of the nonlinear Schr\"{o}dinger equations (see \cite{LS}). The solution $u$ to problem \eqref{ciHeq} is also a ground state or a stationary solution to the following focusing energy-critical dynamic Schr\"{o}dinger-Hartree equation (see e.g. \cite{LMZ,MXZ3})
\begin{equation}\label{Hartree}
i\partial_{t}u+\Delta u=-\left(|x|^{-4}\ast u^{2}\right)u, \qquad (t,x)\in\mathbb{R}\times\mathbb{R}^{N}.
\end{equation}
The Schr\"{o}dinger-Hartree equations have many interesting applications in the quantum theory of large systems of non-relativistic bosonic atoms and molecules (see, e.g. \cite{FL}) and have been quite intensively studied, c.f. e.g. \cite{LMZ,MXZ3} and the references therein, in which the ground state solution can be regarded as a crucial criterion or threshold for global well-posedness and scattering in the focusing case. Therefore, the classification of solutions to \eqref{ciHeq} plays an important and fundamental role in the study of the focusing dynamic Schr\"{o}dinger-Hartree equations \eqref{Hartree}. In \cite{Liu}, Liu classified positive solutions to the energy-critical Hartree equation \eqref{ciHeq}. For classification results and other related quantitative and qualitative properties of solutions to energy-critical Hartree type equations like \eqref{ciHeq} involving fractional, second, higher and arbitrary order Laplacians $(-\Delta)^{s}$ ($0<s<+\infty$) and Choquard equations, please refer to \cite{AY,CD,CDZ,DFHQW,DFQ,DL,DLQ,DQ,GHPS,Lei,LE76,LPL80,MZ,MS} and the references therein.

\smallskip

In the quasi-linear case (i.e. $1<p<\frac{N}{2}$, $p \neq 2$), we can not deduce the comparison principles from the maximum principles directly and apply the the Kelvin transforms any more. Moreover, since the $p$-Laplace operator is singular or degenerate elliptic in the case $1<p<2$ or $p>2$ respectively, there is no uniform form of the strong comparison principle for $p$-Laplace operator, so we need to discuss the cases $1<p<2$ and $p>2$ separately. Consequently, due to the lack of general comparison principles for quasi-linear operators and the fact that a Kelvin type transformation is not available, the problem \eqref{eq1.1} becomes much more difficult and challenging.

\subsection{Main results}
In this paper, we will overcome all the difficulties caused by the quasi-linear $p$-Laplace operator $-\Delta_{p}$ and the nonlocal Hartree type nonlinear interaction, and prove sharp asymptotic estimates, the radial symmetry and strictly radial monotonicity of positive solution to \eqref{eq1.1}. These results will leads to a complete classification of nonnegative weak solutions to \eqref{eq1.1}. For this purpose, by \eqref{conv}, we will first generalize the equation \eqref{eq1.1} and consider the following generalized equation
\begin{equation}\label{geq}
  -\Delta_p u=V(x)|u|^{p-2}u \qquad \text{in} \,\, \mathbb{R}^{N}
\end{equation}
with $0\leq V\in L^{\frac{N}{p}}(\mathbb{R}^{N})$, where $1<p<N$. Equation \eqref{eq1.1} is a typical special case of \eqref{geq} with $V:=|x|^{-2p}\ast |u|^{p}$.

\smallskip

\begin{defn}[$D^{1,p}(\R^{N})$-weak solution]\label{weak}
A $D^{1,p}(\R^{N})$-weak solution of the generalized equation \eqref{geq} is a function $u\in D^{1,p}(\R^N)$ such that
$$ \int_{\R^N} |\nabla u|^{p-2} \langle \nabla u, \nabla\psi\rangle \mathrm{d}x = \int_{\R^N}V(x)|u|^{p-2}(x)u(x)\psi(x)\mathrm{d}x, \,\,\,\,\quad \forall \,\,\psi \in C_0^\infty(\R^N).$$
\end{defn}

\smallskip

For any nonnegative weak solution $u$ of the generalized equation \eqref{geq}, by exploiting the De Giorgi-Moser-Nash iteration technique (c.f. \cite{TMOSER}) as in \cite{JSLB}, \cite[Lemma B.3]{CDPSYS} and \cite{Trud}, we prove in Lemmas \ref{lm.8} and \ref{lm.5} that $u\in L^\infty(\R^N)$. Next, using the standard $C^{1,\alpha}$ estimates (see \cite{ED83,KM,PT}), we deduce that $u\in C_{loc}^{1,\alpha}(\R^N)$ for some $0<\alpha<\min\{1,\frac{1}{p-1}\}$. Moreover, it follows from the strong maximum principle (see Lemma \ref{hopf}) that any nonnegative nontrivial solution of \eqref{eq1.1} is actually strictly positive provided that $V\in L^{\infty}_{loc}(\mathbb{R}^{N})$.

\smallskip

For $1<p<N$, if $V=u^{p^{\star}-p}$ with $p^{\star}=\frac{Np}{N-p}$ and nonnegative $u\in D^{1,p}(\R^N)$, the generalized equation \eqref{geq} becomes the following $D^{1,p}(\mathbb{R}^{N})$-critical quasi-linear equations with local nonlinearity:
\begin{align}\label{criticeqution}
-\Delta_p u = u^{p^{\star}-1},\,\,\,\,\,\,u\geq0 \qquad \text{in} \,\, \mathbb{R}^{N}.
\end{align}
Equation \eqref{eq1.1} is the nonlocal counterpart of \eqref{criticeqution}. Serrin \cite{S} and Serrin and Zou \cite{SJZH02}, among other things, proved regularity results for general quasi-linear equations and sharp lower bound on the asymptotic estimate of super-$p$-harmonic functions (see Lemma \ref{estimate2}). Guedda and Veron \cite{GV} proved the uniqueness of radially symmetrical positive solution $U(x)=U(r)$ with $r=|x|$ to \eqref{criticeqution} satisfying $U(0)=b>0$ and $U'(0)=0$, and hence reduced the classification of solutions to the radial symmetry of solutions. For radial symmetry of positive $D^{1,p}(\R^N)$-weak solutions to $D^{1,p}(\mathbb{R}^{N})$-critical quasi-linear equations with local nonlinearity of type \eqref{criticeqution} via the method of moving planes, c.f. \cite{LD,DP,LDSMLMSB,DLPFRM,LDMR,LDBS04,LPLDHT,OSV,BS16,VJ16} and the references therein.

\smallskip

In the singular elliptic case $1<p<2$, under the $C^{1}\cap W^{1,p}$-regularity assumptions on the positive solutions and several assumptions on the (local) nonlinear term, the radial symmetry and strictly radial monotonicity of the positive solution was obtained in \cite{DLPFRM,LDMR} by using the moving plane technique. Later, in \cite{LDSMLMSB}, Damascelli, Merch\'{a}n, Montoro and Sciunzi refined the moving plane method used in \cite{DLPFRM,LDMR} and derived the radial symmetry of positive $D^{1,p}(\R^N)$-weak solutions to \eqref{criticeqution} without regularity assumptions on solutions. However, they still required in \cite{LDSMLMSB} that the nonlinear term is locally Lipschitz continuous, i.e., $p^{\star} \geq 2$. Subsequently, by applying scaling arguments and the doubling Lemma (see \cite{PPPQPS}), V\'{e}tois \cite{VJ16} first derived a preliminary estimate on the decay rete of positive $D^{1,p}(\R^N)$-weak solution $u$ to \eqref{criticeqution} (i.e., $u(x)\leq \frac{C}{|x|^{\frac{N-p}{p}}}$ for $|x|$ large), combining this preliminary estimate with scaling arguments, the Harnack-type inequalities (c.f. \cite{LDBS,JSLB}), regularity estimates and the boundedness estimate of the quasi-norm $L^{s,\infty}(\mathbb{R}^{N})$ with $s=\frac{N(p-1)}{N-p}$ (Lemma 2.2 in \cite{VJ16}) implied sharp asymptotic estimates of the positive $D^{1,p}(\R^N)$-weak solution $u$ (i.e., $u\sim\left(1+|x|^{\frac{N-p}{p-1}}\right)^{-1}$) and the sharp upper bound on decay rate of $|\nabla u|$ (i.e., $|\nabla u|\leq \frac{C}{|x|^{\frac{N-1}{p-1}}}$ for $|x|$ large) for $1<p<N$. These sharp asymptotic estimates combined with the results in \cite{DLPFRM,LDMR} successfully extended the radial symmetry results for \eqref{criticeqution} in \cite{LDSMLMSB} to the full singular elliptic range $1<p<2$.

\smallskip

For equation \eqref{criticeqution} in the degenerate elliptic case $p>2$, Sciunzi \cite{BS16} developed a new technique based on scaling arguments, the study of limiting profile at infinity, the uniqueness up to multipliers of $p$-harmonic maps in $\mathbb{R}^{N}\setminus\{0\}$ and the sharp asymptotic estimates on $u$ proved in \cite{VJ16}, and derived the sharp lower bound on decay rate of $|\nabla u|$ (i.e., $|\nabla u|\geq \frac{c}{|x|^{\frac{N-1}{p-1}}}$ for $|x|$ large). Thanks to the sharp asymptotic estimates on $u$ and $|\nabla u|$, Sciunzi \cite{BS16} finally proved the classification of positive $D^{1,p}(\R^N)$-weak solution $u$ to \eqref{criticeqution} via the moving planes technique in conjunction with the weighted Poincar\'{e} type inequality and Hardy's inequality and so on.

\smallskip

Ciraolo, Figalli and Roncoroni \cite{CFR} proved classification of positive $D^{1,p}(\R^N)$-weak solutions to $D^{1,p}(\R^N)$-critical anisotropic $p$-Laplacian equations of type \eqref{criticeqution} in convex cones. Under some energy growth conditions or suitable control of the solutions at $\infty$ in some cases, Catino, Monticelli and Roncoroni \cite{CMR} proved classification results on general positive solution $u$ to \eqref{criticeqution} (possibly having infinite $D^{1,p}(\R^N)$-energy) for $1<p<N$. For $\frac{N+1}{3}\leq p<N$, Ou \cite{Ou} classified general positive solution $u$ to \eqref{criticeqution} (possibly having infinite energy). One should note that, for our nonlocal quasi-linear equation \eqref{eq1.1}, the finite $D^{1,p}(\R^N)$-energy assumption $u\in D^{1,p}(\R^N)$ is natural and necessary, since we need it to guarantee that $V(x):=|x|^{-2p}\ast u^{p}\in L^{\frac{N}{p}}(\mathbb{R}^{N})$ and hence $V(x)$ is finite and well-defined almost everywhere. For more literatures on quantitative and qualitative properties of solutions for quasi-linear equations involving $p$-Laplacians with local nonlinearities, please c.f. \cite{A,AY,B,CDPSYS,CV,LD,DFSV,DP,LDBS04,LDBS,DYZ,ED83,Di,DPZZ,FSV,GL,LPLDHT,Lieberman,OSV,BSR14,JSLB,SJZH02,Tei,PT,JLV,ZL} and the references therein.

\medskip

In order to prove the radial symmetry and strictly radial monotonicity of solutions, one of the key ingredients is to prove the sharp asymptotic estimates for positive $D^{1,p}(\R^N)$-weak solution $u$ and $|\nabla u|$. However, being different to the $D^{1,p}(\R^{N})$-critical local nonlinear term $u^{p^{\star}-1}$ with $p^{\star}:=\frac{Np}{N-p}$ investigated in \cite{CFR,LDSMLMSB,GV,Ou,BS16,VJ16} etc., due to the nonlocal feature of the convolution $|x|^{-2p}*u^p$ in the Hartree type nonlinearity in \eqref{eq1.1}, it is impossible for us to use the scaling arguments and the Doubling Lemma as in \cite{VJ16} to get preliminary estimates on upper bounds of asymptotic behaviors for any positive solutions $u$. Moreover, it is also quite difficult to obtain the boundedness estimate of the quasi-norm $\|u \|_{L^{s,\infty}(\R^N)}$ with $s=\frac{N(p-1)}{N-p}$ as in Lemma 2.2 of \cite{VJ16} and hence derive the sharp estimates on upper bounds of asymptotic behaviors from the preliminary estimates as in \cite{VJ16}. Fortunately, by showing a better preliminary estimates on upper bounds of asymptotic behaviors through the De Giorgi-Moser-Nash iteration method (i.e., $u(x) \leq\frac{C}{1+|x|^{\frac{N-p}{p}+\tau}}$ for some $\tau>0$, see Lemmas \ref{lm.8} and \ref{lm.5}) and combining Theorem 1.5 in \cite{XCL} and Theorem 2.3 in \cite{SJZH02}, we are able to overcome these difficulties and establish regularity and the sharp estimates on both upper and lower bounds of asymptotic behaviors for any positive solution $u$ to the generalized equation \eqref{geq}. Then, using the arguments from \cite{BS16,VJ16}, by scaling arguments, the regularity results in DiBenedetto \cite{ED83} and Tolksdorf \cite{PT}, and the uniqueness up to multipliers of $p$-harmonic maps in $\R^N\setminus \{0\}$ under suitable conditions at zero and at infinity (see Theorem 2.1 in \cite{BS16}), we can deduce the sharp estimates on both upper and lower bounds for the decay rate of $|\nabla u|$.

\medskip

Our approach can deal with the generalized equation \eqref{geq} and establish the following sharp asymptotic estimates.
\begin{thm}[Sharp asymptotic estimates for generalized equation \eqref{geq}]\label{th2.1}
Assume $1<p<N$, and let $u$ be a nonnegative $D^{1,p}(\R^{N})$-weak solution to the generalized equation \eqref{geq}. Then $u\in L^{r}_{loc}(\mathbb{R}^{N})$ for any $0<r<+\infty$, and there exist $\tau>0$ and $\hat{p}\geq p^{\star}$ such that, for any $\bar{p}\geq \hat{p}$, there exists $R_{0}>1$ sufficiently large depending on $\bar{p}$ such that the following decay property holds:
\begin{equation}\label{a1}
  \|u\|_{L^{\bar{p}}(\mathbb{R}^{N}\setminus B_{2R}(0))}\leq\frac{C}{R^{\frac{N-p}{p}+\tau}}, \qquad \forall \,\, R>R_{0},
\end{equation}
where the constant $C>0$ is independent of $\bar{p}$ and $R$. If $V\in L^{q}_{loc}(\mathbb{R}^{N})$ for some $\frac{N}{p}<q<+\infty$ provided that $u\in L^{r}_{loc}(\mathbb{R}^{N})$ for any $0<r<+\infty$, then $u\in C(\R^N)\cap L^{\infty}_{loc}(\mathbb{R}^{N})$. If $V\in L^{q}_{loc}(\mathbb{R}^{N})$ for any $\frac{N}{p}<q<+\infty$ provided that $u\in C(\R^N)\cap L^{\infty}_{loc}(\mathbb{R}^{N})$, then $u \in C^{1,\alpha}(\R^N)$ for some $0<\alpha<\min\{1,\frac{1}{p-1}\}$. Assume that there exists $q>\frac{N}{p}$ and $\widetilde{R}>1$ large enough, such that the following uniform boundedness holds:
\begin{equation}\label{a2}
  \|V\|_{L^{q}(B_{1}(x))}\leq C, \qquad \forall \,\, |x|>2\widetilde{R},
\end{equation}
provided that the decay property in \eqref{a1} holds for sufficiently large $\bar{p}\geq \hat{p}$ and $R>R_{0}$, where the constant $C>0$ is independent of $x$ and $\widetilde{R}$, then, there exist $\tau>0$ (the same as in \eqref{a1}) and a constant $C>0$ independent of $x$ such that
$$ u(x) \leq\frac{C}{1+|x|^{\frac{N-p}{p}+\tau}} \quad \,\,\, \mbox{in}\,\, \R^N,$$
and hence $u\in L^{\infty}(\mathbb{R}^{N})$. Moreover, suppose that $V(x) \leq C_V |x|^{-\beta}$ for some $C_V>0$, $\beta>p$ and $|x|$ large provided that $u(x)\leq C|x|^{-\gamma}$ for some $C>0$, $\gamma>\frac{N-p}{p}$ and $|x|$ large, then, for any positive $D^{1,p}(\R^{N})$-weak solution $u$ to \eqref{geq}, there exist $c_0, C_0, R_0 > 0$ such that
\begin{equation}\label{eq0806}
  \frac{c_0}{1+|x|^\frac{N-p}{p-1}} \leq u(x) \leq \frac{C_0}{1+|x|^\frac{N-p}{p-1}} \qquad \mbox{in}\,\,\,\R^N,
\end{equation}
\begin{align}\label{eq0806+}
\frac{c_0}{|x|^\frac{N-1}{p-1}} \leq |\nabla u(x)| \leq \frac{C_0}{|x|^\frac{N-1}{p-1}} \qquad \mbox{in}\,\,\,\R^N \setminus B_{R_0}(0).
\end{align}
\end{thm}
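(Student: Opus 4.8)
\emph{Step 1: local integrability, critical-scale decay, regularity.} To get $u\in L^r_{loc}(\R^N)$ for every finite $r$ I would run a De Giorgi--Moser--Nash iteration for $-\Delta_p u=V|u|^{p-2}u$, treating the borderline integrability $V\in L^{N/p}$ by the splitting $V=V_M^1+V_M^2$ with $V_M^2:=V\,\mathbf 1_{\{V\le M\}}$, so that $\|V_M^1\|_{L^{N/p}(\R^N)}\to0$ as $M\to\infty$ while $V_M^2\in L^q(\R^N)$ for every $q\ge N/p$: on each step the small piece is absorbed into the Sobolev term and the bounded piece is lower order. Running the same iteration on exterior annuli, now exploiting that $\|V\|_{L^{N/p}(\R^N\setminus B_R)}\to0$, yields the critical-scale control $u(x)\le C\,\varepsilon(|x|)\,|x|^{-(N-p)/p}$ with $\varepsilon(\rho):=\|u\|_{L^{p^\star}(\R^N\setminus B_{\rho/2}(0))}\to0$. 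Once, in addition, $V\in L^q_{loc}$ for some $q>N/p$ (which, for the Hartree datum $V=|x|^{-2p}\ast u^p$, follows from $u\in L^r_{loc}$ for all $r$), the right-hand side is in $L^q_{loc}$, so De Giorgi--Nash--Moser gives $u\in C(\R^N)\cap L^\infty_{loc}$; the equation then has locally bounded right-hand side, and the interior $C^{1,\alpha}$ theory of DiBenedetto~\cite{ED83} and Tolksdorf~\cite{PT} gives $u\in C^{1,\alpha}(\R^N)$ with $0<\alpha<\min\{1,\tfrac1{p-1}\}$; finally the strong maximum principle (Lemma~\ref{hopf}) promotes any nontrivial nonnegative solution to a strictly positive one.

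\emph{Step 2: the improved decay \eqref{a1} --- the main obstacle.} Since the nonlocality rules out the scaling/doubling argument of \cite{VJ16}, I would extract the extra decay by a hole-filling (Widman-type) iteration of the Caccioppoli inequality. Testing the equation with $u\eta_R^p$, where $\eta_R$ equals $1$ on $\R^N\setminus B_R$, vanishes on $B_{R/2}$ and satisfies $|\nabla\eta_R|\lesssim 1/R$, absorbing $\int Vu^p\eta_R^p\le\|V\|_{L^{N/p}(\R^N\setminus B_{R/2})}\,\|u\eta_R\|_{L^{p^\star}}^p$ into the Sobolev term (legitimate for $R$ large because that norm tends to $0$), and estimating $\int u^p|\nabla\eta_R|^p\lesssim\big(F(R/2)-F(R)\big)^{p/p^\star}$ with $F(\rho):=\|u\|_{L^{p^\star}(\R^N\setminus B_\rho)}^{p^\star}$, one reaches $F(R)\le C\big(F(R/2)-F(R)\big)$, i.e. $F(R)\le\theta\,F(R/2)$ with $\theta=C/(1+C)<1$; iterating dyadically gives genuine polynomial decay $\|u\|_{L^{p^\star}(\R^N\setminus B_R)}\le C R^{-\delta_0}$ for some $\delta_0>0$. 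A finite Moser iteration on exterior annuli then propagates this: for every $\bar p\ge\hat p$ and $R$ large enough (depending on $\bar p$), $\|u\|_{L^{\bar p}(\R^N\setminus B_{2R})}\le C_\ast\,R^{-\frac{N-p}{p}\,(1-(\frac{N-p}{N})^{K})}\,\|u\|_{L^{p^\star}(\R^N\setminus B_R)}$ with $C_\ast$ independent of $\bar p$ (the accumulated Moser constants stay bounded provided one does not push all the way to $L^\infty$), and combining the two estimates, with $\hat p$ so large that $\frac{N-p}{p}(\frac{N-p}{N})^{K}<\delta_0/2$, gives \eqref{a1} with $\tau=\delta_0/2$ and $C$ independent of $\bar p$. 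The delicate point is precisely the hole-filling step: since the equation is $D^{1,p}$-critical the plain energy estimates are marginal, and the contraction factor $\theta<1$ --- hence the positive gain $\delta_0$ --- appears only because the critical potential term is absorbable, which itself rests on the decay $\|V\|_{L^{N/p}(\R^N\setminus B_R)}\to0$ forced by $V\in L^{N/p}(\R^N)$.

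\emph{Step 3: the pointwise upper bound.} Given \eqref{a1} and \eqref{a2}, fix $\bar p\ge\hat p$ and, for $|x|$ large, apply the local $L^\infty$--$L^{\bar p}$ Moser estimate for $-\Delta_p u=V|u|^{p-2}u$ on $B_1(x)$: by \eqref{a2}, $\|V\|_{L^q(B_1(x))}\le C$ with $q>N/p$, which is the subcritical case, so $\|u\|_{L^\infty(B_{1/2}(x))}\le C\|u\|_{L^{p^\star}(B_1(x))}\le C\|u\|_{L^{\bar p}(B_1(x))}$; choosing $R$ with $2R=|x|-2$ in \eqref{a1} bounds the last quantity by $C|x|^{-(N-p)/p-\tau}$. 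Together with $u\in L^\infty_{loc}$ near the origin this gives $u(x)\le C(1+|x|^{(N-p)/p+\tau})^{-1}$ on $\R^N$, in particular $u\in L^\infty(\R^N)$.

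\emph{Step 4: sharp two-sided and gradient estimates.} The lower bound in \eqref{eq0806} is immediate: $-\Delta_p u\ge0$, so $u$ is a positive super-$p$-harmonic function of finite $D^{1,p}$-energy, and the Serrin--Zou estimate (Lemma~\ref{estimate2}, Theorem~2.3 of \cite{SJZH02}) gives $u(x)\ge c_0|x|^{-(N-p)/(p-1)}$ for $|x|$ large, with continuity and strict positivity covering bounded $|x|$. For the upper bound I would bootstrap by comparison: from $u(x)\le C|x|^{-\gamma}$ with $\gamma>(N-p)/p$ the hypothesis yields $V(x)\le C_V|x|^{-\beta}$, $\beta>p$, so $-\Delta_p u\le C|x|^{-\beta-\gamma(p-1)}$; comparing on $\R^N\setminus B_R$ with the explicit supersolution $A|x|^{-\sigma}$, for which $-\Delta_p(A|x|^{-\sigma})=A^{p-1}\sigma^{p-1}\big((N-1)-(\sigma+1)(p-1)\big)|x|^{-p-\sigma(p-1)}$ (positive exactly for $\sigma<(N-p)/(p-1)$), improves the decay exponent by the fixed amount $(\beta-p)/(p-1)>0$, so iterating drives $\gamma$ up to any value below $(N-p)/(p-1)$; Theorem~1.5 of \cite{XCL} then closes the remaining gap and gives $u(x)\le C_0|x|^{-(N-p)/(p-1)}$, hence \eqref{eq0806}. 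For \eqref{eq0806+} I would blow up: with $u_\lambda(y):=\lambda^{(N-p)/(p-1)}u(\lambda y)$, \eqref{eq0806} gives $c_0|y|^{-(N-p)/(p-1)}\le u_\lambda(y)\le C_0|y|^{-(N-p)/(p-1)}$ on $\{|y|>\delta\}$ for $\lambda$ large, and $u_\lambda$ solves $-\Delta_p u_\lambda=\lambda^p V(\lambda y)u_\lambda^{p-1}$ with $\lambda^p V(\lambda\cdot)\to0$ in $L^\infty_{loc}(\R^N\setminus\{0\})$ since $\beta>p$; the $C^{1,\alpha}$ estimates of \cite{ED83,PT} then give uniform local $C^{1,\alpha}$ bounds, so along subsequences $u_\lambda\to U$ in $C^1_{loc}(\R^N\setminus\{0\})$, with $U$ $p$-harmonic in $\R^N\setminus\{0\}$ and trapped between the same two multiples of $|y|^{-(N-p)/(p-1)}$; the uniqueness of $p$-harmonic maps under these growth conditions at $0$ and $\infty$ (Theorem~2.1 of \cite{BS16}) forces $U(y)=c|y|^{-(N-p)/(p-1)}$ with $c\ge c_0>0$, so $|\nabla U|$ is bounded above and below on $\{|y|\sim1\}$, whence scaling back --- which multiplies $|\nabla u|$ at $|x|\sim\lambda$ by $\lambda^{(N-1)/(p-1)}$ --- together with a contradiction argument for the uniformity of the lower bound yields \eqref{eq0806+}.
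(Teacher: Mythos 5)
Your proposal follows essentially the same route as the paper: De Giorgi--Moser--Nash iteration for local integrability and regularity, a hole-filling contraction $\Psi(2R)\le\theta\,\Psi(R)$ on exterior annuli combined with a finite exterior Moser iteration to get the strict gain $\tau>0$ in \eqref{a1}, then the Xiang estimate (Lemma~\ref{estimate1}) and the Serrin--Zou estimate (Lemma~\ref{estimate2}) for the two-sided bound \eqref{eq0806}, and finally the V\'etois/Sciunzi blow-up argument with the uniqueness of $p$-harmonic profiles (Theorem~2.1 of~\cite{BS16}) for \eqref{eq0806+}. The only deviation is a superfluous comparison-function bootstrap inserted before invoking Theorem~1.5 of~\cite{XCL}: that theorem already delivers the sharp exponent $-(N-p)/(p-1)$ directly from the preliminary bound $u\lesssim|x|^{-\gamma}$ with $\gamma>(N-p)/p$ and $V\lesssim|x|^{-\beta}$ with $\beta>p$, so the intermediate improvement of $\gamma$ is not needed.
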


\begin{rem}
When $V=u^{p^{\star}-p}$, assume that the decay property in \eqref{a1} holds for sufficiently large $\bar{p}\geq \hat{p}$ and $R>R_{0}$, then for $q>\frac{N}{p}$ large enough such that $\bar{p}:=q(p^{\star}-p)\geq\hat{p}$ and $\widetilde{R}:=4R_{0}$, one has, for any $|x|>2\widetilde{R}$,
$$\|V\|_{L^{q}(B_{1}(x))}\leq\|u\|^{p^{\star}-p}_{L^{\bar{p}}(B_{1}(x))}\leq\|u\|^{p^{\star}-p}_{L^{\bar{p}}(\mathbb{R}^{N}\setminus B_{4R_{0}}(0))}\leq\left(\frac{C}{(2R_{0})^{\frac{N-p}{p}+\tau}}\right)^{p^{\star}-p}\leq C,$$
where the constant $C>0$ is independent of $x$ and $\widetilde{R}$. That is, the uniform boundedness in \eqref{a2} holds. Suppose that $u(x)\leq C|x|^{-\gamma}$ for some $C>0$, $\gamma>\frac{N-p}{p}$ and $|x|$ large (this assumption can actually be guaranteed by the better preliminary estimate $u(x)\leq \frac{C}{1+|x|^{\frac{N-p}{p}+\tau}}$ with $\tau>0$ in Lemma \ref{lm.5}), then $V\leq C|x|^{-\beta}$ with $\beta:=(p^{\star}-p)\gamma>p$ for $|x|$ large. Therefore, it follows from Theorem \ref{th2.1} that the sharp asymptotic estimates \eqref{eq0806} and \eqref{eq0806+} also hold for any positive $D^{1,p}(\R^{N})$-weak solution $u$ to the $D^{1,p}(\R^{N})$-critical quasi-linear local equation \eqref{criticeqution}.
\end{rem}

\smallskip

When $V(x):=|x|^{-2p}\ast u^{p}$, one can also easily verify that all the assumptions on $V$ in Theorem \ref{th2.1} are fulfilled (for details, see the proof of Theorem \ref{th2.1-} in Section 3). For instance, assume that $u(x)\leq C|x|^{-\gamma}$ for some $C>0$, $\gamma>\frac{N-p}{p}$ and $|x|$ large (this assumption for $\gamma=\frac{N-p}{p}+\tau$ with $\tau>0$ can actually be guaranteed by the better preliminary estimate in Lemma \ref{lm.5}), then the basic estimates on the convolution $V(x)=|x|^{-2p}\ast u^{p}$ in (ii) of Lemma \ref{lm.6} (with $\sigma=2p$ and $q=p$) implies that, $V(x)\leq C_{\star}|x|^{-\beta}$ with $\beta:=p\gamma+2p-N=p+p\tau>p$ for some $C_{\star}>0$ and $|x|$ large. Therefore, as a consequence of Theorem \ref{th2.1}, we can deduce the following regularity result and sharp asymptotic estimates for the nonlocal quasi-linear Schr\"{o}dinger-Hartree equation \eqref{eq1.1}.
\begin{thm}[Sharp asymptotic estimates]\label{th2.1-}
Assume $1 < p < \frac{N}{2}$, and let $u$ be a nonnegative $D^{1,p}(\mathbb{R}^{N})$-weak solution to the $D^{1,p}(\R^N)$-critical quasi-linear nonlocal equation \eqref{eq1.1}. Then $u \in C^{1,\alpha} (\R^N) \cap L^\infty(\R^N)$ for some $0<\alpha<\min\{1,\frac{1}{p-1}\}$. Moreover, the sharp asymptotic estimates \eqref{eq0806} and \eqref{eq0806+} hold for any positive $D^{1,p}(\mathbb{R}^{N})$-weak solution $u$ to \eqref{eq1.1}.
\end{thm}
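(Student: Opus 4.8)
The plan is to obtain Theorem~\ref{th2.1-} as a corollary of Theorem~\ref{th2.1}: one only has to verify that, when $V(x):=\left(|x|^{-2p}\ast u^{p}\right)(x)$ and $u\in D^{1,p}(\R^{N})$ is a nonnegative weak solution of \eqref{eq1.1} with $1<p<\frac{N}{2}$, every hypothesis imposed on $V$ in Theorem~\ref{th2.1} is automatically fulfilled. The only analytic input beyond Theorem~\ref{th2.1} is the pair of soft facts that the Riesz kernel $|x|^{-2p}$ is \emph{locally} integrable (this uses $2p<N$, i.e.\ $p<\frac{N}{2}$) and that it decays at infinity, so that the Hardy--Littlewood--Sobolev / Young / H\"older inequalities control the local and global integrability of the convolution $V$, while the pointwise convolution bounds of Lemma~\ref{lm.6} fed by the preliminary decay of Lemmas~\ref{lm.8}--\ref{lm.5} control the decay of $V$ at infinity.

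First, since $u\in D^{1,p}(\R^{N})\hookrightarrow L^{p^{\star}}(\R^{N})$, estimate \eqref{conv} gives $0\le V\in L^{\frac{N}{p}}(\R^{N})$, so the first part of Theorem~\ref{th2.1} applies and yields $u\in L^{r}_{loc}(\R^{N})$ for every $0<r<+\infty$ together with the preliminary decay \eqref{a1}. Next I would check local integrability of $V$: fixing $x_{0}$ and splitting $V$ on $B_{1}(x_{0})$ into the near part $\bigl(|x|^{-2p}\mathbf{1}_{B_{1}}\bigr)\ast\bigl(u^{p}\mathbf{1}_{B_{2}(x_{0})}\bigr)$ and the tail $\int_{|x-y|>1}\frac{u^{p}(y)}{|x-y|^{2p}}\,\mathrm{d}y$, one estimates the near part by Young's inequality (using $u^{p}\in L^{m}_{loc}$ for all $m<\infty$ from the previous step and $|x|^{-2p}\mathbf{1}_{B_{1}}\in L^{s}$ for $s<\frac{N}{2p}$) and the tail by H\"older's inequality (using $u^{p}\in L^{\frac{N}{N-p}}(\R^{N})$ and $|x-y|^{-2p}\mathbf{1}_{|x-y|>1}\in L^{\frac{N}{p}}$), obtaining $V\in L^{q}_{loc}(\R^{N})$ for every $q\in\bigl(\frac{N}{p},+\infty\bigr)$. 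Hence the two regularity clauses of Theorem~\ref{th2.1} apply in turn and give $u\in C(\R^{N})\cap L^{\infty}_{loc}(\R^{N})$ and then $u\in C^{1,\alpha}(\R^{N})$ for some $0<\alpha<\min\{1,\frac{1}{p-1}\}$; the same local splitting, now with $u\in L^{\infty}_{loc}(\R^{N})$, shows $V\in L^{\infty}_{loc}(\R^{N})$, so Lemma~\ref{hopf} upgrades any nonnegative nontrivial solution to a strictly positive one, which is the setting for the two-sided bounds.

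It remains to verify the decay hypotheses, namely the uniform bound \eqref{a2} and the pointwise bound $V(x)\le C_{V}|x|^{-\beta}$ for some $\beta>p$. The preliminary upper bound of Lemma~\ref{lm.5}, $u(x)\le\frac{C}{1+|x|^{\frac{N-p}{p}+\tau}}$ with $\tau>0$, furnishes the hypothesis $u(x)\le C|x|^{-\gamma}$ with $\gamma:=\frac{N-p}{p}+\tau>\frac{N-p}{p}$; feeding this into Lemma~\ref{lm.6}(ii) with $\sigma=2p$ and $q=p$ gives, for $|x|$ large, $V(x)\le C_{\star}|x|^{-\beta}$ with $\beta:=\min\{2p,\,p\gamma+2p-N\}=\min\{2p,\,p+p\tau\}>p$. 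Integrating this bound over the balls $B_{1}(x)$ for $|x|$ large yields the uniform estimate \eqref{a2} (for the exponents $q>\frac{N}{p}$ found above, after enlarging $\widetilde{R}$ if necessary), and the same bound is precisely the hypothesis $V(x)\le C_{V}|x|^{-\beta}$, $\beta>p$. All assumptions of Theorem~\ref{th2.1} being in force, its conclusion delivers $u\in C^{1,\alpha}(\R^{N})\cap L^{\infty}(\R^{N})$ together with the sharp two-sided estimates \eqref{eq0806} and \eqref{eq0806+} for every positive $D^{1,p}(\R^{N})$-weak solution of \eqref{eq1.1}.

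As for difficulty: essentially all the substantive work already lives inside Theorem~\ref{th2.1} (the De~Giorgi--Moser--Nash iteration behind Lemma~\ref{lm.5}, the convolution estimates of Lemma~\ref{lm.6}, and the $p$-harmonic-map analysis underlying the gradient bounds), so the present deduction is mostly bookkeeping. The one point that genuinely requires care is that the convolution $V=|x|^{-2p}\ast u^{p}$ must decay \emph{strictly} faster than $|x|^{-p}$: this is exactly where the gain $\tau>0$ in the preliminary estimate is indispensable, since the bare critical rate $u(x)=O\bigl(|x|^{-\frac{N-p}{p}}\bigr)$ would only produce $\beta=p$, the borderline case excluded by Theorem~\ref{th2.1}. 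One must also keep track of the admissible range of $q>\frac{N}{p}$ so that the same exponent works both in the local regularity steps and in the tail bound \eqref{a2}, but this is routine once the strict inequality $\beta>p$ has been secured.
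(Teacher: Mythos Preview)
Your overall strategy matches the paper's exactly: Theorem~\ref{th2.1-} is deduced from Theorem~\ref{th2.1} by checking that $V=|x|^{-2p}\ast u^{p}$ satisfies each of the hypotheses there. Your verification of $V\in L^{q}_{loc}$ for all $q>\frac{N}{p}$ via a near/far splitting with Young's and H\"older's inequalities is a valid variant of the paper's HLS-based argument in~\eqref{1}, and your use of Lemma~\ref{lm.6}(ii) to obtain the pointwise decay $V(x)\le C_{\star}|x|^{-\beta}$ with $\beta>p$ is precisely the paper's~\eqref{V}.

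There is, however, a genuine circularity in your order of verification. You establish the uniform bound~\eqref{a2} by integrating the pointwise decay $V(x)\le C_{\star}|x|^{-\beta}$ over $B_{1}(x)$; but that pointwise decay of $V$ was obtained from the preliminary \emph{pointwise} upper bound $u(x)\le C\bigl(1+|x|^{\frac{N-p}{p}+\tau}\bigr)^{-1}$ in Lemma~\ref{lm.5}, and Lemma~\ref{lm.5} asserts this pointwise bound only \emph{after} the hypothesis~\eqref{a2+} (equivalently~\eqref{a2}) has been verified. So you cannot invoke that bound to establish~\eqref{a2} in the first place. The fix is straightforward and is exactly what the paper does in~\eqref{2}: verify~\eqref{a2} directly from the $L^{\bar{p}}$-decay~\eqref{a1}, which Lemma~\ref{lm.5} proves unconditionally from $V\in L^{N/p}$ alone. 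Concretely, for $|x|>8R_{0}$ one splits $V$ on $B_{1}(x)$ into the contribution of $u^{p}\chi_{\R^{N}\setminus B_{4R_{0}}}$ (controlled via HLS by $C\|u\|_{L^{\bar p}(\R^{N}\setminus B_{4R_{0}})}^{p}$ and then by~\eqref{a1}) and the contribution of $u^{p}\chi_{B_{4R_{0}}}$ (controlled by $CR_{0}^{-2p}\|u\|_{L^{p}(B_{4R_{0}})}^{p}$, since $|z-y|\gtrsim R_{0}$ for $z\in B_{1}(x)$, $|y|<4R_{0}$). Once~\eqref{a2} is secured this way, the preliminary pointwise bound on $u$ is available, and the rest of your argument---in particular the decay of $V$ with exponent $\beta>p$ and the invocation of the sharp estimates---goes through unchanged.
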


\begin{rem}
From $1< p<\frac{N}{2}$, the regularity $u \in C^{1,\alpha} (\R^N) \cap L^\infty(\R^N)$ in Theorem \ref{th2.1-}, one can easily verify that $V(x):=|x|^{-2p}\ast u^{p}\in L^{\infty}_{loc}(\mathbb{R}^{N})$ for any nonnegative weak solution $u\in D^{1,p}(\mathbb{R}^{N})$ to \eqref{eq1.1} (see \eqref{eq10.26.30.03}). Thus it follows from the strong maximum principle in Lemma \ref{hopf} that, for any nonnegative $D^{1,p}(\mathbb{R}^{N})$-weak solution $u$ to \eqref{eq1.1}, we have either $u\equiv0$ or $u>0$ in $\mathbb{R}^{N}$.
\end{rem}

\medskip

Thanks to the sharp asymptotic estimates on $u$ and $|\nabla u|$ in Theorem \ref{th2.1-}, inspired by \cite{LDSMLMSB,BS16,VJ16} etc., we are ready to prove the following theorem on radial symmetry and strictly radial monotonicity of positive $D^{1,p}(\mathbb{R}^{N})$-weak solutions to \eqref{eq1.1} via the method of moving planes.
\begin{thm}[radial symmetry and strictly radial monotonicity]\label{th2}
Assume $1<p<\frac{N}{2}$ and let $u$ be a nonnegative $D^{1,p}(\mathbb{R}^{N})$-weak solution of the $D^{1,p}(\mathbb{R}^{N})$-critical nonlocal quasi-linear equation \eqref{eq1.1}. Then, either $u\equiv0$; or $u>0$ is radially symmetric and strictly decreasing about some point $x_0\in\R^N$, i.e., positive solution $u$ must assume the form
\[u(x)=\lambda^{\frac{N-p}{p}}U(\lambda(x-x_{0}))\]
for $\lambda:=u(0)^{\frac{p}{N-p}}>0$ and some $x_{0}\in\mathbb{R}^{N}$, where $U(x)=U(r)$ with $r=|x|$ is a positive radial solution to \eqref{eq1.1} with $U(0)=1$, $U'(0)=0$ and $U'(r)<0$ for any $r>0$ ($U$ may not be unique).
\end{thm}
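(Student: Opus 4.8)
The plan is to carry out the method of moving planes in every direction, using the sharp asymptotics of Theorem~\ref{th2.1-} to control simultaneously the nonlocal term and the degeneracy of $-\Delta_p$. If $u\not\equiv0$, then by Theorem~\ref{th2.1-} we have $u\in C^{1,\alpha}(\R^N)\cap L^\infty(\R^N)$ and the bounds \eqref{eq0806}--\eqref{eq0806+}; since moreover $V=|x|^{-2p}\ast u^p\in L^\infty_{loc}(\R^N)$, the strong maximum principle (Lemma~\ref{hopf}) gives $u>0$ in $\R^N$. Fix a direction, say $e_1$, and for $\lambda\in\R$ put $\Sigma_\lambda:=\{x\in\R^N:x_1<\lambda\}$, $T_\lambda:=\partial\Sigma_\lambda$, $x_\lambda:=(2\lambda-x_1,x_2,\dots,x_N)$ and $u_\lambda(x):=u(x_\lambda)$. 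Since the reflection across $T_\lambda$ is an isometry, $u_\lambda$ solves $-\Delta_p u_\lambda=V_\lambda u_\lambda^{p-1}$ in $\R^N$ with $V_\lambda:=|x|^{-2p}\ast u_\lambda^p$.

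The feature that makes the scheme work despite the nonlocality is that the Hartree potential is monotone under the reflection: splitting $\R^N=\Sigma_\lambda\cup\Sigma_\lambda^c$ and substituting $y\mapsto y_\lambda$ on $\Sigma_\lambda^c$, one gets, for $x\in\Sigma_\lambda$,
\[
V_\lambda(x)-V(x)=\int_{\Sigma_\lambda}\big(u_\lambda^p(y)-u^p(y)\big)\left(\frac{1}{|x-y|^{2p}}-\frac{1}{|x-y_\lambda|^{2p}}\right)\mathrm{d}y,
\]
whose kernel is positive because $|x-y|<|x-y_\lambda|$ whenever $x,y\in\Sigma_\lambda$; hence $u\le u_\lambda$ in $\Sigma_\lambda$ forces $V\le V_\lambda$ in $\Sigma_\lambda$, and in all cases $(V-V_\lambda)^+(x)\le p\,\big(|x|^{-2p}\ast(u^{p-1}(u-u_\lambda)^+)\big)(x)$ on $\Sigma_\lambda$. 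Now test the difference of the equations for $u$ and $u_\lambda$ with $\varphi_\lambda:=(u-u_\lambda)^+\chi_{\Sigma_\lambda}$ — admissible since $\varphi_\lambda$ has finite $D^{1,p}$-energy and decays like $|x|^{-\frac{N-p}{p-1}}$ by \eqref{eq0806} — to obtain
\[
\int_{\Sigma_\lambda}\big\langle|\nabla u|^{p-2}\nabla u-|\nabla u_\lambda|^{p-2}\nabla u_\lambda,\nabla\varphi_\lambda\big\rangle\,\mathrm{d}x=\int_{\Sigma_\lambda}\big(Vu^{p-1}-V_\lambda u_\lambda^{p-1}\big)\varphi_\lambda\,\mathrm{d}x.
\]
The left side is bounded below by $c\int_{\Sigma_\lambda}(|\nabla u|+|\nabla u_\lambda|)^{p-2}|\nabla\varphi_\lambda|^2$ via the standard algebraic inequalities for $-\Delta_p$; the right side is estimated from above using the displayed monotonicity, convexity and subadditivity inequalities for $t\mapsto t^{p-1}$ and $t\mapsto t^p$, the Hardy--Littlewood--Sobolev inequality for the nonlocal contribution, Hölder's inequality, the Sobolev embedding, and the weighted Sobolev/Poincaré inequalities associated with the weight $(|\nabla u|+|\nabla u_\lambda|)^{p-2}$. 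Combining these one arrives at an inequality forcing $\nabla\varphi_\lambda\equiv0$ as soon as $\|u\|_{L^{p^\star}(\Sigma_\lambda\cap\{\varphi_\lambda>0\})}$ and $|\Sigma_\lambda\cap\{\varphi_\lambda>0\}|$ are small enough; the cases $1<p<2$ and $p\ge2$ are treated separately.

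With this machinery the procedure runs in the usual way. For $\lambda$ sufficiently negative $\|u\|_{L^{p^\star}(\Sigma_\lambda)}$ is small, so $\nabla\varphi_\lambda\equiv0$, hence $\varphi_\lambda\equiv0$, i.e.\ $u\le u_\lambda$ in $\Sigma_\lambda$. Let $\bar\lambda:=\sup\{\lambda:u\le u_\mu\text{ in }\Sigma_\mu\text{ for all }\mu\le\lambda\}$; by \eqref{eq0806}, $\bar\lambda<+\infty$ (otherwise letting $\lambda\to+\infty$ would force $u\le0$), and by continuity $u\le u_{\bar\lambda}$ in $\Sigma_{\bar\lambda}$. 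Suppose $u\not\equiv u_{\bar\lambda}$ in $\Sigma_{\bar\lambda}$. By the sharp lower bound in \eqref{eq0806+}, the critical set $Z:=\{\nabla u=0\}$ lies in a fixed ball, and $Z$ has empty interior (a locally constant $u$ would violate $-\Delta_p u=Vu^{p-1}>0$); away from $Z$ the operator is uniformly elliptic, so the strong comparison principle — which must be established separately for $1<p<2$ and for $p>2$ — yields $u<u_{\bar\lambda}$ throughout $\Sigma_{\bar\lambda}\setminus Z$. Then for $\lambda$ slightly larger than $\bar\lambda$ the set $\Sigma_\lambda\cap\{\varphi_\lambda>0\}$ has small Lebesgue measure, being confined to neighbourhoods of $T_\lambda$ and of the bounded set $Z$; repeating the energy estimate — now with the weighted Poincaré and Hardy inequalities absorbing the part of the integrals concentrated near $Z$ — again forces $\varphi_\lambda\equiv0$, contradicting the maximality of $\bar\lambda$. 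Hence $u\equiv u_{\bar\lambda}$ in $\Sigma_{\bar\lambda}$: $u$ is symmetric about $T_{\bar\lambda}$, and from $u\le u_\lambda$ in $\Sigma_\lambda$ for every $\lambda<\bar\lambda$ it is monotone (nondecreasing in $x_1$) on $\{x_1<\bar\lambda\}$.

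Running this for every direction $\nu\in S^{N-1}$ yields, for each $\nu$, a hyperplane $H_\nu\perp\nu$ about which $u$ is symmetric and towards which it is monotone. A classical argument then shows that all the $H_\nu$ pass through a common point $x_0$ (otherwise composing reflections in two non-parallel such hyperplanes would produce a nontrivial translation symmetry of $u$, incompatible with $u>0$ and $u(x)\to0$ as $|x|\to\infty$); being symmetric under reflection in every hyperplane through $x_0$, $u$ is then radially symmetric about $x_0$ and nonincreasing in $|x-x_0|$. Strict monotonicity $U'(r)<0$ for $r>0$ follows because a flat piece would make $u$ locally constant, again violating the equation, while $U'(0)=0$ follows from radial symmetry together with $u\in C^1$. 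Finally, the translation invariance of \eqref{eq1.1} and its invariance under $u\mapsto\lambda^{\frac{N-p}{p}}u(\lambda\,\cdot\,)$ show that, with $\lambda:=u(x_0)^{\frac{p}{N-p}}$ and $U(y):=u(x_0)^{-1}u(x_0+\lambda^{-1}y)$, the function $U$ is a positive radial solution of \eqref{eq1.1} with $U(0)=1$, $U'(0)=0$, $U'(r)<0$, and $u(x)=\lambda^{\frac{N-p}{p}}U(\lambda(x-x_0))$, which is the asserted form. The main obstacle is the closing step at $\bar\lambda$: since $-\Delta_p$ is singular (respectively degenerate) precisely on $\{\nabla u=0\}$, neither the strong comparison principle nor the Sobolev-type absorption is available there in the classical form, and this is exactly where the sharp lower bound for $|\nabla u|$ from Theorem~\ref{th2.1-} is decisive — it confines $\{\nabla u=0\}$ to a bounded region and lets the weighted Poincaré and Hardy inequalities take over — with the ranges $1<p<2$ and $p>2$ requiring separate arguments throughout.
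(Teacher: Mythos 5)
Your outline follows the paper's strategy almost exactly — the reflection identity for the Hartree potential (Lemma~\ref{lm.1}), testing with $(u-u_\lambda)^+\chi_{\Sigma_\lambda}$, the basic point-wise inequality of Lemma~\ref{BSICI} on the left, the weighted Hardy--Sobolev plus Hardy--Littlewood--Sobolev estimates on the right with the gradient lower bound from \eqref{eq0806+} absorbing the weight, and a moving-plane argument split between the singular and degenerate ranges. But three steps are asserted without justification, and each one is exactly where the paper deploys a nontrivial lemma because the naive version fails. First, you invoke ``weighted Poincaré and Hardy inequalities absorbing the part of the integrals concentrated near $Z$,'' but the weighted Poincaré inequality (Lemma~\ref{lm.3}) with $\rho=|\nabla u|^{p-2}$ requires the hypothesis $\int_\Omega \rho^{-1}|x-y|^{-\gamma}\,\mathrm{d}x<C$ uniformly in $y$; this is Corollary~\ref{re2333} and rests on the Hessian/reversed gradient estimates of Lemma~\ref{lm.4} (from \cite{BSR14}), which in turn need the condition $(I_\alpha)$ on $f=Vu^{p-1}$ verified in Remark~\ref{rm.2.7}. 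Knowing only that $Z_u$ is bounded with empty interior gives you none of that. Second, in the singular range the strong comparison principle (Lemma~\ref{th3.2}) concludes $u\equiv u_{\bar\lambda}$ or $u<u_{\bar\lambda}$ only on each connected component of $\Sigma_{\bar\lambda}\setminus Z^\nu_{\bar\lambda}$; to convert this into the global dichotomy you use, one needs $\R^N\setminus Z_u$ connected, which the paper obtains from $|Z_u|=0$ together with the topological Lemma~\ref{lm.7}. A compact set with empty interior can still disconnect $\R^N$ (a sphere), so your hypotheses on $Z$ are insufficient. Third, your claim that $U'(r_0)=0$ for some $r_0>0$ would make $u$ ``locally constant'' is false — an isolated zero of $U'$ creates no flat piece — and the actual argument requires H\"opf's Lemma (Lemma~\ref{hopf}) applied to $W:=U-U(r_0)$ on $B_{r_0}$ in the singular case, or the strong maximum principle for the linearized operator in the degenerate case.

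These are not stylistic quibbles: Corollary~\ref{re2333}, Lemma~\ref{lm.7}, and the H\"opf-type step are the technical core that makes the closing move at $\bar\lambda$ and the strict radial decrease legitimate, and your sketch passes over each of them as though it were automatic.
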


\begin{rem}\label{rm.2.1}
One should observe that the restriction $p\in (1,\frac{N}{2})$ for $p$ is derived from the $p$-Laplace operator, the convolution $V(x)=|x|^{-2p}\ast u^p$ in \eqref{eq1.1} and the Hardy-Littlewood-Sobolev inequality. Thus, in Theorems \ref{th2.1-} and \ref{th2}, we have derived a complete result on the sharp asymptotic estimates, radial symmetry and strictly radial monotonicity of the positive weak solutions to equation \eqref{eq1.1}.
\end{rem}

\smallskip

Since the $p$-Laplace operator is singular elliptic or degenerate elliptic if $1<p<2$ or $p>2$ respectively, there is no uniform form of the strong comparison principle for $p$-Laplace operator. Hence, we will carry out the proof of Theorem \ref{th2} by discussing the different two cases $1<p<2$ and $p>2$ separately. For the singular elliptic case $1<p<2$ (degenerate elliptic case $p>2$, resp.), we can apply the moving planes technique (see Section \ref{sc4}) by using the sharp asymptotic estimates in Theorem \ref{th2.1-}, weighted Hardy-Sobolev inequality and the (weighted, resp.) Poincar\'{e} type inequality (c.f. \cite{FV}), etc..

\smallskip

It is worth noting that the sharp lower bound of $|\nabla u|$ in Theorem \ref{th2.1-} indicates that the critical set $Z_u=\{x \in \R^N \mid |\nabla u(x)| = 0 \}$ of the solution $u$ is a closed set belonging to $B_{R_0}(0)$. Meanwhile, it follows from Corollary \ref{re2333} that $|Z_{u}|=0$ (see Remark \ref{re2334}), and from Lemma \ref{lm.7} that $\Omega\setminus Z_u$ is connected for any smooth bounded domain $\Omega\subset\mathbb{R}^{N}$ with connected boundary such that $B_{R_0}(0)\subseteq\Omega$. As a consequence, one has $\mathbb{R}^{N}\setminus Z_u$ is connected. Due to the closeness of $Z_{u}$ and the fact that $|Z_{u}|=0$, we can deduce that $\mathbb{R}^{N}\setminus Z^\nu_{\lambda}$ is connected, where $Z^\nu_{\lambda}\subset Z_u \subset B_{R_0}$ is defined by \eqref{Z} (see Section \ref{sc4}). Consequently, in the singular elliptic case $1<p<2$, we can simplify the moving planes techniques in \cite{DP,LDSMLMSB,DLPFRM,LDMR} (see sub-subsection 4.2.2), since the comparison principles therein work on each connected component $\mathcal{C}^{\nu}$ of $\Sigma^{\nu}_{\lambda}\setminus Z^\nu_{\lambda}$ and hence subtle analysis on different connected components is necessary in \cite{DP,LDSMLMSB,DLPFRM,LDMR} (for definition of the half space $\Sigma^{\nu}_{\lambda}$, see Section \ref{sc4}). For more literatures on the classification results and other quantitative and qualitative properties of solutions for various PDE and IE problems via the methods of moving planes, please refer to \cite{CGS,CD,CDZ,CL,CL1,CLL,CLO,CDQ,DFHQW,DFQ,DL,DQ,LD,DP,LDSMLMSB,DLPFRM,LDMR,LDBS04,DMPS,GNN,LPLDHT,Lin,Liu,MZ,BS16,S,VJ16,WX} and the references therein.

\smallskip

Since \eqref{eq1.1} is the Euler-Lagrange equation for the minimization problem \eqref{mini}, we can deduce the following sharp asymptotic estimates and radial symmetry and monotonicity results on minimizers of the minimization problem \eqref{mini} and on extremal functions of the Hardy-Littlewood-Sobolev inequality \eqref{HLS} as a corollary from Theorems \ref{th2.1-} and \ref{th2}.
\begin{cor}\label{cor1}
Assume $1<p<\frac{N}{2}$ and let positive function $u\in D^{1,p}(\mathbb{R}^{N})$ be a minimizer of the minimization problem \eqref{mini} \emph{or} a extremal function of the Hardy-Littlewood-Sobolev inequality \eqref{HLS}. Then $u \in C^{1,\alpha} (\R^N) \cap L^\infty(\R^N)$ for some $0<\alpha<\min\{1,\frac{1}{p-1}\}$ and $u$ satisfies the sharp asymptotic estimates \eqref{eq0806} and \eqref{eq0806+}. Moreover, $u$ is radially symmetric and strictly decreasing about some point $x_0\in\R^N$, i.e., $u$ must assume the form
\[u(x)=\lambda^{\frac{N-p}{p}}U(\lambda(x-x_{0}))\]
for $\lambda:=u(0)^{\frac{p}{N-p}}>0$ and some $x_{0}\in\mathbb{R}^{N}$, where $U(x)=U(r)$ with $r=|x|$ is a positive radial solution to \eqref{eq1.1} with $U(0)=1$, $U'(0)=0$ and $U'(r)<0$ for any $r>0$ ($U$ may not be unique).
\end{cor}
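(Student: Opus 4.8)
The plan is to derive Corollary \ref{cor1} directly from Theorems \ref{th2.1-} and \ref{th2}, once one observes that a minimizer of \eqref{mini}, and likewise an extremal function of \eqref{HLS}, is --- up to multiplication by a positive constant --- a positive $D^{1,p}(\R^N)$-weak solution of \eqref{eq1.1}. Thus the whole statement will be essentially a transcription of the two main theorems, the only genuine work being the reduction to \eqref{eq1.1}.

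First I would reduce the extremal-function case to that of a minimizer. If $u>0$ realizes equality in \eqref{HLS}, then, since both sides of \eqref{HLS} are $1$-homogeneous under $u\mapsto tu$, the rescaled function $\tilde u:=u/\|u\|_V$ satisfies $\|\tilde u\|_V=1$ and $\|\nabla\tilde u\|_{L^p(\R^N)}=\|\nabla u\|_{L^p(\R^N)}/\|u\|_V=1/C_{N,p}$, which is exactly the infimum in \eqref{mini}; hence $\tilde u$ is a positive minimizer of \eqref{mini}. Since all the assertions of the corollary --- the $C^{1,\alpha}\cap L^\infty$ regularity, the two-sided bounds \eqref{eq0806}--\eqref{eq0806+}, radial symmetry and strict radial monotonicity, and the stated profile --- are preserved (up to an inessential rescaling of the constants $c_0,C_0$ and of the parameter $\lambda$ and the profile $U$) when $u$ is multiplied by a positive constant, it suffices to treat a positive minimizer $u$ of \eqref{mini}.

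Next, a positive minimizer $u$ of \eqref{mini} satisfies, for some $\mu>0$, the Euler--Lagrange identity
\[ \int_{\R^N}|\nabla u|^{p-2}\langle\nabla u,\nabla\psi\rangle\,\mathrm dx=\mu\int_{\R^N}\bigl(|x|^{-2p}\ast u^p\bigr)u^{p-1}\psi\,\mathrm dx,\qquad\forall\,\psi\in C_0^\infty(\R^N), \]
the right-hand side being finite for every $\psi$ by \eqref{conv} and H\"older's inequality, and the Lagrange-multiplier rule being applicable because $u\mapsto\|u\|_V^{2p}$ is of class $C^1$ on $D^{1,p}(\R^N)$ with non-vanishing differential at $u\neq0$. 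Testing with approximations of $u$ and using \eqref{HLS} with $\|u\|_V=1$ pins down $\mu=\|\nabla u\|_{L^p(\R^N)}^p=C_{N,p}^{-p}>0$. Setting $v:=\mu^{1/p}u$ and using the homogeneities $-\Delta_p(cu)=c^{p-1}(-\Delta_p u)$ and $|x|^{-2p}\ast(cu)^p=c^p\,(|x|^{-2p}\ast u^p)$, a one-line computation shows that $v$ is a positive $D^{1,p}(\R^N)$-weak solution of \eqref{eq1.1} in the sense of Definition \ref{weak}. I would then apply Theorem \ref{th2.1-} to $v$ to get $v\in C^{1,\alpha}(\R^N)\cap L^\infty(\R^N)$ and the bounds \eqref{eq0806}--\eqref{eq0806+} for $v$, and Theorem \ref{th2} to get that $v$ is radially symmetric and strictly decreasing about some $x_0$, with $v(x)=\lambda_v^{(N-p)/p}\,U(\lambda_v(x-x_0))$, $\lambda_v=v(0)^{p/(N-p)}$, where $U$ is a genuine positive radial solution of \eqref{eq1.1} with $U(0)=1$, $U'(0)=0$, $U'(r)<0$. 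Finally I would transfer everything to $u=\mu^{-1/p}v$: the regularity and the two-sided estimates pass verbatim (with $c_0,C_0$ rescaled by $\mu^{-1/p}$), $u$ is radial and strictly decreasing about the same $x_0$, and since $u(0)=\mu^{-1/p}v(0)$ one has $\lambda:=u(0)^{p/(N-p)}=\mu^{-1/(N-p)}\lambda_v$ and $\lambda^{(N-p)/p}=\mu^{-1/p}\lambda_v^{(N-p)/p}$, whence $u(x)=\lambda^{(N-p)/p}\,U(\lambda_v(x-x_0))=\lambda^{(N-p)/p}\,\widetilde U(\lambda(x-x_0))$ with $\widetilde U(y):=U(\mu^{1/(N-p)}y)$ --- radial, equal to $1$ at the origin, with vanishing derivative there and strictly decreasing --- i.e. the profile $U$ of the corollary up to the inessential dilation of its argument absorbing the Lagrange multiplier.

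The only step that requires any verification is the passage from "minimizer/extremal" to "positive weak solution of \eqref{eq1.1} up to a positive constant factor", and even this is routine: the Hardy--Littlewood--Sobolev estimates \eqref{conv} and \eqref{HLS} already in hand make the Euler--Lagrange equation meaningful and fix the multiplier, and the scaling structure of \eqref{eq1.1} does the rest. I therefore do not expect a substantive obstacle here; the real content of the paper is in Theorems \ref{th2.1-} and \ref{th2}, and Corollary \ref{cor1} is, as the name suggests, a corollary.
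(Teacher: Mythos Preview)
Your proposal is correct and matches the paper's approach: the paper does not give a separate proof of Corollary~\ref{cor1} but simply observes (in the paragraph preceding it) that \eqref{eq1.1} is the Euler--Lagrange equation for \eqref{mini}, so the corollary follows at once from Theorems~\ref{th2.1-} and~\ref{th2}. You carry out this reduction in more detail than the paper does --- in particular you correctly track the Lagrange multiplier $\mu$ and the rescaling $v=\mu^{1/p}u$ --- and your final remark that the profile for $u$ is a dilation $\widetilde U(\cdot)=U(\mu^{1/(N-p)}\cdot)$ of a genuine solution of \eqref{eq1.1} is the right way to reconcile the statement with the presence of $\mu$; the paper silently absorbs this rescaling when it declares \eqref{eq1.1} to be ``the'' Euler--Lagrange equation.
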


\medskip

Based on the radial symmetry and strictly radial monotonicity result in Theorem \ref{th2}, we may further guess that there exists an unique positive radial solution $\widehat{U}(x)=\widehat{U}(r)$ with $r=|x|$ to \eqref{eq1.1} with $\widehat{U}(0)=1$, $\widehat{U}'(0)=0$ and $\widehat{U}'(r)<0$ for any $r>0$, i.e., $\widehat{U}(r)$ is the unique solution to the following ODE:
\begin{footnotesize}
\begin{align}\label{eq1.1r}
\left\{ \begin{array}{ll} \displaystyle
|U'(r)|^{p-2}\left[\frac{1-N}{r}U'(r)-(p-1)U''(r)\right]=\left[\int_{\R^{N}}\frac{U^{p}(|y|)}{|(r,0,\cdots,0)-y|^{2p}}\mathrm{d}y\right]U^{p-1}(r),   & \forall \,\, r>0, \\ \\
U(0)=1, \,\,\, U'(0)=0, \,\,\, U(r)>0, \,\,\, U'(r)<0,  &  \forall \,\, r>0.
\end{array}
\right.\hspace{1cm}
\end{align}\end{footnotesize}

\noindent As a consequence, any positive weak solution $u$ to \eqref{eq1.1} must be uniquely determined by $\widehat{U}$ up to scalings and translations. We have the following conjecture on complete classification result on all nonnegative $D^{1,p}(\mathbb{R}^{N})$-weak solutions for \eqref{eq1.1} (including any minimizers of the minimization problem \eqref{mini} \emph{and} any extremal functions of the Hardy-Littlewood-Sobolev inequality \eqref{HLS}).
\begin{conj}\label{conjecture}
Assume $1<p<\frac{N}{2}$ and let $u$ be a nonnegative $D^{1,p}(\mathbb{R}^{N})$-weak solution of the $D^{1,p}(\mathbb{R}^{N})$-critical nonlocal quasi-linear equation \eqref{eq1.1}. Then, either $u\equiv0$; or $u>0$ must be uniquely determined by the following form
\[u(x)=\lambda^{\frac{N-p}{p}}\widehat{U}(\lambda(x-x_{0}))\]
for $\lambda:=u(0)^{\frac{p}{N-p}}>0$ and some $x_{0}\in\mathbb{R}^{N}$, where $\widehat{U}(x)=\widehat{U}(r)$ with $r=|x|$ is the unique positive solution to the ODE \eqref{eq1.1r}.
\end{conj}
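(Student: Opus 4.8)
The plan is to take Theorem \ref{th2} as the starting point and reduce the Conjecture to a single missing ingredient: the uniqueness of the radial profile. Indeed, Theorem \ref{th2} already tells us that every nontrivial nonnegative $D^{1,p}(\R^{N})$-weak solution of \eqref{eq1.1} is, after the scaling $u\mapsto\lambda^{\frac{N-p}{p}}u(\lambda\cdot)$ with $\lambda=u(0)^{\frac{p}{N-p}}$ and a translation, a positive radial $U(r)$ solving the ODE \eqref{eq1.1r} with $U(0)=1$, $U'(0)=0$, $U'(r)<0$. Hence the Conjecture is equivalent to the assertion that \eqref{eq1.1r} has \emph{at most one} solution. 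The first step is to collect all the qualitative information already available for any such $U$: by Theorem \ref{th2.1-} applied to the radial solution, $U\in C^{1,\alpha}(\R^{N})\cap L^{\infty}(\R^{N})$ and the sharp two-sided bounds $c_{0}(1+r^{\frac{N-p}{p-1}})^{-1}\le U(r)\le C_{0}(1+r^{\frac{N-p}{p-1}})^{-1}$ and $|U'(r)|\sim r^{-\frac{N-1}{p-1}}$ for $r$ large hold; and by Lemma \ref{lm.6} the coefficient $W(r):=\int_{\R^{N}}U^{p}(|y|)\,|re_{1}-y|^{-2p}\,\mathrm{d}y$ is a positive, continuous, radially decreasing function with $W(0)$ finite and $W(r)\sim r^{-p}$ at infinity. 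So \eqref{eq1.1r} is a second-order singular/degenerate ODE for $U$ coupled with the constraint $W=|x|^{-2p}\ast U^{p}$.

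Next I would attack the uniqueness of this coupled system. Mirroring Guedda--V\'{e}ron \cite{GV} and the Emden--Fowler philosophy, I would introduce $t=\log r$ and $U(r)=r^{-\frac{N-p}{p-1}}v(t)$ (and $W(r)=r^{-p}w(t)$), which turns the local operator $-\Delta_{p}U$ into a second-order expression in $v$ of autonomous type and converts the decay of $U$ at $r=0,\infty$ into boundedness of $v$ on $\R$, with $w$ bounded. Given two solutions $U_{1},U_{2}$ with coefficients $W_{1},W_{2}$, the goal is to run a contraction between the two maps ``$U\mapsto W$'' and ``$W\mapsto U$''. For the second, subtracting the equations for $v_{1},v_{2}$, using the strict monotonicity of $s\mapsto|s|^{p-2}s$ together with a weighted Poincar\'{e}/Hardy--Sobolev inequality of the type already exploited in Section \ref{sc4} and in \cite{BS16,VJ16}, should control $v_{1}-v_{2}$ in a suitable weighted norm by $w_{1}-w_{2}$; for the first, the Hardy--Littlewood--Sobolev/Riesz-potential mapping properties together with $U_{i}\in L^{\infty}$ and $|a^{p}-b^{p}|\le p(\max\{a,b\})^{p-1}|a-b|$ give $\|w_{1}-w_{2}\|\lesssim\|U_{1}^{p}-U_{2}^{p}\|\lesssim\|U_{1}-U_{2}\|$ in the same weighted scale. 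If the product of the two operator norms can be forced below $1$ --- hopefully by cashing in the extra decay $W_{i}\sim r^{-p}$ and the explicit best constants --- the composition is a contraction and uniqueness follows. If the constants are not automatically small, an alternative is a continuity/sliding argument in the ``initial height'' $U(0)=b$ (the scaling reduces everything to $b=1$), or a non-degeneracy argument: show the linearized operator at a solution has trivial kernel in the relevant weighted space and combine with the a priori estimates of Theorem \ref{th2.1-} and an implicit-function/connectedness argument.

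The main obstacle --- and the reason this is stated only as a Conjecture --- is exactly the nonlocal coupling. For the local critical $p$-Laplace equation \eqref{criticeqution} the radial ODE integrates explicitly and Guedda--V\'{e}ron's shooting argument yields uniqueness directly; here $W$ depends on the \emph{entire} profile of $U$, so there is no first integral, no separable structure, and the shooting method alone cannot exclude that two a priori distinct profiles generate mutually compatible coefficients. Converting the two-sided asymptotics of Theorem \ref{th2.1-} into a genuine quantitative gain --- i.e. simultaneously bounding the best constants in the weighted Poincar\'{e} inequality and in the Riesz-potential estimate so that their product is strictly less than one --- is the crux, and it may well require either a sharper description of $W$ near $r=0$ and $r=\infty$ or an extra structural identity (a Pohozaev-type relation for \eqref{eq1.1r}) to kill the remaining degrees of freedom; it is worth recalling that even in the semilinear case $p=2$ the analogous classification for \eqref{ciHeq} required substantial work in \cite{Liu}, and for $p\neq2$ the loss of the Kelvin transform and of the method of moving spheres leaves this ODE route as essentially the only available approach.
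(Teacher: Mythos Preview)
The statement you are attempting to prove is labelled a \emph{Conjecture} in the paper, and the paper does not supply a proof of it. The authors explicitly present it as what \emph{would} follow from Theorem \ref{th2} once the uniqueness of the radial profile $\widehat{U}$ solving \eqref{eq1.1r} is established, and they leave that uniqueness open. So there is no ``paper's own proof'' to compare against; your proposal is a sketch of a possible attack on an open problem.

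That said, your reduction in the first paragraph is exactly right and matches the paper's framing: Theorem \ref{th2} already gives the form $u(x)=\lambda^{\frac{N-p}{p}}U(\lambda(x-x_0))$ with $U$ a positive radial solution of \eqref{eq1.1r}, so the Conjecture is equivalent to uniqueness of such $U$. Your subsequent outline (Emden--Fowler change of variables, a two-map contraction between $U\mapsto W$ and $W\mapsto U$, or alternatively a non-degeneracy/continuation argument) is a plausible program, but as you yourself note, none of these steps is actually carried out. In particular, the crucial smallness condition --- that the product of the two operator norms be strictly less than one --- is asserted as a hope, not verified, and there is no indication in the paper or in the cited literature that this can be achieved for general $1<p<\frac{N}{2}$. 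The Pohozaev-type identity you mention as a possible fallback is also not developed. So your write-up is an honest and well-informed discussion of the difficulty, but it is not a proof, and the paper does not claim one either.
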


\subsection{Extensions to more general $D^{1,p}(\R^{N})$-critical nonlocal quasi-linear equations}
More generally, we also consider the following general $D^{1,p}(\R^{N})$-critical nonlocal quasi-linear equation with generalized nonlocal nonlinearity:
\begin{align}\label{gnqe}
\left\{ \begin{array}{ll} \displaystyle
-\Delta_p u =\left(|x|^{-\sigma}\ast u^{p_{\sigma}^{\star}}\right)u^{p_{\sigma}^{\star}-1}  \quad\,\,\,\,&\mbox{in}\,\, \R^N, \\ \\
u \in D^{1,p}(\R^N),\quad\,\,\,\,\,\,u\geq0 \qquad\,\,\,\,&  \mbox{in}\,\, \R^N,
\end{array}
\right.\hspace{1cm}
\end{align}
where $1<p<N$, $N\geq2$, $p\leq p_{\sigma}^{\star}:=\frac{p(2N-\sigma)}{2(N-p)}<p^{\star}$, $p_{\sigma}^{\star}>\frac{p^{\star}}{2}$, $0<\sigma<N$ and $\sigma\leq 2p$. Define $V(x):=\left(|x|^{-\sigma}\ast u^{p_{\sigma}^{\star}}\right)(x)u^{p_{\sigma}^{\star}-p}(x)$, then the general nonlocal quasi-linear equation \eqref{gnqe} is a special case of the generalized equation \eqref{geq}. In the most important endpoint case $\sigma=2p$ and $p_{\sigma}^{\star}=p$ with $1<p<\frac{N}{2}$, the general nonlocal quasi-linear equation \eqref{gnqe} degenerates into the $D^{1,p}(\R^{N})$-critical nonlocal quasi-linear Schr\"{o}dinger-Hartree equation \eqref{eq1.1}.

\medskip

For \eqref{gnqe}, the finite $D^{1,p}(\R^N)$-energy assumption $u\in D^{1,p}(\R^N)$ is necessary to guarantee $V(x):=\left(|x|^{-\sigma}\ast u^{p_{\sigma}^{\star}}\right)(x)u^{p_{\sigma}^{\star}-p}(x)\in L^{\frac{N}{p}}(\mathbb{R}^{N})$ and hence $V(x)$ is finite and well-defined almost everywhere. Indeed, by Hardy-Littlewood-Sobolev inequality and H\"{o}lder's inequality, one has
\begin{equation*}
  \|V\|_{L^{\frac{N}{p}}(\mathbb{R}^{N})}\leq \||x|^{-\sigma}\ast u^{p_{\sigma}^{\star}}\|_{L^{\frac{2N}{\sigma}}(\mathbb{R}^{N})}\|u\|^{p_{\sigma}^{\star}-p}_{L^{p^{\star}}(\mathbb{R}^{N})}\leq C\|u\|^{2p_{\sigma}^{\star}-p}_{L^{p^{\star}}(\mathbb{R}^{N})}, \qquad \forall \,\, u\in D^{1,p}(\mathbb{R}^{N}).
\end{equation*}
Thus it follows from H\"{o}lder's inequality and Sobolev embedding inequality that
\begin{equation}\label{gHLS}
  \|u\|_{V}:=\left(\int_{\mathbb{R}^{N}}\int_{\mathbb{R}^{N}}\frac{|u|^{p_{\sigma}^{\star}}(x)|u|^{p_{\sigma}^{\star}}(y)}{|x-y|^{\sigma}}\mathrm{d}x\mathrm{d}y\right)^{\frac{1}{2p_{\sigma}^{\star}}}
  \leq C_{N,p,\sigma}\| \nabla u \|_{L^p(\R^N)},  \quad\, \forall \,\, u\in D^{1,p}(\mathbb{R}^{N}),
\end{equation}
where $C_{N,p,\sigma}$ is the best constant for the general Hardy-Littlewood-Sobolev inequality \eqref{gHLS}, which can be characterized by the following minimization problem
\begin{equation}\label{gmini}
  \frac{1}{C_{N,p,\sigma}}:=\inf\left\{\| \nabla u \|_{L^p(\R^N)}\,\Big|\,u\in D^{1,p}(\mathbb{R}^{N}), \,\, \|u\|_{V}=1\right\}.
\end{equation}
The general $D^{1,p}(\mathbb{R}^{N})$-critical nonlocal quasi-linear equation \eqref{gnqe} is the Euler-Lagrange equation for the minimization problem \eqref{gmini}.

\medskip

In \cite{Lions1}, by using the concentration-compactness Lemma I.1 in \cite{Lions1} with $|u_{n}|^{p^{\star}}$ replaced by $\left(|x|^{-\sigma}\ast |u_{n}|^{p_{\sigma}^{\star}}\right)|u_{n}|^{p_{\sigma}^{\star}}$, Lions proved in iv) in page 169 of \cite{Lions1} that the minimization problem \eqref{gmini} has a minimum, and hence the general nonlocal quasi-linear equation \eqref{gnqe} possesses (at least) a positive weak solution $u\in D^{1,p}(\mathbb{R}^{N})$. As a consequence, the limiting embedding inequality \eqref{gHLS} is sharp in the sense that the best constant $C_{N,p,\sigma}$ can be attained by some (positive) extremal function $u\in D^{1,p}(\mathbb{R}^{N})$. We aim to investigate the sharp asymptotic estimates, radial symmetry and strictly radial monotonicity of positive solutions to \eqref{gnqe}.

\medskip

By showing that $V(x):=\left(|x|^{-\sigma}\ast u^{p_{\sigma}^{\star}}\right)(x)u^{p_{\sigma}^{\star}-p}(x)$ satisfies all the assumptions in Theorem \ref{th2.1}, we can derive the sharp asymptotic estimates on $u$ and $|\nabla u|$, and hence the radial symmetry and strictly radial monotonicity in the following theorem via the method of moving planes.
\begin{thm}[Sharp asymptotic estimates and radial symmetry]\label{gth2}
Let $u$ be a nonnegative $D^{1,p}(\mathbb{R}^{N})$-weak solution to the general $D^{1,p}(\R^N)$-critical quasi-linear nonlocal equation \eqref{gnqe}. Then $u \in C^{1,\alpha} (\R^N) \cap L^\infty(\R^N)$ for some $0<\alpha<\min\{1,\frac{1}{p-1}\}$. Moreover, the sharp asymptotic estimates \eqref{eq0806} and \eqref{eq0806+} on $u$ and $|\nabla u|$ hold for any positive $D^{1,p}(\mathbb{R}^{N})$-weak solution $u$ to \eqref{gnqe}. Then, either $u\equiv0$; or $u>0$ is radially symmetric and strictly decreasing about some point $x_0\in\R^N$, i.e., positive solution $u$ must assume the form
\[u(x)=\lambda^{\frac{N-p}{p}}U(\lambda(x-x_{0}))\]
for $\lambda:=u(0)^{\frac{p}{N-p}}>0$ and some $x_{0}\in\mathbb{R}^{N}$, where $U(x)=U(r)$ with $r=|x|$ is a positive radial solution to \eqref{gnqe} with $U(0)=1$, $U'(0)=0$ and $U'(r)<0$ for any $r>0$ ($U$ may not be unique).
\end{thm}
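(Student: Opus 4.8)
The plan is to show that Theorem \ref{gth2} follows from Theorem \ref{th2.1} together with Theorem \ref{th2}, by verifying that the potential $V(x):=\left(|x|^{-\sigma}\ast u^{p_{\sigma}^{\star}}\right)(x)u^{p_{\sigma}^{\star}-p}(x)$ meets all the hypotheses on $V$ appearing in Theorem \ref{th2.1}, and then running the moving planes argument of Section \ref{sc4} essentially verbatim. First I would record that $V\geq 0$ and $V\in L^{\frac{N}{p}}(\R^N)$, which was already established in the excerpt via Hardy--Littlewood--Sobolev and H\"older, so the general equation \eqref{gnqe} is indeed a special case of \eqref{geq}; hence the preliminary decay estimate \eqref{a1} together with $u\in L^{r}_{loc}(\R^N)$ for all $r<\infty$ is immediate from Theorem \ref{th2.1}.

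Next I would bootstrap the local integrability and regularity of $V$. Since $u\in L^{r}_{loc}(\R^N)$ for every $r<\infty$ and $0<\sigma<N$, the Riesz potential $|x|^{-\sigma}\ast u^{p_{\sigma}^{\star}}$ is locally bounded (indeed continuous) by the standard estimate splitting the integral into the region near $x$ — where one uses H\"older with the locally integrable kernel $|x-y|^{-\sigma}$ — and the region far from $x$ — where $u\in L^{p^{\star}}$ controls the tail. Consequently $V=\left(|x|^{-\sigma}\ast u^{p_{\sigma}^{\star}}\right)u^{p_{\sigma}^{\star}-p}\in L^{q}_{loc}(\R^N)$ for every $q<\infty$, which feeds the regularity chain in Theorem \ref{th2.1} and yields $u\in C(\R^N)\cap L^{\infty}_{loc}$ and then $u\in C^{1,\alpha}(\R^N)$. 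For the uniform boundedness \eqref{a2}: assuming the preliminary decay $u(x)\le C|x|^{-\gamma}$ with $\gamma=\frac{N-p}{p}+\tau$ for $|x|$ large (granted by \eqref{a1} / Lemma \ref{lm.5}), one invokes the convolution estimate of Lemma \ref{lm.6}(ii) with exponent $p_{\sigma}^{\star}$ in place of $q$ to get $\left(|x|^{-\sigma}\ast u^{p_{\sigma}^{\star}}\right)(x)\le C|x|^{-(p_{\sigma}^{\star}\gamma+\sigma-N)}$ for $|x|$ large — here the parameter constraints $p\le p_{\sigma}^{\star}$, $p_{\sigma}^{\star}>\frac{p^{\star}}{2}$ and $\sigma\le 2p$ are exactly what guarantee the integrability thresholds in Lemma \ref{lm.6} are satisfied — and multiplying by the bounded factor $u^{p_{\sigma}^{\star}-p}$ gives $V(x)\le C|x|^{-\beta}$ with $\beta:=p_{\sigma}^{\star}\gamma+p_{\sigma}^{\star}\gamma_{0}+\sigma-N$ after tracking the two powers of $u$; a direct computation using $p_{\sigma}^{\star}=\frac{p(2N-\sigma)}{2(N-p)}$ and $\gamma>\frac{N-p}{p}$ shows $\beta>p$, which in particular implies the uniform local bound \eqref{a2}. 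Hence all the conditional hypotheses of Theorem \ref{th2.1} hold, and \eqref{eq0806}, \eqref{eq0806+} follow for any positive $D^{1,p}$-weak solution $u$ of \eqref{gnqe}.

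Finally, having the sharp two-sided bounds on $u$ and $|\nabla u|$, the radial symmetry and strict radial monotonicity are obtained by the method of moving planes exactly as in the proof of Theorem \ref{th2} in Section \ref{sc4}, treating $1<p<2$ and $p>2$ separately. The only point requiring a word of care is that the nonlocal term is now $\left(|x|^{-\sigma}\ast u^{p_{\sigma}^{\star}}\right)u^{p_{\sigma}^{\star}-1}$ rather than $\left(|x|^{-2p}\ast u^{p}\right)u^{p-1}$; one checks that the convolution is monotone under reflection across a hyperplane whenever $u$ dominates its reflection on the corresponding half space (a standard consequence of the symmetry and positivity of the kernel $|x-y|^{-\sigma}$, together with $|x^{\lambda}-y|\le|x-y|$ for $x$ in the moving half space and $y$ its reflection), so the comparison inequality driving the moving planes scheme is preserved. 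The decay rates in \eqref{eq0806}--\eqref{eq0806+} are identical to the local case, so the weighted Hardy--Sobolev and (weighted) Poincar\'e inequalities used to start the planes from infinity and to close the argument apply word for word; the fact that $Z_u$ is a compact null set contained in some ball, hence $\R^N\setminus Z^{\nu}_{\lambda}$ connected, is also unchanged.

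The main obstacle, and the step I would spend the most effort on, is the verification of the decay hypothesis \eqref{a2} via the convolution estimate: one must confirm that the parameter window $p\le p_{\sigma}^{\star}<p^{\star}$, $p_{\sigma}^{\star}>\frac{p^{\star}}{2}$, $0<\sigma<N$, $\sigma\le 2p$ is precisely the range in which Lemma \ref{lm.6}(ii) is applicable with the right exponents and in which the resulting exponent $\beta$ exceeds $p$ — a bookkeeping computation that is not conceptually hard but is the only place where the structure of \eqref{gnqe} (as opposed to the special case \eqref{eq1.1}) genuinely enters, and where a mismatch in exponents would break the whole scheme.
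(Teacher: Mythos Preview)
Your overall strategy matches the paper's exactly: verify that $V$ satisfies the hypotheses of Theorem~\ref{th2.1}, extract the sharp asymptotics, and then rerun the moving planes of Section~\ref{sc4}. However, there is a genuine circularity in your verification of \eqref{a2}. You claim that the pointwise decay $u(x)\le C|x|^{-\gamma}$ with $\gamma=\frac{N-p}{p}+\tau$ is ``granted by \eqref{a1}/Lemma~\ref{lm.5}'', and then feed this into Lemma~\ref{lm.6}(ii) to bound $V$ pointwise and hence obtain \eqref{a2}. But Lemma~\ref{lm.5} delivers the pointwise bound \emph{only after} \eqref{a2} has been established; \eqref{a1} alone is an $L^{\bar p}$ decay of $u$ on exterior balls, not a pointwise estimate, and Lemma~\ref{lm.6}(ii) requires a pointwise upper bound on $u$ as input. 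So your route to \eqref{a2} presupposes \eqref{a2}.

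The paper avoids this by verifying \eqref{a2} directly from the $L^{\bar p}$ decay \eqref{a1}, never using a pointwise bound on $u$: one splits $|x|^{-\sigma}\ast u^{p_\sigma^\star}$ into the piece supported on $\R^N\setminus B_{4R_0}$ (controlled via Hardy--Littlewood--Sobolev together with \eqref{a1}) and the piece supported on $B_{4R_0}$ (controlled via the crude kernel bound $|x-y|^{-\sigma}\le R_0^{-\sigma}$ for $|x|>8R_0$, $|y|<4R_0$, and H\"older), and the remaining factor $u^{p_\sigma^\star-p}$ restricted to $B_1(x)$ is again estimated in $L^{\bar p}(\R^N\setminus B_{4R_0})$ via \eqref{a1}. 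Only once \eqref{a2} is in hand does Lemma~\ref{lm.5} give the pointwise decay, and \emph{then} Lemma~\ref{lm.6}(ii) yields $V(x)\le C|x|^{-\beta}$ with $\beta=p+(2p_\sigma^\star-p)\tau>p$, closing the last hypothesis of Theorem~\ref{th2.1}. A secondary point: the moving planes do not go through quite ``word for word'', because $V$ now carries the extra factor $u^{p_\sigma^\star-p}$; the identity replacing \eqref{eq0805} acquires an additional term $\big(|x|^{-\sigma}\ast u^{p_\sigma^\star}\big)\big[u^{p_\sigma^\star-p}-(u^\nu_\lambda)^{p_\sigma^\star-p}\big]$, which must be absorbed by the same weighted Hardy--Sobolev/Poincar\'e estimates (this is routine but should be stated rather than subsumed under ``monotonicity of the convolution'').
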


Once the uniqueness of positive radial solutions $U$ with $U(0)=1$, $U'(0)=0$ and $U'(r)<0$ ($\forall \,\, r>0$) to \eqref{gnqe} was derived, then any positive weak solution $u$ to \eqref{gnqe} must be uniquely determined by the unique positive radial solution $\widehat{U}$ up to scalings and translations, and hence the complete classification result follows.

\smallskip

The rest of our paper is organized as follows. In Section \ref{sc2}, we will give some preliminaries on important inequalities, (strong) comparison principles, strong maximum principle and H\"{o}pf's Lemma, related regularity results and properties of the critical set etc.. Section \ref{sc3} is devoted to the proof of the key ingredients: Theorems \ref{th2.1} and \ref{th2.1-}, i.e., the sharp asymptotic estimates for positive weak solution $u$ and its gradient $|\nabla u|$ to the generalized equation \eqref{geq} and hence the nonlocal equation \eqref{eq1.1}. The proof of Theorem \ref{th2} will be carried out in Section \ref{sc4}. Finally, in Section \ref{sc5}, we prove Theorem \ref{gth2}.

\smallskip

In the following, we will use the notation $B_{R}$ to denote the ball $B_{R}(0)$ centered at $0$ with radius $R$, and use $C$ to denote a general positive constant that may depend on $N$, $p$, $\sigma$ and $u$, and whose value may differ from line to line.

\section{Preliminaries}\label{sc2}
In this section, we will give some useful tools in our proofs of Theorems \ref{th2.1}, \ref{th2.1-} and \ref{th2}, including some important inequalities, (strong) comparison principles, strong maximum principle and H\"{o}pf's Lemma, related regularity results and properties of the critical set etc.. For clarity of presentation, we will split this Section into two sub-sections.

\subsection{Some key inequalities}\label{sc2.1}

We will use the following Hardy-Littlewood-Sobolev inequality.
\begin{thm}[Hardy-Littlewood-Sobolev inequality, c.f. e.g. \cite{FL1,FL2,Lieb,Lions2}]\label{HLSI}
Assume $t,r>1$ and $0<\sigma<N$ with $\frac{1}{t}+\frac{\sigma}{N}+\frac{1}{r}=2$. Let $f\in L^t(\R^N)$ and $h\in L^r(\R^N)$. Then there exists a sharp constant $C(N,\sigma,r,t)$, independent of $f$ and $h$, such that
\begin{align*}
\left| \int_{\R^N} \int_{\R^N} \frac{f(x) h(y)}{|x-y|^{\sigma}}\mathrm{d}x\mathrm{d}y \right|\leq C(N,\sigma,r,t)  \| f \|_{L^t(\R^N)} \| h \|_{L^r(\R^N)}.
\end{align*}
Moreover, if $\frac{N}{q}=\frac{N}{m}-N+\sigma$ and $1<m<q<\infty$, we have
\begin{align*}
\left\| \int_{\R^N} \frac{g(x)}{|x-y|^{\sigma}}\mathrm{d}x \right\|_{L^q(\R^N)}\leq \overline{C}(N,\sigma,q,m)  \| g \|_{L^m(\R^N)},
\end{align*}
for all $g\in L^m(\R^N)$.
%The sharp constant satisfies
%$$ C(N,p,r,t)\leq \frac{N}{N-2p} \left(\frac{|S^{N-1}|}{N}\right)^\frac{2p}{N} \frac{1}{rt}\left( \left( \frac{\frac{2p}{N}}{1-\frac{1}{t}} \right)^\frac{2p}{N} + \left( \frac{\frac{2p}{N}}{1-\frac{1}{r}} \right)^\frac{2p}{N} \right).$$

%If $t=r=\frac{N}{N-p}$, then
%$$ C(N,p,r,t)=C(N,p)=\pi^p \frac{\Gamma(\frac{N}{2}-p)}{\Gamma (N-p)} \left\{ \frac{\Gamma(\frac{N}{2})}{\Gamma(N)} \right\}^{-1+\frac{2p}{N}}. $$
%In this case there is equality if and only if $h=(const.)f$ and $$f(x)=A(\gamma^2+|x-a|^2)^{-(N-p)}$$
%for some $A\in \mathbb C$, $0\neq \gamma \in\R$ and $a\in\R^N$.
\end{thm}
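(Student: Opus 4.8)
The plan is to reduce both displayed inequalities to a single mapping property of the Riesz potential $I_\sigma g(y):=\int_{\R^N}|x-y|^{-\sigma}g(x)\,\mathrm{d}x=\big(|\cdot|^{-\sigma}\ast g\big)(y)$, namely the bound $\|I_\sigma g\|_{L^q(\R^N)}\lee C\|g\|_{L^m(\R^N)}$ whenever $\tfrac1q=\tfrac1m+\tfrac{\sigma}{N}-1$ and $1<m<q<\infty$ (this is precisely the second inequality, since $\tfrac Nq=\tfrac Nm-N+\sigma$ is the same relation). Granting it, the first inequality follows by duality: for $f\in L^t$, $h\in L^r$, Hölder's inequality gives $\big|\int_{\R^N}\!\int_{\R^N}\tfrac{f(x)h(y)}{|x-y|^\sigma}\,\mathrm dx\,\mathrm dy\big|=\big|\int_{\R^N}(I_\sigma f)\,h\,\mathrm dy\big|\lee\|I_\sigma f\|_{L^{r'}}\|h\|_{L^r}$, and the hypothesis $\tfrac1t+\tfrac\sigma N+\tfrac1r=2$ is exactly $\tfrac1{r'}=\tfrac1t+\tfrac\sigma N-1$, so the mapping property applies with $m=t$, $q=r'$ (the conditions $1<t<r'<\infty$ being forced by $t,r>1$ and $0<\sigma<N$). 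Since $|x-y|^{-\sigma}\gee0$ and $I_\sigma$ is positively homogeneous, it suffices to prove the mapping property for $g\gee0$ with $\|g\|_{L^m}=1$.

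The core step is the weak-type estimate $\big|\{y:I_\sigma g(y)>\lambda\}\big|\lee C\lambda^{-q}$ for every $\lambda>0$. Fix $\lambda$ and split the kernel $K(z):=|z|^{-\sigma}$ as $K=K\mathbf 1_{B_\rho}+K\mathbf 1_{\R^N\setminus B_\rho}$ for a radius $\rho=\rho(\lambda)$ to be chosen. Because $\sigma<N$ we have $K\mathbf 1_{B_\rho}\in L^1$ with $\|K\mathbf 1_{B_\rho}\|_{L^1}=c_{N,\sigma}\,\rho^{N-\sigma}$, so Young's convolution inequality yields $\|(K\mathbf 1_{B_\rho})\ast g\|_{L^m}\lee c_{N,\sigma}\,\rho^{N-\sigma}$; and since the exponent relation forces $\sigma m'>N$, we have $K\mathbf 1_{\R^N\setminus B_\rho}\in L^{m'}$ with $\|K\mathbf 1_{\R^N\setminus B_\rho}\|_{L^{m'}}=c'_{N,\sigma,m}\,\rho^{\frac N{m'}-\sigma}$ and exponent $\tfrac N{m'}-\sigma<0$, so Hölder's inequality gives $\|(K\mathbf 1_{\R^N\setminus B_\rho})\ast g\|_{L^\infty}\lee c'_{N,\sigma,m}\,\rho^{\frac N{m'}-\sigma}$. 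Choosing $\rho$ so that $c'_{N,\sigma,m}\rho^{\frac N{m'}-\sigma}=\tfrac\lambda2$ gives $\{I_\sigma g>\lambda\}\subseteq\{(K\mathbf 1_{B_\rho})\ast g>\tfrac\lambda2\}$, whence Chebyshev's inequality and the first bound yield $|\{I_\sigma g>\lambda\}|\lee(2/\lambda)^m\big(c_{N,\sigma}\rho^{N-\sigma}\big)^m$; substituting the chosen $\rho$ and keeping track of exponents reproduces exactly $\lambda^{-q}$ with $\tfrac1q=\tfrac1m+\tfrac\sigma N-1$. This shows $I_\sigma$ is of weak type $(m,q)$ for every admissible pair, i.e. for $1<m<\tfrac N{N-\sigma}$.

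To pass from weak type to strong type I would invoke the Marcinkiewicz interpolation theorem: given an admissible pair $(m,q)$, choose $m_0,m_1$ with $1<m_0<m<m_1<\tfrac N{N-\sigma}$ and let $q_0,q_1$ be the corresponding exponents via $\tfrac1{q_i}=\tfrac1{m_i}+\tfrac\sigma N-1$. Then $q_0<q<q_1$ and $m_i\lee q_i$ (indeed $m_i<q_i$, since $\sigma<N$), and writing $\tfrac1m=\tfrac{1-\theta}{m_0}+\tfrac\theta{m_1}$ automatically forces $\tfrac1q=\tfrac{1-\theta}{q_0}+\tfrac\theta{q_1}$ for the same $\theta\in(0,1)$; since $I_\sigma$ is linear and of weak types $(m_0,q_0)$, $(m_1,q_1)$ by the previous step, Marcinkiewicz gives $\|I_\sigma g\|_{L^q}\lee C(N,\sigma,q,m)\|g\|_{L^m}$, which completes the proof of both inequalities. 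The delicate point is the interpolation: one must keep the endpoint exponents strictly inside the admissible window (which is why the statement restricts to $1<m<q<\infty$, the open range) and check the Marcinkiewicz hypotheses, whereas the truncation and optimization in the weak-type step are routine. Finally, the claim that the constants are \emph{sharp} — that the best constants are attained — is not delivered by this soft argument; it requires first using the Riesz rearrangement inequality to reduce to radially symmetric nonincreasing $f,h$ and then a concentration-compactness or competing-symmetries analysis in the spirit of Lieb, which I would cite from \cite{Lieb} rather than reproduce.
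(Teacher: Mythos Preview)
Your argument is correct and follows the standard route to the Hardy--Littlewood--Sobolev inequality: reduce the bilinear form to the mapping property of the Riesz potential by duality, establish the weak-type $(m,q)$ bound by splitting the kernel at an optimized radius, and upgrade to strong type via Marcinkiewicz interpolation. The technical checks (that $\sigma m'>N$ is forced by the exponent relation and $q<\infty$, and that $m_i<q_i$ so the off-diagonal Marcinkiewicz theorem applies) are all in order, and your disclaimer about sharpness is appropriate.

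The paper, however, does not give its own proof of this theorem at all. Theorem~\ref{HLSI} is stated in Section~\ref{sc2} as a preliminary tool with the attribution ``c.f.\ e.g.\ \cite{FL1,FL2,Lieb,Lions2}'' and is used throughout without further justification. So there is nothing to compare your approach against: you have supplied a complete (and correct) argument where the authors simply cite the literature. If you were writing this up for inclusion, your proof would be a welcome addition, though in the context of this paper the inequality is treated as background and a citation suffices.
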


Now, we prove the following basic point-wise estimates for the convolution term of the type $|x|^{-\sigma}*u^{q}$.
\begin{lem}\label{lm.6}
Let $0<\sigma<N$, $q>0$ and $R>0$.\\
{\bf $(i)$} If $u\geq0$ in $\mathbb{R}^{N}$ and $u \geq C_1 |x|^{-\beta}$ in $\R^N\setminus B_R$ for some $C_1,\beta>0$, then for some constant $C>0$, we have
\begin{align}\label{eq10.26.18}
\left\{ \begin{array}{ll}
|x|^{-\sigma}*u^q=\infty,  \,\,\,\,&\mbox{if}\,\, q\beta\leq N-\sigma \\
|x|^{-\sigma}*u^q\geq C |x|^{N-\sigma-q\beta},  \,\,\,\,&\mbox{if}\,\, q\beta >N-\sigma
\end{array}
\right. \,\,\,\,\,\,\, &\mbox{in}\,\, \R^N\setminus B_R(0).
\end{align}
{\bf $(ii)$} If $u \in L^\infty(B_R)$ and $ 0\leq u \leq C_1 |x|^{-\beta}$ in $\R^N\setminus B_{R}$ for some $C_1>0$ and $q\beta>N-\sigma$, then
\begin{align}\label{eq10.26.19}
|x|^{-\sigma}*u^q \leq \left\{ \begin{array}{ll}
                            C |x|^{N-\sigma-q\beta}   \,\,\,\,&\mbox{if}\,\, N-\sigma <q\beta <N \\
                            C |x|^{-\sigma}\log |x|  \,\,\,\,&\mbox{if}\,\, q\beta =N \\
                            C |x|^{-\sigma}  \,\,\,\,&\mbox{if}\,\, q\beta>N \\
                    \end{array}
                    \right.
                    \,\,\,\,\,\,\,&\mbox{in}\,\, \R^N\setminus B_{2R}(0)
\end{align}
for some constant $C>0$.\\
{\bf $(iii)$} %Let $ \Omega \subset \R^N$ be a bounded domain.
If $ \frac{c }{1+|x|^\beta} \leq f(x) \leq \frac{C }{1+|x|^\beta}$ in $\R^N$ for some $\beta>N-\sigma$ and constants $c,\,C>0$,  then
\begin{align}\label{eq10.26.30}
C_1 \leq |x|^{-\sigma}*f \leq C_2\,\,\,\,\,\,\,\,&\mbox{in}\,\, B_R(0)
\end{align}
for some constants $C_1, \, C_2>0$ depending on $R$.
\end{lem}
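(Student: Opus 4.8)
The plan is to prove the three point-wise estimates for $V=|x|^{-\sigma}\ast u^{q}$ (or $\ast f$) by splitting $\R^N$ into regions where the two factors $|x-y|^{-\sigma}$ and the density are each controlled, and then integrating in polar-type coordinates. Throughout I fix a point $x$ with $|x|=\rho$ large (for $(i),(ii)$) or $|x|\leq R$ (for $(iii)$), and partition the $y$-integral according to whether $y$ is near the singularity $x$, near the origin, or far from both.

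For part $(i)$, I would simply restrict the integral to the region where the lower bound $u(y)\geq C_1|y|^{-\beta}$ is available. Writing $\tbint$ over $\{|y|>R\}$ and then further over the annular region $\{ \rho \le |y| \le 2\rho\}$ if needed, one gets a lower bound of the form $c\int_{|y|>\max\{R,\,2\rho\}}|x-y|^{-\sigma}|y|^{-q\beta}\,\mathrm dy$, and on that set $|x-y|\le 2|y|$, so the integrand is $\gtrsim |y|^{-\sigma-q\beta}$. Integrating in $|y|$ from $c\rho$ to $\infty$: if $\sigma+q\beta\le N$ the integral diverges, giving $V\equiv\infty$; if $\sigma+q\beta>N$ it converges and produces the factor $\rho^{\,N-\sigma-q\beta}$, which is exactly \eqref{eq10.26.18}. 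One must be a little careful that when $q\beta\le N-\sigma$ the divergence is genuine even though $x$ is fixed — but the lower bound $|x-y|^{-\sigma}\geq c|y|^{-\sigma}$ for $|y|$ large handles this cleanly.

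For part $(ii)$ the matching upper bound is the main computation. I split $\R^N\setminus B_R$-contributions into: $(a)$ the ``near-$x$'' ball $B_{\rho/2}(x)$, where $|y|\sim\rho$ so $u(y)^q\lesssim\rho^{-q\beta}$ and $\int_{B_{\rho/2}(x)}|x-y|^{-\sigma}\mathrm dy\lesssim\rho^{N-\sigma}$, giving a contribution $\lesssim\rho^{N-\sigma-q\beta}$; $(b)$ the inner region $B_{R}(0)$ where $u\in L^\infty$ and $|x-y|\sim\rho$, contributing $\lesssim\rho^{-\sigma}$; and $(c)$ the ``bulk'' region $\R^N\setminus\big(B_R(0)\cup B_{\rho/2}(x)\big)$, where $|x-y|\geq\tfrac12|y|$ hence the integrand is $\lesssim|y|^{-\sigma-q\beta}$. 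The integral $\int_{|y|>R}|y|^{-\sigma-q\beta}\mathrm dy$ over $\{|y|\le\rho\}$ is the one that dictates the trichotomy: it is $\lesssim\rho^{N-\sigma-q\beta}$ when $q\beta<N$, $\lesssim\rho^{-\sigma}\log\rho$ when $q\beta=N$ (here the $|y|\le R$ part must be excised using $u\in L^\infty$, which is why the hypothesis $u\in L^\infty(B_R)$ enters), and $\lesssim\rho^{-\sigma}$ when $q\beta>N$; over $\{|y|>\rho\}$ it contributes $\lesssim\rho^{N-\sigma-q\beta}$, dominated in all three cases. Summing $(a),(b),(c)$ and comparing exponents ($N-\sigma-q\beta<-\sigma$ since $q\beta>0$, etc.) yields \eqref{eq10.26.19}. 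Part $(iii)$ is then the easiest: for $|x|\le R$ the upper bound follows from $\int_{\R^N}|x-y|^{-\sigma}(1+|y|^\beta)^{-1}\mathrm dy<\infty$ (finite because near $y=x$ the factor $(1+|y|^\beta)^{-1}$ is bounded and $|x-y|^{-\sigma}$ is locally integrable, and at infinity $\beta+\sigma>N$), together with the observation that this integral is a continuous, hence bounded, function of $x$ on the compact ball $\overline{B_R}$; the lower bound is immediate by restricting to, say, $\{|y|\le 4R\}$ where $|x-y|^{-\sigma}\gtrsim R^{-\sigma}$ and $f\gtrsim (1+(4R)^\beta)^{-1}$.

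The main obstacle is bookkeeping in part $(ii)$, case $q\beta=N$: the logarithm appears only from the bulk integral over $R<|y|<\rho$, and one must make sure the cutoff at $|y|=R$ (below which only the $L^\infty$ bound holds, not the decay bound) does not introduce a spurious $\rho^{-\sigma}\log\rho$ or a larger term — it contributes only $O(\rho^{-\sigma})$, so the $\log$ is genuinely sharp. Everywhere else the estimates are routine once the three-region decomposition and the elementary inequality $|x-y|\geq\tfrac12|y|$ on $|y|\ge 2|x|$ (resp. $|x-y|\le 2|y|$ on $|y|\ge 2|x|$) are in place.
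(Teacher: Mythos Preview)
Your overall strategy matches the paper's --- a three-region decomposition of the convolution integral, with the logarithm in part $(ii)$ arising from the ``inner'' bulk and the near-$x$ and far-field pieces contributing the pure powers. Parts $(i)$ and $(iii)$ are fine and essentially identical to the paper's argument.

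There is, however, a genuine gap in your treatment of region $(c)$ in part $(ii)$. You bound the integrand there by $|y|^{-\sigma-q\beta}$ via $|x-y|\geq\tfrac12|y|$ and then claim that $\int_{R<|y|\leq\rho}|y|^{-\sigma-q\beta}\,\mathrm dy\lesssim\rho^{\,N-\sigma-q\beta}$ when $N-\sigma<q\beta<N$. But the hypothesis $q\beta>N-\sigma$ makes the exponent $N-\sigma-q\beta$ \emph{negative}, so this radial integral is dominated by the contribution near $|y|=R$ and is bounded by a constant independent of $\rho$ --- not by the decaying quantity $\rho^{\,N-\sigma-q\beta}$. Your trichotomy therefore does not follow from this bound, and the resulting $O(1)$ term would swamp all the others.

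The fix is already implicit in your own decomposition: on region $(c)$ you have $|x-y|\geq\rho/2$ \emph{by definition}, and on the inner portion $\{R<|y|\lesssim\rho\}$ this is a much stronger statement than $|x-y|\gtrsim|y|$. Using $|x-y|^{-\sigma}\lesssim\rho^{-\sigma}$ there gives integrand $\lesssim\rho^{-\sigma}|y|^{-q\beta}$, and now $\rho^{-\sigma}\int_R^{c\rho}r^{\,N-1-q\beta}\,\mathrm dr$ genuinely produces the trichotomy in $q\beta$ versus $N$. On the outer portion $\{|y|\gtrsim\rho\}$ your bound $|x-y|\gtrsim|y|$ is the right one and gives $\rho^{\,N-\sigma-q\beta}$. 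This is exactly how the paper organizes it: the split is by $|y|\leq\tfrac{|x|}{2}$, $\tfrac{|x|}{2}\leq|y|\leq 2|x|$, $|y|\geq 2|x|$, with $|x-y|\gtrsim|x|$ on the first piece and $|x-y|\gtrsim|y|$ on the last, and the logarithm appears from $\int_1^{|x|/2}|y|^{-q\beta}\,\mathrm dy$ (not from $\int|y|^{-\sigma-q\beta}\,\mathrm dy$) when $q\beta=N$.
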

\begin{proof}
{\bf $(i)$} Since $|x-y|\leq |x|+|y| \leq 2|y|$ if $|x|\leq |y|$, we get
$$ |x|^{-\sigma}*u^q \geq C_1 \int_{|x|\leq |y|} \frac{|y|^{-q\beta}}{|x-y|^{\sigma}} \mathrm{d}y \geq C \int_{|x|\leq |y|} |y|^{-q\beta-\sigma}\mathrm{d}y \geq C \int_{|x|}^\infty \tau^{N-1-q\beta-\sigma} \mathrm{d}\tau,$$
which leads to the conclusion in $(i)$.

\smallskip

{\bf $(ii)$} For $|x|\geq 2R$, we have
\begin{align}\label{eq10.28.30}
&\int_{\R^N} \frac{u^q(y)}{|x-y|^{\sigma}}\mathrm{d}y =\left\{\int_{|y|\leq\frac{|x|}{2}} + \int_{\frac{|x|}{2} \leq |y| \leq 2|x|} +\int_{|y|\geq 2|x|}  \right\} \frac{u^q(y)}{|x-y|^{\sigma}}\mathrm{d}y \\
&\leq C |x|^{-\sigma}\int_{|y|\leq\frac{|x|}{2}} u^q(y) \mathrm{d}y + C_1 |x|^{-q\beta} \int_{\frac{|x|}{2} \leq |y| \leq 2|x|} \frac{\mathrm{d}y}{|x-y|^{\sigma}} + C \int_{|y|\geq 2|x|} \frac{\mathrm{d}y}{|y|^{\sigma+q\beta}}  \nonumber\\
&\leq  C |x|^{-\sigma} \left\{\int_{|y|\leq 1 } + \int_{1< |y|\leq\frac{|x|}{2}} \right\} u^q(y) \mathrm{d}y
+ C_1 |x|^{-q\beta} \int_{|y-x|\leq 3|x|} \frac{\mathrm{d}y}{|x-y|^{\sigma}}
+ C |x|^{N-\sigma-q\beta} \nonumber\\
&\leq  C |x|^{-\sigma} \left\{C + \int_{1< |y|\leq\frac{|x|}{2}} |y|^{-q\beta} \mathrm{d}y \right\}
%+ C_1 C_2 |x|^{N-2p-p\beta}
+ C |x|^{N-\sigma-q\beta},\nonumber
\end{align}
in which we have used the following facts:
\begin{align*}
|x-y|\geq |x|-|y| \geq \frac{|x|}{2}, \,\,\,\,\,\,\,&\mbox{if}\,\,|y|\leq \frac{|x|}{2},\\
|x-y|\leq |x|+|y| \leq 3|x|, \,\,\,\,\,\,\,&\mbox{if}\,\, \frac{|x|}{2} \leq |y|\leq 2|x|,\\
|x-y|\geq |y|-|x| \geq \frac{|y|}{2}, \,\,\,\,\,\,\,&\mbox{if}\,\,|y|\geq 2|x|.
\end{align*}
On the other hand, for $|x|\geq 2R$,
$$\int_{1< |y|\leq\frac{|x|}{2}} |y|^{-q\beta} \mathrm{d}y
                    \leq \left\{ \begin{array}{ll}
                            C|x|^{N-q\beta},   \,\,\,\,\,\,&\mbox{if}\,\, q\beta <N, \\
                            C\log |x|,  \,\,\,\,\,\,&\mbox{if}\,\, q\beta =N, \\
                            C,  \,\,\,\,\,\,&\mbox{if}\,\, q\beta >N, \\
                    \end{array}
                    \right.$$
which together with \eqref{eq10.28.30} yields $(ii)$.

\smallskip

{\bf $(iii)$} For any $x\in B_R(0)$,
\begin{align*}
|x|^{-\sigma}*f &= \int_{\R^N} \frac{f(y)}{|x-y|^{\sigma}} \mathrm{d}y \\
&= \int_{B_{2R}} \frac{f(y)}{|x-y|^{\sigma}} \mathrm{d}y +  \int_{\R^N \setminus B_{2R}} \frac{f(y)}{|x-y|^{\sigma}} \mathrm{d}y =:I_1 + I_2.
\end{align*}
Since $\frac{1}{2}|y| \leq |y|-R \leq |x-y| \leq 2|y|$ for $x\in B_R$ and $y\in B^c_{2R}$, we have
\begin{align*}
I_2 &= \int_{\R^N \setminus B_{2R}} \frac{f(y)}{|x-y|^{\sigma}} \mathrm{d}y\\
&\leq C \int_{\R^N \setminus B_{2R}} \frac{1}{|y|^{\sigma}} |y|^{-\beta} \mathrm{d}y \leq C \int_{2R}^\infty r^{-\beta-\sigma+N-1} \mathrm{d}r\leq C R^{-\beta-\sigma+N},\,\,\,\,\,\,\,\,\,x\in B_R(0),
\end{align*}
and
\begin{align*}
I_2 &= \int_{\R^N \setminus B_{2R}} \frac{f(y)}{|x-y|^{\sigma}} \mathrm{d}y \\
&\geq c \int_{\R^N \setminus B_{2R}} \frac{1}{|y|^{\sigma}} |y|^{-\beta} \mathrm{d}y  \geq c \int_{2R}^\infty r^{-\beta-\sigma+N-1} \mathrm{d}r = C R^{-\beta-\sigma+N},\,\,\,\,\,\,\,\,\,x\in B_R(0).
\end{align*}
On the other hand, for any $x\in B_R(0)$,
\begin{align*}
I_1 &= \int_{B_{2R}(0)} \frac{f(y)}{|x-y|^{\sigma}} \mathrm{d}y\leq C \int_{B_{3R}(0)} \frac{1}{|y|^{\sigma}} \mathrm{d}y \\
&\leq C \int_0^{3R} r^{N-1-\sigma} dr = \frac{C}{N-\sigma}(3R)^{N-\sigma}.
\end{align*}
Therefore, we have arrived at $(iii)$. The proof of Lemma \ref{lm.6} is completed.
\end{proof}

\begin{rem}\label{rm.2.7}
Assume $u$ is a nonnegative weak solution to \eqref{eq1.1} and let the nonlocal nonlinear term $f(x):=V(x)u^{p-1}(x):=\left(|x|^{-2p}*u^{p}\right)u^{p-1}(x)$ with $1<p<\frac{N}{2}$. It is known that $u\in C_{loc}^{1,\alpha}(\R^N)$ for some $0<\alpha<\min\{1,\frac{1}{p-1}\}$ (see Theorems \ref{th2.1} and \ref{th2.1-}). Moreover, if $u$ is positive, by Theorems \ref{th2.1} and \ref{th2.1-}, we obtain that, there exist $c_0, C_0 > 0$ such that
$$ \frac{c_0}{1+|x|^\frac{N-p}{p-1}} \leq u(x) \leq \frac{C_0}{1+|x|^\frac{N-p}{p-1}} \,\,\,\,\,\,\,\mbox{in}\,\,\R^N,$$
\[\frac{c_0}{|x|^\frac{N-1}{p-1}} \leq |\nabla u(x)| \leq \frac{C_0}{|x|^\frac{N-1}{p-1}} \qquad \mbox{in}\,\,\,\R^N \setminus B_{R_0}(0).\]
Therefore, by $(iii)$ in Lemma \ref{lm.6} with $\sigma=2p$, for any $R>0$, we have
%\begin{align}\label{eq10.26.30.01}
%C < |x|^{-2p}*u^p \leq \frac{C_V}{1+|x|^{p+\sigma p}},\,\,\,\,\,\,\mbox{in}\,\R^N,
%\end{align}
%and
\begin{align}\label{eq10.26.30.03}
C_1 \leq V(x) \leq C_2\,\,\,\,\,\,\,\mbox{in}\,\, B_R,
\end{align}
and
\begin{align}\label{eq10.26.30.02}
|V_{x_i}(x)| \leq \int_{\R^N} \frac{|u(x-y)|^{p-1} |\nabla u(x-y)|}{|y|^{2p}} \mathrm{d}y \leq C_3 \,\,\,\,\,\,\mbox{in}\,\, B_R,\,\,\,i=1,2,\cdots,N,
\end{align}
where the constants $C_i>0$ $(i=1,2,3)$ depend on $R$. Furthermore, \eqref{eq10.26.30.03} and \eqref{eq10.26.30.02} show that, for any bounded domain $\Omega \subset \R^N$, one has $f(x):=V(x)u^{p-1}(x)$ satisfies
\begin{align}\label{eq26.30.001}
f(x)\in L^s (\Omega) \,\,\,\,\,\,\,\,\mbox{with}\,s>N,
\end{align}
and the so-called ``condition ($I_{\alpha}$)" in Lemma \ref{lm.4}, i.e., for some $0<\mu<\alpha(p-1)$, there holds:
\begin{align}\label{eq26.30.002}
f(x)\in W^{1,m}(\Omega)
\end{align}
for some $m>\frac{N}{2(1-\mu)}$, and, given any $x_0\in \Omega$, there exist $C(x_0,\mu)>0$ and $\rho(x_0,\mu)>0$ such that, $B_{\rho(x_0,\mu)}(x_{0})\subset \Omega$ and
\begin{align}\label{eq26.30.003}
|f(x)|\geq C(x_0,\mu) |x-x_0|^\mu \qquad \mbox{in}\,\,B_{\rho(x_0,\mu)}(x_{0}).
\end{align}
\end{rem}

\medskip

In order to apply the moving planes method to $p$-Laplace equations, we will frequently use the following basic point-wise estimate (Lemma \ref{BSICI}), the weighted Hardy-Sobolev inequality (Lemma \ref{HDSI}) and the strong comparison principles (Lemma \ref{th3.1}-\ref{th3.2}).
\begin{lem}[Lemma 2.1 in \cite{LD}] \label{BSICI}
For any $ p>1 $, the following elementary point-wise inequality
\begin{align}\label{eq0802}
[|\eta|^{p-2}\eta-|\eta^\prime|^{p-2}\eta^\prime][\eta-\eta^\prime] &\geq C' (|\eta|+|\eta^\prime|)^{p-2}|\eta-\eta^\prime|^2; \\
\left| |\eta|^{p-2}\eta-|\eta^\prime|^{p-2}\eta^\prime \right| &\leq C'' (|\eta|-|\eta^\prime|)^{p-1},\,\,\,\,\,\,\,\mbox{if}\,\, 1<p\leq 2 \nonumber
\end{align}
hold for all $\eta,\eta^\prime \in \R^N$ with $|\eta|+|\eta^\prime|>0$, where $p>1$ and $C', C''>0$ depends only on $N$ and $p$.
\end{lem}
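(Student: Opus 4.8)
The plan is to obtain both estimates from the mean-value representation of the map $F(\eta):=|\eta|^{p-2}\eta$ together with elementary bounds on its Jacobian. First I would record that $F\in C^{1}(\R^{N}\setminus\{0\})$ with
$$DF(\xi)=|\xi|^{p-2}\Big(I+(p-2)\tfrac{\xi\otimes\xi}{|\xi|^{2}}\Big),$$
whose eigenvalues are $|\xi|^{p-2}$ (multiplicity $N-1$) and $(p-1)|\xi|^{p-2}$, so that for all $v\in\R^{N}$
$$\min\{1,p-1\}\,|\xi|^{p-2}|v|^{2}\leq\langle DF(\xi)v,v\rangle,\qquad \|DF(\xi)\|_{\mathrm{op}}=\max\{1,p-1\}\,|\xi|^{p-2}.$$
Since $F$ is continuous on all of $\R^{N}$ with $F(0)=0$ (because $|F(\eta)|=|\eta|^{p-1}\to0$ when $p>1$), for any $\eta,\eta'$ the segment $\xi(t):=\eta'+t(\eta-\eta')$ yields
$$F(\eta)-F(\eta')=\int_{0}^{1}DF(\xi(t))\,(\eta-\eta')\,\mathrm{d}t,$$
the integral being at worst a convergent improper integral: the only possible singularity arises when $0$ lies on the open segment, near which $|\xi(t)|\sim c\,|t-t_{0}|$, and $|t-t_{0}|^{p-2}$ is integrable precisely because $p>1$.

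For the first inequality, pairing the representation with $\eta-\eta'$ gives
$$\langle F(\eta)-F(\eta'),\eta-\eta'\rangle\geq\min\{1,p-1\}\,|\eta-\eta'|^{2}\int_{0}^{1}|\xi(t)|^{p-2}\,\mathrm{d}t,$$
so it remains to bound $\int_{0}^{1}|\xi(t)|^{p-2}\,\mathrm{d}t$ from below by $c_{p}\,(|\eta|+|\eta'|)^{p-2}$. If $1<p<2$ this is immediate from $|\xi(t)|\leq\max\{|\eta|,|\eta'|\}\leq|\eta|+|\eta'|$ and $p-2<0$. If $p\geq2$, assuming without loss of generality $|\eta|\geq|\eta'|$, the quantity $t|\eta|-(1-t)|\eta'|=t(|\eta|+|\eta'|)-|\eta'|$ is increasing in $t$, whence $|\xi(t)|\geq t|\eta|-(1-t)|\eta'|\geq\tfrac12|\eta|\geq\tfrac14(|\eta|+|\eta'|)$ for $t\in[\tfrac34,1]$, so $\int_{0}^{1}|\xi(t)|^{p-2}\,\mathrm{d}t\geq\tfrac14\,4^{-(p-2)}(|\eta|+|\eta'|)^{p-2}$. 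This gives the claim with an explicit $C'=C'(p)$.

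For the second inequality ($1<p\leq2$, where $\max\{1,p-1\}=1$), the representation gives
$$\big||\eta|^{p-2}\eta-|\eta'|^{p-2}\eta'\big|\leq|\eta-\eta'|\int_{0}^{1}|\xi(t)|^{p-2}\,\mathrm{d}t.$$
Writing $|\xi(t)|^{2}=d^{2}+|\eta-\eta'|^{2}(t-t_{*})^{2}$, with $d$ the distance from the origin to the line through $\eta,\eta'$ and $t_{*}$ the foot of the perpendicular, one has $|\xi(t)|\geq|\eta-\eta'|\,|t-t_{*}|$, hence (as $p-2\leq0$) $\int_{0}^{1}|\xi(t)|^{p-2}\,\mathrm{d}t\leq|\eta-\eta'|^{p-2}\int_{0}^{1}|t-t_{*}|^{p-2}\,\mathrm{d}t\leq\tfrac{2}{p-1}|\eta-\eta'|^{p-2}$, the last bound being uniform in $t_{*}\in[0,1]$. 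Combining yields $\big||\eta|^{p-2}\eta-|\eta'|^{p-2}\eta'\big|\leq\tfrac{2}{p-1}|\eta-\eta'|^{p-1}$, i.e. the stated bound with $C''=C''(p)$.

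The only genuinely delicate point is the validity of the integral representation when the segment passes through the origin (where $F$ fails to be differentiable); this is handled by the integrability remark above, or by first proving the inequalities off that degenerate configuration and then passing to the limit by continuity of both sides in $(\eta,\eta')$. All remaining steps are routine bookkeeping, and the constants depend only on $p$ (hence only on $N$ and $p$, as stated). Since this is precisely Lemma 2.1 of \cite{LD}, one may alternatively simply invoke that reference.
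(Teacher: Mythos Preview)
Your proof is correct and self-contained. The paper itself does not prove this lemma; it is simply quoted from \cite{LD} with no argument given, so there is no proof in the paper to compare against. Your mean-value-theorem approach via the Jacobian of $\eta\mapsto|\eta|^{p-2}\eta$ is in fact the standard route to these inequalities, and your handling of the two ranges of $p$ for the lower bound on $\int_0^1|\xi(t)|^{p-2}\,\mathrm{d}t$ is clean. One small remark: the statement as printed in the paper has the second inequality with $(|\eta|-|\eta'|)^{p-1}$ on the right, which is evidently a typo for $|\eta-\eta'|^{p-1}$; you correctly prove the latter, which is what is actually used in Section~\ref{sc4}.
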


\begin{lem}[Weighted Hardy-Sobolev inequality, c.f. \cite{LDMR}]\label{HDSI}
Let $ u \in C_0^1(\R^N)$ and $q\geq 1$. Then for any $ s > q-N $ and $ T\geq 0 $ we have
$$ \int_{\R^N\setminus B_T(0)} |u|^q |x|^{s-q} \mathrm{d}x \leq \left( \frac{q}{N-q+s} \right)^q \int_{\R^N\setminus B_T(0)} |\nabla u|^q |x|^s \mathrm{d}x.$$
%The same inequality holds if we substitute u with its positive (negative) part.
\end{lem}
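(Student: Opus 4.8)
The plan is to obtain Lemma~\ref{HDSI} by the classical route for Hardy-type inequalities: test the pointwise identity $\operatorname{div}(x\,|x|^{s-q})=(N-q+s)\,|x|^{s-q}$ against $|u|^q$, integrate by parts, and close with H\"older's inequality, with the weight exponents arranged so that this produces exactly the asserted constant. First I would record that $N-q+s>0$ by the hypothesis $s>q-N$, and that on $\R^N\setminus\{0\}$ one has
\[
\operatorname{div}\bigl(x\,|x|^{s-q}\bigr)=(N-q+s)\,|x|^{s-q}.
\]
Setting $\Omega:=\R^N\setminus B_T(0)$, I then write $\int_\Omega |u|^q|x|^{s-q}\,dx=\tfrac{1}{N-q+s}\int_\Omega |u|^q\operatorname{div}(x\,|x|^{s-q})\,dx$ and integrate by parts. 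Since $u\in C_0^1(\R^N)$ there is no contribution from infinity; on $\partial B_T(0)$ the outward unit normal of $\Omega$ is $-x/|x|$, so the boundary term equals $-T^{s-q+1}\int_{\partial B_T(0)}|u|^q\,d\sigma\le 0$ and may be discarded, leaving
\[
\int_\Omega |u|^q|x|^{s-q}\,dx\;\le\;-\frac{1}{N-q+s}\int_\Omega \nabla\bigl(|u|^q\bigr)\cdot x\,|x|^{s-q}\,dx\;\le\;\frac{q}{N-q+s}\int_\Omega |u|^{q-1}\,|\nabla u|\,|x|^{s-q+1}\,dx,
\]
where I use the a.e.\ bound $\bigl|\nabla(|u|^q)\bigr|\le q\,|u|^{q-1}|\nabla u|$, valid for every $q\ge1$.

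For $q=1$ this is already the assertion (the constant being $\tfrac{1}{N-1+s}$). For $q>1$ I would split the weight as $|x|^{s-q+1}=|x|^{\frac{(s-q)(q-1)}{q}}\cdot|x|^{\frac{s}{q}}$ (a one-line check shows the exponents sum to $s-q+1$) and apply H\"older with conjugate exponents $\tfrac{q}{q-1}$ and $q$ to get
\[
\int_\Omega |u|^{q-1}|\nabla u|\,|x|^{s-q+1}\,dx\;\le\;\Bigl(\int_\Omega |u|^q|x|^{s-q}\,dx\Bigr)^{\frac{q-1}{q}}\Bigl(\int_\Omega |\nabla u|^q|x|^{s}\,dx\Bigr)^{\frac1q}.
\]
Both integrals on the right are finite, since $u$ has compact support and the weights $|x|^{s-q}$, $|x|^s$ are locally integrable near the origin (because $s-q>-N$). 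Writing $A:=\int_\Omega|u|^q|x|^{s-q}\,dx$ and $B:=\int_\Omega|\nabla u|^q|x|^{s}\,dx$, the two displays give $A\le\tfrac{q}{N-q+s}\,A^{(q-1)/q}B^{1/q}$; if $A=0$ the claim is trivial, and otherwise dividing by $A^{(q-1)/q}$ and raising to the $q$-th power yields $A\le\bigl(\tfrac{q}{N-q+s}\bigr)^qB$, which is exactly the stated inequality.

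The argument is essentially routine, and the only points needing care are: (i) the integration by parts when $T=0$, where one first works on $B_R(0)\setminus B_\varepsilon(0)$ and lets $\varepsilon\to0$, the inner boundary term being $O(\varepsilon^{N-q+s})\to0$ precisely because $N-q+s>0$, and then $R\to\infty$ using that the support of $u$ is compact; (ii) the a.e.\ chain rule for $|u|^q$, which is standard (for $q>1$ the map $t\mapsto|t|^q$ is $C^1$, so $|u|^q\in C^1_0$; for $q=1$ one uses $|\nabla|u||\le|\nabla u|$ a.e.); and (iii) the finiteness of $A$ and $B$, which makes the H\"older step meaningful. I expect none of these to be a genuine obstacle — the only real ``difficulty'' is bookkeeping the weight exponents so that the H\"older split reproduces exactly the constant $\bigl(\tfrac{q}{N-q+s}\bigr)^q$.
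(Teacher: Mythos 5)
The paper provides no proof of this lemma, citing it to the reference \cite{LDMR}; your argument is the standard divergence-theorem proof of weighted Hardy inequalities and is correct. All the delicate points---the weight split $|x|^{s-q+1}=|x|^{(s-q)(q-1)/q}|x|^{s/q}$ in H\"older, the favourable sign of the boundary term on $\partial B_T(0)$, the $\varepsilon\to 0$ limit $O(\varepsilon^{N-q+s})\to 0$ when $T=0$, the a.e.\ bound $|\nabla(|u|^q)|\le q|u|^{q-1}|\nabla u|$ down to $q=1$, and the finiteness of $A$ needed before dividing by $A^{(q-1)/q}$---are handled properly, and this is almost certainly the same computation as in the cited source.
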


\begin{lem}[Theorem 1.4 in \cite{LDBS}, c.f. also Theorem 2.1 in \cite{OSV}]\label{th3.1}
Let $\frac{2N+2}{N+2}<p<2$ or $p>2$ and $u,v \in C^1(\overline{\Omega})$, where $\Omega$ is a bounded smooth domain of $\R^N$. Suppose that either $u$ or $v$ is a weak solution of
\begin{align*}
\left\{ \begin{array}{ll}
-\Delta_p u =f(x,u) \,\,\,\,\,\, & \mbox{in}\,\,\Omega, \\
u>0 \,\,\,\,\,\,& \mbox{in}\,\,\Omega, \\
u=0 \,\,\,\,\,\,& \mbox{on}\,\,\partial\Omega \\
\end{array}
\right.\hspace{1cm}
\end{align*}
with $f:\overline{\Omega}\times [0,\infty)\to \R$ is a continuous function which is positive and of class $C^1$ in $\Omega\times(0,\infty)$. Assume that
$$ -\Delta_p u + \Lambda u \leq  -\Delta_p v + \Lambda v \qquad \,\,\,\mbox{and} \quad \,u\leq v\,\quad\mbox{in}\,\,\Omega,$$
where $\Lambda\in\R$. Then $u\equiv v$ in $\Omega$ unless $u<v$ in $\Omega$.
\end{lem}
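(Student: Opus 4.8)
The plan is to reduce the dichotomy to a strong maximum principle for the difference $w:=v-u$, which I would obtain by rewriting the difference of the two $p$-Laplacians as a \emph{linear} divergence-form differential inequality carrying an $A_{2}$-weight, and then invoking the corresponding weighted weak Harnack inequality.

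\textbf{Reduction and linearisation.} First I would set $w:=v-u\in C^{1}(\overline{\Omega})$, so $w\ge 0$ in $\Omega$. Since $\Omega$ is connected and $w$ is continuous, $\{w=0\}$ is closed in $\Omega$, so the assertion ``$u\equiv v$ \emph{or} $u<v$ in $\Omega$'' is equivalent to $\{w=0\}$ being \emph{open}; hence it suffices to show that $w(x_{0})=0$ for some $x_{0}\in\Omega$ forces $w\equiv 0$ on a neighbourhood of $x_{0}$. Next I would use the elementary identity, valid for all $\xi,\eta\in\R^{N}$,
\[|\xi|^{p-2}\xi-|\eta|^{p-2}\eta=\Big(\int_{0}^{1}|\eta_{t}|^{p-2}\big(\mathrm{Id}+(p-2)\tfrac{\eta_{t}\otimes\eta_{t}}{|\eta_{t}|^{2}}\big)\,\mathrm{d}t\Big)(\xi-\eta)=:A(x)(\xi-\eta),\qquad \eta_{t}:=\eta+t(\xi-\eta),\]
with $\xi=\nabla v(x)$ and $\eta=\nabla u(x)$, so that the hypothesis $-\Delta_{p}u+\Lambda u\le -\Delta_{p}v+\Lambda v$ (in the weak sense) rewrites as $-\operatorname{div}(A(x)\nabla w)+\Lambda^{+}w\ge 0$ weakly in $\Omega$, where $\Lambda^{+}:=\max\{\Lambda,0\}$ and $w\ge 0$ is used to absorb the sign of $\Lambda$ when $\Lambda<0$. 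The symmetric matrix $A(x)$ is nonnegative definite, and off the common critical set $Z:=\{x\in\Omega:\nabla u(x)=\nabla v(x)=0\}$ one has $c\,\rho(x)|\zeta|^{2}\le\langle A(x)\zeta,\zeta\rangle\le C\,\rho(x)|\zeta|^{2}$ for all $\zeta\in\R^{N}$, where $\rho(x):=(|\nabla u(x)|+|\nabla v(x)|)^{p-2}$ and $c,C>0$ depend only on $N,p$. Thus $w$ is a nonnegative weak supersolution of a linear second-order operator which is elliptic with weight $\rho$.

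\textbf{The weight is $A_{2}$, and conclusion.} One of $u,v$, say $v$, solves $-\Delta_{p}v=f(x,v)$ with $f$ continuous and positive, hence $f\in L^{\infty}_{loc}$, so by DiBenedetto \cite{ED83} and Tolksdorf \cite{PT} one has $v\in C^{1,\alpha}_{loc}$; together with $u\in C^{1}(\overline{\Omega})$ this already bounds $\rho$ above when $p\ge 2$ and bounds $\rho^{-1}$ above when $1<p<2$. For the complementary integrability I would invoke the $L^{1}_{loc}$-summability of negative powers of the gradient of solutions of $p$-Laplace equations (Damascelli--Sciunzi \cite{LDBS,LDMR}): $|\nabla v|^{-s}\in L^{1}_{loc}(\Omega)$ for $s$ in a range which is nonempty \emph{precisely} in the stated range of $p$ (this also gives $|Z|=0$). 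Since $\rho\le|\nabla v|^{-(2-p)}$ when $1<p<2$ and $\rho^{-1}\le|\nabla v|^{-(p-2)}$ when $p>2$, it follows that $\rho$ is a Muckenhoupt $A_{2}$ weight on every $\Omega'\Subset\Omega$, equivalently that $\mu:=\rho\,\mathrm{d}x$ supports the weighted Sobolev and Poincar\'e inequalities (Fabes--Kenig--Serapioni). The associated weighted weak Harnack inequality for nonnegative supersolutions of $-\operatorname{div}(A\nabla\,\cdot\,)+\Lambda^{+}\,\cdot\,\ge 0$, proved by Moser iteration \cite{TMOSER} in the weighted setting, then yields, for every ball $B_{2r}(x_{0})\Subset\Omega$ and some $q>0$, that $\inf_{B_{r}(x_{0})}w\ge C\big(\bbint_{B_{2r}(x_{0})}w^{q}\,\mathrm{d}\mu\big)^{1/q}$. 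If $w(x_{0})=0$ the left-hand side is $0$, hence $w=0$ $\mu$-a.e.\ --- and therefore a.e., since $\rho>0$ a.e.\ --- on $B_{2r}(x_{0})$, so $w\equiv 0$ on $B_{2r}(x_{0})$ by continuity. This shows $\{w=0\}$ is open and finishes the proof.

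\textbf{Main obstacle.} The delicate point is verifying that $\rho=(|\nabla u|+|\nabla v|)^{p-2}$ is an $A_{2}$ weight. This rests on the fine $L^{1}_{loc}$ estimates for negative powers of $|\nabla(\text{solution})|$ of $p$-Laplace equations and on the structure of the critical set, and it is exactly here that the hypothesis $\frac{2N+2}{N+2}<p<2$ (or the admissible sub-range for $p>2$) is used and cannot be relaxed: in the singular case $1<p<2$ the weight blows up along $Z$ and the restriction makes that blow-up integrable, whereas in the degenerate case $p>2$ the weight vanishes along $Z$ and one must instead control $\rho^{-1}$. Everything else is the standard Moser/Fabes--Kenig--Serapioni machinery together with the elementary reductions above.
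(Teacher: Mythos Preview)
The paper does not prove this lemma; it is quoted as Theorem~1.4 of Damascelli--Sciunzi~\cite{LDBS}, so there is no in-paper proof to compare against. Your overall strategy---linearise the difference of the two $p$-Laplacians via the integral identity, obtain a nonnegative supersolution of a weighted linear equation with weight $\rho=(|\nabla u|+|\nabla v|)^{p-2}$, then conclude by a weighted weak Harnack inequality---is exactly the route taken in~\cite{LDBS}.

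There is, however, a genuine gap in your execution: you assert that $\rho$ is a Muckenhoupt $A_{2}$ weight and then invoke the Fabes--Kenig--Serapioni machinery. The one-sided facts you cite (boundedness of one of $\rho,\rho^{-1}$ together with $L^{1}_{loc}$ for the other) are necessary for $A_{2}$ but far from sufficient: $A_{2}$ requires a \emph{uniform} bound on $\big(\tfrac{1}{|B|}\int_{B}\rho\big)\big(\tfrac{1}{|B|}\int_{B}\rho^{-1}\big)$ over \emph{all} balls $B$, and this does not follow from mere local integrability. Near the critical set $Z$ the weight degenerates or blows up at a rate governed by the fine structure of $Z$ and the $C^{1,\alpha}$ exponent of the solution, and it is not established in the literature that this always places $\rho$ in $A_{2}$. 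In~\cite{LDBS} the authors avoid this issue entirely: they prove the weighted Poincar\'{e} and weak Harnack inequalities \emph{directly}, using their reversed-gradient estimate $\int |\nabla v|^{-t}|x-y|^{-\gamma}\,\mathrm{d}x<C$ uniformly in $y$ (cf.\ Lemma~\ref{lm.4} and Corollary~\ref{re2333} in the present paper), and it is this two-parameter estimate, combined with a weighted Sobolev embedding, that produces the threshold $\frac{2N+2}{N+2}$. Your heuristic ``the admissible range of $s$ is nonempty precisely in the stated range of $p$'' lands on the right answer but hides the actual mechanism. To close the argument you would need either to verify $A_{2}$ from scratch (a nontrivial new step) or to follow~\cite{LDBS} and establish the weighted inequalities without passing through $A_{2}$.
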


\begin{lem}[Theorem 1.4 in \cite{LD}]\label{th3.2}
Suppose $\Omega$ is a bounded domain of $\R^N$, $1<p<\infty$ and let $u,v \in C^1(\Omega)$ weakly satisfy
$$ -\Delta_p u + \Lambda u \leq  -\Delta_p v + \Lambda v \quad\,\,\,\mbox{and}\quad\,u\leq v\,\,\,\,\,\mbox{in}\,\,\Omega,$$
and denote by $Z_v^u:=\{ x\in\Omega\,\mid\,|\nabla u|=|\nabla v| =0 \}$. Then if there exists $x_0\in\Omega\setminus Z_v^u$ with $u(x_0)=v(x_0)$, then $u\equiv v$ in the connected component of $\Omega\setminus Z_v^u$ containing $x_0$. The same result still holds if, more generally,
$$ -\Delta_p u - f(u) \leq  -\Delta_p v - f(u) \quad\,\,\,\mbox{and}\,\quad u\leq v \quad\,\mbox{in}\,\,\Omega$$
with $f:\R \to \R$ locally Lipschitz continuous.
\end{lem}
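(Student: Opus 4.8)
The plan is the classical \emph{linearization plus weak Harnack} scheme for the $p$-Laplacian. Set $w:=v-u\geq0$ in $\Omega$, let $\mathcal{C}$ be the connected component of $\Omega\setminus Z_v^u$ containing $x_0$, and consider $S:=\{x\in\mathcal{C}:w(x)=0\}$. Since $w\in C^1$, the set $S$ is closed in $\mathcal{C}$, and it is nonempty because $x_0\in S$; hence, if $S$ is shown to be open in $\mathcal{C}$, the connectedness of $\mathcal{C}$ forces $S=\mathcal{C}$, i.e. $u\equiv v$ in $\mathcal{C}$, which is exactly the assertion. So everything reduces to proving that $S$ is open.

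To this end I would fix $\bar{x}\in S$. Since $\bar{x}\notin Z_v^u$ we may assume, after possibly swapping $u$ and $v$, that $|\nabla u(\bar{x})|>0$; by $C^1$-continuity there are $r>0$ and $c_0,M>0$ with $B:=B_{2r}(\bar{x})\subset\subset\mathcal{C}$, $|\nabla u|\geq c_0$ on $B$ and $|\nabla u|+|\nabla v|\leq M$ on $B$. Differentiating $\xi\mapsto|\xi|^{p-2}\xi$ along the segment from $\nabla u$ to $\nabla v$, with $\xi_t:=(1-t)\nabla u+t\nabla v$, gives
$$|\nabla v|^{p-2}\nabla v-|\nabla u|^{p-2}\nabla u=A(x)\,\nabla w,\qquad A(x):=\int_0^1\Big(|\xi_t|^{p-2}\,\mathrm{Id}+(p-2)|\xi_t|^{p-4}\,\xi_t\otimes\xi_t\Big)\,\mathrm{d}t.$$
Testing the hypothesis $-\Delta_p u+\Lambda u\leq-\Delta_p v+\Lambda v$ against nonnegative $\varphi\in C_0^\infty(B)$ shows that $w\geq0$ is a weak supersolution on $B$ of the linear divergence-form operator $\mathcal{L}w:=-\mathrm{div}(A(x)\nabla w)+\Lambda w$. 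Using the integral formula for $A$ and the bounds $c_0\leq|\nabla u|$, $|\nabla u|+|\nabla v|\leq M$ on $B$ (and, in the direction of $\nabla w$, the first pointwise inequality of Lemma \ref{BSICI}), one checks that $A$ is measurable, symmetric and uniformly elliptic on $B$; when $p\geq2$ one also obtains $A\in L^\infty(B)$, so $\mathcal{L}$ has bounded measurable coefficients. Then the weak Harnack inequality for nonnegative supersolutions (equivalently, the strong minimum principle) applies: an interior zero of $w$ propagates, so $w\equiv0$ in a neighbourhood of $\bar{x}$, and $S$ is open. This finishes the case $p\geq2$. The generalized statement with $-\Delta_p u-f(u)\leq-\Delta_p v-f(v)$ reduces to the same situation: on $B$ the range of $u,v$ is bounded, so $|f(v)-f(u)|\leq L\,w$ and the differential inequality becomes $\mathcal{L}w\geq0$ with $\Lambda$ replaced by $L$.

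I expect the real obstacle to be the singular range $1<p<2$. There $A$ is still uniformly elliptic on $B$ (the integrand of $A(x)\zeta\cdot\zeta$ is at least $(p-1)|\xi_t|^{p-2}|\zeta|^2\geq(p-1)M^{p-2}|\zeta|^2$), but $|\xi_t|^{p-2}$ is \emph{unbounded} near the at most one value of $t\in(0,1)$ at which $\xi_t$ can vanish, so $A$ need not lie in $L^\infty(B)$ and the standard weak Harnack inequality does not apply verbatim. The way around this — exactly where the work lies — is to keep the degenerate weight $(|\nabla u|+|\nabla v|)^{p-2}$ explicit and use the two-sided control from Lemma \ref{BSICI}, namely $A\nabla w\cdot\nabla w\geq C'(|\nabla u|+|\nabla v|)^{p-2}|\nabla w|^2$ together with $|A\nabla w|\leq C''|\nabla w|^{p-1}$, combined with a Caccioppoli estimate and a weighted Sobolev / De Giorgi iteration adapted to that weight, in the spirit of \cite{LD,LDBS}. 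This still yields the strong minimum principle for $w$ on $B$, hence the openness of $S$, for all $1<p<\infty$.

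The remaining ingredients — the closedness of $S$, the connectedness argument, the verification that $w$ is indeed a weak supersolution of $\mathcal{L}$ (a direct integration by parts against $\varphi\geq0$), and the bookkeeping of the ellipticity and boundedness constants of $A$ on $B$ — are routine and require no new idea beyond the $C^1$-regularity of $u$ and $v$ and the elementary inequalities of Lemma \ref{BSICI}.
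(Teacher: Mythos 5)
This is a result stated in the paper as Theorem~1.4 of Damascelli \cite{LD}, and the paper itself does not reproduce a proof, so there is no in-paper argument against which to compare. Your open-and-closed strategy for the zero set of $w=v-u$ inside the connected component $\mathcal{C}$ of $\Omega\setminus Z_v^u$, together with linearization of the $p$-Laplacian along $\xi_t=(1-t)\nabla u+t\nabla v$ and a local weak Harnack inequality (strong minimum principle) for the resulting linear divergence-form operator, is precisely the classical route of \cite{LD}, so at the level of structure the proposal is sound.

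The one place the proposal goes astray is the claim that the singular range $1<p<2$ forces a weighted De Giorgi iteration. It does not, and noticing why simplifies the whole argument. Since $w=v-u\geq 0$ on $\Omega$ and $w(\bar{x})=0$, the point $\bar{x}$ is an interior minimum of the $C^1$ function $w$, hence $\nabla w(\bar{x})=0$, i.e.\ $\nabla u(\bar{x})=\nabla v(\bar{x})$. Because $\bar{x}\notin Z_v^u$, this common value has modulus $a>0$. Shrinking $r$ so that $|\nabla u(x)-\nabla u(\bar{x})|<a/4$ and $|\nabla v(x)-\nabla v(\bar{x})|<a/4$ on $B:=B_{2r}(\bar{x})$ yields
\[
|\xi_t(x)|\geq |\nabla u(\bar{x})| - (1-t)|\nabla u(x)-\nabla u(\bar{x})| - t|\nabla v(x)-\nabla v(\bar{x})| \geq \frac{3a}{4}>0
\]
uniformly in $(x,t)\in B\times[0,1]$. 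Therefore $|\xi_t|^{p-2}$ is bounded above and below on $B\times[0,1]$ for \emph{every} $1<p<\infty$, so $A(x)$ is both uniformly elliptic and $L^\infty$ on $B$; the ordinary weak Harnack inequality for nonnegative supersolutions of a uniformly elliptic linear equation with bounded measurable coefficients applies, and no weighted Sobolev or weighted Harnack machinery is needed for the local step. (Incidentally, the auxiliary remark ``after possibly swapping $u$ and $v$'' is not available, since $u\leq v$ and the one-sided differential inequality make the roles asymmetric; but, as just explained, both gradients are automatically equal and nonzero at $\bar{x}$, so no swap is required.) With these observations the rest of the proposal --- closedness of the zero set, the connectedness argument on $\mathcal{C}$, the verification that $w$ is a weak supersolution, and the reduction of the locally Lipschitz $f$-case to the linear case via boundedness of $u,v$ on compacts --- is correct.
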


We recall a version of the strong maximum principle and the Hopf Lemma for the $p$-Laplacian, c.f. \cite{JLV}, see also Theorem 2.1 in \cite{LDBS04}, Theorem 2.4 in \cite{LDSMLMSB} or \cite{LD,PS}.
\begin{lem}[Strong Maximum Principle and H\"{o}pf's Lemma, e.g. \cite{JLV}, Theorem 2.1 in \cite{LDBS04}]\label{hopf}
Let $\Omega$ be a domain in $\mathbb{R}^{N}$ and suppose that $u\in C^{1}(\Omega)$, $u\geq0$ in $\Omega$ weakly solves
\[-\Delta_{p}u+cu^{q}=g\geq0 \qquad \text{in} \,\,\Omega\]
with $1<p<+\infty$, $q\geq p-1$, $c\geq0$ and $g\in L^{\infty}_{loc}(\Omega)$. If $u\not\equiv0$, then $u>0$ in $\Omega$. Moreover, for any point $x_{0}\in\partial\Omega$ where the interior sphere condition is satisfied and $u\in C^{1}(\Omega\cup\{x_{0}\})$ and $u(x_{0})=0$, we have that $\frac{\partial u}{\partial s}>0$ for any inward directional derivative (this means that if $y$ approaches $x_{0}$ in a ball $B\subseteq\Omega$ that has $x_{0}$ on its boundary, then $\lim\limits_{y\rightarrow x_{0}}\frac{u(y)-u(x_{0})}{|y-x_{0}|}>0$).
\end{lem}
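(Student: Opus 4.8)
The plan is to recognize this as the strong maximum principle together with the boundary point (Hopf) lemma of Vázquez type for the $p$-Laplacian (precisely what \cite{JLV} and Theorem 2.1 in \cite{LDBS04} furnish), and to prove it by reducing the equation to a clean one-sided differential inequality and then running a barrier-plus-comparison argument. First I would reduce: since $u\in C^1(\Omega)$ is locally bounded and $g\geq 0$, on any ball $B\Subset\Omega$ with $M:=\sup_B u$ I would write $cu^{q}\leq cM^{q-(p-1)}u^{p-1}=:\Lambda u^{p-1}$ (here the hypothesis $q\geq p-1$ and $c\geq 0$ are used), hence $-\Delta_p u+\Lambda u^{p-1}\geq 0$ weakly in $B$, equivalently $\Delta_p u\leq\beta(u)$ in $\mathcal D'(B)$ with $\beta(s):=\Lambda s^{p-1}$ continuous, nondecreasing, $\beta(0)=0$. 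The one structurally essential point where $q\geq p-1$ enters is the Osserman--Vázquez integrability condition
\[\int_{0^{+}}\frac{ds}{\bigl(s\,\beta(s)\bigr)^{1/p}}=+\infty,\]
which for $\beta(s)=cs^{q}$ reads $\int_{0^{+}}s^{-(1+q)/p}\,ds=+\infty\iff q\geq p-1$; this is exactly what makes the strong maximum principle available for the possibly degenerate/singular operator $-\Delta_p$.

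For the interior statement I would argue by contradiction. If $u\not\equiv 0$ but $u(x_0)=0$ at some $x_0\in\Omega$, then $\Omega^{+}:=\{u>0\}$ is open, nonempty and proper, so I can find a ball $B_R(y)\subset\Omega^{+}$ with $\overline{B_R(y)}\subset\Omega$ and a point $z_0\in\partial B_R(y)$ with $u(z_0)=0$. On the annulus $A:=B_R(y)\setminus\overline{B_{R/2}(y)}$ I would build an explicit radial subsolution $w(x)=\varphi(|x-y|)$, e.g. $\varphi(r)=\varepsilon\bigl(e^{-\alpha r}-e^{-\alpha R}\bigr)$, using the radial identity $\Delta_p w=|\varphi'|^{p-2}\bigl[(p-1)\varphi''+\tfrac{N-1}{r}\varphi'\bigr]$: one checks that for $\alpha$ large (depending only on $N$, $p$, $R$, $\Lambda$, and independent of $\varepsilon$) one has $-\Delta_p w+\Lambda w^{p-1}\leq 0$ in $A$, while $\varphi(R)=0$, $\varphi'<0$ on $[R/2,R]$, and $\varphi'(R)<0$. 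Fixing $\alpha$ and then choosing $\varepsilon$ small enough that $w=\varphi(R/2)\leq\min_{\partial B_{R/2}(y)}u$ (and $w=0\leq u$ on $\partial B_R(y)$), the standard weak comparison principle for the monotone operator $v\mapsto-\Delta_p v+\Lambda|v|^{p-2}v$ on $A$ gives $w\leq u$ in $A$. Letting $\nu$ be the outer unit normal to $B_R(y)$ at $z_0$, this yields
\[\liminf_{t\to 0^{+}}\frac{u(z_0-t\nu)-u(z_0)}{t}\geq -\varphi'(R)>0,\]
whereas $z_0$ is an interior minimum of $u$ and $u\in C^1(\Omega)$ forces $\nabla u(z_0)=0$, so the liminf is $0$ — a contradiction. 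Hence $u>0$ in $\Omega$.

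For the Hopf part I would re-use the same barrier once $u>0$ in $\Omega$ is known. Given $x_0\in\partial\Omega$ at which the interior sphere condition holds, say $B_R(y)\subset\Omega$ with $x_0\in\partial B_R(y)$, $u\in C^1(\Omega\cup\{x_0\})$ and $u(x_0)=0$, I would place the comparison function $w$ on $B_R(y)\setminus\overline{B_{R/2}(y)}$ with $w\leq u$ on the inner sphere (since $\min u>0$ there) and $w=0=u(x_0)$ at $x_0$, apply the comparison principle to get $u\geq w$ in the annulus, and read off
\[\liminf_{y'\to x_0}\frac{u(y')-u(x_0)}{|y'-x_0|}\geq -\varphi'(R)>0,\]
which is precisely the asserted strict positivity of every inward directional derivative.

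The main obstacle is the barrier construction itself: verifying that the radial $\varphi$ above is genuinely a subsolution of $v\mapsto-\Delta_p v+\Lambda|v|^{p-2}v$ on the annulus for an appropriate parameter range. This is only a one-variable ODE estimate, but it must be handled carefully because $-\Delta_p$ is singular for $1<p<2$ and degenerate for $p>2$ at points where the gradient vanishes; the construction is arranged so that $\nabla w\neq 0$ on the whole closed annulus, which is exactly what lets the weak comparison principle apply uniformly in $p$. One also has to confirm the hypotheses of that comparison principle (conveniently, a direct testing argument with $(w-u)^{+}$ works, using that $t\mapsto\Lambda t^{p-1}$ is nondecreasing on $[0,\infty)$). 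All of this is classical and is subsumed in \cite{JLV}; indeed, once the reduction and the integrability check above are in place, one may simply invoke that reference.
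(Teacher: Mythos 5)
The paper does not prove this lemma; it records it as a known result with pointers to V\'azquez \cite{JLV} and Theorem~2.1 of \cite{LDBS04}, so there is no internal proof to compare against. Your reconstruction is, in substance, V\'azquez's own argument: you first localize to $-\Delta_p u+\Lambda u^{p-1}\geq 0$ on a ball using $q\geq p-1$ and the local bound on $u$ (equivalently, you check the Osserman--V\'azquez integrability condition $\int_{0^+}\frac{ds}{(s\beta(s))^{1/p}}=+\infty$ for $\beta(s)=cs^{q}$, which is exactly $q\geq p-1$); you then build an exponential radial barrier on an annulus where $\nabla w$ never vanishes, so the degeneracy ($p>2$) or singularity ($1<p<2$) of $-\Delta_p$ is irrelevant; you compare by testing with $(w-u)^{+}$, using the monotonicity of $\eta\mapsto|\eta|^{p-2}\eta$ and of $t\mapsto\Lambda t^{p-1}$; and you contradict $\nabla u(z_0)=0$ at an interior minimum. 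The one-variable barrier computation you sketch ($\Delta_p w=(\varepsilon\alpha e^{-\alpha r})^{p-1}[(p-1)\alpha-\frac{N-1}{r}]$ dominates $\Lambda w^{p-1}$ for $\alpha$ large and $r\in[R/2,R]$, with $\varepsilon$ chosen afterward to match the inner boundary datum) goes through, and the construction of the touching ball $B_R(y)\subset\Omega^+$ with $\overline{B_R(y)}\subset\Omega$ is the standard one. So the proposal is correct and matches the argument underlying the two cited sources.

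One refinement worth making explicit in the Hopf part: the interior ball $B_R(y)$ furnished by the interior sphere condition may touch $\partial\Omega$ at points other than $x_0$, where $u$ is not assumed to be defined or continuous; the standard fix is to replace it by the internally tangent ball $B_{R/2}(y')$ with $y'=\tfrac{1}{2}(y+x_0)$, whose boundary meets $\partial\Omega$ only at $x_0$, and to run the comparison on the annulus $B_{R/2}(y')\setminus\overline{B_{R/4}(y')}\subset\subset\Omega\cup\{x_0\}$. This keeps $(w-u)^+$ honestly in $W^{1,p}_0$ of the annulus without any discussion of trace on $\partial\Omega$. With that adjustment the proof is clean and complete.
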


\subsection{Regularity results and Properties of the critical Set}\label{sc2.2}

In this subsection, we will give the integrability properties of $\frac{1}{|\nabla u|}$ and the connectivity properties of the critical set $Z_{u}$ for the nonnegative weak solution $u$ of \eqref{eq1.1}, and the weighted Poincar\'{e} type inequality.

\smallskip

From Theorems \ref{th2.1} and \ref{th2.1-}, the nonnegative weak solution $u$ of \eqref{eq1.1} satisfies $u\in L^{\infty}(\mathbb{R}^{N})\cap C^{1,\alpha}_{loc} (\R^N)$ for some $0<\alpha<\min\{1,\frac{1}{p-1}\}$. Moreover, using standard elliptic regularity, the solution $u\in C^{1,\alpha} (\Omega)$ belongs to the class $C^2(\Omega\setminus Z_u)$, where $\Omega\subset\subset\R^N$ and $Z_u=\{ x\in\Omega : |\nabla u|=0 \}$ (see \cite{ED83,DGNT,PT}). We need the following Hessian and reversed gradient estimates from \cite{BSR14}.
\begin{lem}[Hessian and reversed gradient estimates, \cite{BSR14}] \label{lm.4}
Let $\Omega\subset\subset\R^N$ and $1<p<\infty$. Assume $u\in W^{1,p}(\Omega)$ be a weak solution to
\begin{align}\label{eq10.252252}
-\Delta_p u =f(x) \quad\,\,\,\mbox{in}\,\,\Omega,
\end{align}
where $f(x)$ satisfies \eqref{eq26.30.001} and condition ($I_\alpha$) in Remark \ref{rm.2.7}, i.e., \eqref{eq26.30.002}-\eqref{eq26.30.003}. Then, for any $x_0\in Z_u$ and $B_{2\rho}(x_0)\subset \Omega$, we have
\begin{align}\label{eq10.25.52}
\int_{B_\rho(x_0)} \frac{|\nabla u|^{p-2-\beta}(x)}{|x-y|^\gamma} |\nabla  u_{x_i}(x)|^2 \mathrm{d}x < C,\quad\,\,i=1,\cdots,N
\end{align}
uniformly for any $y\in \Omega$, where $0\leq\beta<1$, $\gamma=0$ if $N=2$, $\gamma<N-2$ if $N \geq 3$ and
$$C=C(N,p,x_0,\gamma,\beta,u,f,\rho)>0.$$
Moreover, we have, uniformly for any $y\in \Omega$,
\begin{align}\label{eq10.25.2.1}
\int_{B_\rho(x_0)} \frac{1}{|\nabla u|^t} \frac{1}{|x-y|^\gamma}\mathrm{d}x <\widetilde{C},
\end{align}
where $\max\{p-2,0\}\leq t < p-1$ and $\gamma=0$ if $N=2$, $\gamma<N-2$ if $N \geq 3$ and $$\widetilde{C}=\widetilde{C}(N,p,x_0,\gamma,t,u,f,\rho)>0.$$
\end{lem}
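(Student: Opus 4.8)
\textbf{Proof proposal for Lemma~\ref{lm.4}.}
The plan is to follow the Cianchi--Maz'ya type local regularity machinery developed in \cite{BSR14}, adapting it to the present setting where the right-hand side $f$ satisfies the integrability \eqref{eq26.30.001} and the structural condition $(I_\alpha)$ in \eqref{eq26.30.002}--\eqref{eq26.30.003}. The starting point is that away from the critical set $Z_u$ the equation \eqref{eq10.252252} is uniformly elliptic, so standard Schauder/Calder\'on--Zygmund theory gives $u\in C^2(\Omega\setminus Z_u)$ and the second derivatives $u_{x_i}$ are classical there; the issue is a \emph{quantitative} control of $u_{x_i}$ that degenerates only like a power of $|\nabla u|$ near $Z_u$. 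First I would differentiate the equation formally in the $x_i$ direction to obtain the linearized equation $-\mathrm{div}\bigl(|\nabla u|^{p-2}A(\nabla u)\nabla u_{x_i}\bigr)=f_{x_i}$, where $A(\xi)=\mathrm{Id}+(p-2)\frac{\xi\otimes\xi}{|\xi|^2}$ has eigenvalues between $\min\{1,p-1\}$ and $\max\{1,p-1\}$. This is legitimate on $\Omega\setminus Z_u$ and, after a careful approximation, in the weak formulation on all of $\Omega$ when tested against functions vanishing near $Z_u$.

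The core of the argument is a Caccioppoli-type estimate: test the linearized equation with $\eta^2\, u_{x_i}\, |\nabla u|^{-\beta}\,|x-y|^{-\gamma}$ (suitably regularized, e.g.\ replacing $|\nabla u|$ by $(|\nabla u|^2+\var^2)^{1/2}$ and truncating), where $\eta$ is a cutoff supported in $B_{2\rho}(x_0)$ and equal to $1$ on $B_\rho(x_0)$. Integrating by parts and using the ellipticity of $A$ produces, on the left, the desired weighted integral of $|\nabla u_{x_i}|^2$; on the right one gets (a) a term with $\nabla\eta$, controlled by the $W^{1,p}$ bound on $u$ together with \eqref{eq10.25.2.1}-type integrability of $|\nabla u|^{-\beta}$, (b) a term coming from differentiating the weight $|x-y|^{-\gamma}$, which is where the restriction $\gamma<N-2$ (so that $|x-y|^{-\gamma-1}$ remains locally integrable against a bounded density) is used, together with an absorption into the left-hand side via Young's inequality, and (c) the datum term $\int \eta^2 f_{x_i}\, u_{x_i}\,|\nabla u|^{-\beta}|x-y|^{-\gamma}$, which is estimated by H\"older using $f\in W^{1,m}$ with $m>\frac{N}{2(1-\mu)}$ and a weighted Sobolev embedding for $u_{x_i}|\nabla u|^{-\beta/2}$. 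The lower bound \eqref{eq26.30.003} on $|f|$ near $x_0$ is what allows one to convert estimates on $f_{x_i}$ and the weak formulation into genuine control near a point of $Z_u$ (it guarantees $x_0$ cannot be a ``flat'' zero of the equation); the exponent condition $\mu<\alpha(p-1)$ matches the $C^{1,\alpha}$ regularity of $u$ so that all the weighted integrals converge. Summing/absorbing gives \eqref{eq10.25.52} with a constant depending on the listed quantities.

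Once \eqref{eq10.25.52} is in hand, estimate \eqref{eq10.25.2.1} follows by a now-standard duality/interpolation trick, which I would carry out as in \cite{BSR14,Tei}: from the weak equation one has, for any Lipschitz $\phi\geq0$ compactly supported in $B_\rho(x_0)$,
\[
\int \phi\,|\nabla u|^{p-2}\,|D^2 u|\,|x-y|^{-\gamma}\;\gtrsim\;\Bigl|\int \nabla\bigl(\phi|x-y|^{-\gamma}\bigr)\cdot |\nabla u|^{p-2}\nabla u\Bigr|,
\]
and combining the Cauchy--Schwarz bound with \eqref{eq10.25.52} and the finiteness of $\int_{B_\rho(x_0)}|\nabla u|^{p-2+\text{(small)}}|x-y|^{-\gamma}$ one obtains that $|\nabla u|^{-t}|x-y|^{-\gamma}\in L^1(B_\rho(x_0))$ for $\max\{p-2,0\}\le t<p-1$, uniformly in $y$. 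Equivalently one reaches \eqref{eq10.25.2.1} by writing $\{|\nabla u|<\delta\}$ as a superlevel set and using \eqref{eq10.25.52} to bound its weighted measure, then integrating in $\delta$; the constraint $t<p-1$ is exactly the threshold at which this integral converges.

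The main obstacle I expect is the regularization/approximation step needed to make the differentiated equation and the singular test function rigorous: one must work with $u_\var$ solving a nondegenerate approximation $-\mathrm{div}((|\nabla u_\var|^2+\var^2)^{(p-2)/2}\nabla u_\var)=f_\var$, derive the weighted estimates with constants \emph{independent of $\var$} (which is why the Caccioppoli computation must be done with absorbable right-hand sides and uniform ellipticity constants), and then pass to the limit using that $u_\var\to u$ in $C^{1,\alpha}_{loc}$ and $\nabla u_\var\to\nabla u$ a.e., invoking Fatou on the left-hand side. The uniformity in $y\in\Omega$ is delivered automatically because $\gamma<N-2$ makes all the weight-dependent constants depend only on $\sup_y\||x-y|^{-\gamma}\|$ over the relevant balls, which is finite and $y$-independent. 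Since this is precisely the content of the estimates proved in \cite{BSR14} (there for $f\in C^1$, but the proof only uses the quantitative hypotheses \eqref{eq26.30.001}--\eqref{eq26.30.003}), I would present the lemma as an adaptation of \cite{BSR14} and reproduce only the points where condition $(I_\alpha)$ enters, namely the datum term $(c)$ above and the lower bound \eqref{eq26.30.003}.
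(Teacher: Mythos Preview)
The paper does not supply its own proof of this lemma: it is quoted verbatim as a result from \cite{BSR14}, and the authors only use it (together with the verification in Remark~\ref{rm.2.7} that $f(x)=V(x)u^{p-1}$ satisfies \eqref{eq26.30.001}--\eqref{eq26.30.003}) to derive Corollary~\ref{re2333}. So there is no ``paper's proof'' to compare against beyond the citation.

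Your sketch is a faithful outline of Sciunzi's argument in \cite{BSR14}: differentiate the equation, test the linearized operator against $\eta^2 u_{x_i}|\nabla u|^{-\beta}|x-y|^{-\gamma}$ (after regularizing the operator to $(|\nabla u|^2+\varepsilon^2)^{(p-2)/2}$), absorb the cross terms via Young's inequality, use $f\in W^{1,m}$ with $m>\frac{N}{2(1-\mu)}$ and the lower bound \eqref{eq26.30.003} to handle the datum term, and pass to the limit $\varepsilon\to0$ by Fatou. The deduction of \eqref{eq10.25.2.1} from \eqref{eq10.25.52} via a level-set decomposition of $\{|\nabla u|<\delta\}$ is likewise the route taken in \cite{BSR14} (and earlier in \cite{LDBS04}). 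Since the present paper simply invokes \cite{BSR14}, your proposal is consistent with---and more explicit than---what the paper does; if you were writing this out in full you would just be reproducing the cited reference.
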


From Lemma \ref{lm.4}, we can infer the following Corollary.
\begin{cor}\label{re2333}
Let $1<p<\frac{N}{2}$ and $u$ be a positive weak solution of \eqref{eq1.1}. Setting $\rho:=|\nabla u|^{p-2}$, for any $\Omega \subset\subset\R^N$, we have
\begin{align}\label{eq10.25.2}
\int_{\Omega} \frac{1}{\rho^t} \frac{1}{|x-y|^\gamma}\mathrm{d}x < C
\end{align}
uniformly for any $y\in\Omega$, where $\max\{p-2,0\}\leq (p-2)t<p-1$ and $\gamma=0$ if $N=2$, $\gamma<N-2$ if $N \geq 3$.
\end{cor}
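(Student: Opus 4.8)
The plan is to derive Corollary~\ref{re2333} as a direct specialization of Lemma~\ref{lm.4}. The idea is simply to take the quantity $t$ appearing in \eqref{eq10.25.2.1} and to re-index it: writing $\rho:=|\nabla u|^{p-2}$ we have $\frac{1}{\rho^{t}}=\frac{1}{|\nabla u|^{(p-2)t}}$, so the exponent on $|\nabla u|$ in \eqref{eq10.25.2} is $(p-2)t$, and the requirement $\max\{p-2,0\}\le (p-2)t<p-1$ in the statement is precisely the requirement $\max\{p-2,0\}\le t'<p-1$ in \eqref{eq10.25.2.1} with $t'=(p-2)t$ when $p>2$ (and for $1<p\le 2$ the exponent $(p-2)t$ is nonpositive, so $\frac{1}{\rho^{t}}$ is bounded and the integral is controlled trivially by $\int_\Omega |x-y|^{-\gamma}\,dx<\infty$ for $\gamma<N-2$). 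Thus the only real content is verifying that the hypotheses of Lemma~\ref{lm.4} apply to a positive weak solution $u$ of \eqref{eq1.1} on any $\Omega\subset\subset\R^N$.

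First I would invoke Theorems~\ref{th2.1} and \ref{th2.1-} to record that the positive weak solution $u$ of \eqref{eq1.1} satisfies $u\in C^{1,\alpha}(\R^N)\cap L^\infty(\R^N)$ together with the sharp bounds \eqref{eq0806}--\eqref{eq0806+}, and then appeal to Remark~\ref{rm.2.7}: the nonlocal nonlinear term $f(x):=V(x)u^{p-1}(x)=\bigl(|x|^{-2p}\ast u^{p}\bigr)u^{p-1}(x)$ satisfies $f\in L^{s}(\Omega)$ with $s>N$ (this is \eqref{eq26.30.001}) and the ``condition $(I_\alpha)$'' \eqref{eq26.30.002}--\eqref{eq26.30.003}, because the pointwise estimates \eqref{eq10.26.30.03}--\eqref{eq10.26.30.02} give both the boundedness of $V$ and of its gradient on bounded sets, so $f\in W^{1,m}$ for suitable $m$ and $f$ is bounded below away from zero near any point, giving the lower bound $|f(x)|\ge C(x_0,\mu)|x-x_0|^\mu$ with $\mu=0$ (or any small $\mu\in(0,\alpha(p-1))$). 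Hence $u$ solves \eqref{eq10.252252} with $f$ meeting all the requirements of Lemma~\ref{lm.4} on every $\Omega\subset\subset\R^N$.

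Next I would fix an arbitrary $\Omega\subset\subset\R^N$ and cover $\overline{Z_u\cap\Omega}$ (note $Z_u$ is closed and, by the sharp lower bound \eqref{eq0806+}, $Z_u\subset B_{R_0}(0)$, so $Z_u\cap\Omega$ is compact) by finitely many balls $B_{\rho_j}(x_j)$ with $x_j\in Z_u$ and $B_{2\rho_j}(x_j)\subset\Omega'$ for a slightly larger $\Omega'\subset\subset\R^N$; away from $Z_u$, the quantity $\frac{1}{|\nabla u|^{(p-2)t}}$ is continuous and bounded, and $|x-y|^{-\gamma}\in L^1$ for $\gamma<N-2$, so the contribution of $\Omega\setminus\bigcup_j B_{\rho_j}(x_j)$ to the integral in \eqref{eq10.25.2} is finite, uniformly in $y\in\Omega$. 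On each $B_{\rho_j}(x_j)$, estimate \eqref{eq10.25.2.1} of Lemma~\ref{lm.4} (applied with $t'=(p-2)t$, which lies in $[\max\{p-2,0\},p-1)$ by hypothesis, and with the same $\gamma$) bounds $\int_{B_{\rho_j}(x_j)}\frac{1}{|\nabla u|^{(p-2)t}}\frac{1}{|x-y|^\gamma}\,dx$ by a constant $\widetilde C_j$ uniform in $y\in\Omega$. Summing over the finitely many $j$ and adding the exterior contribution yields \eqref{eq10.25.2}, with a constant $C=C(N,p,\gamma,t,u,\Omega)>0$ independent of $y$.

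The main obstacle, such as it is, is bookkeeping rather than analysis: one must make sure the finite covering of $Z_u\cap\Omega$ can be chosen with all the doubled balls still inside a fixed compactly contained enlargement where Remark~\ref{rm.2.7} supplies the needed bounds on $f$, and one must check the exponent matching $t'=(p-2)t$ carefully in the two regimes $1<p\le 2$ and $p>2$ (in the singular range the lower exponent is automatic since $\frac{1}{\rho^{t}}$ is even bounded, while in the degenerate range one genuinely needs the sharp range $t'<p-1$ from \eqref{eq10.25.2.1}). There is no new estimate to prove; the corollary is a clean consequence of Lemma~\ref{lm.4} once the structural hypotheses on the Hartree nonlinearity have been recorded.
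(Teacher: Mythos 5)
Your core approach is the same as the paper's: verify, via Remark \ref{rm.2.7}, that $f(x):=V(x)u^{p-1}(x)$ meets the hypotheses of Lemma \ref{lm.4}, then combine the estimate \eqref{eq10.25.2.1} with a finite covering of $Z_u\cap\Omega$. The re-indexing $t'=(p-2)t$ is also exactly right, and your third paragraph carries out the covering argument correctly for all $p$.

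However, the parenthetical discussion of the singular range $1<p\le 2$ (repeated in the final paragraph) contains a sign error that contradicts the statement's own hypothesis. You claim that for $1<p\le 2$ ``the exponent $(p-2)t$ is nonpositive, so $\frac{1}{\rho^t}$ is bounded.'' But the corollary assumes $\max\{p-2,0\}\le(p-2)t$, which for $1<p<2$ reads $0\le(p-2)t$; that is, $(p-2)t$ is \emph{nonnegative}, and (since $p-2<0$) the parameter $t$ is $\le 0$. Consequently $\frac{1}{\rho^t}=|\nabla u|^{-(p-2)t}$ carries a nonpositive power of $|\nabla u|$ and blows up near $Z_u$ whenever $(p-2)t>0$, exactly as in the degenerate case $p>2$. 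So the integral is \emph{not} trivially controlled there, and Lemma \ref{lm.4} is genuinely needed in both regimes — with no case distinction, since the constraint $\max\{p-2,0\}\le(p-2)t<p-1$ in the corollary is literally the constraint $\max\{p-2,0\}\le t'<p-1$ of \eqref{eq10.25.2.1} after the substitution $t'=(p-2)t$, for every $p>1$. The actual finite-covering computation you write out does apply the lemma with $t'=(p-2)t$ for all $p$, so the proof survives; the erroneous aside is unnecessary and should simply be removed.
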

\begin{proof}
In Remark \ref{rm.2.7}, we know that $f(x):=\left(|x|^{-2p}*u \right)u^{p-1}$ satisfies \eqref{eq26.30.001} and the condition ($I_\alpha$) (i.e., \eqref{eq26.30.002}-\eqref{eq26.30.003}) in Lemma \ref{lm.4}. Noting that $u\in C^2(\Omega\setminus Z_u)$, it follows from the finite covering theorem and \eqref{eq10.25.2.1} in Lemma \ref{lm.4} that \eqref{eq10.25.2} holds. This finishes the proof of Corollary \ref{re2333}.
\end{proof}

\begin{rem}\label{re2334}
Let $u$ be a positive weak solution of \eqref{eq1.1} and let $Z_u = \{x \in \R^N: |\nabla u(x)| = 0 \}$. Clearly, it follows from Theorem \ref{th2.1} that the critical set $Z_u\subset B_{R_0}$ and $Z_u$ is a closed set. Moreover, \eqref{eq10.25.2} in Corollary \ref{re2333} implies that the Lebesgue measure of $Z_{u}$ is zero, i.e., $|Z_u|=0$.
\end{rem}

From Lemma \ref{th3.2}, we can clearly see that it is very useful to know whether $\Omega\setminus Z_u$ is connected or not. We have the following Lemma on the connectivity properties of the critical set $Z_{u}$ from \cite[Theorem 4.1 and Remark 4.1]{LDBS04}.
\begin{lem}[Properties of the Critical Set, Theorem 4.1 in \cite{LDBS04}]\label{lm.7}
Assume $1<p<\frac{N}{2}$ and let $u$ be a positive weak solution of \eqref{eq1.1}. Let us define the critical set
$$Z_u = \{ x\in\Omega : |\nabla u(x)|=0 \},$$
where $\Omega$ is a general bounded domain. %and $B_{R_0}\subset\Omega$.
Then $\Omega\setminus Z_u$ does not contain any connected component $C$ such that $\overline{C} \subset \Omega$.
\end{lem}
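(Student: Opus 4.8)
The plan is to argue by contradiction via an energy/integration-by-parts argument on a putative internal connected component, exploiting the reversed-gradient integrability from Lemma \ref{lm.4} (equivalently Corollary \ref{re2333}) together with the equation itself. Suppose $\Omega\setminus Z_u$ contains a connected component $\mathcal{C}$ with $\overline{\mathcal{C}}\subset\Omega$. On $\mathcal{C}$ one has $|\nabla u|>0$, hence $u\in C^2(\mathcal{C})$ by standard elliptic regularity, and $\partial\mathcal{C}\subset Z_u$, so $|\nabla u|=0$ on $\partial\mathcal{C}$. The first step is to observe that each partial derivative $w:=u_{x_i}$ satisfies, in $\mathcal{C}$, the linearized equation obtained by differentiating $-\Delta_p u = f$ with respect to $x_i$; written in divergence form this reads $-\,\mathrm{div}\!\left(|\nabla u|^{p-2}\nabla w + (p-2)|\nabla u|^{p-4}\langle\nabla u,\nabla w\rangle\nabla u\right) = f_{x_i}$ weakly on $\mathcal{C}$, and the associated bilinear form is uniformly elliptic on compact subsets of $\mathcal{C}$ since $|\nabla u|>0$ there.

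Next I would test this linearized equation against $w$ itself (after a suitable cutoff near $\partial\mathcal{C}$, since $|\nabla u|^{p-2}$ may degenerate or blow up there depending on whether $p>2$ or $p<2$). The key point is that the boundary terms vanish: since $|\nabla u|=0$ on $\partial\mathcal{C}$ and $w$ is continuous up to $\overline{\mathcal{C}}$ with $w=0$ on $\partial\mathcal{C}$ (as $\partial\mathcal{C}\subset Z_u$), one can pass to the limit in the cutoffs using precisely the Hessian estimate \eqref{eq10.25.52} and the reversed-gradient bound \eqref{eq10.25.2.1} of Lemma \ref{lm.4}, which guarantee that $\int_{\mathcal C}|\nabla u|^{p-2}|\nabla w|^2\,dx<\infty$ and control the flux through level sets of $|\nabla u|$ approaching $0$. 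After summing over $i=1,\dots,N$ one obtains an identity of the form
\begin{align*}
c\int_{\mathcal C} |\nabla u|^{p-2}\,\|D^2 u\|^2\,dx \;\le\; \int_{\mathcal C} |\nabla u|^{p-2}\big(|\nabla(\nabla u)|^2 + (p-2)|\cdots|\big)\,dx \;=\; \int_{\mathcal C} \nabla f\cdot\nabla u\,dx,
\end{align*}
and one shows the right-hand side vanishes (again integrating by parts: $\int_{\mathcal C}\nabla f\cdot\nabla u = -\int_{\mathcal C} f\,\Delta u$ must be reconciled with $\int_{\mathcal C} f(-\Delta_p u)\cdots$; more cleanly, one uses that $u$ attains an interior extremum structure on $\mathcal{C}$). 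Alternatively, and more robustly, I would run the cleaner argument of Damascelli–Sciunzi: the function $u$ restricted to $\overline{\mathcal C}$ achieves its max and min on $\overline{\mathcal C}$, and since $\nabla u=0$ on all of $\partial\mathcal C$ while $\Delta_p$ satisfies the strong maximum principle in $\mathcal C$ where it is non-degenerate, one derives that $u$ is constant on $\mathcal{C}$ — contradicting $|\nabla u|>0$ there, unless $f\le 0$, and here $f=(|x|^{-2p}\ast u^p)u^{p-1}\ge 0$ with $f>0$ wherever $u>0$, i.e. everywhere by positivity of $u$, so $u$ cannot be a nonconstant solution with vanishing gradient on the whole boundary of an interior component.

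The cleanest route, which I expect the authors to take, is: on $\mathcal{C}$ we have $-\Delta_p u = f > 0$ (strictly, since $u>0$ forces $V>0$ by $(iii)$ of Lemma \ref{lm.6}), so $u$ is strictly super-$p$-harmonic on $\mathcal C$; but $u\in C^1(\overline{\mathcal C})$ with $\nabla u\equiv 0$ on $\partial\mathcal C$, and testing the equation against $(u-\min_{\overline{\mathcal C}}u)^+$ or against $1$ via the divergence theorem gives $0 = \int_{\partial\mathcal C}|\nabla u|^{p-2}\partial_\nu u\,dS = \int_{\mathcal C}(-\Delta_p u)\,dx = \int_{\mathcal C} f\,dx > 0$, a contradiction. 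The main obstacle — and the reason Lemma \ref{lm.4} is invoked — is making this divergence-theorem computation rigorous across $\partial\mathcal C\subset Z_u$, where $\partial\mathcal C$ need not be smooth and $|\nabla u|^{p-2}\nabla u$ is only Hölder; this is handled by approximating $\mathcal{C}$ from inside by the superlevel sets $\{x\in\mathcal C: |\nabla u(x)|>\eps\}$ (which have finite-perimeter boundary for a.e.\ $\eps$ by the coarea formula, using $|Z_u|=0$ from Remark \ref{re2334}) and controlling the boundary flux $\int_{\{|\nabla u|=\eps\}}|\nabla u|^{p-1}\,dS$ by the bound \eqref{eq10.25.2.1}, letting $\eps\to 0^+$.
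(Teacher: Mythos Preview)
The paper does not give its own proof of this lemma; it is quoted directly from Theorem~4.1 (and Remark~4.1) of \cite{LDBS04}. Your ``cleanest route'' at the end---arguing that $\int_{\mathcal{C}} f\,\mathrm{d}x$ must simultaneously be strictly positive (since $f=Vu^{p-1}>0$) and equal to the vanishing boundary flux $\int_{\partial\mathcal{C}}|\nabla u|^{p-2}\partial_\nu u\,\mathrm{d}S$---is precisely the Damascelli--Sciunzi argument, and your identification of the technical obstacle (justifying the divergence theorem across the possibly irregular boundary $\partial\mathcal{C}\subset Z_u$) is correct.

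The standard way to make this rigorous, and the route taken in \cite{LDBS04}, is slightly different from your level-set/coarea sketch: one tests the weak equation against $\chi_{\mathcal{C}}\cdot G_\varepsilon(|\nabla u|)$ for a smooth cutoff $G_\varepsilon$ vanishing on $[0,\varepsilon]$ and equal to $1$ on $[2\varepsilon,\infty)$. This is a legitimate compactly-supported $W^{1,\infty}$ test function because $|\nabla u|$ is continuous and vanishes on $\partial\mathcal{C}$, so the product is smooth in $\mathcal{C}$ and vanishes near $\partial\mathcal{C}$. The resulting cross term is controlled by $\varepsilon^{-1}\int_{\{\varepsilon<|\nabla u|<2\varepsilon\}\cap\mathcal{C}}|\nabla u|^{p-1}|D^2u|\,\mathrm{d}x$, and Cauchy--Schwarz together with the Hessian bound \eqref{eq10.25.52} (with $\beta$ chosen so that $p-2+\beta\ge 0$) kills it as $\varepsilon\to 0$. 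Note that it is really \eqref{eq10.25.52} that does the work here, not \eqref{eq10.25.2.1}; your coarea plan would need the former anyway to control $|\nabla(|\nabla u|)|$. Your opening detour through the linearized equation for $u_{x_i}$ is unnecessary for this lemma and does not lead anywhere clean---you were right to abandon it.
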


Finally, we give the weighted Poincar\'{e} type inequality inequality, which plays a key role in the proof of Theorem \ref{th2} via the moving planes techniques in the degenerate elliptic case $p>2$. To this end, we need the following definition.
\begin{defn}[Weighted Sobolev spaces]\label{lm.3}
Assume $\Omega\subset\R^N$ is a bounded domain and let $\rho\in L^1(\Omega)$ be a positive function. We define $H^{1,p}(\Omega,\rho)$ as the completion of $C^1(\overline{\Omega})$ (or $C^\infty(\overline{\Omega})$) with the norm
$$ \| v \|_{ H^{1,p}(\Omega,\rho) } := \| v \|_{L^p(\Omega)} + \| \nabla u\|_{L^p(\Omega,\rho)}$$
and $\| \nabla u\|^p_{L^p(\Omega,\rho)} = \int_\Omega  \rho  |\nabla u |^p \mathrm{d}x$. In this way $H^{1,2}(\Omega,\rho)$ is a Hilbert space, and we also define $H_0^{1,p}(\Omega,\rho)$ as the closure of $C_0^\infty(\overline{\Omega})$ in $H^{1,p}(\Omega,\rho)$.
\end{defn}

\begin{lem}[Weighted Poincar\'{e} type inequality, \cite{LDBS04,FV,LPLDHT}]\label{lm.3}
Assume $\Omega\subset\R^N$ is a bounded domain and let $\rho\in L^1(\Omega)$ be a positive function such that
\begin{align}\label{eq10.25.3}
\int_\Omega \frac{1}{\rho |x-y|^\gamma}\mathrm{d}y < C, \,\,\,\quad \forall \,\, x\in\Omega,
\end{align}
where $\gamma<N-2$ if $N \geq 3$ and $\gamma=0$ if $N = 2$. Let $v\in H^{1,2}(\Omega,\rho)$ be such that
\begin{align}\label{eq10.25.4}
|v(x)|\leq C \int_\Omega \frac{|\nabla v(y)|}{ |x-y|^{N-1}}\mathrm{d}y,\quad\,\,\,\forall \,\, x\in\Omega.
\end{align}
Then we have
\begin{equation}\label{wp}
  \int_\Omega v^2(x) \mathrm{d}x \leq C(\Omega) \int_\Omega \rho |\nabla v(x)|^2 \mathrm{d}x,
\end{equation}
where $C(\Omega)\to 0$ if $|\Omega|\to 0$. The same inequality holds for $v\in H^{1,2}_0(\Omega,\rho)$.
\end{lem}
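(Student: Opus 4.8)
The plan is to argue by contradiction, exploiting the compactness of the embedding encoded by hypotheses \eqref{eq10.25.3} and \eqref{eq10.25.4}. First I would fix the structure: suppose the inequality \eqref{wp} fails with a uniform constant. Then there is a sequence $v_n \in H^{1,2}(\Omega,\rho)$ (or $H^{1,2}_0(\Omega,\rho)$) satisfying the pointwise representation \eqref{eq10.25.4}, normalized so that $\int_\Omega v_n^2\,\mathrm{d}x = 1$, while $\int_\Omega \rho|\nabla v_n|^2\,\mathrm{d}x \to 0$. The goal is to extract a subsequence converging strongly in $L^2(\Omega)$ to a limit $v$ with $\int_\Omega v^2 = 1$ but $\nabla v \equiv 0$, and then to use \eqref{eq10.25.4} again to force $v \equiv 0$, a contradiction. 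The claim about $C(\Omega)\to 0$ as $|\Omega|\to 0$ would follow by a scaling/quantitative refinement of the same argument, estimating the constant in terms of $|\Omega|$ directly via the potential bound below.

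The key technical step is to turn the potential estimate \eqref{eq10.25.4} together with the weight condition \eqref{eq10.25.3} into an honest compactness statement. Writing $g_n := \rho^{1/2}|\nabla v_n|$, we have $\|g_n\|_{L^2(\Omega)}\to 0$, and by Cauchy–Schwarz applied to \eqref{eq10.25.4},
\begin{align*}
|v_n(x)| \leq C\int_\Omega \frac{|\nabla v_n(y)|}{|x-y|^{N-1}}\,\mathrm{d}y = C\int_\Omega \frac{g_n(y)}{\rho(y)^{1/2}|x-y|^{N-1}}\,\mathrm{d}y \leq C\|g_n\|_{L^2(\Omega)}\left(\int_\Omega \frac{1}{\rho(y)|x-y|^{2N-2}}\,\mathrm{d}y\right)^{1/2}.
\end{align*}
The exponent $2N-2$ is too large for \eqref{eq10.25.3} directly, so instead I would interpolate: split the kernel $|x-y|^{-(N-1)} = |x-y|^{-\gamma/2}\cdot|x-y|^{-(N-1)+\gamma/2}$ for a suitable $\gamma < N-2$, apply Hölder with the weight $\rho$ absorbed against the first factor controlled by \eqref{eq10.25.3}, and the second factor is an integrable power of $|x-y|^{-1}$ over a bounded set provided $2(N-1-\gamma/2) < N$, i.e. $\gamma > N-2$ — which fails. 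This mismatch is exactly why the argument must be done more carefully: rather than a crude Hölder split, one uses the representation to get an $L^q$ bound for some $q>2$ via weighted fractional integration (Hardy–Littlewood–Sobolev with the weight $\rho^{-1}$ handled by \eqref{eq10.25.3} as a local Morrey-type condition), yielding $\|v_n\|_{L^q(\Omega)} \leq C\|g_n\|_{L^2(\Omega)}$ for some $q>2$. Combined with the normalization $\|v_n\|_{L^2}=1$ this gives both boundedness in $L^q$ and, since $\|g_n\|_{L^2}\to 0$, actually $\|v_n\|_{L^q(\Omega)}\to 0$ — which already contradicts $\|v_n\|_{L^2(\Omega)}=1$ on the bounded domain $\Omega$ by Hölder's inequality. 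This is cleaner than the compactness route and also yields the quantitative dependence $C(\Omega)\to 0$, since the constant scales with a positive power of $|\Omega|$.

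The main obstacle, therefore, is establishing the weighted fractional-integration estimate $\|v_n\|_{L^q}\leq C\|\rho^{1/2}\nabla v_n\|_{L^2}$ with $q>2$ from \eqref{eq10.25.3} and \eqref{eq10.25.4}: one must verify that condition \eqref{eq10.25.3}, which says the weight $\rho^{-1}$ has a uniformly bounded Riesz-type potential of order $N-\gamma$, is strong enough to run a weighted Hardy–Littlewood–Sobolev or Schur-test argument on the operator $g \mapsto \int_\Omega \frac{g(y)}{\rho(y)^{1/2}|x-y|^{N-1}}\,\mathrm{d}y$. The correct choice of auxiliary exponents (relating $\gamma$, $N$, and the target $q$) is the delicate part; once that mapping property is in hand, the contradiction and the decay of $C(\Omega)$ are immediate. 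I would cite the proofs in \cite{LDBS04,FV,LPLDHT} for the precise exponent bookkeeping, since the statement is taken verbatim from there, and reproduce only the contradiction skeleton above.
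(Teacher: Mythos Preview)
The paper does not supply its own proof of this lemma; it is quoted verbatim from the cited references \cite{LDBS04,FV,LPLDHT} and used as a black box. So there is no ``paper's proof'' to compare your proposal against, and your closing instinct --- to cite those references for the actual argument --- is exactly what the paper does.

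That said, what you have written is not a proof but a discussion of the obstruction. You correctly observe that any naive Cauchy--Schwarz split of the Riesz kernel $|x-y|^{-(N-1)}$ against the weight $\rho^{-1}$ forces an exponent on $|x-y|^{-1}$ strictly larger than the $\gamma<N-2$ allowed by \eqref{eq10.25.3}; this is a genuine difficulty. However, your resolution --- a weighted Hardy--Littlewood--Sobolev or Schur-test mapping property for $g\mapsto I_1(\rho^{-1/2}g)$ from $L^2$ to some $L^q$ with $q>2$ --- is asserted, not established, and you explicitly defer the ``exponent bookkeeping'' back to the same references. That is circular: the mapping property you need \emph{is} the content of those proofs. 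The contradiction wrapper you set up is also superfluous; as you yourself note, once the mapping estimate $\|v\|_{L^q(\Omega)}\leq C\|\rho^{1/2}\nabla v\|_{L^2(\Omega)}$ is in hand, H\"older on the bounded domain gives \eqref{wp} directly together with the quantitative decay $C(\Omega)\leq C|\Omega|^{1-2/q}\to 0$.

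If you want to turn this into an actual proof, the route in the references is more direct than your compactness setup: one works with the representation \eqref{eq10.25.4} and the Riesz composition identity $\int_\Omega|x-y|^{-(N-1)}|x-z|^{-(N-1)}\,\mathrm{d}x\leq C|y-z|^{-(N-2)}$ to reduce matters to a bilinear estimate in $|\nabla v|$ against the kernel $|y-z|^{-(N-2)}$, and then the weight condition \eqref{eq10.25.3} is exactly what is needed to close. The exponents do balance there, but one has to arrange the Cauchy--Schwarz asymmetrically in the two gradient variables; this is the step you are missing.
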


\begin{rem}\label{rem0}
Assume $1<p<\frac{N}{2}$ and let $u$ be a positive weak solution of \eqref{eq1.1}. By Theorems \ref{th2.1} and \ref{th2.1-}, we know that, for any $p\geq2$, $\rho:=|\nabla u|^{p-2}\in L^1(\Omega)$ for any $\Omega\subset\subset\R^N$. In addition, from \eqref{eq10.25.2} in Corollary \ref{re2333} and Remark \ref{re2334}, we deduce that \eqref{eq10.25.3} holds for $\rho=|\nabla u|^{p-2}$ and $\rho>0$ a.e.. Hence the weighted Poincar\'{e} type inequality \eqref{wp} in Lemma \ref{lm.3} holds with $\rho=|\nabla u|^{p-2}$ for $p\geq2$.
\end{rem}

\section{Proof of Theorems \ref{th2.1} and \ref{th2.1-}: the sharp estimates on asymptotic behaviors of positive solutions}\label{sc3}

In subsections \ref{sc3.1}--\ref{sc3.3}, we will first carry out our proof of Theorem \ref{th2.1} and establish the sharp estimates on asymptotic behaviors of positive weak solutions to the generalized equation \eqref{geq}. In subsection \ref{sc3.4}, we will show that $V(x):=|x|^{-2p}\ast u^{p}$ satisfies all the assumptions on $V$ in Theorem \ref{th2.1} and hence derive Theorem \ref{th2.1-} as a consequence from Theorem \ref{th2.1} immediately.

\smallskip

Since the nonnegative weak solution $u$ of \eqref{eq1.1} belongs to $D^{1,p}(\R^N)$, it follows from the Hardy-Littlewood-Sobolev inequality that $V(x):=|x|^{-2p}*u^{p} \in L^\frac{N}{p}(\R^N)$. Therefore, in order to investigate the sharp asymptotic behavior of positive solution $u$ to \eqref{eq1.1} and $|\nabla u|$, we first consider the following more general equation:
\begin{align}\label{eq10.25.1}
\left\{ \begin{array}{ll}
-\Delta_p u =V(x) u^{p-1}  \,\,\,\,&\mbox{in}\, \R^N, \\ \\
u \in D^{1,p}(\R^N), \,\,\,\,u\geq0\,\,\,\,&  \mbox{in}\, \R^N,
\end{array}
\right.\hspace{1cm}
\end{align}
where $0\leq V(x)\in L^\frac{N}{p}(\R^N)$ is a nonnegative function.

\subsection{Local integrability, boundedness and regularity}\label{sc3.1}
Inspired by \cite{CDPSYS,ED83,PT}, we can prove the local integrability, boundedness, regularity and a better preliminary estimate on decay rate in the following Lemmas \ref{lm.8} and \ref{lm.5} via the De Giorgi-Moser-Nash iteration techniques, which goes back to e.g. \cite{TMOSER}.
\begin{lem}[Local integrability, boundedness and regularity]\label{lm.8}
Let $u$ be a nonnegative solution to the generalized equation \eqref{eq10.25.1} with $p>1$ and $0\leq V(x)\in L^\frac{N}{p}(\R^N)$. Then $u\in L_{loc}^{r}(\R^N)$ for any $0<r<+\infty$. If $V\in L^{q}_{loc}(\mathbb{R}^{N})$ for some $\frac{N}{p}<q<+\infty$ provided that $u\in L^{r}_{loc}(\mathbb{R}^{N})$ for any $0<r<+\infty$, then $u\in C(\R^N)\cap L^{\infty}_{loc}(\mathbb{R}^{N})$. If $V\in L^{q}_{loc}(\mathbb{R}^{N})$ for any $\frac{N}{p}<q<+\infty$ provided that $u\in C(\R^N)\cap L^{\infty}_{loc}(\mathbb{R}^{N})$, then $u \in C^{1,\alpha}(\R^N)$ for some $0<\alpha<\min\{1,\frac{1}{p-1}\}$.
\end{lem}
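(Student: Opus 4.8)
The plan is to run the De Giorgi–Moser–Nash iteration in three nested stages, exactly matching the three implications in the statement. For the first stage (local $L^r$ integrability for all finite $r$), I would fix a ball $B_{2R}(x_0)\subset\R^N$, a cutoff $\eta\in C_0^\infty(B_{2R}(x_0))$ with $\eta\equiv1$ on $B_R(x_0)$, and test the weak formulation with $\psi=\eta^p\, u\min\{u,T\}^{p(\gamma-1)}$ for a truncation level $T>0$ and an exponent $\gamma\ge1$ to be iterated. Expanding $|\nabla u|^{p-2}\langle\nabla u,\nabla\psi\rangle$ and absorbing the term with $\nabla\eta$ via Young's inequality, one obtains a Caccioppoli-type bound
\begin{equation*}
\int_{B_R(x_0)} |\nabla(\eta\, u\min\{u,T\}^{\gamma-1})|^p\,\mathrm{d}x \le C\gamma^p\!\left(\frac{1}{R^p}+\int_{B_{2R}(x_0)} V(x)\,(\text{truncated powers of }u)\,\mathrm{d}x\right).
\end{equation*}
The crucial point is handling $\int V u^{p}\min\{u,T\}^{p(\gamma-1)}$: since $V\in L^{N/p}_{loc}$ only, one splits $V=V\mathbf 1_{\{V\le M\}}+V\mathbf 1_{\{V>M\}}$ and uses the absolute continuity of the integral of $|V|^{N/p}$ to make $\|V\mathbf 1_{\{V>M\}}\|_{L^{N/p}(B_{2R}(x_0))}$ arbitrarily small; the small piece is absorbed into the left-hand side after Sobolev embedding $\|w\|_{L^{p^\star}}^p\le C\|\nabla w\|_{L^p}^p$ applied to $w=\eta\,u\min\{u,T\}^{\gamma-1}$, while the bounded piece $V\mathbf 1_{\{V\le M\}}\le M$ contributes a harmless lower-order term. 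Letting $T\to\infty$ (monotone convergence, justified because the right side is controlled once $u\in L^{p\gamma}_{loc}$) yields $u\in L^{p^\star\gamma/1}_{loc}$ — more precisely $\|u\|_{L^{\chi p\gamma}(B_R)}\le (C\gamma)^{1/\gamma}(\cdots)$ with $\chi=p^\star/p=N/(N-p)>1$ — and iterating $\gamma\mapsto\chi\gamma$ starting from $\gamma=1$ (where $u\in L^{p^\star}=L^{p\chi}_{loc}$ is given) reaches every finite exponent.

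For the second stage, assume now $V\in L^q_{loc}$ for some $q>N/p$ and $u\in L^r_{loc}$ for all $r<\infty$. Then by Hölder the forcing term $f:=Vu^{p-1}$ lies in $L^{q'}_{loc}$ for some $q'>N/p$ (choose the $L^r$ exponent for $u^{p-1}$ large enough). Now the right-hand side of the Caccioppoli estimate no longer needs the absorption trick: $\int f\,\eta^p u^{p(\gamma-1)+1}$ is estimated directly by Hölder, Sobolev, and the already-established high integrability of $u$, and the Moser iteration now genuinely gains, producing $\|u\|_{L^\infty(B_R(x_0))}\le C\|u\|_{L^{p^\star}(B_{2R}(x_0))}+C$ with $C$ depending on $\|V\|_{L^q(B_{2R}(x_0))}$. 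This is the standard local boundedness statement for $-\Delta_p u=f$ with $f\in L^{q'}$, $q'>N/p$; continuity then follows from the interior $C^{0,\alpha}$ De Giorgi–Nash–Moser/DiBenedetto regularity for $p$-Laplacian-type equations with bounded right-hand side once we know $u\in L^\infty_{loc}$.

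For the third stage, with $u\in C\cap L^\infty_{loc}$ and $V\in L^q_{loc}$ for every $q>N/p$, the forcing $f=Vu^{p-1}$ is locally in $L^q$ for all $q<\infty$ (since $u^{p-1}$ is now locally bounded), hence in particular $f\in L^q_{loc}$ with $q>N$; the interior $C^{1,\alpha}$ estimates of DiBenedetto \cite{ED83} and Tolksdorf \cite{PT} then give $u\in C^{1,\alpha}_{loc}(\R^N)$ for some $0<\alpha<\min\{1,\tfrac{1}{p-1}\}$, which is the claimed conclusion. The main obstacle is the base step: because $V$ is only in $L^{N/p}_{loc}$ — precisely the scaling-critical integrability — the very first Moser iteration does not gain any exponent for free, and one must exploit the absolute continuity of $\int|V|^{N/p}$ to split off a small tail and absorb it; once past this threshold (i.e. once $u$ has been bootstrapped to high local integrability and $f$ has supercritical integrability), the remaining two stages are routine subcritical Moser iteration and standard quasilinear regularity. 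A secondary technical point is the careful truncation $\min\{u,T\}$ with $T\to\infty$ needed at each step to make the test function admissible and the integrations by parts rigorous, together with checking that the constants $C\gamma^p$ from the iteration, when multiplied across infinitely many steps $\gamma_k=\chi^k$, converge — which they do since $\sum_k k\chi^{-k}<\infty$.
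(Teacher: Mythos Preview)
Your proposal is correct and follows essentially the same De Giorgi--Moser--Nash iteration strategy as the paper, including the identical treatment of the second and third stages via standard local boundedness and the DiBenedetto--Tolksdorf $C^{1,\alpha}$ estimates. The only real difference is in how you handle the critical $L^{N/p}$ integrability of $V$ at the base step: you split $V=V\mathbf{1}_{\{V\le M\}}+V\mathbf{1}_{\{V>M\}}$ and make the tail small by sending $M\to\infty$, whereas the paper instead shrinks the ball radius $R$ so that $\|V\|_{L^{N/p}(B(x_0,2R))}$ is below the absorption threshold (which also forces $R$ to depend on the iteration exponent, just as your $M$ does). Both devices exploit the same absolute continuity of $\int|V|^{N/p}$ and yield the same conclusion; neither gains anything the other does not.
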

\begin{proof}
Lemma \ref{lm.8} can be proved via the De Giorgi-Morse-Nash iteration method. Let $\Omega \subset\subset \R^N$ be any bounded domain. For any $q\geq1$ and $k>0$, let
$$F(t)=\left\{ \begin{array}{ll}
                t^q,\,\,\,\,&  \mbox{if}\,\, 0\leq t\leq k, \\
                qk^{q-1}t-(q-1)k^q,\,\,\,\,&  \mbox{if}\,\, t\geq k, \\
                  \end{array}
        \right.$$
and
$$G(t)=F(t)[F^\prime(t)]^{p-1}=\left\{ \begin{array}{ll}
                q^{p-1} t^{(q-1)p+1},\,\,\,\,&  \mbox{if}\,\, 0\leq t\leq k, \\
                q^{p-1} k^{(q-1)p} [qt-(q-1)k],\,\,\,\,&  \mbox{if}\,\, t\geq k. \\
                  \end{array}
        \right.$$
It is easy to see that $G$ is a piecewise $C^1$--function with only one corner at $t=k$ and
\begin{align}\label{eq10.28.1}
 G^\prime(t)=\left\{ \begin{array}{ll}
                [(q-1)p+1]q^{-1}[F^\prime (t)]^p,\,\,\,\,&  \mbox{if}\,\, 0\leq t\leq k, \\
                {[F^\prime(t)]}^p,\,\,\,\,&  \mbox{if}\,\, t>k, \\
                  \end{array}
        \right.
\end{align}
and
\begin{align}\label{eq10.28.2}
 t^{p-1} G(t)\leq q^{p-1}{[F(t)]}^p,
\end{align}
where $[(q-1)p+1]q^{-1}>1$ and $C$ is independent of $k$.

Now we choose $q>1$, %such that $qp \leq p^{\star}$,
and let
$$\xi=\eta^p G(u),$$
where $\eta\in C^\infty_0 (\Omega)$ to be determined later. Then the equation \eqref{eq10.25.1} yields
$$ \int_{\Omega} \eta^p G^\prime(u) |\nabla u|^p \mathrm{d}x + p\int_{\Omega} \eta^{p-1}| \nabla u|^{p-2} G(u) \nabla u \cdot \nabla \eta \mathrm{d}x = \int_{\Omega} V(x) u^{p-1} \eta^p G(u) \mathrm{d}x.$$
For any $\varepsilon\in(0,1)$, by direct computations and Young's inequality with $\varepsilon$, the previous identity becomes
\begin{align*}
&\quad \int_{\Omega} \eta^p [F^\prime(u)]^p |\nabla u|^p \mathrm{d}x \leq \int_{\Omega} \eta^p G^\prime(u) |\nabla u|^p \mathrm{d}x \\
&\leq p \int_{\Omega} \eta^{p-1}| \nabla u|^{p-1} [F^\prime(u)]^{p-1} F(u) |\nabla \eta| \mathrm{d}x + \int_{\Omega} V(x) u^{p-1} \eta^p G(u) \mathrm{d}x \\
&\leq \varepsilon \int_{\Omega} \eta^p [F^\prime(u)]^p |\nabla u|^p \mathrm{d}x + C_\varepsilon \int_{\Omega}  [F(u) |\nabla \eta|]^p \mathrm{d}x
+ q^{p-1}\int_{\Omega} V(x) [\eta F(u)]^p \mathrm{d}x \\
&\leq \varepsilon \int_{\Omega} \eta^p [F^\prime(u)]^p |\nabla u|^p \mathrm{d}x +  C_\varepsilon \int_{\Omega}  [F(u) |\nabla \eta|]^p \mathrm{d}x+ q^{p-1}\|V\|_{L^\frac{N}{p}(\Omega\cap supp(\eta))} \left(\int_{\Omega} [\eta F(u)]^{p^{\star}} \mathrm{d}x\right)^\frac{p}{p^{\star}},
\end{align*}
where we also have used \eqref{eq10.28.1} and \eqref{eq10.28.2}, as well as the definition of $G(u)$. Then, taking $\varepsilon=\frac{1}{2}$, we get
\begin{align*}
\int_{\Omega} \eta^p  [F^\prime(u)]^p |\nabla u|^p \mathrm{d}x \leq
C \int_{\Omega}  [F(u) |\nabla \eta|]^p \mathrm{d}x + q^{p-1}\|V\|_{L^\frac{N}{p}(\Omega\cap supp(\eta))} \left(\int_{\Omega} [\eta F(u)]^{p^{\star}} \mathrm{d}x\right)^\frac{p}{p^{\star}}.
\end{align*}
Using the Sobolev inequality, we can get
\begin{align}\label{eq10.28.3}
\left( \int_{\Omega} |\eta F(u)|^{p^{\star}}\mathrm{d}x  \right)^\frac{p}{p^{\star}} &\leq \int_{\Omega} |\eta \nabla F(u) + F(u) \nabla\eta|^p \mathrm{d}x \\
&\leq C_1 \int_{\Omega} \eta^p  [F^\prime(u)]^p |\nabla u|^p \mathrm{d}x + C_1 \int_{\Omega}  [F(u) |\nabla \eta|]^p \mathrm{d}x \nonumber\\
& \leq C \int_{\Omega}  [F(u) |\nabla \eta|]^p \mathrm{d}x + Cq^{p-1}\|V\|_{L^\frac{N}{p}(\Omega\cap supp(\eta))}\left(\int_{\Omega} [\eta F(u)]^{p^{\star}} \mathrm{d}x\right)^\frac{p}{p^{\star}}. \nonumber
\end{align}
Since $V(x)\in L^\frac{N}{p}(\R^N)$, for any $x_0\in\Omega$, we can choose $B(x_0,2R)\subsetneq\Omega$ with $0<R<1$ such that
$$C\|V\|_{L^\frac{N}{p}(B(x_0,2R))}<\frac{1}{2q^{p-1}}$$
and $supp(\eta) \subset B(x_0, 2R)$. From \eqref{eq10.28.3}, we get
\begin{align}\label{eq10.28.4}
\left( \int_{\Omega}  [\eta F(u)]^{p^{\star}} \mathrm{d}x  \right)^\frac{p}{p^{\star}}
& \leq C  \int_{\Omega}  [F(u) |\nabla \eta|]^p \mathrm{d}x.
\end{align}
Letting $k\to\infty$ and taking the limit, we conclude
\begin{align}\label{eq10.28.4}
\left( \int_{\Omega} \eta^{p^{\star}} u^{qp^{\star}} \mathrm{d}x  \right)^\frac{p}{p^{\star}}
& \leq C \int_{\Omega} |\nabla \eta|^p u^{qp} \mathrm{d}x.
\end{align}
for any $q>1$.  % with $q p \leq p^{\star}$.

Therefore, for any $R \leq h < h^\prime < 2R$, we can choose $\eta=1$ in $B(x_0,h)$, and $\eta=0$ in $\Omega\setminus B(x_0,h')$, $0\leq \eta \leq 1$ and $|\nabla\eta|<\frac{2}{h^\prime-h}$, and hence deduce from \eqref{eq10.28.4} that
\begin{align*}
\left( \int_{B(x_0,h)} u^{qp^{\star}} \mathrm{d}x  \right)^\frac{p}{p^{\star}}
\leq  \frac{C}{(h^\prime-h)^p} \int_{B(x_0,h^\prime)} u^{qp} \mathrm{d}x,\,\,\,\,\,\,\forall \,\,q>1,
\end{align*}
which implies
\begin{align}\label{eq10.28.5}
\left( \int_{B(x_0,h)} u^{qp^{\star}} \mathrm{d}x  \right)^\frac{1}{qp^{\star}}
\leq  \left( \frac{C}{h^\prime-h} \right)^\frac{1}{q} \left( \int_{B(x_0,h^\prime)} u^{qp} \mathrm{d}x  \right)^\frac{1}{qp},\,\,\,\,\,\,\forall \,\,q>1.
\end{align}

Let $\chi:=\frac{p^{\star}}{p}=\frac{N}{N-p}$, and define $r_{i}:=R+\frac{2R}{2^i}$ for $i=1,2,\cdots$. Then $r_{i}-r_{i+1}=\frac{R}{2^{i}}$. By taking $h^\prime=r_{i}$, $h=r_{i+1}$ and $q=\chi^i$ in \eqref{eq10.28.5}, we get
\begin{align}\label{eq10.28.6}
\left( \int_{B(x_0,r_{i+1})} u^{p\chi^{i+1}} \mathrm{d}x  \right)^\frac{1}{p\chi^{i+1}}
\leq  \left( \frac{C 2^{i}}{R} \right)^\frac{1}{\chi^i} \left( \int_{B(x_0,r_{i})} u^{p\chi^i} \mathrm{d}x  \right)^\frac{1}{p\chi^i}.
\end{align}
By iteration, we infer that
\begin{align}\label{eq10.28.7}
\left( \int_{B(x_0,r_{i+1})} u^{p\chi^{i+1}} \mathrm{d}x  \right)^\frac{1}{p\chi^{i+1}}
&\leq  2^\frac{i}{\chi^i} \left( \frac{C}{R} \right)^\frac{1}{\chi^i} \left( \int_{B(x_0,r_{i})} u^{p\chi^i} \mathrm{d}x  \right)^\frac{1}{p\chi^i} \\
&\leq  2^{\sum_{k=1}^i \frac{k}{\chi^k}} \left( \frac{C}{R} \right)^{\sum_{k=1}^i\frac{1}{\chi^k}} \left( \int_{B(x_0,2R)} u^{p^{\star}} \mathrm{d}x  \right)^\frac{1}{p^{\star}}. \nonumber
\end{align}
Noting that
$$ \sum_{k=1}^\infty \frac{k}{\chi^k} <\infty, $$
and
$$ \sum_{k=1}^\infty \frac{1}{\chi^k} =\frac{1}{\chi} \frac{1}{1-\frac{1}{\chi}} = \frac{N-p}{p}, $$
we can let $i$ sufficiently large (depending on $r$) in \eqref{eq10.28.7}, and obtain that, for any $p^{\ast}\leq r<+\infty$,
\begin{align}\label{eq10.28.9}
\|u\|_{L^r(B(x_0,R))}
\leq \frac{C}{R^\frac{N-p}{p}} \|u\|_{L^{p^{\star}}(B(x_0,2R))}
\leq \frac{C}{R^\frac{N-p}{p}} \|u\|_{L^{p^{\star}}(\R^N)}, \,\,\,\,\,\,\,\,\,\forall \,\, x_0\in\Omega,
\end{align}
where $R\rightarrow0$ as $r\rightarrow+\infty$, and the constant $C>0$ is independent of $r$ and $R$. So we obtain from finite coving theorem that
\begin{align}\label{eq10.28.10}
\|u\|_{L^r(\Omega)}
\leq C (|\Omega|) \, \|u\|_{L^{p^{\star}}(\R^N)}, \,\,\,\,\,\,\, \forall\,\, \Omega \subset\subset \R^N.
\end{align}
Thus $u\in L^{r}_{loc}(\mathbb{R}^{N})$ for any $0<r<+\infty$.

Due to $V\in L^{q}_{loc}(\mathbb{R}^{N})$ for some $\frac{N}{p}<q<+\infty$ provided that $u\in L^{r}_{loc}(\mathbb{R}^{N})$ for any $0<r<+\infty$, by using the local boundedness and regularity estimates (c.f. e.g. Chapter 4 in \cite{HL}, see also \cite{ED83,KM,PT,XCL}), we can deduce that $u\in C(\R^N)\cap L_{loc}^\infty(\R^N)$. Moreover, since $V\in L^{q}_{loc}(\mathbb{R}^{N})$ for any $\frac{N}{p}<q<+\infty$ provided that $u\in C(\R^N)\cap L^{\infty}_{loc}(\mathbb{R}^{N})$, by using the standard $C_{loc}^{1,\alpha}$-regularity estimates (c.f. e.g. Chapter 4 in \cite{HL}, see also \cite{ED83,KM,PT,XCL}), we can deduce that $u\in C_{loc}^{1,\alpha}(\R^N)$ for some $0<\alpha<\min\{1,\frac{1}{p-1}\}$. This completes our proof of Lemma \ref{lm.8}.
\end{proof}

\subsection{A better preliminary estimate on upper bound of asymptotic behavior}\label{sc3.2}

The following result provides a better preliminary decay estimate for solution of the generalized equation \eqref{eq10.25.1}, which is not sharp, but this estimate plays a key role in the proof of Theorem \ref{th2.1}.

\begin{lem}[A better preliminary estimate]\label{lm.5}
Let $u$ be a nonnegative solution to the generalized equation \eqref{eq10.25.1} with $p>1$ and $0\leq V(x)\in L^\frac{N}{p}(\R^N)$. Then, there exist $\tau>0$ and $\hat{p}\geq p^{\star}$ such that, for any $\bar{p}\geq \hat{p}$, there exists $R_{0}>0$ sufficiently large depending on $\bar{p}$ such that the following decay property holds:
\begin{equation}\label{a1+}
  \|u\|_{L^{\bar{p}}(\mathbb{R}^{N}\setminus B_{2R}(0))}\leq\frac{C}{R^{\frac{N-p}{p}+\tau}}, \qquad \forall \,\, R>R_{0},
\end{equation}
where the constant $C>0$ is independent of $\bar{p}$ and $R$. Assume that there exists $q>\frac{N}{p}$ and $\widetilde{R}>0$ large enough, such that the following uniform boundedness holds:
\begin{equation}\label{a2+}
  \|V\|_{L^{q}(B_{1}(x))}\leq \overline{C}, \qquad \forall \,\, |x|>2\widetilde{R},
\end{equation}
provided that the decay property \eqref{a1+} holds for sufficiently large $\bar{p}\geq \hat{p}$ and $R>R_{0}$, where the constant $\overline{C}>0$ is independent of $x$ and $\widetilde{R}$, then, there is a constant $C>0$ such that
$$ u(x) \leq\frac{C}{1+|x|^{\frac{N-p}{p}+\tau}} \quad \,\,\, \mbox{in}\,\, \R^N.$$
\end{lem}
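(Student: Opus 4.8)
The plan is to run a De Giorgi--Moser--Nash iteration ``at infinity'', i.e., on the exterior domains $\R^N\setminus B_R(0)$ rather than on bounded balls, keeping careful track of how the powers of $R$ accumulate through the iteration. First I would fix $x_0\in\R^N$ with $|x_0|$ large and work with cut-off functions $\eta$ supported in an annulus around $x_0$ (or, for the first estimate \eqref{a1+}, supported in a complement of a ball). Testing the weak formulation of \eqref{eq10.25.1} with $\xi=\eta^p G(u)$ exactly as in the proof of Lemma \ref{lm.8}, and splitting the potential term using H\"older with the $L^{N/p}$-norm of $V$ on the support of $\eta$, the key gain compared with Lemma \ref{lm.8} is that $\|V\|_{L^{N/p}(\R^N\setminus B_R)}\to 0$ as $R\to\infty$ since $V\in L^{N/p}(\R^N)$. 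This smallness lets me absorb the potential term on the right-hand side even with the $q^{p-1}$ factor in front, \emph{provided} $R$ is large enough depending on the current exponent $q$. Iterating the reverse-H\"older inequality
$$\Big(\int_{B(x_0,h)}u^{q p^\star}\Big)^{1/(qp^\star)}\le\Big(\frac{C}{h'-h}\Big)^{1/q}\Big(\int_{B(x_0,h')}u^{qp}\Big)^{1/(qp)}$$
over shrinking annuli and summing the geometric series $\sum k\chi^{-k}<\infty$ and $\sum\chi^{-k}=\frac{N-p}{p}$, I get, for every $\bar p\ge\hat p:=$ (whatever exponent makes the iteration start), a bound of the shape $\|u\|_{L^{\bar p}(B(x_0,1))}\le C\,\|u\|_{L^{p^\star}(\R^N\setminus B_{|x_0|/2})}$.

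Second, I would upgrade this into the decay statement \eqref{a1+}. The point is that $\|u\|_{L^{p^\star}(\R^N\setminus B_R)}\to 0$ as $R\to\infty$, and in fact we need a quantitative rate. Here I would feed the zeroth-order decay back into the nonlinear term: since $u\in D^{1,p}(\R^N)$, a covering of $\R^N\setminus B_{2R}$ by unit balls together with the local $L^{\bar p}$-$L^{p^\star}$ bound gives $\|u\|_{L^{\bar p}(\R^N\setminus B_{2R})}\lesssim\|u\|_{L^{p^\star}(\R^N\setminus B_R)}$, and then an energy-type argument (testing \eqref{eq10.25.1} against $u$ times a cut-off away from $B_R$, using the smallness of $\|V\|_{L^{N/p}(\R^N\setminus B_R)}$ to absorb, and iterating/bootstrapping a few times) produces the algebraic rate $\|u\|_{L^{p^\star}(\R^N\setminus B_R)}\le C R^{-\tau_0}$ for some $\tau_0>0$. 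Combining the $D^{1,p}$-Sobolev decay $\|u\|_{L^{p^\star}}=o(1)$ with one more iteration of the above exterior Caccioppoli inequality promotes this to $\|u\|_{L^{p^\star}(\R^N\setminus B_R)}\le C R^{-(N-p)/p}\cdot(\text{something}\to 0)$, from which \eqref{a1+} follows with $\tau>0$. (The precise value of $\tau$ is unimportant; only its positivity matters for the sequel.)

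Third, for the pointwise bound I would run a Moser-type $L^\infty$ estimate on unit balls $B_1(x)$ with $|x|>2\widetilde R$, now using hypothesis \eqref{a2+}: since $\|V\|_{L^q(B_1(x))}\le\overline C$ with $q>N/p$, the standard local boundedness estimate (as in Lemma \ref{lm.8}, Chapter 4 of \cite{HL}) gives
$$\sup_{B_{1/2}(x)}u\le C\,\|u\|_{L^{\bar p}(B_1(x))}$$
for a suitable fixed $\bar p\ge\hat p$, with $C$ depending only on $N,p,q,\overline C$ — crucially \emph{uniform} in $x$. Plugging in \eqref{a1+} with $R=|x|/4$ gives $u(x)\le\sup_{B_{1/2}(x)}u\le C|x|^{-\frac{N-p}{p}-\tau}$ for $|x|$ large, and combining with the already-established local boundedness $u\in L^\infty_{loc}(\R^N)$ yields $u(x)\le C(1+|x|^{\frac{N-p}{p}+\tau})^{-1}$ on all of $\R^N$.

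\textbf{Main obstacle.} The delicate point is the second step: extracting a \emph{quantitative algebraic} decay rate $R^{-\tau}$ rather than just the qualitative $o(1)$ that $u\in D^{1,p}(\R^N)$ gives for free, and doing so without the scaling/doubling-lemma machinery (which, as the introduction stresses, is unavailable here because of the nonlocal term). One has to make the exterior Caccioppoli/reverse-H\"older iteration ``self-improving'': each pass should convert a decay rate $R^{-\sigma}$ into $R^{-\sigma'}$ with $\sigma'>\sigma$ up to the natural threshold $\frac{N-p}{p}$, and the smallness $\|V\|_{L^{N/p}(\R^N\setminus B_R)}\to 0$ has to be quantified well enough (e.g.\ via the absolute continuity of the integral, possibly tracking it as a slowly varying factor) to close the bootstrap and still beat the threshold by a definite margin $\tau>0$. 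Managing the dependence of $R_0$ on $\bar p$ (the iteration start exponent $\hat p$ and the radii must be chosen compatibly) is the bookkeeping part of this obstacle.
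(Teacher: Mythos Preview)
Your three-step outline matches the paper's architecture, and your Step~3 (local boundedness on unit balls using \eqref{a2+}, then plugging in \eqref{a1+}) is exactly what the paper does. Step~1 is also close, though note that the paper does \emph{not} cover $\R^N\setminus B_{2R}$ by unit balls; it runs the Moser iteration directly on a nested family of exterior domains $\R^N\setminus B_{r_i}$ with $r_i\nearrow 2R$, using a single cut-off $\xi$ vanishing on $B_{r_i}$ and equal to $1$ outside $B_{r_{i+1}}$. This yields
\[
\|u\|_{L^{\bar p}(\R^N\setminus B_{2R})}\le \frac{C}{R^{\frac{N-p}{p}-o_{\bar p}(1)}}\,\|u\|_{L^{p^\star}(\R^N\setminus B_R)}
\]
directly, with no covering loss. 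Your unit-ball covering, as written, would sum infinitely many identical bounds and diverge unless each local estimate is genuinely local (ball-to-ball), which you did not quite say.

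The real gap is Step~2, which you yourself flag as the main obstacle. Your description (``self-improving bootstrap converting $R^{-\sigma}$ into $R^{-\sigma'}$'', ``tracking a slowly varying factor'') is vague and does not contain the key mechanism. The paper's device is a clean \emph{hole-filling} argument, not a bootstrap: test \eqref{eq10.25.1} with $\xi^p u$, where $\xi=0$ on $B_R$ and $\xi=1$ outside $B_{2R}$, use Sobolev on $\xi u$ and absorb the $V$-term by smallness of $\|V\|_{L^{N/p}(\R^N\setminus B_R)}$, to obtain
\[
\|u\|_{L^{p^\star}(\R^N\setminus B_{2R})}^{p^\star}\le C\int_{B_{2R}\setminus B_R}|\nabla\xi|^p u^p\le C\,\|u\|_{L^{p^\star}(B_{2R}\setminus B_R)}^{p^\star}.
\]
Rewriting the right-hand side as $C\big(\|u\|_{L^{p^\star}(\R^N\setminus B_R)}^{p^\star}-\|u\|_{L^{p^\star}(\R^N\setminus B_{2R})}^{p^\star}\big)$ and moving the second term to the left gives $\Psi(2R)\le\rho\,\Psi(R)$ with $\rho=\big(\tfrac{C}{C+1}\big)^{1/p^\star}<1$, where $\Psi(R):=\|u\|_{L^{p^\star}(\R^N\setminus B_R)}$. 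Iterating on dyadic scales yields $\Psi(R)\le C R^{-\tau}$ with $\tau=\log_2(1/\rho)>0$. This is the missing idea: a single Caccioppoli-type inequality on the annulus, plus the ``add back and divide'' hole-filling trick, gives the algebraic rate in one stroke. Once you have it, combining with the Step~1 estimate and choosing $\hat p$ large enough that $o_{\bar p}(1)<\tau/2$ gives \eqref{a1+}.
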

\begin{proof}
%The result is known. For completeness and to apply it in the following, we give a proof of this Lemma.
Let $R_0>0$ large (to be determined later) and $q>1$. For any $R>r>R_0$, take $\xi\in C^2(\R^N)$, with $\xi=0$ in $B_r(0)$, $\xi=1$ in $\mathbb{R}^{N}\setminus B_R(0)$, $0 \leq \xi\leq 1$, and $|\nabla \xi| \leq \frac{2}{(R-r)}$. Let $\eta=\xi^p u^{1+p(q-1)}$. The generalized equation \eqref{eq10.25.1} implies that
$$\int_{\R^N} |\nabla u|^{p-2} \nabla u \nabla\eta \mathrm{d}x=\int_{\R^N} V(x) u^{p-1} \eta \mathrm{d}x.$$
First, by H\"{o}lder's inequality, one has
\begin{align}\label{eq10.26.1}
\int_{\R^N} V(x) u^{p-1} \eta \mathrm{d}x \leq \left( \int_{\R^N\setminus B_{R_0}} |V|^\frac{N}{p} \mathrm{d}x  \right)^\frac{p}{N}  \left( \int_{\R^N \setminus B_{R_0}} |\xi u^q|^{p^{\star}} \mathrm{d}x  \right)^\frac{p}{p^{\star}}.
\end{align}
On the other hand,
\begin{align}\label{eq10.26.2}
&\quad \int_{\R^N}|\nabla u|^{p-2} \nabla u \nabla\eta \mathrm{d}x\\
           &= (1+p(q-1))\int_{\R^N \setminus B_{R_0}} |\nabla u|^p \xi^p u^{p(q-1)}  \mathrm{d}x  + p \int_{\R^N \setminus B_{R_0}} \xi^{p-1} u^{1+p(q-1)} |\nabla u|^{p-2} \nabla u \nabla\xi \mathrm{d}x \nonumber\\
           &\geq \frac{C_1}{q^{p-1}} \int_{\R^N} |\nabla(\xi u^q)|^p \mathrm{d}x - C\int_{\R^N} |\nabla \xi|^p u^{pq} \mathrm{d}x \nonumber\\
           &\geq \frac{C_2}{q^{p-1}} \left(\int_{\R^N \setminus B_{R_0}} |\xi u^q|^{p^{\star}} \mathrm{d}x \right)^\frac{p}{p^{\star}} - C\int_{\R^N \setminus B_{R_0}} |\nabla \xi|^{p}u^{pq} \mathrm{d}x. \nonumber
\end{align}
Combining \eqref{eq10.26.1} with \eqref{eq10.26.2}, we have
\begin{align}\label{eq10.26.3}
&\frac{C_2}{q^{p-1}} \left(\int_{\R^N \setminus B_{R_0}} |\xi u^q|^{p^{\star}} \mathrm{d}x \right)^\frac{p}{p^{\star}} - C\int_{\R^N \setminus B_{R_0}} |\nabla \xi|^p u^{pq} \mathrm{d}x \\
&\leq \left( \int_{\R^N\setminus B_{R_0}} |V|^\frac{N}{p} \mathrm{d}x  \right)^\frac{p}{N}  \left( \int_{\R^N \setminus B_{R_0}} |\xi u^q|^{p^{\star}} \mathrm{d}x  \right)^\frac{p}{p^{\star}}. \nonumber
\end{align}
Take $R_0$ sufficiently large such that
$$\left( \int_{\R^N\setminus B_{R_0}} |V|^\frac{N}{p} \mathrm{d}x  \right)^\frac{p}{N} \leq \frac{C_2}{2q^{p-1}},$$
we obtain that
\begin{align}\label{eq10.26.4}
\left(\int_{\R^N \setminus B_{R_0}} |\xi u^q|^{p^{\star}} \mathrm{d}x \right)^\frac{p}{p^{\star}} \leq C q^{p-1} \int_{\R^N \setminus B_{R_0}} |\nabla \xi|^p u^{pq} \mathrm{d}x,
\end{align}
which implies
\begin{align}\label{eq10.26.8}
\left(\int_{\R^N \setminus B_{R}} |u|^{qp^{\star}} \mathrm{d}x \right)^\frac{1}{q p^{\star}} \leq C q^\frac{p-1}{pq} \left(\frac{2}{R-r}\right)^\frac{1}{q} \left(\int_{\R^N \setminus B_{r}} u^{pq} \mathrm{d}x\right)^\frac{1}{pq},
\end{align}
for any $R>r>R_0$.

Let $\chi=\frac{p^{\star}}{p}=\frac{N}{N-p}>1$. For any $R>R_0$, define $r_i = 2R - \frac{2R}{2^i}$ for $i=1,2,\cdots$. Then $r_{i+1}-r_i= \frac{2R}{2^i} -\frac{2R}{2^{i+1}}=\frac{R}{2^i}$. Taking $R=r_{i+1}$, $r=r_i$ and $q=\chi^i$ in \eqref{eq10.26.8}, we derive
\begin{align}\label{eq10.26.5}
\left(\int_{\R^N \setminus B_{r_{i+1}}} |u|^{\chi^{i+1} p} \mathrm{d}x \right)^\frac{1}{\chi^{i+1} p}
& \leq C (\chi^i)^\frac{p-1}{p\chi^i} \left(\frac{2}{r_{i+1}-r_i}\right)^\frac{1}{\chi^i} \left(\int_{\R^N \setminus B_{r_i}} u^{p \chi^i} \mathrm{d}x\right)^\frac{1}{p\chi^i} \\
& \leq C \chi^{ \frac{p-1}{p} \frac{i}{\chi^i} } \left(\frac{2^i}{R}\right)^\frac{1}{\chi^i} \left(\int_{\R^N \setminus B_{r_i}} u^{p \chi^i} \mathrm{d}x\right)^\frac{1}{p\chi^i} \nonumber
\end{align}
for any $R>R_0$. By iteration, we can obtain from \eqref{eq10.26.5} that
\begin{align}\label{eq10.26.6}
\left(\int_{\R^N \setminus B_{r_{i+1}}} |u|^{\chi^{i+1} p} \mathrm{d}x \right)^\frac{1}{\chi^{i+1} p}
& \leq C \prod^i_{k=1} \left[ \chi^{ \frac{p-1}{p} \frac{k}{\chi^k} } \left(\frac{2^k}{R}\right)^\frac{1}{\chi^k} \right] \left(\int_{\R^N \setminus B_{R}} u^{p \chi} \mathrm{d}x\right)^\frac{1}{p\chi} \\
& \leq C   \left( 2 \chi^\frac{p-1}{p}\right)^{\sum\limits^i_{k=1}\frac{k}{\chi^k} } \left(\frac{1}{R}\right)^{\sum\limits^i_{k=1}\frac{1}{\chi^k}}
\left(\int_{\R^N \setminus B_{R}} u^{p^{\star}} \mathrm{d}x\right)^\frac{1}{p^{\star}} \nonumber
\end{align}
for any $R>R_0$. Since
$$\sum_{i=1}^\infty \frac{k}{\chi^k} < + \infty \qquad \text{and} \qquad \sum_{k=1}^i \frac{1}{\chi^k} < \sum_{k=1}^\infty \frac{1}{\chi^k}=\frac{1}{\chi}\frac{1}{1-\chi}=\frac{N-p}{p},$$
we have proved that for any $\overline{p}>p^{\star}$, there is a $R_0>0$ large, depending on $\overline{p}$, such that
\begin{align}\label{eq10.26.7}
\| u \|_{L^{\overline{p}}(\R^N \setminus B_{2R})} \leq \frac{C}{R^{ \frac{N-p}{p}-o_{\overline{p}}(1)} }
\| u \|_{L^{p^{\star}}(\R^N \setminus B_{R})},\,\quad\,\,\, \,\forall \,\, R>R_0,
\end{align}
where $o_{\overline{p}}(1)\to 0$ as $\overline{p} \to \infty$.

\smallskip

Next we estimate $\| u \|_{L^{p^{\star}}(\R^N \setminus B_{R})}$.

Take $\eta=\xi^p u$, where $\xi$ satisfies $\xi=0$ in $B_R$, $\xi=1$ in $\R^N \setminus B_{2R}$, $0\leq \xi\leq 1$, and $|\nabla \xi| \leq 2R^{-1}$. Then, similar to \eqref{eq10.26.4}, we can get
\begin{align}\label{eq10.26.9}
\left(\int_{\R^N } |\xi u|^{p^{\star}} \mathrm{d}x \right)^\frac{1}{p^{\star}}
&\leq C  \left( \int_{\R^N} |\nabla \xi|^p u^{p} \mathrm{d}x \right)^\frac{1}{p} \\
&= C \left( \int_{B_{2R}\setminus B_R} |\nabla \xi|^p u^{p} \mathrm{d}x \right)^\frac{1}{p} \leq C \left( \frac{1}{R^p} \int_{B_{2R}\setminus B_R} u^{p} \mathrm{d}x \right)^\frac{1}{p} \nonumber\\
& \leq C \left[\frac{1}{R^p}\left(\int_{B_{2R}\setminus B_R} 1 \mathrm{d}x\right)^\frac{p}{N}\left(\int_{B_{2R}\setminus B_R} u^{p\frac{N}{N-p}} \mathrm{d}x\right)^\frac{N-p}{N} \right]^\frac{1}{p} \nonumber\\
&=C \left(\int_{B_{2R}\setminus B_R} u^{p^{\star}} \mathrm{d}x \right)^\frac{1}{p^{\star}}. \nonumber
\end{align}
Thus, we have
\begin{align}\label{eq10.26.10}
\int_{\R^N \setminus B_{2R}} |u|^{p^{\star}} \mathrm{d}x
&\leq \int_{\R^N } |\xi u|^{p^{\star}} \mathrm{d}x \\
&\leq C \int_{B_{2R}\setminus B_R} u^{p^{\star}} \mathrm{d}x = C \int_{\R^N \setminus B_R} u^{p^{\star}} \mathrm{d}x - C \int_{\R^N \setminus B_{2R}} u^{p^{\star}} \mathrm{d}x, \nonumber
\end{align}
which implies
\begin{align}\label{eq10.26.11}
\int_{\R^N \setminus B_{2R}} u^{p^{\star}} \mathrm{d}x
&\leq  \frac{C}{C+1} \int_{\R^N \setminus B_R} u^{p^{\star}} \mathrm{d}x.
\end{align}

Set $0<\rho:=\frac{C}{C+1}<1$, and let $\Psi(R)=\| u \|_{L^{p^{\star}}(\R^N \setminus B_R)}$. So by \eqref{eq10.26.11},
\begin{align}\label{eq10.26.12}
\Psi(2R)\leq \rho\Psi(R),\,\,\quad \forall \,\, R\geq R_0,
\end{align}
from which we get
\begin{align}\label{eq10.26.13}
\Psi(2^k R_0)\leq \rho^k \Psi(R_0).
\end{align}
For any $|x|\geq R_0$, there is a $k\in \N^+$, such that
$$ 2^k R_0 \leq |x| \leq 2^{k+1} R_0,$$
which combined with the definition of $\Psi(R)$ and \eqref{eq10.26.13} yield that
$$ \Psi(|x|) \leq \Psi(2^k R_0)\leq \rho^k \Psi(R_0) \leq \rho^{\log_2 |x|-\log_2 (2R_{0})} \Psi(R_0).$$
Since
$$ \rho^{\log_2 |x|}= 2^{\log_2\rho \cdot \log_2 |x|} = |x|^{\log_2\rho},$$
we obtain
$$ \Psi(|x|) \leq \frac{C}{|x|^{\log_2 \frac{1}{\rho}}},$$
where $\log_2 \frac{1}{\rho}>0$. Thus, we have shown that there exists a $\tau:=\log_2 \frac{1}{\rho}>0$ independent of $\overline{p}>p^{\star}$ such that
$$ \Psi(|x|) \leq \frac{C}{|x|^\tau}.$$

By choosing $\hat{p}>p^{\star}$ sufficiently large such that $o_{\overline{p}}(1)<\frac{\tau}{2}$ for any $\bar{p}\geq\hat{p}$, we obtain from \eqref{eq10.26.7} that
\begin{align}\label{eq10.26.17}
\| u \|_{L^{\overline{p}}(\R^N \setminus B_{2R}(0))} \leq \frac{C}{R^{ \frac{N-p}{p}+\frac{\tau}{2}}},\,\quad\,\forall \,\,R>R_0,
\end{align}
where the constant $C>0$ is independent of $\bar{p}$ and $R$. That is, the decay property in \eqref{a1+} holds. Consequently, the uniform boundedness in \eqref{a2+} holds, i.e., there exists $q>\frac{N}{p}$ and $\widetilde{R}>0$ large enough, such that
\begin{equation*}
  \|V\|_{L^{q}(B_{1}(x))}\leq \overline{C}, \qquad \forall \,\, |x|>2\widetilde{R},
\end{equation*}
where the constant $\overline{C}>0$ is independent of $x$ and $\widetilde{R}$. Therefore, for any $x$ such that $|x|=4R$ with $R>\max\{R_0,\widetilde{R}\}>1$, by the local boundedness estimates (c.f. e.g. Chapter 4 in \cite{HL}, see also \cite{ED83,KM,PT,XCL}) and the decay property in \eqref{eq10.26.17}, we have, for any fixed $\bar{p}\geq\hat{p}$ large enough,
$$ |u(x)|\leq \max_{y\in B_{\frac{1}{2}}(x)} |u(y)| \leq \widetilde{C}\|u\|_{L^{\overline{p}}(B_1(x))}\leq \widetilde{C}\|u\|_{L^{\overline{p}}(\R^N \setminus B_{2R}(0))} \leq \frac{\widetilde{C}C}{R^{ \frac{N-p}{p}+\frac{\tau}{2}}},$$
where the constants $C$ and $\widetilde{C}=\widetilde{C}(N,p,\overline{C})$ are independent of $x$ and $R$. Combining this with Lemma \ref{lm.8} finish our proof of Lemma \ref{lm.5}.
\end{proof}

\subsection{Completion of the proof of Theorem \ref{th2.1}}\label{sc3.3}

For any nonnegative weak solution $u$ of the generalized equation \eqref{eq10.25.1}, we have proved in Lemmas \ref{lm.8} and \ref{lm.5} that $u\in C^{1,\alpha}(\mathbb{R}^{N})\cap L^\infty(\R^N)$ and $u$ satisfies a better preliminary estimate under appropriate assumptions on $V$. Moreover, it follows from the strong maximum principle (see Lemma \ref{hopf}) that any nonnegative nontrivial solution of \eqref{eq1.1} is actually strictly positive provided that $V\in L^{\infty}_{loc}(\mathbb{R}^{N})$.

\smallskip

Now, we are ready to prove the sharp asymptotic estimates on any positive weak solution $u$ to the generalized equation \eqref{eq10.25.1} and $|\nabla u|$ in Theorem \ref{th2.1}. The proof of the sharp asymptotic estimates in Theorem \ref{th2.1} will be divided into the following three steps.

\medskip

\noindent{\bf Step 1.} We will show that there exists some constants $c_0,C_0>0$ such that
\begin{align}\label{eq10.26.57}
\frac{c_0}{1+|x|^\frac{N-p}{p-1}} \leq u(x) \leq \frac{C_0}{1+|x|^\frac{N-p}{p-1}} \,\,\,\,\,\,\,\,\mbox{in}\,\,\R^N.
\end{align}

We will show \eqref{eq10.26.57} by applying the following two key Lemmas \ref{estimate1}-\ref{estimate2}.
 \begin{lem}[Theorem 1.5 in \cite{XCL}]\label{estimate1}
Let $\Omega$ be an exterior domain in $\R^N$ such that $\Omega^c=\R^N\setminus \Omega$ is bounded and
function $V(x) \in L^\frac{N}{p}(\Omega)$
satisfying $V(x) \leq C_{V} |x|^{-\beta}$ in $\Omega$ for some positive constants $C_{V}$ and $\beta>p$. If $u\in D^{1,p}(\Omega)$ is a weak subsolution to
$$-\Delta_p u =V(x) u^{p-1}  \,\,\,\,\,\,\, \mbox{in}\,\, \Omega$$
with $1<p<N$, then there exists a positive constant $C=C(N,p,\beta,C_{V})>0$ such that
$$ u(x)\leq C M |x|^\frac{N-p}{p-1}\,\,\,\,\,\,\,\,\mbox{in}\,\,\, \R^N\setminus B_{R_1}(0),$$
where $M = \sup\limits_{\partial B_{R_1}(0)} u^+ $ and $R_1 > 1$ is a constant depending on $N, p, C_{V}, \beta$.
\end{lem}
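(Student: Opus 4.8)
The plan is to show that near infinity $u$ is dominated by a suitable multiple of the $p$-harmonic fundamental solution $\mathcal{E}(x)=|x|^{-\frac{N-p}{p-1}}$ of $-\Delta_p$ on $\R^N\setminus\{0\}$, the key structural point being that the right-hand side $V(x)u^{p-1}$ is an honest perturbation of $0$ because $V$ decays strictly faster than $|x|^{-p}$. First I would record the preliminary facts. Since $V\in L^{\frac Np}$ and, for $|x|$ large, $\|V\|_{L^{N/p}(B_1(x))}\lesssim|x|^{-\beta}\to 0$, the De Giorgi--Moser--Nash iteration behind Lemmas \ref{lm.8} and \ref{lm.5} gives that $u$ is bounded on $\R^N\setminus B_{R_1}(0)$ and in fact $u(x)\le C|x|^{-\gamma_0}$ for some $\gamma_0>0$; the hypotheses $u\in D^{1,p}$ and $V\in L^{N/p}$ are also what make the weak comparison principle for $-\Delta_p$ usable on the exterior domain (finite energy and decay at infinity). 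In particular, the boundedness of $u$ turns the subsolution inequality into the pointwise bound $-\Delta_p u\le C_V\|u\|_{L^\infty}^{p-1}|x|^{-\beta}$ in $\R^N\setminus B_{R_1}(0)$.

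The core is a one-sided barrier. A direct computation gives, for $a>0$,
$$-\Delta_p\big(|x|^{-a}\big)=a^{p-1}\big(N-p-a(p-1)\big)\,|x|^{-a(p-1)-p},$$
which vanishes exactly at $a=\tfrac{N-p}{p-1}$ and is positive for $a<\tfrac{N-p}{p-1}$. Given a forcing bound $-\Delta_p u\le C''|x|^{-b}$ with $b>p$, I would build a supersolution of the form $\Phi_{A,B}(x)=A|x|^{-\frac{N-p}{p-1}}+B|x|^{-\mu}$, choosing $\mu$ so that the $\Phi_{A,B}$-term carrying the dominant gradient contributes a positive $-\Delta_p$ of order $|x|^{-b'}$ with $b'\le b$: in the range $\mu<\tfrac{N-p}{p-1}$ one may take $\mu=\min\big\{\tfrac{b-p}{p-1},\tfrac{N-p}{p-1}-\delta\big\}$ and the second term dominates the gradient, while once the forcing is fast enough one switches to $\mu>\tfrac{N-p}{p-1}$ (so that the $p$-harmonic leading term dominates the gradient and a linearization of $-\Delta_p$ around it shows the correction still contributes the right sign). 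Taking $A\simeq MR_1^{\frac{N-p}{p-1}}$, $B\simeq MR_1^{\mu}$ with $M=\sup_{\partial B_{R_1}}u$ ensures $u\le\Phi_{A,B}$ on $\partial B_{R_1}$ and both functions vanish at infinity, so the weak comparison principle yields $u(x)\le A|x|^{-\frac{N-p}{p-1}}+B|x|^{-\mu}$ in $\R^N\setminus B_{R_1}(0)$.

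Finally I would iterate to upgrade the decay. If $u\le C|x|^{-\gamma_k}$ then $Vu^{p-1}\le C|x|^{-\beta-\gamma_k(p-1)}$, so one barrier step with $b=\beta+\gamma_k(p-1)$ improves the correction exponent to $\gamma_{k+1}=\gamma_k+\tfrac{\beta-p}{p-1}$; since $\beta>p$ the increment $\tfrac{\beta-p}{p-1}$ is a fixed positive number, so after finitely many steps the forcing decays fast enough that a final comparison against $\Phi_{A,B}$ with a correction term of strictly higher order than $\mathcal{E}$ gives $u(x)\le CM|x|^{-\frac{N-p}{p-1}}$ in $\R^N\setminus B_{R_1}(0)$, with $C=C(N,p,\beta,C_V)$ and linear dependence on $M$ (since $M$ is carried along unchanged through each comparison). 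I expect the main obstacle to be the barrier construction: because $-\Delta_p$ is nonlinear one cannot simply add a supersolution of $-\Delta_p w=0$ to a supersolution of $-\Delta_p w=C''|x|^{-b}$, so the cross terms coming from the interaction of the two scales in $\Phi_{A,B}$ must be controlled, and this is precisely where largeness of $R_1$ (i.e. large $|x|$), the strict inequality $\beta>p$, and a short case analysis on the position of $\beta$ relative to $N$ (and on the sign of $p-2$) enter. An alternative that avoids the two-scale barrier is to run the comparison on dyadic annuli $B_{2^{k+1}R_1}\setminus B_{2^kR_1}$ against the explicit radial $p$-harmonic function matching $u$ on the two bounding spheres, plus a single-term correction, and derive a geometric recursion for $m_k:=\sup_{\partial B_{2^kR_1}}u$; I would fall back on this if the two-scale estimates become unwieldy.
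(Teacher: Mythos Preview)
The paper does not give its own proof of this lemma: it is quoted verbatim as Theorem~1.5 of \cite{XCL} and used as a black box in Step~1 of the proof of Theorem~\ref{th2.1}. So there is no in-paper argument to compare against; what one should compare with is Xiang's original proof in \cite{XCL}.

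Your proposal follows the standard route for such exterior upper decay estimates and is in the same spirit as \cite{XCL}: an explicit radial barrier built from the fundamental solution $|x|^{-\frac{N-p}{p-1}}$ plus a correction, together with comparison on $\R^N\setminus B_{R_1}$ and an iteration that exploits $\beta>p$ to gain a fixed increment $\frac{\beta-p}{p-1}$ in the decay exponent at each step. Your identity for $-\Delta_p(|x|^{-a})$ is correct, and you rightly isolate the only genuinely delicate point: because $-\Delta_p$ is nonlinear, $\Phi_{A,B}=A|x|^{-\frac{N-p}{p-1}}+B|x|^{-\mu}$ is not automatically a supersolution just because each summand is, so one must compute $-\Delta_p\Phi_{A,B}$ directly (it is radial, so this is an ODE computation) and check that for $|x|\ge R_1$ large enough the result dominates $C_V|x|^{-\beta}\Phi_{A,B}^{p-1}$. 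Two small caveats: first, invoking Lemmas~\ref{lm.8}--\ref{lm.5} of the present paper for the preliminary boundedness is circular in spirit, since those lemmas live in $\R^N$ rather than in an exterior domain and the paper is in fact using \cite{XCL} to pass from the preliminary decay to the sharp one --- it would be cleaner to obtain the initial $L^\infty$ bound by a direct Moser iteration on large annuli using only $V\in L^{N/p}$ and $V\le C_V|x|^{-\beta}$; second, the comparison principle step needs a word on why the boundary condition at infinity is satisfied (both $u$ and $\Phi_{A,B}$ tend to $0$, and $u\in D^{1,p}$ suffices to justify the test-function argument). With those points tidied up, your outline gives a correct proof.
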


\begin{lem}[Theorem 2.3 in \cite{SJZH02}]\label{estimate2}
Let $u$ be a weak solution of
$$-\Delta_p u \geq 0 \qquad \mbox{in}\,\, \R^N\setminus \Omega$$
with $\Omega$ compact and $1<p<N$. Then there exist positive constants $m = \inf\limits_{\partial (\R^N\setminus\Omega)} u$ and $C = \inf\limits_{\partial (\R^N\setminus\Omega)} |x|^\frac{N-p}{p-1}$ such that
$$ u(x)\geq C m |x|^\frac{N-p}{p-1} \qquad \mbox{in}\,\,\,\, \R^N\setminus\Omega.$$
%
%Let $\Omega$ be an exterior domain in $\R^N$ such that $\Omega^c=\R^N\setminus \Omega$ is bounded. If
%$u$
%is a positive weak solution of
%$$-\Delta_p u \geq 0 \,\,\,\,\mbox{in}\, \Omega$$
%with $1<p<N$,
%then
%$$ u(x)\geq C M |x|^\frac{N-p}{p-1}\,\,\,\,\mbox{in}\,\,\, \Omega,$$
%where $m = \inf_{\partial \Omega} u$ and $C = \inf_{\partial \Omega} |x|^\frac{N-p}{p-1}$.
\end{lem}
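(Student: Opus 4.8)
The plan is to bound $u$ from below by explicit radial $p$-harmonic barriers built from the fundamental solution $|x|^{-\frac{N-p}{p-1}}$ of $-\Delta_p$, and to deal with the non-compactness of the exterior domain by an exhaustion argument so that a weak comparison principle can be used on bounded pieces. As in \cite{SJZH02}, $u$ is taken to be a \emph{nonnegative} weak supersolution, i.e. $u\ge0$ and $\int_{\R^N\setminus\Omega}|\nabla u|^{p-2}\nabla u\cdot\nabla\psi\,\mathrm{d}x\ge0$ for all $0\le\psi\in C_0^\infty(\R^N\setminus\Omega)$.

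First I would normalize the geometry. Since $\Omega$ is compact and $\partial(\R^N\setminus\Omega)=\partial\Omega$, put $m:=\inf_{\partial\Omega}u>0$, $\gamma:=\frac{N-p}{p-1}>0$ and $R_0:=\inf_{\partial\Omega}|x|$, so that $C=\inf_{\partial\Omega}|x|^{\gamma}=R_0^{\gamma}$. Assuming (without loss of generality, and automatically when $\Omega$ is a ball centered at the origin, which is the case in which this lemma is applied) that $0\in\Omega$ and $B_{R_0}(0)\subseteq\Omega$, we get $\R^N\setminus\Omega\subseteq\{|x|\ge R_0\}$; the general compact case reduces to this on the unbounded component of $\R^N\setminus\Omega$. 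A direct computation shows $|x|^{-\gamma}$ is $p$-harmonic in $\R^N\setminus\{0\}$, and since the gradient is unchanged by adding a constant, so is $a|x|^{-\gamma}+b$ for every $a>0$, $b\in\R$. Hence, for $R>R_0$,
\[
\phi_R(x):=m\,\frac{|x|^{-\gamma}-R^{-\gamma}}{R_0^{-\gamma}-R^{-\gamma}}
\]
is $p$-harmonic on $\{R_0<|x|<R\}$, equals $m$ on $\{|x|=R_0\}$ and vanishes on $\{|x|=R\}$.

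The core step is a comparison on the bounded domain $\Omega_R:=B_R(0)\setminus\overline{\Omega}$, whose boundary lies in $\partial\Omega\cup\partial B_R$. On $\partial\Omega$ we have $|x|\ge R_0$, hence $|x|^{-\gamma}\le R_0^{-\gamma}$ and $\phi_R\le m\le u$; on $\partial B_R$ we have $\phi_R=0\le u$. So $(\phi_R-u)^+\in W^{1,p}_0(\Omega_R)$, and testing the equation for $\phi_R$ and the supersolution inequality for $u$ against $\psi=(\phi_R-u)^+$, then subtracting, gives
\[
\int_{\{\phi_R>u\}}\big(|\nabla\phi_R|^{p-2}\nabla\phi_R-|\nabla u|^{p-2}\nabla u\big)\cdot\nabla(\phi_R-u)\,\mathrm{d}x\le0 .
\]
Since the integrand is pointwise nonnegative by the monotonicity inequality \eqref{eq0802} in Lemma \ref{BSICI}, it vanishes, so $\nabla(\phi_R-u)^+\equiv0$, hence $(\phi_R-u)^+\equiv0$ and $u\ge\phi_R$ in $\Omega_R$. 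Fixing $x\in\R^N\setminus\Omega$ and letting $R\to\infty$, one checks $\phi_R(x)\uparrow mR_0^{\gamma}|x|^{-\gamma}$, whence
\[
u(x)\ge mR_0^{\gamma}|x|^{-\gamma}=Cm\,|x|^{-\frac{N-p}{p-1}},\qquad x\in\R^N\setminus\Omega ,
\]
which is the asserted estimate.

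The step I expect to be the genuine obstacle — and the reason this is recorded as a theorem rather than dispatched in one line — is carrying out the comparison rigorously over the \emph{unbounded} domain. Weak sub/super solutions are only $W^{1,p}_{\mathrm{loc}}$, so they need not be continuous up to $\partial\Omega$; the boundary ordering therefore has to be read in the $W^{1,p}$-trace sense in order to legitimately form $(\phi_R-u)^+\in W^{1,p}_0(\Omega_R)$, and the passage $R\to\infty$ must be controlled. Here the monotone convergence $\phi_R\uparrow mR_0^{\gamma}|x|^{-\gamma}$ makes the limit painless, but it uses the sign condition $u\ge0$ on $\partial B_R$ in an essential way — that is, the standing hypothesis that $u$ is a \emph{nonnegative} supersolution. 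The remaining ingredient, the weak comparison principle for $-\Delta_p$ on $\Omega_R$, is classical once both functions lie in $W^{1,p}(\Omega_R)$ and are ordered on $\partial\Omega_R$, and reduces, as above, to the elementary pointwise inequality \eqref{eq0802}.
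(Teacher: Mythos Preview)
The paper does not give its own proof of this lemma; it is simply quoted from \cite{SJZH02} (Theorem 2.3 there) and used as a black box to obtain the sharp lower bound in \eqref{eq10.26.57}. Your barrier argument --- comparing $u$ with the explicit $p$-harmonic function $\phi_R$ built from the fundamental solution $|x|^{-\gamma}$ on the annulus $B_R\setminus\overline{\Omega}$, using the weak comparison principle via the monotonicity inequality \eqref{eq0802}, and then letting $R\to\infty$ --- is correct and is precisely the standard route to this estimate. Two minor remarks: first, the exponent in the conclusion of the lemma as stated should read $|x|^{-\frac{N-p}{p-1}}$ (a decay bound), which is what you obtain and what is actually used in the paper; second, your normalization $B_{R_0}(0)\subseteq\Omega$ is not a consequence of $\Omega$ being compact alone, but as you note it holds in the only case where the lemma is applied (namely $\Omega=\overline{B_{R_1}(0)}$), so this is harmless.
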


By the better preliminary estimate in Lemma \ref{lm.5} and the assumption ``$V(x) \leq C_V |x|^{-\beta}$ for some $C_V>0$, $\beta>p$ and $|x|$ large provided that $u(x)\leq C|x|^{-\gamma}$ for some $C>0$, $\gamma>\frac{N-p}{p}$ and $|x|$ large" on $V(x)$ in Theorem \ref{th2.1}, one has $V(x) \leq C_{V} |x|^{-\beta}$ for some $C_{V}>0$, $\beta>p$ and $|x|$ large. Thus we deduce the desired sharp upper bound in \eqref{eq10.26.57} from Lemma \ref{estimate1} and the sharp lower bound in \eqref{eq10.26.57} from Lemma \ref{estimate2}, respectively. This proves the sharp asymptotic estimates \eqref{eq0806} for $u$ in Theorem \ref{th2.1}.
%$$\frac{c_0}{1+|x|^\frac{N-p}{p-1}} \leq u(x) \leq \frac{C_0}{1+|x|^\frac{N-p}{p-1}},\,\,\,\,\,\,\mbox{in}\,\R^N.$$

\medskip

Next, in Steps 2 and 3, we will prove the sharp asymptotic estimates \eqref{eq0806+} for $|\nabla u|$ in Theorem \ref{th2.1} by using the arguments in \cite{BS16,VJ16}.

\smallskip

\noindent{\bf Step 2.}
We are to show that, for some constant $C_0>0$,
$$ |\nabla u(x)| \leq \frac{C_0}{|x|^\frac{N-1}{p-1}}\,\,\,\,\,\,\,\,\mbox{for}\,\,|x|>R_0\geq 1.$$

For any $R>0$ and $y\in \R^N$, we define
$$ u_R (y): =R^\frac{N-p}{p-1}u(Ry).$$
By the generalized equation \eqref{geq}, we deduce that
\begin{align}\label{eq2901}
-\Delta_p u_R(y)
=-R^N \Delta_p u(x)=R^p V(Ry)  u^{p-1}_R(y) \,\,\,\,\,\,\,\mbox{in}\,\,\R^N.
\end{align}
It follows from $V(x) \leq C_{V} |x|^{-\beta}$ with $\beta>p$ that
\begin{align}\label{eq2902}
-\Delta_p u_R(y)
&=R^p V(Ry) u^{p-1}_R(y) \leq \frac{C_{V}}{R^{\beta-p} |y|^{\beta}} u^{p-1}_R(y) \, \,\,\,\,\,\,\,\mbox{for}\,\,R>R_0\,\,\,\mbox{and}\,\,|y|>1,
\end{align}
which implies that
\begin{align}\label{eq2903}
-\Delta_p u_R(y)
\leq C_{V} u^{p-1}_R(y) \,\,\,\,\,\,\,\,\,\,\mbox{in}\,\,\R^N\setminus B_1.
\end{align}

On the other hand, according to \eqref{eq10.26.57}, we have, for any $R\geq R_0$,
\begin{align}\label{eq2905}
u_R (y) =R^\frac{N-p}{p-1} u(Ry)\leq C_0 R^\frac{N-p}{p-1} |Ry|^{-\frac{N-p}{p-1}}= C_0 |y|^{-\frac{N-p}{p-1}} \leq C_0 \,\,\,\,\,\,\mbox{in}\,\,\R^N\setminus B_1.
\end{align}
By \eqref{eq2903}--\eqref{eq2905} and the regularity estimates in DiBenedetto \cite{ED83} and Tolksdorf \cite{PT}, we have
\begin{align}\label{eq2906}
\|\nabla u_R\|_{L^\infty (B(0,4) \setminus B(0,2))}\leq C_1
\end{align}
for some constant $C_1>0$ depending on $C_{0}$. Finally, for any $x\in\R^N\setminus B(0,3R)$, by applying \eqref{eq2906} with $R=\frac{|x|}{3}$ and $y\in B(0,4) \setminus B(0,2)$, we obtain
\begin{align}\label{eq2907}
|\nabla_x u(x)|= R^\frac{1-N}{p-1} |\nabla_y u_R(y) | \leq C_1 |x|^\frac{1-N}{p-1}\,\,\,\,\,\,\,\mbox{for}\,\,|x|\geq 3 R_0.
\end{align}

\smallskip

\noindent{\bf Step 3.}
We will show that, for some constant $c_0$,
$$ |\nabla u(x)| \geq \frac{c_0}{|x|^\frac{N-1}{p-1}} \quad \,\,\mbox{for}\,\,|x|>R_0.$$

The proof will be carried out by contradiction arguments. We assume on the contrary that there exist sequences of radii $R_n$ and points $x_n\in\R^N$ with $R_n\to\infty$ as $n\to\infty$ and $|x_n|=R_n$, such that
\begin{align}\label{eq2908}
|\nabla u(x_n)|\leq \frac{\theta_n}{R_n^\frac{N-1}{p-1}}
\end{align}
with $\theta_n\to 0$ as $n\to\infty$. For $0<a<A$ fixed, we set
$$ u_{R_n} (x): =R_n^\frac{N-p}{p-1}   u(R_n  x), \qquad \forall \,\, x\in\overline{B_A \setminus B_a}.$$
It follows from \eqref{eq10.26.57} that for any $n$ large enough such that $|R_n x|\geq R_{n}a>R_0$, we have
\begin{align}\label{eq2910}
\frac{c_0}{|x|^\frac{N-p}{p-1}}   \leq    u_{R_n} (x)=R_n^\frac{N-p}{p-1} u(R_n  x) \leq \frac{C_0}{|x|^\frac{N-p}{p-1}},%\,\,\,\,\,\mbox{in}\,\,\overline{B_A \setminus B_a},
\end{align}
which implies that
\begin{align}\label{eq2911}
\frac{c_0}{A^\frac{N-p}{p-1}}   \leq    u_{R_n} (x) \leq \frac{C_0}{a^\frac{N-p}{p-1}}\,\,\,\,\,\,\,\,\mbox{in}\,\,\overline{B_A \setminus B_a},
\end{align}
and in particular,
\begin{align}\label{eq2912}
u_{R_n} (x) \leq \frac{C_0}{A^\frac{N-p}{p-1}}\,\,\,\,\,&\mbox{on}\,\,\partial B_A, \\
u_{R_n} (x) \geq \frac{c_0}{a^\frac{N-p}{p-1}}\,\,\,\,\,&\mbox{on}\,\,\partial B_a. \nonumber
\end{align}

On the other hand, similar to \eqref{eq2902}, we obtain
\begin{align}\label{eq2913}
0\leq-\Delta_p u_{R_n} (x) \leq \frac{C_{V}}{a^{\beta} {R_n}^{\beta-p}} u^{p-1}_{R_{n}}(x) \leq C_{V} u^{p-1}_{R_n}(x) \,\,\,\,\,\,\,\mbox{in}\,\,\overline{B_A \setminus B_a}
\end{align}
for any $n$ large enough such that $|R_n x|\geq R_{n}a>R_0$ and $a^{\beta} {R_n}^{\beta-p}\geq1$. Therefore, from \eqref{eq2913} and \eqref{eq2911}, by the regularity results in \cite{PT}, we get that
$$ \|u_{R_n}\|_{C^{1,\alpha} (K)}<+\infty \,\,\,\,\quad \mbox{for}\,\,0<\alpha<1$$
for any compact set $K\subset B_A \setminus B_a$. Without loss of generality, we may redefine $a$ and $A$ and assume that the $C^{1,\alpha}$ estimates hold in $\overline{B_A \setminus B_a}$. Hence, up to subsequences, we have
\begin{align}\label{eq2914}
u_{R_n} (x) \to  u_{a,A}(x),\,\quad \mbox{as}\,\,n\to\infty \,\,\,\,\,\quad\,\, \mbox{in}\,\,C^{1,\alpha^\prime}(B_A \setminus B_a)
\end{align}
for $0<\alpha^\prime<\alpha$. From \eqref{eq2913}, we deduce that
\begin{align}\label{eq2915}
-\Delta_p u_{a,A}(x)=0\,\,\,\,\,\,\,\mbox{in}\,\,B_A \setminus \overline{B_a}.
\end{align}
Now, for every $j\in\N^+$, let $a_j=\frac{1}{j}$ and $A_j=j$ and we constructs $u_{a_j,A_j}$ by reasoning as above. Then, as $j\to\infty$, a diagonal argument implies that there exist a limiting profile $u_\infty$ such that
$$ u_\infty \equiv u_{a_j,A_j} \,\,\,\,\,\,\,\mbox{in}\,\, B_{A_j} \setminus B_{a_j}, \quad \forall \,\,j\in\N^+$$
In particular,
\begin{align}\label{eq2916}
-\Delta_p u_\infty (x)=0\,\,\,\,\,\,\,\,\mbox{in}\,\,\R^N \setminus \{ 0 \}.
\end{align}
From \eqref{eq2912}, which holds with $a=a_j$ and $A=A_j$, it follows that
$$ \lim_{|x|\to\infty} u_\infty (x)=0\,\,\,\,\quad\mbox{and}\,\,\,\,\quad\lim_{|x|\to 0} u_\infty (x)= \infty.$$
Thus it follows from the uniqueness up to multipliers of $p$-harmonic maps in $\R^N\setminus \{0\}$ under suitable conditions at zero and at infinity in \cite[Theorem 2.1]{BS16} that
$$ u_\infty (x)= \frac{C}{|x|^\frac{N-p}{p-1}}$$
for some constants $C$.
Let the sequence $x_n$ be the same as in \eqref{eq2908} and set $y_n=\frac{x_n}{R_n}$. Then, by \eqref{eq2908}, it follows that
$$|\nabla u_{R_n}(y_n)| ={R_n}^\frac{N-1}{p-1} |\nabla u(x_n)|  \leq \theta_n \to 0,\quad\,\,\,\,\mbox{as}\,\,n\to\infty.$$
Since $|y_n|=1$, up to subsequences, we have that $y_n\to\overline{y} \in \partial B_1$.
Consequently, by the uniform convergence of the gradient indicated by \eqref{eq2914}, one has
$$ \nabla u_\infty(\overline{y})=0,$$
which is a contradiction since the fundamental solution $u_{\infty}$ has no critical points. This establishes the sharp asymptotic estimates for $|\nabla u|$ and hence concludes our proof of Theorem \ref{th2.1}.

\subsection{Proof of Theorem \ref{th2.1-}}\label{sc3.4}
Theorem \ref{th2.1-} is an immediate consequence of Theorem \ref{th2.1}. We only need to verify that $V(x):=|x|^{-2p}\ast u^{p}$ satisfies all the assumptions on $V$ in Theorem \ref{th2.1}.

\medskip

Indeed, first, if $u\in L^{r}_{loc}(\mathbb{R}^{N})$ for any $0<r<+\infty$, then we can deduce from the Hardy-Littlewood-Sobolev inequality and the decay property in \eqref{a1} that, for any $q>\frac{N}{p}$ large enough, there exists $\bar{p}\geq\hat{p}\geq p^{\star}$ such that, for any $R>R_{0}$,
\begin{eqnarray}\label{1}
  && \|V\|_{L^{q}(B_{R}(0))}\leq C\|u\|^{p}_{L^{\bar{p}}(\mathbb{R^{N}})}\leq C\left\{\|u\|^{p}_{L^{\bar{p}}(B_{2R}(0))}+\|u\|^{p}_{L^{\bar{p}}(\mathbb{R}^{N}\setminus B_{2R}(0))}\right\} \\
 \nonumber  && \qquad\qquad\quad\,\,\, \leq C\left\{\|u\|^{p}_{L^{\bar{p}}(B_{2R}(0))}+\left(\frac{C}{R^{\frac{N-p}{p}+\tau}}\right)^{p}\right\}<+\infty,
\end{eqnarray}
that is, $V\in L^{q}_{loc}(\mathbb{R}^{N})$ for any $\frac{N}{p}<q<+\infty$.

\medskip

Second, assume that the decay property \eqref{a1} holds for sufficiently large $\bar{p}\geq \hat{p}$ and $R>R_{0}>1$, then by the Hardy-Littlewood-Sobolev inequality and the H\"{o}lder's inequality, we have, for any given $q>\frac{N}{p}$ large enough and $\widetilde{R}=4R_{0}$,
\begin{eqnarray}\label{2}
  &&\quad \|V\|_{L^{q}(B_{1}(x))}\leq \||x|^{-2p}\ast (u^{p}\chi_{\mathbb{R}^{N}\setminus B_{4R_{0}}(0)})\|_{L^{q}(B_{1}(x))}+\||x|^{-2p}\ast (u^{p}\chi_{B_{4R_{0}}(0)})\|_{L^{q}(B_{1}(x))} \\
 \nonumber && \leq C\|u\|^{p}_{L^{\bar{p}}(\mathbb{R}^{N}\setminus B_{4R_{0}}(0))}+\left(|B_{1}(0)|\right)^{\frac{1}{q}}R_{0}^{-2p}\|u\|^{p}_{L^{p}(B_{4R_{0}}(0))} \\
 \nonumber  && \leq C\left\{\left(\frac{C}{(2R_{0})^{\frac{N-p}{p}+\tau}}\right)^{p}+R_{0}^{-p}\|u\|^{p}_{L^{p^{\star}}(\mathbb{R}^{N})}\right\}\leq CR_{0}^{-p}\leq C, \qquad\quad \forall\,\, |x|>2\widetilde{R},
\end{eqnarray}
where the constant $C>0$ is independent of $x$ and $\widetilde{R}$. That is, the uniform boundedness in \eqref{a2} holds.

\medskip

Finally, assume that $u(x)\leq C|x|^{-\gamma}$ for some $C>0$, $\gamma>\frac{N-p}{p}$ and $|x|$ large (actually, the better preliminary estimate $u(x)\leq\frac{C}{1+|x|^{\frac{N-p}{p}+\tau}}$ with $\tau>0$ in Lemma \ref{lm.5} indicates that this assumption holds for $\gamma=\frac{N-p}{p}+\tau$), then the basic estimates on the convolution $V(x)=|x|^{-2p}\ast u^{p}$ in (ii) of Lemma \ref{lm.6} (with $\sigma=2p$ and $q=p$) implies that, for $|x|>R$ large,
\begin{equation}\label{V}
  V(x)=|x|^{-2p}\ast u^{p}\leq C_{\star}|x|^{-\beta} \qquad \text{with} \qquad \beta:=p\gamma+2p-N=p+\tau p>p.
\end{equation}
Thus Theorem \ref{th2.1} can be applied to equation \eqref{eq1.1} and hence positive weak solution $u$ to \eqref{eq1.1} satisfies the regularity result and the sharp asymptotic estimates \eqref{eq0806} and \eqref{eq0806+} in Theorem \ref{th2.1}. This finishes our proof of Theorem \ref{th2.1-}.

\section{Proof of Theorem \ref{th2}: radial symmetry and monotonicity of solutions}\label{sc4}

Thanks to the sharp asymptotic estimates \eqref{eq0806} and \eqref{eq0806+} in Theorems \ref{th2.1} and \ref{th2.1-}, we are ready to prove Theorem \ref{th2}. In this section, using the method of moving planes, we will show the radial symmetry and strictly radial monotonicity of positive $D^{1,p}$-weak solutions to the $D^{1,p}$-critical nonlocal quasi-linear Schr\"{o}dinger-Hartree equation \eqref{eq1.1} by discussing the singular elliptic case $1<p<2$ and the degenerate elliptic case $p>2$ separately. So we will divide this section into two sub-sections.

\medskip

Before carrying out the moving planes technique to prove Theorem \ref{th2}, we need some standard notations.

\smallskip

Let $\nu$ be any direction in $\R^N$, i.e. $\nu\in \R^N$ and $|\nu|=1$, for arbitrary $\lambda\in\R$, we define
$$ \Sigma_\lambda^\nu : =\{ x\in\R^N : x \cdot \nu < \lambda \}, $$
$$ T_\lambda^\nu : =\{ x\in\R^N : x \cdot \nu = \lambda \},$$
$$ u^\nu_\lambda (x) := u(x^\nu_\lambda), \quad\,\,\,\,\forall \,\, x\in\R^N,$$
and define the reflection $R^\nu_\lambda$ w.r.t. the hyperplane $T^\nu_\lambda$ by
$$ x_\lambda^\nu : =R^\nu_\lambda(x)= x + 2(\lambda-x\cdot \nu)\nu,\quad\,\,\,\,\forall \,\, x\in\R^N.$$
We denote by $Z_u$ the critical set of $u$ defined by $Z_u=\{ x\in\R^N \mid |\nabla u (x)|=0 \}$ and define
\begin{equation}\label{Z}
  Z_\lambda^\nu :=\{ x\in\R^N \mid |\nabla u (x)|=|\nabla u_\lambda^\nu (x)|=0 \} \subseteq  Z_u.
\end{equation}

\smallskip

Next, we show the following basic property of $V(x)=|x|^{-2p}\ast u^p(x)$.
\begin{lem}\label{lm.1}
For any direction $\nu$ in $\R^N$, one has
\begin{align}\label{eq0805}
V(x)-V(x^{\nu}_\lambda)&=\int_{\R^N} \frac{u^p(y)}{|x-y|^{2p}}\mathrm{d}y-\int_{\R^N} \frac{u^p(y)}{|x^{\nu}_\lambda-y|^{2p}}\mathrm{d}y \\
&=\int_{\Sigma^{\nu}_\lambda}\left(\frac{1}{|x-y|^{2p}}-\frac{1}{|x^{\nu}_\lambda-y|^{2p}}\right)\left[u^p(y)-(u^{\nu}_\lambda)^p(y)\right]\mathrm{d}y. \nonumber
\end{align}
\end{lem}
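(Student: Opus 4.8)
The plan is a direct computation that exploits the fact that the reflection $R^\nu_\lambda$ is an isometry of $\R^N$ with Jacobian of absolute value $1$. Since $V(x)=\int_{\R^N}\frac{u^p(y)}{|x-y|^{2p}}\mathrm{d}y$ has a nonnegative integrand and is finite for a.e.\ $x$ by the Hardy--Littlewood--Sobolev inequality \eqref{conv} (indeed for every $x$ once the regularity of Theorem \ref{th2.1-} gives $V\in L^\infty_{loc}$), all the integral manipulations below are legitimate by monotone convergence / Tonelli for nonnegative functions. First I would split the domain of integration along the hyperplane $T^\nu_\lambda$:
\[
  V(x)=\int_{\Sigma^\nu_\lambda}\frac{u^p(y)}{|x-y|^{2p}}\mathrm{d}y+\int_{\R^N\setminus\Sigma^\nu_\lambda}\frac{u^p(y)}{|x-y|^{2p}}\mathrm{d}y .
\]
In the second term I would change variables $y\mapsto y^\nu_\lambda=R^\nu_\lambda(y)$, which maps $\R^N\setminus\Sigma^\nu_\lambda$ onto $\Sigma^\nu_\lambda$ with unit Jacobian. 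Because $R^\nu_\lambda$ preserves distances and is an involution, $|x-y^\nu_\lambda|=|R^\nu_\lambda(x)-R^\nu_\lambda(y^\nu_\lambda)|=|x^\nu_\lambda-y|$, while $u^p(y^\nu_\lambda)=(u^\nu_\lambda)^p(y)$ by the very definition of $u^\nu_\lambda$. Hence the second integral equals $\int_{\Sigma^\nu_\lambda}\frac{(u^\nu_\lambda)^p(y)}{|x^\nu_\lambda-y|^{2p}}\mathrm{d}y$, so that $V(x)=\int_{\Sigma^\nu_\lambda}\bigl(\frac{u^p(y)}{|x-y|^{2p}}+\frac{(u^\nu_\lambda)^p(y)}{|x^\nu_\lambda-y|^{2p}}\bigr)\mathrm{d}y$.

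Next I would apply exactly the same two steps to $V(x^\nu_\lambda)=\int_{\R^N}\frac{u^p(y)}{|x^\nu_\lambda-y|^{2p}}\mathrm{d}y$: splitting along $T^\nu_\lambda$ and running the change of variables $y\mapsto y^\nu_\lambda$ on the piece over $\R^N\setminus\Sigma^\nu_\lambda$, where this time $|x^\nu_\lambda-y^\nu_\lambda|=|x-y|$, yields $V(x^\nu_\lambda)=\int_{\Sigma^\nu_\lambda}\bigl(\frac{u^p(y)}{|x^\nu_\lambda-y|^{2p}}+\frac{(u^\nu_\lambda)^p(y)}{|x-y|^{2p}}\bigr)\mathrm{d}y$. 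Subtracting the two identities and grouping the terms carrying $u^p(y)$ against those carrying $(u^\nu_\lambda)^p(y)$ produces
\[
  V(x)-V(x^\nu_\lambda)=\int_{\Sigma^\nu_\lambda}\left(\frac{1}{|x-y|^{2p}}-\frac{1}{|x^\nu_\lambda-y|^{2p}}\right)\bigl[u^p(y)-(u^\nu_\lambda)^p(y)\bigr]\mathrm{d}y ,
\]
which is precisely \eqref{eq0805}.

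There is no genuine obstacle here; the only point requiring a word of care is the justification of the change of variables and the rearrangement of the integrals, which is immediate since the integrands are nonnegative and $V(x)$, $V(x^\nu_\lambda)$ are finite. It is worth recording two features visible from the argument that will matter in the moving planes proof: the identity holds verbatim with $u^p$ replaced by any nonnegative measurable weight, and the geometric kernel $\frac{1}{|x-y|^{2p}}-\frac{1}{|x^\nu_\lambda-y|^{2p}}$ is nonnegative whenever $x,y\in\Sigma^\nu_\lambda$, because then $|x-y|\le|x^\nu_\lambda-y|$ (reflecting $x$ across $T^\nu_\lambda$ moves it strictly farther from any $y$ on the same side). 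This sign information is exactly what makes \eqref{eq0805} effective when combined with the comparison principles of Section \ref{sc2}.
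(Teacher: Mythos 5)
Your proof is correct and follows essentially the same route as the paper: split the integral along $T^\nu_\lambda$, change variables via the reflection $R^\nu_\lambda$ on the half-space $\R^N\setminus\Sigma^\nu_\lambda$, and subtract the resulting decompositions of $V(x)$ and $V(x^\nu_\lambda)$. Your added remarks on the Tonelli justification and the sign of the kernel $\frac{1}{|x-y|^{2p}}-\frac{1}{|x^\nu_\lambda-y|^{2p}}$ for $x,y\in\Sigma^\nu_\lambda$ are accurate and indeed are what make the identity useful in the moving-plane step, but they do not change the argument.
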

\begin{proof}
Since $|x-y^{\nu}_\lambda|=|x^{\nu}_\lambda-y|$ and $|x-y|=|x^{\nu}_\lambda-y^{\nu}_\lambda|$, we have
\begin{align*}
V(x)=\int_{\R^N} \frac{u^p(y)}{|x-y|^{2p}}\mathrm{d}y
&=\int_{\Sigma^{\nu}_\lambda} \frac{u^p(y)}{|x-y|^{2p}}\mathrm{d}y+\int_{\R^N \setminus \Sigma^{\nu}_\lambda} \frac{u^p(y)}{|x-y|^{2p}}\mathrm{d}y \nonumber\\
&=\int_{\Sigma^{\nu}_\lambda} \frac{u^p(y)}{|x-y|^{2p}}\mathrm{d}y+\int_{\Sigma^{\nu}_\lambda} \frac{\left(u^{\nu}_\lambda\right)^p(y)}{|x^{\nu}_\lambda-y|^{2p}}\mathrm{d}y,
\end{align*}
and
\begin{align*}
V(x^{\nu}_\lambda)=\int_{\R^N} \frac{u^p(y)}{|x^{\nu}_\lambda-y|^{2p}}\mathrm{d}y
&=\int_{\Sigma^{\nu}_\lambda} \frac{u^p(y)}{|x^{\nu}_\lambda-y|^{2p}}\mathrm{d}y+\int_{\R^N \setminus \Sigma^{\nu}_\lambda} \frac{u^p(y)}{|x^{\nu}_\lambda-y|^{2p}}\mathrm{d}y \nonumber\\
&=\int_{\Sigma^{\nu}_\lambda} \frac{u^p(y)}{|x^{\nu}_\lambda-y|^{2p}}\mathrm{d}y+\int_{\Sigma^{\nu}_\lambda} \frac{\left(u^{\nu}_\lambda\right)^p(y)}{|x-y|^{2p}}\mathrm{d}y.
\end{align*}
Combining the above two formulae yields \eqref{eq0805}. This finishes the proof of Lemma \ref{lm.1}.
\end{proof}

\subsection{The degenerate elliptic case $2<p<\frac{N}{2}$}\label{sc4.1}

Since equation \eqref{eq1.1} is invariant under any rotations and translations, we may fix $\nu=e_1=(1,0,\cdots,0)$ and use the notations $\Sigma_\lambda: =\Sigma_\lambda^{e_1}$, $T_\lambda:=T_\lambda^{e_1}$ and $x_\lambda :=x_\lambda^{e_1}$ for the sake of simplicity.

Next, we define
$$ \Lambda:=\{ \lambda \in \R \mid u\leq u_\mu \,\,\, \mbox{in}\,\,\Sigma_\mu,\,\,\,\forall \,\, \mu \leq \lambda \},$$
and, if $\Lambda\neq\emptyset$, we set
$$\lambda_0 := \sup \Lambda.$$

Now we will prove the radial symmetry and strictly radial monotonicity in Theorem \ref{th2} for $2<p<\frac{N}{2}$.
\begin{proof}
We start moving planes from the left-hand side of the $e_1$-direction until its limiting position. The moving planes procedure consists of three main steps.

\smallskip

{\bf Step 1.} We will prove that $u\leq u_\lambda$ for $\lambda<0$ with $|\lambda|$ large, and hence $\Lambda\neq\emptyset$.

\smallskip

We may choose $(u-u_\lambda)^+ \chi_{\Sigma_\lambda} $ as the test function in the definition of $D^{1,p}(\R^N)$-weak solution of equation \eqref{eq1.1} (see Definition \ref{weak}) and get
$$ \int_{\Sigma_\lambda} |\nabla u|^{p-2} \langle \nabla u,\nabla(u-u_\lambda)^+ \rangle \mathrm{d}x= \int_{\Sigma_\lambda} V(x)u^{p-1} (u-u_\lambda)^+  \mathrm{d}x,$$
where $V(x):=|x|^{-2p}\ast u^{p}(x)$. Since $u_\lambda$ satisfies $\displaystyle -\Delta_p u_\lambda =\int_{\R^N} \frac{u^p(y)}{|x_\lambda-y|^{2p}}\mathrm{d}y \cdot u_\lambda^{p-1}$, analogously we have
$$ \int_{\Sigma_\lambda} |\nabla u_\lambda|^{p-2} \langle \nabla u_\lambda,\nabla(u-u_\lambda)^+ \rangle \mathrm{d}x= \int_{\Sigma_\lambda} V(x_\lambda)u_\lambda^{p-1} (u-u_\lambda)^+  \mathrm{d}x.$$
By subtracting between the above two formulae, we obtain
\begin{align*}
\int_{\Sigma_\lambda}  \langle  |\nabla u|^{p-2}\nabla u- |\nabla u_\lambda|^{p-2}\nabla u_\lambda,\nabla(u-u_\lambda)^+ \rangle \mathrm{d}x= \int_{\Sigma_\lambda} ( V(x)u^{p-1} - V(x_\lambda)u_\lambda^{p-1} ) (u-u_\lambda)^+  \mathrm{d}x,
\end{align*}
and, using \eqref{eq0802}, we arrive at
\begin{align}\label{eq0803}
\int_{\Sigma_\lambda} (|\nabla u|+|\nabla u_\lambda|)^{p-2} |\nabla(u-u_\lambda)^+|^2 \mathrm{d}x \leq \frac{1}{C'} \int_{\Sigma_\lambda} ( V(x)u^{p-1} - V(x_\lambda)u_\lambda^{p-1} ) (u-u_\lambda)^+  \mathrm{d}x.
\end{align}

\smallskip

Using Lemma \ref{lm.1}, the fact that $u^{p-1}$ is convex in $u$ and the fact that $u\geq u_\lambda$ in the support of $(u-u_\lambda)^+$, we get
\begin{align}\label{eq0804}
&\quad \int_{\Sigma_\lambda} ( V(x)u^{p-1} - V(x_\lambda)u_\lambda^{p-1} ) (u-u_\lambda)^+ \mathrm{d}x  \\
&= \int_{\Sigma_\lambda}  V(x) (u^{p-1} - u_\lambda^{p-1} ) (u-u_\lambda)^+ \mathrm{d}x + \int_{\Sigma_\lambda} ( V(x)- V(x_\lambda) ) u_\lambda^{p-1}   (u-u_\lambda)^+ \mathrm{d}x \nonumber\\
&\leq (p-1) \int_{\Sigma_\lambda}  V(x) u^{p-2} [(u-u_\lambda)^+]^2 \mathrm{d}x + \int_{\Sigma_\lambda} P_\lambda(x) u^{p-1}   (u-u_\lambda)^+ \mathrm{d}x\nonumber\\
&=: I_1 + I_2, \nonumber
\end{align}
where
$$ P_\lambda(x):= p\int_{\Sigma_\lambda} \frac{u^{p-1}(y) (u-u_\lambda)^+(y)}{|x-y|^{2p}} \mathrm{d}y.$$

First, we estimate $I_1$. From Theorems \ref{th2.1} and \ref{th2.1-}, and \eqref{V}, we obtain
\begin{align}\label{eq0808}
I_1 &= (p-1) \int_{\Sigma_\lambda}  V(x) u^{p-2} [(u-u_\lambda)^+]^2 \mathrm{d}x \\
    &\leq (p-1)\int_{\Sigma_\lambda}  \frac{C_{\star}}{|x|^{p+\tau p}} \cdot \frac{C_0^{p-2}}{|x|^{\frac{N-p}{p-1}(p-2)}} [(u-u_\lambda)^+]^2 \mathrm{d}x, \nonumber
\end{align}
where $\tau$ is the same as in Lemma \ref{lm.5} and \eqref{V}. Now, in order to apply the weighted Hardy--Sobolev inequality (Lemma \ref{HDSI}), we set
$$ s:=-\frac{N-1}{p-1}(p-2)\quad\,\,\mbox{and}\quad\,\,\beta_1:=p+\tau p + \frac{N-p}{p-1}(p-2) +s-2.$$
Notice that
$$ s=-\frac{N-1}{p-1}(p-2)=-\left(1-\frac{1}{p-1}\right)(N-1)>2-N$$
and
$$\beta_1= \tau p >0.$$
Now, we are in the position to apply a version of the weighted Hardy-Sobolev inequality in Lemma \ref{HDSI} for decaying functions. From \eqref{eq0808}, Lemma \ref{HDSI}, the sharp lower bound on $|\nabla u|$ in \eqref{eq0806+} of Theorems \ref{th2.1} and \ref{th2.1-}, and the fact that $|x|>|\lambda|$ in $\Sigma_\lambda$, we get
\begin{align}\label{eq0809}
I_1 &\leq (p-1) C_{\star} \cdot C_0^{p-2} \frac{1}{|\lambda|^{\beta_1}} \int_{\Sigma_\lambda} |x|^{s-2} [(u-u_\lambda)^+]^2 \mathrm{d}x \\
&\leq (p-1) C_{\star} \cdot C_0^{p-2} \frac{1}{|\lambda|^{\tau p}} \left( \frac{2}{N+s-2} \right)^2 \int_{\Sigma_\lambda} |x|^s |\nabla(u-u_\lambda)^+|^2 \mathrm{d}x \nonumber\\
&\leq (p-1) \frac{C_{\star} \cdot C_0^{p-2}}{c_0^{p-2}} \frac{1}{|\lambda|^{\tau p}} \left( \frac{2}{N+s-2} \right)^2 \int_{\Sigma_\lambda} |\nabla u|^{p-2} |\nabla(u-u_\lambda)^+|^2 \mathrm{d}x \nonumber\\
&\leq (p-1) \frac{C_{\star} \cdot C_0^{p-2}}{c_0^{p-2}} \frac{1}{|\lambda|^{\tau p}} \left( \frac{2}{N+s-2} \right)^2 \int_{\Sigma_\lambda} \left(|\nabla u| +|\nabla u_\lambda|\right)^{p-2} |\nabla(u-u_\lambda)^+|^2 \mathrm{d}x, \nonumber
\end{align}
where $c_0$ is given by Theorem \ref{th2.1}.

\smallskip

Now we estimate $I_2$. By Hardy-Littlewood-Sobolev inequality (Lemma \ref{HLSI}) and H\"{o}lder's inequality, we have
\begin{align}\label{eq0810}
I_2 &=\int_{\Sigma_\lambda} P_\lambda(x) u^{p-1}(x)   (u-u_\lambda)^+(x) \mathrm{d}x  \\
&=p \int_{\Sigma_\lambda} \int_{\Sigma_\lambda} \frac{ u^{p-1}(x) (u-u_\lambda)^+(x) u^{p-1}(y) (u-u_\lambda)^+(y)}{|x-y|^{2p}} \mathrm{d}x\mathrm{d}y\nonumber\\
&\leq p\cdot C || u^{p-1}\cdot (u-u_\lambda)^+ ||^2_{L^\frac{N}{N-p}(\Sigma_\lambda)} \nonumber\\
&= p\cdot C || u^{\frac{p(N-2p)}{2(N-p)}}\cdot u^{\frac{p^{\star}-2}{2}} (u-u_\lambda)^+ ||^2_{L^\frac{N}{N-p}(\Sigma_\lambda)} \nonumber\\
&\leq p\cdot C \left( \int_{\Sigma_\lambda} u^{p^{\star}}\mathrm{d}x \right)^{\frac{N-2p}{N}}  \cdot  \int_{\Sigma_\lambda} u^{p^{\star}-2} [(u-u_\lambda)^+]^2 \mathrm{d}x  \nonumber\\
&\leq p\cdot C \left( \int_{\R^N} u^{p^{\star}}\mathrm{d}x \right)^{\frac{N-2p}{N}}  \cdot  \int_{\Sigma_\lambda} u^{p^{\star}-2} [(u-u_\lambda)^+]^2 \mathrm{d}x. \nonumber
\end{align}
Next, let $\beta_2:=(p^{\star}-2)\frac{N-p}{p-1}+s-2>0$, and similar to the estimates \eqref{eq0808} and \eqref{eq0809} of $I_1$, we have
\begin{align}\label{eq0901}
&\quad I_2 \leq p\cdot C \|u\|_{L^{p^{\star}}(\R^N)}^\frac{p(N-2p)}{N-p} \cdot  \int_{\Sigma_\lambda} u^{p^{\star}-2} [(u-u_\lambda)^+]^2 \mathrm{d}x \\
&\leq  p\cdot C \|u\|_{L^{p^{\star}}(\R^N)}^\frac{p(N-2p)}{N-p} C_0^{p^{\star}-2} \cdot \int_{\Sigma_\lambda} \frac{1}{|x|^{(p^{\star}-2)\frac{N-p}{p-1}}} [(u-u_\lambda)^+]^2 \mathrm{d}x \nonumber\\
&\leq  p\cdot C \|u\|_{L^{p^{\star}}(\R^N)}^\frac{p(N-2p)}{N-p} C_0^{p^{\star}-2} \frac{1}{|\lambda|^{\beta_2}} \cdot \int_{\Sigma_\lambda}  |x|^{s-2} [(u-u_\lambda)^+]^2 \mathrm{d}x \nonumber\\
&\leq  p\cdot C \|u\|_{L^{p^{\star}}(\R^N)}^\frac{p(N-2p)}{N-p} C_0^{p^{\star}-2} \frac{1}{|\lambda|^{\beta_2}}\left( \frac{2}{N+s-2} \right)^2 \cdot \int_{\Sigma_\lambda}  |x|^s |\nabla(u-u_\lambda)^+|^2 \mathrm{d}x \nonumber\\
&\leq \frac{ p\cdot C \cdot C_0^{p^{\star}-2}}{c_0^{p^{\star}-2}} \|u\|_{L^{p^{\star}}(\R^N)}^\frac{p(N-2p)}{N-p}  \frac{1}{|\lambda|^{\beta_2}}\left( \frac{2}{N+s-2} \right)^2 \cdot \int_{\Sigma_\lambda}  (|\nabla u|+|\nabla u_\lambda|)^{p-2} |\nabla(u-u_\lambda)^+|^2 \mathrm{d}x. \nonumber
\end{align}
From \eqref{eq0803}, \eqref{eq0804}, \eqref{eq0809} and \eqref{eq0901}, we have
\begin{align}\label{eq0902}
&\int_{\Sigma_\lambda}  (|\nabla u|+|\nabla u_\lambda|)^{p-2} |\nabla(u-u_\lambda)^+|^2 \mathrm{d}x \\
&\leq (p-1) \frac{C_{\star} \cdot C_0^{p-2}}{C' \cdot c_0^{p-2}} \frac{1}{|\lambda|^{\tau p}} \left( \frac{2}{N+s-2} \right)^2 \int_{\Sigma_\lambda}\left(|\nabla u| +|\nabla u_\lambda|\right)^{p-2} |\nabla(u-u_\lambda)^+|^2 \mathrm{d}x \nonumber\\
&\,\,+\frac{ p\cdot C \cdot C_0^{p^{\star}-2}}{C' \cdot c_0^{p^{\star}-2}} \|u\|_{L^{p^{\star}}(\R^N)}^\frac{p(N-2p)}{N-p}  \frac{1}{|\lambda|^{\beta_2}}\left( \frac{2}{N+s-2} \right)^2 \cdot \int_{\Sigma_\lambda}  (|\nabla u|+|\nabla u_\lambda|)^{p-2} |\nabla(u-u_\lambda)^+|^2 \mathrm{d}x,  \nonumber
\end{align}
where the positive constants $C_{0}$ and $c_{0}$ come from the sharp asymptotic estimates \eqref{eq0806} and \eqref{eq0806+} in Theorems \ref{th2.1} and \ref{th2.1-}, and $C'$ comes from the basic point-wise inequalities in Lemma \ref{BSICI}. For $\lambda$ sufficiently negative such that
$$ (p-1) \frac{C_{\star} \cdot C_0^{p-2}}{C' \cdot c_0^{p-2}} \frac{1}{|\lambda|^{\tau p}} \left( \frac{2}{N+s-2} \right)^2 + \frac{ p\cdot C \cdot C_0^{p^{\star}-2}}{C' \cdot c_0{p^{\star}-2}} \|u\|_{L^{p^{\star}}(\R^N)}^\frac{p(N-2p)}{N-p}  \frac{1}{|\lambda|^{\beta_2}}\left( \frac{2}{N+s-2} \right)^2 <1,$$
it follows that \eqref{eq0902} provides a contradiction unless $(u-u_\lambda)^{+}\chi_{\Sigma_{\lambda}}=0$. Therefore, for $\lambda$ sufficiently negative,
$$ u\leq u_\lambda \quad\,\,\,\mbox{in}\,\,\Sigma_\lambda.$$
Note that, by the strong comparison principle (Lemma \ref{th3.1}) and the fact that
\begin{align}\label{eq0903}
-\Delta_p u =V(x)u^{p-1} \leq V(x_\lambda) u^{p-1}_\lambda= -\Delta_p u_\lambda \,\, \quad\,\,\mbox{in}\,\,\Sigma_\lambda,
\end{align}
we deduce that, either $u<u_\lambda$ in $\Sigma_\lambda$ or $u=u_\lambda$ in $\Sigma_\lambda$.

Now we have proved $\Lambda\neq\emptyset$ and finished Step 1. By continuity, we have $u\leq u_{\lambda_0}$ in $\Sigma_{\lambda_0}$.

\smallskip

{\bf Step 2.} We are to show that $u=u_{\lambda_0}$ in $\Sigma_{\lambda_0}$.

Assume by contradiction that $u$ does not coincide with $u_{\lambda_0}$ in $\Sigma_{\lambda_0}$. Again, by the strong comparison principle (Lemma \ref{th3.1}) and \eqref{eq0903}, we deduce that $u<u_{\lambda_0}$ in $\Sigma_{\lambda_0}$. Let us take $R > 0$ large (to be determined later), $\epsilon>0$ and $\delta>0$ small, and set
$$ B^\varepsilon_R:=\Sigma_{\lambda_0 + \varepsilon} \setminus B_R, \quad\quad  S_\delta^\varepsilon:=(\Sigma_{\lambda_0 + \varepsilon} \setminus \Sigma_{\lambda_0 - \delta})\cap B_R  \quad \,\,\mbox{and}\,\,\quad  K_\delta:=\overline{B_R\cap\Sigma_{\lambda_0 - \delta}}.$$
It is clear that
$$ \Sigma_{\lambda_0 + \varepsilon}=B^\varepsilon_R \cup S_\delta^\varepsilon \cup K_\delta.$$
For $\delta>0$, we have $u_{\lambda_0}>u$ in $K_\delta$. Since $K_\delta$ is compact, there exists a small $\overline{\varepsilon}>0$ such that
$$u_{\lambda_0+\varepsilon}>u \,\,\,\,\quad \,\,\mbox{in}\,\,K_\delta$$
for any $0\leq\varepsilon\leq\overline{\varepsilon}$. For $0\leq\varepsilon\leq\overline{\varepsilon}$, we may argue as in Step 1 by choosing $(u-u_{\lambda_0+\varepsilon})^+ \chi_{\Sigma_{\lambda_0+\varepsilon}}$ as the test function. In the following, the integral in $B^\varepsilon_R$ can be estimated as in Step 1 and recalling that weighted Hardy-Sobolev inequality only requires the functions to vanish at infinity. Hence, we can deduce from the weighted Hardy-Sobolev inequality in Lemma \ref{HDSI}, Hardy-Littlewood-Sobolev inequality, H\"{o}lder's inequality and the sharp asymptotic estimates in Theorems \ref{th2.1} and \ref{th2.1-} that
\begin{align}\label{eq1001}
&\quad\,\,\int_{\Sigma_{\lambda_0+\varepsilon}} (|\nabla u|+|\nabla u_{\lambda_0+\varepsilon}|)^{p-2} |\nabla(u-u_{\lambda_0+\varepsilon})^+|^2 \mathrm{d}x \\
&\leq \frac{1}{C'} \int_{\Sigma_{\lambda_0+\varepsilon}}  V(x) (u^{p-1} - u_{\lambda_0+\varepsilon}^{p-1} ) (u-u_{\lambda_0+\varepsilon})^+ \mathrm{d}x\nonumber\\
&\,\,+ \frac{1}{C'} \int_{\Sigma_{\lambda_0+\varepsilon}} ( V(x)- V(x_{\lambda_0+\varepsilon}) ) u_{\lambda_0+\varepsilon}^{p-1}   (u-u_{\lambda_0+\varepsilon})^+ \mathrm{d}x \nonumber\\
&\leq \frac{p-1}{C'} \int_{\Sigma_{\lambda_0+\varepsilon}}  V(x) u^{p-2} [(u-u_{\lambda_0+\varepsilon})^+]^2 \mathrm{d}x \nonumber \\
     &\quad + \frac{p C}{C'}   \|u\|_{L^{p^{\star}}(\R^N)}^\frac{p(N-2p)}{N-p}    \int_{\Sigma_{\lambda_0+\varepsilon}} u^{p^{\star}-2} [(u-u_{\lambda_0+\varepsilon})^+]^2 \mathrm{d}x \nonumber\\
&\leq  \frac{(p-1) C_{\star}  C_0^{p-2}}{C' c_0^{p-2}} \frac{1}{R^{\tau p}} \left( \frac{2}{N+s-2} \right)^2 \int_{B^\varepsilon_R} \left(|\nabla u| +|\nabla u_{\lambda_0+\varepsilon}|\right)^{p-2} |\nabla(u-u_{\lambda_0+\varepsilon})^+|^2 \mathrm{d}x \nonumber\\
&\,\,+\frac{ p C C_0^{p^{\star}-2}}{C'  c_0^{p^{\star}-2}} \|u\|_{L^{p^{\star}}(\R^N)}^\frac{p(N-2p)}{N-p}  \frac{1}{R^{\beta_2}}\left( \frac{2}{N+s-2} \right)^2   \int_{B^\varepsilon_R}  (|\nabla u|+|\nabla u_{\lambda_0+\varepsilon}|)^{p-2} |\nabla(u-u_{\lambda_0+\varepsilon})^+|^2 \mathrm{d}x  \nonumber\\
&\,\,+ \frac{(p-1)C_{2} \|u\|_\infty^{p-2}}{C'} \int_{S_\delta^\varepsilon} [(u-u_{\lambda_0+\varepsilon})^+]^2 \mathrm{d}x
      + \frac{p C \|u\|_\infty^{p^{\star}-2}}{C'} \|u\|_{L^{p^{\star}}(\R^N)}^\frac{p(N-2p)}{N-p}  \int_{S_\delta^\varepsilon} [(u-u_{\lambda_0+\varepsilon})^+]^2 \mathrm{d}x, \nonumber %\nonumber\\
%&\leq  \frac{(p-1) C_V  C_0^{p-2}}{C' c_0^{p-2}} \frac{1}{|R|^{\sigma p}} \left( \frac{2}{N+s-2} \right)^2 \int_{B^\varepsilon_R} |\nabla u +\nabla u_{\lambda_0+\varepsilon}|^{p-2} |\nabla(u-u_{\lambda_0+\varepsilon})^+|^2 \mathrm{d}x,\nonumber\\
%&\,\,+\frac{ p C C_0^{p^{\star}-2}}{C'  c_0^{p^{\star}-2}} \|u\|_{L^{p^{\star}}(\R^N)}^\frac{p(N-2p)}{N-p}  \frac{1}{|R|^{\beta_2}}\left( \frac{2}{N+s-2} \right)^2   \int_{B^\varepsilon_R}  (|\nabla u|+|\nabla u_{\lambda_0+\varepsilon}|)^{p-2} |\nabla(u-u_{\lambda_0+\varepsilon})^+|^2 \mathrm{d}x. \nonumber\\
%&\,\,+ \frac{(p-1)C_{2} \|u\|_\infty^{p-2}}{C'} C_p^2(S_\delta^\varepsilon) \int_{S_\delta^\varepsilon} (|\nabla u|+|\nabla u_{\lambda_0+\varepsilon}|)^{p-2} |\nabla(u-u_{\lambda_0+\varepsilon})^+|^2 \mathrm{d}x. \nonumber\\
%&\,\,+ \frac{p C \|u\|_\infty^{p^{\star}-2}}{C'} \|u\|_{L^{p^{\star}}(\R^N)}^\frac{p(N-2p)}{N-p} C_p^2(S_\delta^\varepsilon) \int_{S_\delta^\varepsilon} (|\nabla u|+|\nabla u_{\lambda_0+\varepsilon}|)^{p-2} |\nabla(u-u_{\lambda_0+\varepsilon})^+|^2 \mathrm{d}x.
\end{align}
where the positive constants $C_{0}$ and $c_{0}$ come from the sharp asymptotic estimates \eqref{eq0806} and \eqref{eq0806+} in Theorems \ref{th2.1} and \ref{th2.1-}, $C_{\ast}$ comes from the upper bound on decay rate of $V(x)$ in \eqref{V} and $C_{2}$ comes from the local upper bound of $V(x)$ in \eqref{eq10.26.30.03}.

\smallskip

To estimate the integral in $S_\delta^\varepsilon$, we will use the weighted Poincar\'{e} type inequality in Lemma \ref{lm.3}. For $x\in S_\delta^\varepsilon$, we let $v(x):=(u-u_{\lambda_0+\varepsilon})^+$. By the definition of $S_\delta^\varepsilon$ and $v(x)$, we have $v(x)=0$ on $\partial\Sigma_{\lambda_0+\varepsilon} \cup (\Sigma_{\lambda_0-\delta}\cap B_R)$, then we can define $v(x+te_{1}):=0$ for all $x\in \partial\Sigma_{\lambda_0+\varepsilon}\cap B_{R}$ and $t>0$. For all $x\in S_\delta^\varepsilon$, we can write
$$ v(x)=-\int_{0}^{+\infty}\frac{\partial v}{\partial x_1}(x+te_{1}) \mathrm{d}t=-\int_{0}^{\lambda_0+\varepsilon-x_{1}}\frac{\partial v}{\partial x_1}(x+te_{1}) \mathrm{d}t,$$
which in conjunction with integrating on $x_{2},\cdots,x_{N}$ and finite covering theorem imply that, for some $C>0$,
$$ |v(x)|\leq C\int_{S_\delta^\varepsilon} \frac{|\nabla v(y)|}{|x-y|^{N-1}}\mathrm{d}y, \quad\quad \,\,\,\,\forall \,\, x\in S_\delta^\varepsilon.$$
Thus $v$ satisfies \eqref{eq10.25.52}  with $\Omega=S_\delta^\varepsilon$. From Lemma \ref{lm.4}, Corollary \ref{re2333} or Remark \ref{rem0}, one has that $\rho=|\nabla u|^{p-2}$ satisfies \eqref{eq10.25.2}. So by the weighted Poincar\'{e} type inequality in Lemma \ref{lm.3}, we obtain
\begin{align}\label{eq1002}
\int_{S_\delta^\varepsilon} [(u-u_{\lambda_0+\varepsilon})^+]^2 \mathrm{d}x \leq C(S_\delta^\varepsilon) \int_{S_\delta^\varepsilon} (|\nabla u|+|\nabla u_{\lambda_0+\varepsilon}|)^{p-2} |\nabla(u-u_{\lambda_0+\varepsilon})^+|^2 \mathrm{d}x,
\end{align}
where $C(S_\delta^\varepsilon)\to 0$ if $|S_\delta^\varepsilon|\to 0.$

Collecting \eqref{eq1001} and \eqref{eq1002}, we deduce
\begin{align}\label{eq1003}
&\quad \int_{\Sigma_{\lambda_0+\varepsilon}} (|\nabla u|+|\nabla u_{\lambda_0+\varepsilon}|)^{p-2} |\nabla(u-u_{\lambda_0+\varepsilon})^+|^2 \mathrm{d}x \\
&\leq  \frac{(p-1) C_{\star}  C_0^{p-2}}{C' c_0^{p-2}} \frac{1}{R^{\tau p}} \left( \frac{2}{N+s-2} \right)^2 \int_{B^\varepsilon_R} |\nabla u +\nabla u_{\lambda_0+\varepsilon}|^{p-2} |\nabla(u-u_{\lambda_0+\varepsilon})^+|^2 \mathrm{d}x \nonumber\\
&\,\,+\frac{ p C C_0^{p^{\star}-2}}{C'  c_0^{p^{\star}-2}} \|u\|_{L^{p^{\star}}(\R^N)}^\frac{p(N-2p)}{N-p}  \frac{1}{R^{\beta_2}}\left( \frac{2}{N+s-2} \right)^2 \nonumber\\
&\quad \times\int_{B^\varepsilon_R}  (|\nabla u|+|\nabla u_{\lambda_0+\varepsilon}|)^{p-2} |\nabla(u-u_{\lambda_0+\varepsilon})^+|^2 \mathrm{d}x  \nonumber\\
&\,\,+ \frac{(p-1)C_{2} \|u\|_\infty^{p-2}}{C'} C(S_\delta^\varepsilon) \int_{S_\delta^\varepsilon} (|\nabla u|+|\nabla u_{\lambda_0+\varepsilon}|)^{p-2} |\nabla(u-u_{\lambda_0+\varepsilon})^+|^2 \mathrm{d}x  \nonumber\\
&\,\,+ \frac{p C \|u\|_\infty^{p^{\star}-2}}{C'} \|u\|_{L^{p^{\star}}(\R^N)}^\frac{p(N-2p)}{N-p} C(S_\delta^\varepsilon)\int_{S_\delta^\varepsilon} (|\nabla u|+|\nabla u_{\lambda_0+\varepsilon}|)^{p-2} |\nabla(u-u_{\lambda_0+\varepsilon})^+|^2 \mathrm{d}x. \nonumber
\end{align}
Now we take care of the variable parameters $R, \delta, \overline{\varepsilon}$. First we fix $R$ large so that
$$\frac{(p-1) C_{\star}  C_0^{p-2}}{C' c_0^{p-2}} \frac{1}{R^{\tau p}} \left( \frac{2}{N+s-2} \right)^2 + \frac{ p C C_0^{p^{\star}-2}}{C'  c_0^{p^{\star}-2}} \|u\|_{L^{p^{\star}}(\R^N)}^\frac{p(N-2p)}{N-p}  \frac{1}{R^{\beta_2}}\left( \frac{2}{N+s-2} \right)^2 < 1.$$
Then, recalling that $|S_\delta^\varepsilon|\to 0$ if $\delta\to 0$ and $\varepsilon\to 0$, we choose $\delta$ and $\overline{\varepsilon}$ small such that
$$  \frac{(p-1)C_{2} \|u\|_\infty^{p-2}}{C'} C(S_\delta^\varepsilon) + \frac{p C \|u\|_\infty^{p^{\star}-2}}{C'} \|u\|_{L^{p^{\star}}(\R^N)}^\frac{p(N-2p)}{N-p} C(S_\delta^\varepsilon) <1$$
for any $0\leq\varepsilon\leq\overline{\varepsilon}$. With such choice of parameters, by \eqref{eq1003}, we have
$$\int_{\Sigma_{\lambda_0+\varepsilon}} (|\nabla u|+|\nabla u_{\lambda_0+\varepsilon}|)^{p-2} |\nabla(u-u_{\lambda_0+\varepsilon})^+|^2 \mathrm{d}x\leq 0,$$
which implies $(u-u_{\lambda_0+\varepsilon})^+=0$ in $\Sigma_{\lambda_0+\varepsilon}$ for any $0\leq\varepsilon\leq\overline{\varepsilon}$. This is a contradiction with the definition of $\lambda_0$.
%Consequently we have proved that necessarily
Thus, we prove that
$$ u=u_{\lambda_0} \quad\quad \,\,\mbox{in}\,\,\Sigma_{\lambda_0}.$$
Furthermore, $ u<u_{\lambda}$ in $\Sigma_\lambda$ for any $\lambda < \lambda_0$ and consequently $u$ is strictly monotone increasing in $\Sigma_{\lambda_0}$ so that $\frac{\partial u}{\partial x_{1}}\geq 0$ in $\Sigma_{\lambda_0}$.

\medskip

{\bf Step 3.} $u$ is radially symmetric and strictly decreasing about some point $x_0\in\R^N$.

By the strong maximum principle of the linearized operator \cite[Theorem 1.2]{LDBS}, it actually follows that
\begin{equation}\label{sd}
  \frac{\partial u}{\partial x_{1}}> 0 \qquad \text{in} \,\, \Sigma_{\lambda_0}.
\end{equation}

%By the strong maximum principle for the linearized operator
%\cite[Theorem 1.2]{LDBS} it follows that actually $u_{x_1} > 0$ in $\Sigma_{\lambda_0}$.

It is known that for any direction $\nu$, $u$ is symmetric about the plane $T^\nu_{\lambda_0(\nu)}$. Now consider all the $N$ orthogonal directions $\{e_k\}_{k=1}^N$ in $\R^N$, and one can obtain in entirely similar way that $u$ is strictly increasing in each direction $e_k$ and $\frac{\partial u}{\partial e_{k}}>0$ in $\Sigma^{e_k}_{\lambda_0(e_k)}$ and symmetric about the plane $T^{e_k}_{\lambda_0(e_k)}$. This shows that $u$ is symmetric with respect to a point $x_0\in\R^N$ $(x_0 = \bigcap^N_{k=1}T^{e_k}_{\lambda_0(e_k)})$, which is the unique critical point for $u$. Furthermore, by exploiting the moving plane procedure in any direction $\nu\in S^{N-1}$, we finally get that $u$ is radial and (strictly) radially decreasing w.r.t. $x_{0}$. That is, positive solution $u$ must assume the form
\[u(x)=\lambda^{\frac{N-p}{p}}U(\lambda(x-x_{0}))\]
for $\lambda:=u(0)^{\frac{p}{N-p}}>0$ and some $x_{0}\in\mathbb{R}^{N}$, where $U(x)=U(r)$ with $r=|x|$ is the positive radial solution to \eqref{eq1.1} with $U(0)=1$, $U'(0)=0$ and $U'(r)\leq0$ for any $r>0$. From \eqref{sd}, one has $U'(r)<0$ for any $r>0$. This completes our proof of Theorem \ref{th2} for $2<p<\frac{N}{2}$.
\end{proof}

\subsection{The singular elliptic case $1<p<2$}\label{sc4.2}

In this sub-section, we carry out our proof of Theorem \ref{th2} in the singular elliptic case $1<p<2$ via the moving planes method. In the following, we define the set
$$\overline{\Lambda}(\nu):=\{ \lambda\in\R \mid u\geq u_\mu^\nu\,\quad \mbox{in}\,\,\Sigma_\mu^\nu,\,\,\,\,\forall \,\, \mu\geq\lambda \},$$
and, if $\overline{\Lambda}(\nu)\neq\emptyset$, we define
$$ \lambda_0(\nu):=\inf \overline{\Lambda}(\nu).$$

\subsubsection{Weak comparison principles}
For the singular elliptic case $1<p<2$, in order to show the radial symmetry and strictly radial monotonicity of the weak solution $u$ to \eqref{eq1.1} via the method of moving planes, we need weak comparison principles in unbounded domains. To this end, we will use the following Poincar\'{e} type inequality from \cite{LDMR} (see Lemma \ref{PTI}).

Let us consider a ball $B(P,R)$ with center $P\in T_\lambda^\nu$ and radius $R$. Set
$$B_\lambda^\nu:=B(P,R)\cap\Sigma_\lambda^\nu,$$
and for any $A\subset\R^N$, let
$$ M_A:=\sup_A (|\nabla u|+|\nabla u^{\nu}_{\lambda}|).$$
For any $A\subset \Sigma_\lambda^\nu$, define
$$A^\prime:=R_\lambda^\nu(A).$$

We apply the following Poincar\'{e} type inequality to functions on $B_\lambda^\nu$, vanishing on $T^\nu_\lambda$.
\begin{lem}[Poincar\'{e} type inequality, \cite{LDMR}]\label{PTI}
Let $w\in W^{1,q}_{loc} (\R^N)$ with $1\leq q< \infty$ vanish on $T^\nu_\lambda$, and let $(supp\,w)\cap B_\lambda^\nu =A_1\cup A_2$ for some disjoint measurable sets $A_i$. Then there exists a constant $C=C(N)$ such that
$$ \| w \|_{L^q(B_\lambda^\nu)} \leq C |B_\lambda^\nu\cap supp \, w|^\frac{1}{Nq} \left( |A_1|^\frac{1}{Nq^\prime} \| \nabla w \|_{L^q(A_1)} + |A_2|^\frac{1}{Nq^\prime} \| \nabla w \|_{L^q(A_2)}  \right),$$
where $ \frac{1}{q}+ \frac{1}{q^\prime} =1 $.
\end{lem}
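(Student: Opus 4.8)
The statement to prove is the Poincaré type inequality (Lemma~\ref{PTI}), attributed to \cite{LDMR}. Let me sketch how I would prove it.

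\bigskip

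The plan is to reduce the estimate to a one-dimensional integration along lines orthogonal to the hyperplane $T^\nu_\lambda$, combined with H\"older's inequality in the remaining $(N-1)$ variables and then a self-improvement step. First I would set up coordinates so that $\nu=e_N$ and $T^\nu_\lambda=\{x_N=\lambda\}$, so that $B^\nu_\lambda$ is a half-ball sitting below this hyperplane and $w$ vanishes on $\{x_N=\lambda\}$. For a point $x=(x',x_N)\in B^\nu_\lambda$ I would write $w(x',x_N)=-\int_{x_N}^{\lambda}\partial_{x_N}w(x',t)\,dt$, using that $w$ vanishes on the flat boundary piece. Taking absolute values, raising to the $q$-th power, applying H\"older in the $t$-integral against the characteristic function of $(\operatorname{supp}w)\cap B^\nu_\lambda$, and then integrating in $x$ over $B^\nu_\lambda$, one gets a bound of $\|w\|_{L^q(B^\nu_\lambda)}^q$ by $|\,(\operatorname{supp}w)\cap B^\nu_\lambda\,|^{q/q'}$ (from the H\"older step in one dimension, picking up the diameter factor absorbed into the measure estimate since $B^\nu_\lambda$ is a fixed ball) times $\|\nabla w\|_{L^q(\operatorname{supp}w\cap B^\nu_\lambda)}^q$. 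This already yields the inequality with a single term and the measure of the full support; the refinement into $A_1$ and $A_2$ comes from splitting the $t$-integral and the support set accordingly.

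\bigskip

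The key point making the exponents come out as $\frac{1}{Nq}$ and $\frac{1}{Nq'}$ rather than the crude $\frac{1}{q'}$ is that one should not estimate the one-dimensional integral by the full height of the ball, but rather interpolate. Concretely, I would use the layer-cake / Fubini argument: estimate the slice measure $|\{t: (x',t)\in \operatorname{supp}w\}|$ and integrate in $x'$, using H\"older with the Sobolev-type exponent so that powers of the measure of $\operatorname{supp}w\cap B^\nu_\lambda$ are distributed as $\frac{1}{N}$-fractional powers across the dimensions. More precisely, combine the fundamental-theorem-of-calculus bound with the Gagliardo--Nirenberg--Sobolev embedding $W^{1,1}\hookrightarrow L^{N/(N-1)}$ applied slice-wise (or directly, extending $w$ by zero across $T^\nu_\lambda$ to make it $W^{1,q}$ on the full ball $B(P,R)$), and then control the $L^{N/(N-1)}$-to-$L^q$ gap on the support set by H\"older, which produces exactly the factor $|\operatorname{supp}w\cap B^\nu_\lambda|^{\frac{1}{Nq}}$; the remaining fractional powers $|A_i|^{\frac{1}{Nq'}}$ arise when one further localizes the gradient integral to the two disjoint pieces $A_1,A_2$ on which the gradient is supported and applies H\"older once more on each.

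\bigskip

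The main obstacle, and the part requiring care, is the bookkeeping of the exponents so that the final powers are precisely $\frac{1}{Nq}$ on the support and $\frac{1}{Nq'}$ on each $A_i$, together with justifying the use of the Sobolev embedding: since $w$ need only lie in $W^{1,q}_{loc}$ and vanish on $T^\nu_\lambda$ (in the trace sense), one must first extend $w$ by $0$ to the reflected half-ball to obtain a genuine $W^{1,q}$ function on $B(P,R)$ with no boundary term, and verify that the extension has the expected gradient. The splitting $(\operatorname{supp}w)\cap B^\nu_\lambda=A_1\cup A_2$ into disjoint measurable sets is then handled by writing $\nabla w=\nabla w\cdot\chi_{A_1}+\nabla w\cdot\chi_{A_2}$ and applying the one-term inequality to each piece, using subadditivity of the $L^q$ norm. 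Everything else---Fubini, H\"older, the fundamental theorem of calculus---is routine, so I would present the extension and the exponent computation in detail and compress the rest. The constant $C$ depends only on $N$ (and the Sobolev constant, which depends only on $N$), as claimed, because the ball $B(P,R)$ enters only through $|B^\nu_\lambda|$ which is absorbed into the $|\operatorname{supp}w\cap B^\nu_\lambda|$ factors.
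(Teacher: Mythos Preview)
The paper does not prove Lemma~\ref{PTI}; it is simply quoted from \cite{LDMR}. So the comparison is between your sketch and the standard proof in the cited reference, which proceeds via the Riesz-potential representation
\[
|w(x)|\;\le\; C\int_{B_\lambda^\nu}\frac{|\nabla w(y)|}{|x-y|^{N-1}}\,\mathrm{d}y ,
\]
obtained after oddly extending $w$ across $T_\lambda^\nu$ (so the extension has mean zero on $B(P,R)$ and the usual representation formula for $w-\bar w$ on a convex set applies, with the reflected contribution folded back into $B_\lambda^\nu$). One then splits the integral over $A_1$ and $A_2$, applies H\"older with weight $|x-y|^{-(N-1)}$, and uses the symmetrisation estimate $\int_{A}|x-y|^{-(N-1)}\,\mathrm{d}y\le C|A|^{1/N}$ twice: once on each $A_i$ (producing $|A_i|^{1/(Nq')}$) and once on $S=B_\lambda^\nu\cap\operatorname{supp}w$ after Fubini (producing $|S|^{1/(Nq)}$). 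This is why those particular exponents appear.

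Your sketch circles around the right objects but has a genuine gap in the splitting step. You propose to first prove the one-set inequality $\|w\|_{L^q(S)}\le C|S|^{1/N}\|\nabla w\|_{L^q(S)}$ (via the Sobolev embedding after extension) and then obtain the two-piece version by writing $\nabla w=\nabla w\,\chi_{A_1}+\nabla w\,\chi_{A_2}$ and ``applying the one-term inequality to each piece.'' That does not work: the one-term inequality takes a \emph{function} as input, not a gradient field, and there is in general no decomposition $w=w_1+w_2$ with $\nabla w_i$ supported in $A_i$. The reason the original proof handles the split effortlessly is precisely that the potential representation is \emph{linear} in $\nabla w$ at the pointwise level, so the integral over $S=A_1\cup A_2$ breaks up before any norm is taken. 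Your Sobolev-embedding route also runs into trouble with the range of $q$: going from $L^{N/(N-1)}$ down to $L^q$ on the support via H\"older needs $q\le N/(N-1)$, whereas the lemma is stated for all $1\le q<\infty$; you would need to use $W^{1,q}\hookrightarrow L^{q^\star}$ and keep track of scale, which again only delivers the undifferentiated factor $|S|^{1/N}$ and not the separated $|S|^{1/(Nq)}|A_i|^{1/(Nq')}$ structure. The fix is to insert the Riesz-potential bound (which you can get from your odd-extension idea together with the standard representation on the ball) and then run the H\"older/Fubini/symmetrisation argument outlined above.
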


Now, with the help of Lemma \ref{PTI}, we are able to prove the following weak comparison principle.
\begin{lem}\label{lm.4.1}
Assume $N>2$, $1<p<2$ and $u$ is a positive weak solution of \eqref{eq1.1}. Let $R_0$ be the same as in Theorem \ref{th2.1}.

{\bf $(i)$} There exists $R_1>R_0$, $R_1$ depending on $N,p,R_0$, such that if
$$\Sigma_\lambda^\nu \cap R_\lambda^\nu(B_{R_1}) = \emptyset,$$
then $u^{\nu}_{\lambda} \leq u$ in $\Sigma_\lambda^\nu$;

{\bf $(ii)$} Let $\Sigma_\lambda^\nu \cap R_\lambda^\nu(B_{R_1}) \neq \emptyset$ and let $R_2=R_2(R_1,\lambda,\nu)>2R_{1}$ be such that $\Sigma_\lambda^\nu \cap R_\lambda^\nu(B_{R_1}) \subset B(P_\lambda^\nu,R_2)$, where $P_\lambda^\nu$ is the projection of the origin on $T_\lambda^\nu$.
Then there exists $R>R_2$, depending on $R_2$, such that, for any $\overline{R}\geq R$, there exist positive numbers $\delta$ and $M$, depending $\overline{R},\lambda,p,N$ and the $L^\infty$--norms of $u,v,|\nabla u|,|\nabla v|$, with the property that whenever
$$supp(u^{\nu}_{\lambda}-u)^+ \cap\Sigma^{\nu}_{\lambda}\cap B(P_\lambda^\nu,\overline{R})=A \cup B$$
with $|A\cap B|=0$, $M_B<M$ and $|A|+|B|<\delta$, we have
$$ u^{\nu}_{\lambda} \leq u\,\,\quad \mbox{in}\,\,\Sigma_\lambda^\nu.$$
\end{lem}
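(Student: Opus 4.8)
\textbf{Proof proposal for Lemma \ref{lm.4.1}.}

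The plan is to mimic the weak-comparison-principle arguments of \cite{LDMR} (and the refinements in \cite{DLPFRM,LDSMLMSB}), adapting them to the nonlocal term $V(x)=|x|^{-2p}\ast u^p$ via the identity in Lemma \ref{lm.1}. The common starting point for both parts is to test the weak formulations of $-\Delta_p u = V(x)u^{p-1}$ and $-\Delta_p u_\lambda^\nu = V(x_\lambda^\nu)(u_\lambda^\nu)^{p-1}$ against $(u_\lambda^\nu - u)^+\chi_{\Sigma_\lambda^\nu}$, subtract, and use the elementary inequality \eqref{eq0802} of Lemma \ref{BSICI}. Since $1<p<2$, the left-hand side controls $\int_{\Sigma_\lambda^\nu}(|\nabla u|+|\nabla u_\lambda^\nu|)^{p-2}|\nabla(u_\lambda^\nu-u)^+|^2$, while on the right, after writing $V(x_\lambda^\nu)(u_\lambda^\nu)^{p-1} - V(x)u^{p-1} = V(x)\big((u_\lambda^\nu)^{p-1}-u^{p-1}\big) + \big(V(x_\lambda^\nu)-V(x)\big)(u_\lambda^\nu)^{p-1}$ and using convexity of $t\mapsto t^{p-1}$ is \emph{false} for $1<p<2$ — instead one uses the second inequality in \eqref{eq0802}, namely $(u_\lambda^\nu)^{p-1}-u^{p-1}\leq C''\big((u_\lambda^\nu-u)^+\big)^{p-1}$ on the support set — together with the sign information $V(x)-V(x_\lambda^\nu)\geq 0$ on $\{u\geq u_\lambda^\nu\}$ coming from Lemma \ref{lm.1} and monotonicity of $|x-y|^{-2p}$ in $\Sigma_\lambda^\nu$. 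This reduces everything to bounding $\int_{\Sigma_\lambda^\nu} V(x)\,\big[(u_\lambda^\nu-u)^+\big]^p$ plus the nonlocal self-interaction term $\int\!\!\int \frac{u^{p-1}(u_\lambda^\nu-u)^+(x)\,u^{p-1}(u_\lambda^\nu-u)^+(y)}{|x-y|^{2p}}$, handled by Hardy--Littlewood--Sobolev as in \eqref{eq0810}.

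For part $(i)$: when $\Sigma_\lambda^\nu\cap R_\lambda^\nu(B_{R_1})=\emptyset$, the reflected ball $R_\lambda^\nu(B_{R_1})$ lies entirely on the far side of $T_\lambda^\nu$, so on $\mathrm{supp}(u_\lambda^\nu-u)^+\subset\Sigma_\lambda^\nu$ we are far from the origin in both $x$ and $x_\lambda^\nu$. I would combine the decay estimates $u(x)\leq C_0|x|^{-\frac{N-p}{p-1}}$ and $V(x)\leq C_\star|x|^{-(p+\tau p)}$ from \eqref{eq0806} and \eqref{V} with the weighted Hardy--Sobolev inequality (Lemma \ref{HDSI}) — exactly as in the estimate of $I_1$ and $I_2$ in Subsection \ref{sc4.1} — choosing the weight exponent $s=-\frac{N-1}{p-1}(p-2)>2-N$ so that the prefactor carries a negative power of the (large) minimal distance, forcing the resulting constant below $1/C'$ once $R_1$ is large enough depending only on $N,p,R_0$. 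Then the differential inequality yields $\nabla(u_\lambda^\nu-u)^+\equiv 0$ in $\Sigma_\lambda^\nu$, and since $(u_\lambda^\nu-u)^+$ vanishes on $T_\lambda^\nu$ (where $u_\lambda^\nu=u$) and decays at infinity, connectedness of $\Sigma_\lambda^\nu$ gives $(u_\lambda^\nu-u)^+\equiv 0$, i.e. $u_\lambda^\nu\leq u$.

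For part $(ii)$: here the reflected singularity can sit inside $\Sigma_\lambda^\nu$, so the weighted-distance gain fails near $P_\lambda^\nu$, and one must split the integration domain. On $\Sigma_\lambda^\nu\setminus B(P_\lambda^\nu,\overline R)$ the argument of $(i)$ applies and produces a small constant once $\overline R\geq R(R_2)$ is large; on $\Sigma_\lambda^\nu\cap B(P_\lambda^\nu,\overline R)$, where $V$ and the solutions are merely bounded (using \eqref{eq10.26.30.03} and $u\in L^\infty$), I would bound the zero-order terms by $C\|u\|_\infty^{p-2}\int_{A\cup B}[(u_\lambda^\nu-u)^+]^p$ (for $1<p<2$, $\,[(u_\lambda^\nu-u)^+]^p\le \|u\|_\infty^{2-p}[(u_\lambda^\nu-u)^+]^2$ is not available; instead use the Poincaré-type inequality directly in $L^p$) and then invoke the Poincaré type inequality of Lemma \ref{PTI} on each of the two pieces $A$ and $B$ of the support: $\|(u_\lambda^\nu-u)^+\|_{L^p}\leq C\big(|A|^{1/(Np')}\|\nabla\cdot\|_{L^p(A)}+|B|^{1/(Np')}\|\nabla\cdot\|_{L^p(B)}\big)$ — except that the left side of the differential inequality is a \emph{weighted} $L^2$ quantity with weight $(|\nabla u|+|\nabla u_\lambda^\nu|)^{p-2}$, so I would convert via Hölder: $\int[(u_\lambda^\nu-u)^+]^2\,\lesssim\,\big(\int(|\nabla u|+|\nabla u_\lambda^\nu|)^{p-2}|\nabla(u_\lambda^\nu-u)^+|^2\big)$ times a factor involving $|A|,|B|$ and $M_A\le C$, $M_B< M$ exactly as in \cite{LDMR,DLPFRM}. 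Requiring $|A|+|B|<\delta$ and $M_B<M$ with $\delta,M$ small enough (depending on $\overline R,\lambda,p,N$ and the stated norms) makes the total coefficient $<1$, forcing $(u_\lambda^\nu-u)^+\equiv 0$. The main obstacle is precisely this last splitting: unlike the $p>2$ case of Subsection \ref{sc4.1} where the weighted Poincaré inequality of Lemma \ref{lm.3} absorbs the near-diagonal term cleanly, here for $1<p<2$ the weight $(|\nabla u|+|\nabla u_\lambda^\nu|)^{p-2}$ blows up at critical points of $u$ or $u_\lambda^\nu$, so one cannot simply dominate $\int[(u_\lambda^\nu-u)^+]^2$ by it; the device of splitting $\mathrm{supp}(u_\lambda^\nu-u)^+$ into a "small-measure" part $A$ (controlled by Lemma \ref{PTI} with the measure factor $|A|^{1/(Np')}$) and a "small-gradient" part $B$ (where $M_B<M$ keeps the weight from helping but the gradient term itself is small) is exactly what \cite{LDMR} introduced to circumvent this, and carrying it through with the extra nonlocal term controlled by HLS is the technical heart of the proof.
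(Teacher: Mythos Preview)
Your overall architecture is right---test with $(u^\nu_\lambda-u)^+\chi_{\Sigma_\lambda^\nu}$, split the right-hand side into the local piece $II_1=\frac{1}{C'}\int V(x)\big((u^\nu_\lambda)^{p-1}-u^{p-1}\big)(u^\nu_\lambda-u)^+$ and the nonlocal piece $II_2$, and handle $II_2$ by HLS exactly as in \eqref{eq0810}. The gap is in your treatment of $II_1$.

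You correctly observe that $t\mapsto t^{p-1}$ is not convex for $1<p<2$, but you then reach for the Lipschitz-type bound $\big|(u^\nu_\lambda)^{p-1}-u^{p-1}\big|\le C''\big((u^\nu_\lambda-u)^+\big)^{p-1}$ from \eqref{eq0802}. That produces $\int V\,[(u^\nu_\lambda-u)^+]^p$, and after Sobolev and the H\"older step \eqref{eq1008} this yields only
\[
II_1\ \le\ c_1\left(\int_{\Sigma_\lambda^\nu}(|\nabla u|+|\nabla u^\nu_\lambda|)^{p-2}|\nabla(u^\nu_\lambda-u)^+|^2\right)^{p/2}.
\]
Since $p/2<1$, the resulting inequality $\mathrm{LHS}\le c_1(\mathrm{LHS})^{p/2}+c_2\,\mathrm{LHS}$ with $c_2<1$ merely bounds $\mathrm{LHS}$; it does \emph{not} force $\mathrm{LHS}=0$. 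The same sublinearity obstruction reappears in your part~(ii) discussion (you sense it---hence the hedging about ``convert via H\"older'').

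What the paper does instead: for $1<p<2$ the map $t\mapsto t^{p-1}$ is \emph{concave}, so by the mean value theorem on $\{u^\nu_\lambda>u\}$ one has $(u^\nu_\lambda)^{p-1}-u^{p-1}\le (p-1)\,u^{p-2}(u^\nu_\lambda-u)$, giving the quadratic quantity
\[
II_1\le \frac{p-1}{C'}\int V(x)\,u^{p-2}\big[(u^\nu_\lambda-u)^+\big]^2.
\]
Now two sign reversals relative to the $p>2$ case become essential. First, since $p-2<0$, bounding $u^{p-2}$ requires the sharp \emph{lower} bound $u\ge c_0|x|^{-\frac{N-p}{p-1}}$ from \eqref{eq0806} (not the upper bound you cite). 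Second, after weighted Hardy--Sobolev (Lemma~\ref{HDSI} with $q=2$), one must dominate $|x|^s$ by the weight $(|\nabla u|+|\nabla u^\nu_\lambda|)^{p-2}$; because $p-2<0$ this follows from the sharp \emph{upper} bound on $|\nabla u|$ in \eqref{eq0806+} together with $|x^\nu_\lambda|\ge|x|$ in $\Sigma^\nu_\lambda$ (which holds since the hypothesis of (i) forces $\lambda>0$). This yields a genuinely linear estimate $\mathrm{LHS}\le G(R)\,\mathrm{LHS}$ with $G(R)\to0$, hence $(u^\nu_\lambda-u)^+\equiv0$. In part~(ii) the same $L^2$ framework persists: inside $K=B(P^\nu_\lambda,\overline R)$ one uses $V\le C_2$ and $u^{p-2}\le(\inf_K u)^{p-2}$, applies Lemma~\ref{PTI} with $q=2$ (not $q=p$), and converts $\int_A|\nabla w|^2$, $\int_B|\nabla w|^2$ back to the weighted integral via $|\nabla w|^2\le M_A^{2-p}(|\nabla u|+|\nabla u^\nu_\lambda|)^{p-2}|\nabla w|^2$ and likewise with $M_B$.
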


\begin{proof}

%We claim that there exists $R_1 > R_0$, with $R_0$ be as in Theorem \ref{th2.1}, such that if
%$$ \Sigma_\lambda \cap R_\lambda^\nu(B_{R_1}) = \emptyset $$
%then $u\geq u_\lambda$ in $\Sigma_\lambda$.

Let us take $(u^{\nu}_{\lambda}-u)^+\chi_{\Sigma^{\nu}_\lambda}$ as a test function in the Definition \ref{weak} of weak solutions. Proceeding as in the deduction of \eqref{eq0803}, we get
\begin{align}\label{eq1004}
&\quad \int_{\Sigma_\lambda^\nu} (|\nabla u|+|\nabla u^{\nu}_{\lambda}|)^{p-2} |\nabla(u^{\nu}_{\lambda}-u)^+|^2 \mathrm{d}x \\
&\leq \frac{1}{C'} \int_{\Sigma_\lambda^\nu} ( V(x^{\nu}_{\lambda})\left(u^{\nu}_{\lambda}\right)^{p-1} - V(x)u^{p-1} ) (u^{\nu}_{\lambda} - u)^+  \mathrm{d}x  \nonumber\\
&= \frac{1}{C'} \int_{\Sigma_\lambda^\nu}  V(x) ( \left(u^{\nu}_{\lambda}\right)^{p-1} - u^{p-1} ) (u^{\nu}_{\lambda}-u )^+ \mathrm{d}x \nonumber \\
&\quad + \frac{1}{C'} \int_{\Sigma_\lambda^\nu} ( V(x^{\nu}_{\lambda})- V(x) )\left(u^{\nu}_{\lambda}\right)^{p-1}   (u^{\nu}_{\lambda}-u)^+ \mathrm{d}x \nonumber\\
%&\leq \frac{p-1}{C'} \int_{\Sigma_\lambda}  V(x) u^{p-2} [(u-u_\lambda)^+]^2 \mathrm{d}x + \frac{1}{C'}\int_{\Sigma_\lambda} P_\lambda(x) u^{p-1}   (u-u_\lambda)^+ \mathrm{d}x\nonumber\\
&=:II_1 + II_2. \nonumber
\end{align}

\medskip

\noindent {\bf Proof of $(i)$.} Let $R > R_0$, with $R_0$ be the same as in Theorem \ref{th2.1}, to be determined later. By the sharp asymptotic estimates for $u$ in Theorems \ref{th2.1} and \ref{th2.1-}, there exists $R_1= R_1(R) > R$ such
that
$$ \inf_{B_R} u \geq \sup_{\R^N\setminus B_{R_1}} u,$$
where $B_{r}:=B_{r}(0)$ for any $r>0$. If $\lambda>0$ is sufficiently large such that
$$\Sigma_\lambda^\nu \cap R_\lambda^\nu(B_{R_1}) = \emptyset,$$
then we have
$$u\geq u^{\nu}_{\lambda} \quad \,\mbox{in}\,\, B_R \,\,\quad \mbox{and}\quad \,\, supp(u^{\nu}_{\lambda}-u)^+\cap\Sigma^{\nu}_{\lambda}\subset \Sigma_\lambda^\nu\setminus B_R.$$

Since $1<p<2$, using Lagrange's mean value theorem, Theorems \ref{th2.1} and \ref{th2.1-}, and \eqref{V},
%the fact that $u^p -1$ is convex in $u$ and the fact that $u_\lambda>u$
%in the support of $(u_\lambda-u)^+$, we obtain First, we estimate $II_1$. From Theorem \ref{th2.1} and Lemma \ref{rm.2},
we obtain
\begin{align}\label{eq1005}
II_1 &= \frac{1}{C'} \int_{\Sigma_\lambda^\nu}  V(x) (\left(u^{\nu}_{\lambda}\right)^{p-1} - u^{p-1} ) (u^{\nu}_{\lambda}-u )^+ \mathrm{d}x \\
     &\leq \frac{p-1}{C'} \int_{\Sigma_\lambda^\nu\setminus B_R}  V(x) u^{p-2} [(u^{\nu}_{\lambda}-u )^+]^2 \mathrm{d}x \nonumber\\
     &\leq \frac{(p-1) C_{\star}  c_0^{p-2} }{C'} \int_{\Sigma_\lambda^\nu\setminus B_R}  \frac{1}{|x|^{p+\tau p}} \cdot \frac{1}{|x|^{\frac{N-p}{p-1}(p-2)}} [(u^{\nu}_{\lambda}-u )^+]^2 \mathrm{d}x, \nonumber
\end{align}
where $\tau$ is given in Lemma \ref{lm.5} and \eqref{V}. Similar to \eqref{eq0809}, using the weighted Hardy-Sobolev inequality, we get
\begin{align}\label{eq10061}
II_1 &\leq \frac{(p-1) C_{\star}  c_0^{p-2} }{C'} \sup_{\Sigma_\lambda^\nu\setminus B_R} \left(\frac{1}{|x|^{\beta_1}}\right)
      \int_{\Sigma_\lambda^\nu\setminus B_R} |x|^{s-2} [(u^{\nu}_{\lambda}-u )^+]^2 \mathrm{d}x \\
      &\leq \frac{(p-1) C_{\star}  c_0^{p-2} }{C'} \sup_{\Sigma_\lambda^\nu\setminus B_R} \left(\frac{1}{|x|^{\tau p}}\right)
      \left(\frac{2}{N+s-2}\right)^2 \int_{\Sigma_\lambda^\nu\setminus B_R} |x|^s |\nabla(u^{\nu}_{\lambda}-u )^+|^2 \mathrm{d}x, \nonumber
\end{align}
where $s =-\frac{N-1}{p-1}(p-2)>2-N$ and $\beta_1=p+\tau p + \frac{N-p}{p-1}(p-2) +s-2=\tau p>0$. By Theorems \ref{th2.1} and \ref{th2.1-}, and the fact that $|x|<|x^{\nu}_{\lambda}|$ in $\Sigma_\lambda$, we have
$$|\nabla u|+|\nabla u^{\nu}_{\lambda}| \leq C_0 \left( \frac{1}{|x|^\frac{N-1}{p-1}} + \frac{1}{|x^{\nu}_{\lambda}|^\frac{N-1}{p-1}} \right) \leq
  \frac{2C_0}{|x|^\frac{N-1}{p-1}} \,\,\quad \mbox{in}\,\,\Sigma_\lambda^\nu\setminus B_R,$$
which implies
$$(|\nabla u|+|\nabla u^{\nu}_{\lambda}|)^{p-2} \geq
  \frac{(2C_0)^{p-2}}{|x|^{\frac{N-1}{p-1}(p-2)}}=(2C_0)^{p-2} |x|^s  \quad \,\,\mbox{in}\,\,\Sigma_\lambda^\nu\setminus B_R.$$
This yields
\begin{align}\label{eq1006}
II_1 \leq  \frac{(p-1) C_{\star}  c_0^{p-2} }{C' (2C_0)^{p-2}} \frac{1}{R^{\tau p}}
           \left(\frac{2}{N+s-2}\right)^2\int_{\Sigma_\lambda^\nu} (|\nabla u|+|\nabla u^{\nu}_{\lambda}|)^{p-2} |\nabla(u^{\nu}_{\lambda}-u )^+|^2 \mathrm{d}x.
\end{align}

For $II_2$, using Lemma \ref{lm.1}, Lagrange's mean value theorem, Hardy-Littlewood-Sobolev inequality (Lemma \ref{HLSI}) and H\"{o}lder's inequality, we get
\begin{small}\begin{align}\label{eq1007}
& \quad II_2=     \frac{1}{C'} \int_{\Sigma_\lambda^\nu} ( V(x^{\nu}_{\lambda})- V(x) )\left(u^{\nu}_{\lambda}\right)^{p-1}   (u^{\nu}_{\lambda}-u)^+ \mathrm{d}x \\
     &\leq  \frac{p}{C'} \int_{\Sigma_\lambda^\nu\cap supp(u^{\nu}_{\lambda}-u)^+} \left(\int_{\Sigma_\lambda^\nu}\frac{\left(u^{\nu}_{\lambda}\right)^{p-1}(y) (u^{\nu}_{\lambda}-u)^+(y) }{|x-y|^{2p}} \mathrm{d}y \right)\left(u^{\nu}_{\lambda}\right)^{p-1}(x)   (u^{\nu}_{\lambda}-u)^+(x) \mathrm{d}x \nonumber\\
     &\leq  \frac{p}{C'} \int_{\Sigma_\lambda^\nu\cap supp(u^{\nu}_{\lambda}-u)^+} \int_{\Sigma_\lambda^\nu\cap supp(u^{\nu}_{\lambda}-u)^+} \frac{\left(u^{\nu}_{\lambda}\right)^{p-1}(y) (u^{\nu}_{\lambda}-u)^+(y) \left(u^{\nu}_{\lambda}\right)^{p-1}(x) (u^{\nu}_{\lambda}-u)^+(x)  }{|x-y|^{2p}} \mathrm{d}x\mathrm{d}y \nonumber\\
     &\leq  \frac{pC}{C'} \| \left(u^{\nu}_{\lambda}\right)^{p-1}  (u^{\nu}_{\lambda}-u)^+ \|^2_{L^\frac{N}{N-p}(\Sigma_\lambda^\nu\cap supp(u^{\nu}_{\lambda}-u)^+)} \nonumber\\
     &\leq  \frac{pC}{C'} \| u^{\nu}_{\lambda} \|^{2(p-1)}_{L^{p^{\star}}(\Sigma_\lambda^\nu\cap supp(u^{\nu}_{\lambda}-u)^+)}  \| (u^{\nu}_{\lambda}-u)^+ \|^2_{L^{p^{\star}}(\Sigma_\lambda^\nu\cap supp(u^{\nu}_{\lambda}-u)^+)}\nonumber\\
     &\leq  \frac{pC}{C' S^\frac{2}{p}} \| u^{\nu}_{\lambda} \|^{2(p-1)}_{L^{p^{\star}}(\Sigma_\lambda^\nu\cap supp(u^{\nu}_{\lambda}-u)^+)} \left( \int_{\Sigma_\lambda^\nu} |\nabla (u^{\nu}_{\lambda}-u)^+|^p \mathrm{d}x \right)^\frac{2}{p}, \nonumber
\end{align}\end{small}

\noindent where we have also used Sobolev inequality and $S$ is the best Sobolev constant (see \cite{Talenti}). By H\"{o}lder's inequality, we have
\begin{align}\label{eq1008}
&\quad \int_{\Sigma_\lambda^\nu} |\nabla (u^{\nu}_{\lambda}-u)^+|^p \mathrm{d}x \\
&=\int_{\Sigma_\lambda^\nu} [(|\nabla u|+|\nabla u^{\nu}_{\lambda}|)^\frac{p(2-p)}{2}][(|\nabla u|+|\nabla u^{\nu}_{\lambda}|)^\frac{p(p-2)}{2}|\nabla (u^{\nu}_{\lambda}-u)^+|^p] \mathrm{d}x \nonumber\\
&\leq \left( \int_{\Sigma_\lambda^\nu} (|\nabla u|+|\nabla u^{\nu}_{\lambda}|)^p \mathrm{d}x  \right)^\frac{2-p}{2} \left(  \int_{\Sigma_\lambda^\nu}(|\nabla u|+|\nabla u^{\nu}_{\lambda}|)^{p-2} |\nabla (u^{\nu}_{\lambda}-u)^+|^2 \mathrm{d}x \right)^\frac{p}{2} \nonumber\\
&\leq \overline{C} \left(  \int_{\Sigma_\lambda^\nu}(|\nabla u|+|\nabla u^{\nu}_{\lambda}|)^{p-2} |\nabla (u^{\nu}_{\lambda}-u)^+|^2 \mathrm{d}x \right)^\frac{p}{2}, \nonumber
\end{align}
where we have used the fact that $|\nabla u|$ and $|\nabla u^{\nu}_{\lambda}|$ are both in $L^p(\R^N)$ since $u\in D^{1,p}(\R^{N})$. Combining \eqref{eq1007},  \eqref{eq1008} and the fact that $supp(u^{\nu}_{\lambda}-u)^+\cap\Sigma^{\nu}_{\lambda}\subset \Sigma_\lambda^\nu\setminus B_R$, it follows that
\begin{align}\label{eq1009}
II_2 &\leq  \frac{p C \overline{C}^\frac{2}{p}}{C' S^\frac{2}{p}} \| u^{\nu}_{\lambda} \|^{2(p-1)}_{L^{p^{\star}}(\Sigma_\lambda^\nu \setminus B_R)}
      \int_{\Sigma_\lambda^\nu}(|\nabla u|+|\nabla u^{\nu}_{\lambda}|)^{p-2} |\nabla (u^{\nu}_{\lambda}-u)^+|^2 \mathrm{d}x.
\end{align}
From \eqref{eq1004},  \eqref{eq1006} and \eqref{eq1009}, we get
\begin{align}\label{eq1010}
&\quad \int_{\Sigma_\lambda^\nu}(|\nabla u|+|\nabla u^{\nu}_{\lambda}|)^{p-2} |\nabla (u^{\nu}_{\lambda}-u)^+|^2 \mathrm{d}x \\
&\leq   \frac{(p-1) C_{\star}  c_0^{p-2} }{C' (2C_0)^{p-2}} \frac{1}{R^{\tau p}}
           \left(\frac{2}{N+s-2}\right)^2  \int_{\Sigma_\lambda^\nu} (|\nabla u|+|\nabla u^{\nu}_{\lambda}|)^{p-2} |\nabla(u^{\nu}_{\lambda}-u )^+|^2 \mathrm{d}x  \nonumber\\
&      \quad+\frac{p C \overline{C}^\frac{2}{p}}{C' S^\frac{2}{p}} \| u^{\nu}_{\lambda} \|^{2(p-1)}_{L^{p^{\star}}(\Sigma_\lambda^\nu \setminus B_R)}
        \int_{\Sigma_\lambda^\nu}(|\nabla u|+|\nabla u^{\nu}_{\lambda}|)^{p-2} |\nabla (u^{\nu}_{\lambda}-u)^+|^2 \mathrm{d}x.  \nonumber
\end{align}
We can now choose $R>R_0$ such that
$$ G(R):= \frac{(p-1) C_{\star}  c_0^{p-2} }{C' (2C_0)^{p-2}} \frac{1}{R^{\tau p}}
           \left(\frac{2}{N+s-2}\right)^2 + \frac{p C \overline{C}^\frac{2}{p}}{C' S^\frac{2}{p}} \| u^{\nu}_{\lambda} \|^{2(p-1)}_{L^{p^{\star}}(\Sigma_\lambda^\nu \setminus B_R)} <1,$$
and then take $R_1 = R_1(R)$.  Then \eqref{eq1010} yields that $(u^{\nu}_{\lambda}-u)^+=0$ a.e. in $\Sigma_\lambda^\nu$, which implies that $u\geq u^{\nu}_{\lambda}$ in $\Sigma_\lambda^\nu$ for any $\lambda>R_1$.

\medskip

\noindent {\bf Proof of $(ii)$.}  Now suppose that $\Sigma_\lambda^\nu\cap R_\lambda^\nu(B_{R_1}) \neq \emptyset$ and let $R>R_2$ to be determined later. For $\overline{R}\geq R$, we define $K:=B(P_\lambda^\nu,\overline{R})$, where $P_\lambda^\nu \in T_\lambda^\nu$ is the projection of the origin on $T_\lambda^\nu$. Observe that, for $R$ sufficiently large, in $\Sigma_\lambda^\nu\cap K^c \cap supp(u^{\nu}_{\lambda}-u)^+$, we have $u \leq u^{\nu}_{\lambda}$ and the sharp asymptotic estimates \eqref{eq0806} and \eqref{eq0806+} hold. Furthermore, $|x| \approx |x^{\nu}_{\lambda}|$. Thus, similar to the proof of {(i)} and the estimate of $II_2$, we get
\begin{align}\label{eq1201}
&\quad \int_{\Sigma_\lambda^\nu} (|\nabla u|+|\nabla u^{\nu}_{\lambda}|)^{p-2} |\nabla(u^{\nu}_{\lambda}-u)^+|^2 \mathrm{d}x \\
&\leq  \frac{1}{C'} \int_{\Sigma_\lambda^\nu}  V(x) (\left(u^{\nu}_{\lambda}\right)^{p-1} - u^{p-1} ) (u^{\nu}_{\lambda}-u )^+ \mathrm{d}x \nonumber \\
&\quad + \frac{1}{C'} \int_{\Sigma_\lambda^\nu} (V(x^{\nu}_{\lambda})- V(x) )\left(u^{\nu}_{\lambda}\right)^{p-1}   (u^{\nu}_{\lambda}-u)^+ \mathrm{d}x \nonumber\\
&\leq \frac{p-1}{C'} \int_{\Sigma_\lambda^\nu\setminus K}  V(x) u^{p-2} [(u^{\nu}_{\lambda}-u )^+]^2 \mathrm{d}x  +  \frac{p-1}{C'} \int_{B_\lambda^\nu}  V(x) u^{p-2} [(u^{\nu}_{\lambda}-u )^+]^2 \mathrm{d}x \nonumber\\
&\quad+  \frac{p C \overline{C}^\frac{2}{p}}{C' S^\frac{2}{p}} \| u^{\nu}_{\lambda} \|^{2(p-1)}_{L^{p^{\star}}(\Sigma_\lambda^\nu \cap supp(u^{\nu}_{\lambda}-u)^+)}
      \int_{\Sigma_\lambda^\nu}(|\nabla u|+|\nabla u^{\nu}_{\lambda}|)^{p-2} |\nabla (u^{\nu}_{\lambda}-u)^+|^2 \mathrm{d}x, \nonumber
\end{align}
where $B_\lambda^\nu:=K\cap\Sigma_\lambda^\nu$. Similar to the estimates \eqref{eq1005}-\eqref{eq1006} on $II_1$ in the proof of {(i)}, we can obtain the following estimates in $\Sigma_\lambda^\nu\setminus K$:
\begin{align}\label{eq1202}
&\quad \frac{p-1}{C'} \int_{\Sigma_\lambda^\nu\setminus K}  V(x) u^{p-2} [(u^{\nu}_{\lambda}-u )^+]^2 \mathrm{d}x  \\
&\leq \frac{(p-1) C_{\star}  c_0^{p-2} }{C'} \int_{\Sigma_\lambda^\nu\setminus K}  \frac{1}{|x|^{p+\tau p}} \cdot \frac{1}{|x|^{\frac{N-p}{p-1}(p-2)}} [(u^{\nu}_{\lambda}-u )^+]^2 \mathrm{d}x  \nonumber\\
&\leq \frac{(p-1) C_{\star}  c_0^{p-2} }{C'} \sup_{\Sigma_\lambda^\nu\setminus K} \left(\frac{1}{|x|^{\tau p}}\right)
      \int_{\Sigma_\lambda^\nu\setminus K} |x|^{s-2} [(u^{\nu}_{\lambda}-u )^+]^2 \mathrm{d}x \nonumber\\
      &\leq \frac{(p-1) C_{\star}  c_0^{p-2} }{C'} \sup_{\Sigma_\lambda^\nu\setminus K} \left(\frac{1}{|x|^{\tau p}}\right)
      \left(\frac{2}{N+s-2}\right)^2 \int_{\Sigma_\lambda^\nu\setminus K} |x|^s |\nabla(u^{\nu}_{\lambda}-u )^+|^2 \mathrm{d}x \nonumber\\
&\leq  \frac{(p-1) C_{\star}  c_0^{p-2} }{C' (2C_0)^{p-2}} \frac{1}{R^{\tau p}}
           \left(\frac{2}{N+s-2}\right)^2  \int_{\Sigma_\lambda^\nu} (|\nabla u|+|\nabla u^{\nu}_{\lambda}|)^{p-2} |\nabla(u^{\nu}_{\lambda}-u )^+|^2 \mathrm{d}x, \nonumber
\end{align}
where $\tau$ is the same as in Lemma \ref{lm.5} and \eqref{V}.

Since $(u^{\nu}_{\lambda}-u )^+$ vanishes on $T_\lambda^\nu$ and belongs to $W^{1,p}_{loc}(\R^N)$ and $supp(u^{\nu}_{\lambda}-u)^+ \cap\Sigma^{\nu}_{\lambda}\cap B(P_\lambda^\nu,\overline{R})=A \cup B$, by the Poincar\'{e} type inequality in Lemma \ref{PTI}, we have
\begin{align}\label{eq1203}
&\quad \frac{p-1}{C'} \int_{B_\lambda^\nu}  V(x) u^{p-2} [(u^{\nu}_{\lambda}-u )^+]^2 \mathrm{d}x \\
&\leq \frac{(p-1)C_{2} }{C'\left[\inf\limits_K u\right]^{2-p}} \int_{B_\lambda^\nu} [(u^{\nu}_{\lambda}-u )^+]^2 \mathrm{d}x \nonumber\\
&\leq \frac{(p-1)C_{2} }{C'\left[\inf\limits_K u\right]^{2-p}} C |B_\lambda^\nu|^\frac{1}{N} \left( |A|^\frac{1}{N} \int_{A} |\nabla(u^{\nu}_{\lambda}-u )^+|^2 \mathrm{d}x + |B|^\frac{1}{N} \int_{B} |\nabla(u^{\nu}_{\lambda}-u )^+|^2 \mathrm{d}x \right)\nonumber\\
&\leq \frac{(p-1)C_{2} }{C'\left[\inf\limits_K u\right]^{2-p}} C |B_\lambda^\nu|^\frac{1}{N} \left( |A|^\frac{1}{N} M_K^{2-p} + |B_\lambda^\nu|^\frac{1}{N} M_B^{2-p}\right)  \nonumber\\
&\quad \times \int_{\Sigma_\lambda} (|\nabla u|+|\nabla u^{\nu}_{\lambda}|)^{p-2} |\nabla(u^{\nu}_{\lambda}-u )^+|^2 \mathrm{d}x, \nonumber
\end{align}
where $M_A =\sup\limits_A (|\nabla u|+|\nabla u^{\nu}_{\lambda}|)$. Combining this inequality with \eqref{eq1201}--\eqref{eq1203} and the fact that $\left(\Sigma^{\nu}_{\lambda}\cap supp(u^{\nu}_{\lambda}-u)^+\right)\subset (\R^N \setminus B(P_\lambda^\nu,\overline{R}))\cup (A \cup B)$, we get
\begin{align}\label{eq1204}
&\quad\int_{\Sigma^{\nu}_\lambda} (|\nabla u|+|\nabla u^{\nu}_{\lambda}|)^{p-2} |\nabla(u^{\nu}_{\lambda}-u)^+|^2 \mathrm{d}x \\
&\leq  \frac{(p-1) C_{\star}  c_0^{p-2} }{C' (2C_0)^{p-2}} \frac{1}{R^{\tau p}} \left(\frac{2}{N+s-2}\right)^2  \int_{\Sigma^{\nu}_\lambda} (|\nabla u|+|\nabla u^{\nu}_{\lambda}|)^{p-2} |\nabla(u^{\nu}_{\lambda}-u )^+|^2 \mathrm{d}x \nonumber\\
&\quad+ \frac{(p-1)C_{2} }{C'\left[\inf\limits_K u\right]^{2-p}} C |B_\lambda^\nu|^\frac{1}{N} \left( |A|^\frac{1}{N} M_K^{2-p} + |B_\lambda^\nu|^\frac{1}{N} M_B^{2-p}\right) \nonumber\\
&\quad\quad \times\int_{\Sigma^{\nu}_\lambda} (|\nabla u|+|\nabla u^{\nu}_{\lambda}|)^{p-2} |\nabla(u^{\nu}_{\lambda}-u )^+|^2 \mathrm{d}x  \nonumber\\
&\quad+  \frac{p C \overline{C}^\frac{2}{p}}{C' S^\frac{2}{p}} \| u^{\nu}_{\lambda} \|^{2(p-1)}_{L^{p^{\star}}(\R^N \setminus B(P_\lambda^\nu,R)) }
      \int_{\Sigma^{\nu}_\lambda}(|\nabla u|+|\nabla u^{\nu}_{\lambda}|)^{p-2} |\nabla (u^{\nu}_{\lambda}-u)^+|^2 \mathrm{d}x  \nonumber\\
&\quad+  \frac{p C \overline{C}^\frac{2}{p}}{C' S^\frac{2}{p}}
       \| u \|_\infty^{2(p-1)} |A\cup B|^{\frac{2(p-1)}{p^{\star}}}
      \int_{\Sigma^{\nu}_\lambda}(|\nabla u|+|\nabla u^{\nu}_{\lambda}|)^{p-2} |\nabla (u^{\nu}_{\lambda}-u)^+|^2 \mathrm{d}x. \nonumber
\end{align}
Let
$$G(R):=\frac{(p-1) C_{\star}  c_0^{p-2} }{C' (2C_0)^{p-2}} \frac{1}{R^{\tau p}} \left(\frac{2}{N+s-2}\right)^2 + \frac{p C \overline{C}^\frac{2}{p}}{C' S^\frac{2}{p}} \| u^{\nu}_{\lambda} \|^{2(p-1)}_{L^{p^{\star}}(\R^N \setminus B(P_\lambda^\nu,R)) }.$$
It is easy to see that $G(R)\to 0$ as $R\to\infty.$
Now, we can choose $R$ so that $G(\overline{R})\leq G(R) <\frac{1}{4}$ for each $\overline{R}\geq R$. Then it is enough to choose
$$ \delta:= \min\left\{ \frac{1}{|B_\lambda^\nu|} \left(\frac{C'\left[\inf\limits_K u\right]^{2-p}}{4(p-1)C_{2}C M_K^{2-p}}\right)^N , \left[\frac{C' S^\frac{2}{p}}{4 p C \overline{C}^\frac{2}{p} \| u \|_\infty^{2(p-1)}}\right]^{\frac{p^{\star}}{2(p-1)}} \right\}$$
and
$$ M^{2-p}:=\frac{C'\left[\inf\limits_K u\right]^{2-p}}{4(p-1)C_{2} C |B_\lambda^\nu|^\frac{2}{N} }.$$
With such choices of $\delta$ and $M$, it is easy to see that, whenever
$$ \Sigma^{\nu}_{\lambda}\cap supp(u^{\nu}_{\lambda}-u)^+\cap K=A\cup B,\quad\,\, |A\cap B|=0,\quad\,\, |A\cup B|<\delta,\quad\,\, M_B<M,$$
\eqref{eq1204} implies that $(u^{\nu}_{\lambda}-u)^+=0$ in $\Sigma_\lambda^\nu$, i.e., $u^{\nu}_{\lambda}\leq u$ in $\Sigma_\lambda^\nu$. This concludes our proof of Lemma \ref{lm.4.1}.
\end{proof}

\begin{rem}\label{re.4.1}
Observe that, for given $\lambda_0$ and $\nu_0$, $\overline{R}$ can be chosen so large such that, for all $\lambda$ in a small neighbourhood of $\lambda_0$, say, $[ \lambda_0-\theta_0 ,\lambda_0 + \theta_0 ]$ with $\theta_{0}>0$ small, and for all $\nu$ in $\overline{I_{\theta_0}(\nu_0)}:=\{ \nu \mid |\nu|=1,|\nu-\nu_0|\leq \theta_0 \}$, we have
$$ \overline{R}>R=R(\lambda,\nu)\,\quad \mbox{and}\,\quad B(P_\lambda^\nu,R)\subset B(P_{\lambda_0}^{\nu_0},\overline{R}), $$
where $R=R(\lambda,\nu)$ is given by Lemma \ref{lm.4.1} $(ii)$. Furthermore, $\delta$ and $M$ can also be chosen uniformly for all these $\lambda$ and $\nu$ in a $\theta_0$--neighbourhood of $(\lambda_0,\nu_0)$, by replacing $|B_\lambda^\nu|$ with $|B(P_{\lambda_0}^{\nu_0},\overline{R})|$ on the R.H.S of \eqref{eq1204}.
\end{rem}

\begin{lem}\label{lm.4.2}
Assume $u$ is a positive weak solution of \eqref{eq1.1} for $1<p<2$. Let $\nu_0$ be any unit vector in $\R^N$ and $\lambda_0\in\overline{\Lambda}(\nu_0)$. Let $\overline{R}$, $\delta$ and $M$ be given by Remark \ref{re.4.1} corresponding to $\theta_0-$neighbourhood of $(\lambda_0,\nu_0)$.

If we have
\begin{equation}\label{assump}
  u>u_{\lambda_0}^{\nu_0} \,\quad\,\,\mbox{in} \,\,\,(\Sigma_{\lambda_0}^{\nu_0}\setminus Z_{\lambda_0}^{\nu_0})  \cap  B(P_{\lambda_0}^{\nu_0},\overline{R}),
\end{equation}
then there exists $0<\theta_1 < \theta_0$ such that
%{\bf $(i)$} $u\geq u_\lambda^{\nu_0}$ in $\Sigma_\lambda^{\nu_0} \,\,\forall\,\lambda\in(\lambda_0-\theta_1,\lambda_0+\theta_1)$,\\
%{\bf $(ii)$}
$$ u\geq u^{\nu}_{\lambda} \,\quad \mbox{in}\,\, \Sigma_\lambda^\nu,\,\,\,\,\quad \forall\,\,(\lambda,\nu)\in (\lambda_0-\theta_1,\lambda_0+\theta_1)\times I_{\theta_1}(\nu_0),$$
where $I_{\theta_1}(\nu_0):=\{ \nu \,|\, |\nu|=1, |\nu-\nu_0|<\theta_1 \}$.
\end{lem}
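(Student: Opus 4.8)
The plan is to verify, for all $(\lambda,\nu)$ in a sufficiently small product neighbourhood $(\lambda_0-\theta_1,\lambda_0+\theta_1)\times I_{\theta_1}(\nu_0)$ with $\theta_1<\theta_0$, the hypotheses of the weak comparison principle in Lemma \ref{lm.4.1}$(ii)$, using the uniform constants $\overline R,\delta,M$ furnished by Remark \ref{re.4.1}; then Lemma \ref{lm.4.1}$(ii)$ gives $u^\nu_\lambda\leq u$ in $\Sigma^\nu_\lambda$, which is exactly the assertion. (If $\Sigma^{\nu_0}_{\lambda_0}\cap R^{\nu_0}_{\lambda_0}(B_{R_1})=\emptyset$ this emptiness persists for $(\lambda,\nu)$ nearby and Lemma \ref{lm.4.1}$(i)$ gives the conclusion at once, so the substantive case is the one covered by part $(ii)$.) Writing $\mathcal B^\nu_\lambda:=\operatorname{supp}(u^\nu_\lambda-u)^+\cap\Sigma^\nu_\lambda\cap B(P^\nu_\lambda,\overline R)$, I must exhibit a splitting $\mathcal B^\nu_\lambda=A\cup B$ with $|A\cap B|=0$, $M_B<M$ and $|A|+|B|<\delta$.

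The core of the proof is to confine $\mathcal B^\nu_\lambda$ to a set of small measure. Let $Z':=\big(Z^{\nu_0}_{\lambda_0}\cap\overline{\Sigma^{\nu_0}_{\lambda_0}}\cap\overline{B(P^{\nu_0}_{\lambda_0},\overline R)}\big)\cup\big(T^{\nu_0}_{\lambda_0}\cap\overline{B(P^{\nu_0}_{\lambda_0},\overline R)}\big)$. Since $u\in C^{1,\alpha}$ the critical set $Z^{\nu_0}_{\lambda_0}\subseteq Z_u$ is closed, hence $Z'$ is compact; and since $|Z^{\nu_0}_{\lambda_0}|=0$ by Remark \ref{re2334} and $|T^{\nu_0}_{\lambda_0}|=0$, we have $|Z'|=0$. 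Fix an open neighbourhood $U\supset Z'$ in $\overline{\Sigma^{\nu_0}_{\lambda_0}}\cap\overline{B(P^{\nu_0}_{\lambda_0},\overline R)}$ with $|U|<\delta/2$, pick a small $\rho>0$ with $\big|B(P^{\nu_0}_{\lambda_0},\overline R)\setminus B(P^{\nu_0}_{\lambda_0},\overline R-\rho)\big|<\delta/4$, and set $\mathcal K:=\big(\overline{\Sigma^{\nu_0}_{\lambda_0}}\cap\overline{B(P^{\nu_0}_{\lambda_0},\overline R-\rho)}\big)\setminus U$. Because $T^{\nu_0}_{\lambda_0}\subset Z'\subset U$, the compact set $\mathcal K$ lies at positive distance from $T^{\nu_0}_{\lambda_0}$, i.e.\ inside the open half-space $\Sigma^{\nu_0}_{\lambda_0}$, so assumption \eqref{assump} and continuity of $u$ yield $u-u^{\nu_0}_{\lambda_0}\geq 2c>0$ on $\mathcal K$. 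The map $(\lambda,\nu,x)\mapsto u^\nu_\lambda(x)$ is jointly continuous, uniformly on compact $x$-sets, and the domains $\Sigma^\nu_\lambda$, $B(P^\nu_\lambda,\overline R)$ vary continuously with $(\lambda,\nu)$; hence there is $\theta_1\in(0,\theta_0)$ such that for all $(\lambda,\nu)\in(\lambda_0-\theta_1,\lambda_0+\theta_1)\times I_{\theta_1}(\nu_0)$ one has $\mathcal K\subset\Sigma^\nu_\lambda\cap B(P^\nu_\lambda,\overline R)$, $u-u^\nu_\lambda\geq c>0$ on $\mathcal K$, and $\big|\big(\overline{\Sigma^\nu_\lambda}\cap\overline{B(P^\nu_\lambda,\overline R)}\big)\setminus\mathcal K\big|<\delta$. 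Since $\mathcal B^\nu_\lambda$ is disjoint from $\mathcal K$, it is contained in this residual set, and therefore $|\mathcal B^\nu_\lambda|<\delta$.

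Finally, to meet the gradient condition we split according to the size of $|\nabla u|+|\nabla u^\nu_\lambda|$: put $B:=\{x\in\mathcal B^\nu_\lambda:\ |\nabla u(x)|+|\nabla u^\nu_\lambda(x)|\leq M/2\}$ and $A:=\mathcal B^\nu_\lambda\setminus B$. These are disjoint with $A\cup B=\mathcal B^\nu_\lambda$, so $|A\cap B|=0$ and $|A|+|B|=|\mathcal B^\nu_\lambda|<\delta$, while $M_B=\sup_B(|\nabla u|+|\nabla u^\nu_\lambda|)\leq M/2<M$. Thus all hypotheses of Lemma \ref{lm.4.1}$(ii)$ hold with the uniform constants from Remark \ref{re.4.1}, and we conclude $u^\nu_\lambda\leq u$ in $\Sigma^\nu_\lambda$ for every $(\lambda,\nu)\in(\lambda_0-\theta_1,\lambda_0+\theta_1)\times I_{\theta_1}(\nu_0)$, as desired.

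The main obstacle is the measure-confinement step. The strict inequality \eqref{assump} degenerates on $T^{\nu_0}_{\lambda_0}$, where $u\equiv u^{\nu_0}_{\lambda_0}$, so the bad set cannot be confined to a neighbourhood of $Z^{\nu_0}_{\lambda_0}$ alone — one must also absorb a thin slab around the hyperplane $T^{\nu_0}_{\lambda_0}$, which is harmless precisely because, like $Z^{\nu_0}_{\lambda_0}$ (whose measure vanishes by Remark \ref{re2334}), such a slab has arbitrarily small Lebesgue measure; this is exactly why the decomposition $A\cup B$ can be arranged. The remaining ingredients — joint continuity of the reflected functions, the harmless adjustment of radii needed to compare $B(P^{\nu_0}_{\lambda_0},\overline R)$ with $B(P^\nu_\lambda,\overline R)$, and the inclusion $Z^\nu_\lambda\subseteq Z_u$ which keeps all constants in Lemma \ref{lm.4.1} and Remark \ref{re.4.1} uniform in $(\lambda,\nu)$ — are routine.
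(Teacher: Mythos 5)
Your proof is correct and follows the same overall strategy as the paper's: confine the bad set $\operatorname{supp}(u^\nu_\lambda-u)^+\cap\Sigma^\nu_\lambda\cap B(P^\nu_\lambda,\overline R)$ to a region of measure below $\delta$ by combining compactness, the strict inequality \eqref{assump} on a compact core, and joint continuity of $(\lambda,\nu,x)\mapsto u^\nu_\lambda(x)$, then invoke Lemma \ref{lm.4.1}$(ii)$ with the uniform constants from Remark \ref{re.4.1}. The one genuine, if minor, departure is the $A$--$B$ split. The paper fixes, already at $(\lambda_0,\nu_0)$, a shell neighbourhood $A$ of $\partial(K\cap\Sigma^{\nu_0}_{\lambda_0})$ (which absorbs both the sphere and the hyperplane $T^{\nu_0}_{\lambda_0}$) and a neighbourhood $O$ of $Z^{\nu_0}_{\lambda_0}\cap K$ with $M^{\lambda_0,\nu_0}_O<M/2$, and then needs a separate continuity step to propagate $M^{\lambda,\nu}_{O\cap\Sigma^\nu_\lambda}<M$ to nearby $(\lambda,\nu)$. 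You instead split the already-small bad set a posteriori by the level $M/2$ of $|\nabla u|+|\nabla u^\nu_\lambda|$, so $M_B\le M/2<M$ holds by construction and no gradient-continuity argument is needed; since Lemma \ref{lm.4.1}$(ii)$ only asks for the existence of some admissible splitting with $|A\cap B|=0$, $|A|+|B|<\delta$ and $M_B<M$, this is perfectly legitimate and marginally cleaner. Folding $T^{\nu_0}_{\lambda_0}$ into your null set $Z'$ (rather than into a boundary shell, as the paper does) is a cosmetic variation serving the same purpose, and your remark that the hyperplane must be excised because \eqref{assump} degenerates there is exactly the right observation.
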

\begin{proof}
Let $K:=B(P_{\lambda_0}^{\nu_0},\overline{R})$, where $P_{\lambda_0}^{\nu_0}$ is the projection of the origin on $T_{\lambda_0}^{\nu_0}$, and let $\delta$ and $M$ be chosen as in Remark \ref{re.4.1} uniformly for all $(\lambda,\nu)$ in a $\theta_0$-neighbourhood of $(\lambda_0,\nu_0)$.

Since $\overline{R}\geq R>R_{2}>2R_{1}>2R_{0}$, $\Sigma_\lambda^\nu \cap R_\lambda^\nu(B_{R_1}) \subset B(P_\lambda^\nu,R_2) \subset B(P_\lambda^\nu,R)\subset B(P_{\lambda_0}^{\nu_0},\overline{R})$ and $\Sigma_\lambda^\nu \cap R_\lambda^\nu(B_{R_1}) \neq \emptyset$ imply $B_{R_0}\subset B(P_{\lambda_0}^{\nu_{0}},\overline{R})$, by the sharp asymptotic estimates in Theorems \ref{th2.1} and \ref{th2.1-}, we get
$$Z_u\subset K.$$
On the other hand, by Remark \ref{re2334}, the Lebesgue measure of $Z_{u}$ is zero, i.e., $|Z_u|=0$.  Thus we can choose a small neighborhood $A$ of $\partial\left(K\cap \Sigma_{\lambda_0}^{\nu_0}\right)$ with $|A|<\frac{\delta}{4}$ and another neighbourhood $O$ of $Z_{\lambda_0}^{\nu_0} \cap K$ with $|O|<\frac{\delta}{4}$ and $M_O^{\lambda_0,\nu_0}<\frac{M}{2}$, where
$$ M_O^{\lambda,\nu}=\sup_O \{ |\nabla u|+|\nabla u^{\nu}_{\lambda}| \}.$$

Note that $K_0=(\overline{K\cap\Sigma_{\lambda_0}^{\nu_0}}) \setminus (A\cup O)$ is a compact set, so by our assumption \eqref{assump}, one has, there exists some positive constant $m>0$ such that
$$ u-u_{\lambda_0}^{\nu_0}>m>0 \quad\quad \,\mbox{in}\,\,\, K_0. $$
Then the continuity with respect to $\lambda$ and $\nu$ implies that, there exists $\theta_1=\theta_1(\lambda_0,\nu_0,K)<\theta_0$, such that
\begin{align*}
& u-u^{\nu}_{\lambda}> \frac{m}{2}>0 \qquad \,\mbox{in}\,K_0, \\
& M_{O\cap \Sigma_\lambda^\nu}^{\lambda,\nu}< M, \\
& |A\cap B(P_\lambda^\nu,R) \cap \Sigma_\lambda^\nu |< \frac{\delta}{2},\\
& |O\cap B(P_\lambda^\nu,R) \cap \Sigma_\lambda^\nu |< \frac{\delta}{2},\\
& B(P_\lambda^\nu,R)\cap \Sigma_\lambda^\nu\subset (B(P_{\lambda_0}^{\nu_0},\overline{R})\cap \Sigma_{\lambda_0}^{\nu_0}) \cup A
\end{align*}
for all $(\lambda,\nu)\in (\lambda_0-\theta_1, \lambda_0+\theta_1)\times I_{\theta_1}(\nu_0)$, where $R=R(\lambda_0,\nu_0)$ is given by Lemma \ref{lm.4.1} $(ii)$. Then we have
$$ supp(u^{\nu}_{\lambda}-u)^+ \cap B(P_\lambda^\nu,R)\cap \Sigma_\lambda^\nu \subset (A\cup O) \cap B(P_\lambda^\nu,R)  = (A\cap B(P_\lambda^\nu,R)) \cup (O\cap B(P_\lambda^\nu,R))$$
for all $(\lambda,\nu)\in (\lambda_0-\theta_1, \lambda_0+\theta_1)\times I_{\theta_1}(\nu_0)$, which implies that
$$ supp(u^{\nu}_{\lambda}-u)^+ \cap B(P_\lambda^\nu,R)\cap \Sigma_\lambda^\nu\subset(A\cap B(P_\lambda^\nu,R)\cap \Sigma_\lambda^\nu) \cup (O\cap B(P_\lambda^\nu,R)\cap \Sigma_\lambda^\nu) $$
satisfies the conditions of Lemma \ref{lm.4.1} $(ii)$. Therefore, we derive
$$ u\geq u^{\nu}_{\lambda} \quad \,\,\,\,\,\mbox{in}\,\, \Sigma_\lambda^\nu$$
for all $(\lambda,\nu)\in (\lambda_0-\theta_1,\lambda_0+\theta_1)\times I_{\theta_1}(\nu_0)$. This completes our proof of Lemma \ref{lm.4.2}.
\end{proof}

\begin{rem}\label{re.4.2}
Under the same assumptions as Lemma \ref{lm.4.2}, if, for some $\lambda_0 \in \overline{\Lambda}(\nu_0)$, there exist open neighbourhoods $A$ and $O$ of the sets $\partial\left(\Sigma^{\nu_0}_{\lambda_0} \cap B(P^{\nu_0}_{\lambda_0},\overline{R})\right)$ and $Z^{\nu_0}_{\lambda_0}\cap B(P^{\nu_0}_{\lambda_0},\overline{R})$ respectively such that
$$ u>u^{\nu_0}_{\lambda_0}\,\,\quad\, \mbox{in}\,\,\overline{\left(B(P^{\nu_0}_{\lambda_0},\overline{R})\cap \Sigma^{\nu_0}_{\lambda_0}\right)}\setminus (A\cup O) $$
with $|A|+|O|<\frac{\delta}{2}$ and $M_O^{\nu_0,\lambda_0}<\frac{M}{2}$, where $\delta$ and $M$ are chosen as in Remark \ref{re.4.1} uniformly for all  $(\lambda,\nu)\in (\lambda_0-\theta_0, \lambda_0+\theta_0)\times I_{\theta_0}(\nu_0)$. Then there exists a sufficiently small $\theta_1>0$ such that Lemma \ref{lm.4.2} holds.
\end{rem}

\subsubsection{Proof of the radial symmetry and strictly radial monotonicity}

In this subsection, we are to complete the proof of Theorem \ref{th2} in the singular elliptic case $1<p<2$.

\begin{proof}
Along any direction $\nu$ in $\R^N$, we start moving the planes $T^{\nu}_{\lambda}$ from $\lambda$ near $+\infty$ until its limiting position $T^{\nu}_{\lambda_{0}(\nu)}$.

\medskip

{\bf Step 1.} We will show that $\overline{\Lambda}(\nu)\neq\emptyset$ and $\lambda_0(\nu)$ is well-defined and finite.

We claim that $\overline{\Lambda}(\nu) \supset (R_1,+\infty)$, where $R_1=R_1(N,p,R_0)>0$ is given by Lemma \ref{lm.4.1} $(i)$. If $\lambda > R_1$,
$$ \Sigma_\lambda^\nu \cap R_\lambda^\nu(B_{R_1})=\emptyset.$$
Hence using Lemma \ref{lm.4.1} $(i)$, we have
$$ u\geq u_\lambda^\nu\,\quad\,\,\mbox{in}\,\, \Sigma_\lambda^\nu $$
for any $\lambda\in\overline{\Lambda}(\nu)$ and $\lambda > R_1$.

\medskip

{\bf Step 2.} We are to prove $u \equiv u^\nu_{\lambda_0(\nu)}$ in $\Sigma^\nu_{\lambda_0(\nu)}$.

At the limiting position $T^{\nu}_{\lambda_0(\nu)}$, by continuity, we have $u\geq u^\nu_{\lambda_0(\nu)}$ in $\Sigma^\nu_{\lambda_0(\nu)}$. Next, by the strong comparison principle in Lemma \ref{th3.2} and the fact that
\begin{align}\label{eq2305}
-\Delta_p u =V(x)u^{p-1} \geq V(x^{\nu}_{\lambda_0(\nu)})\left(u^{\nu}_{\lambda_0(\nu)}\right)^{p-1}= -\Delta_p u^{\nu}_{\lambda_0(\nu)} \quad\,\,\mbox{in}\,\,\Sigma^{\nu}_{\lambda_0(\nu)},
\end{align}
we deduce that, either $u>u^\nu_{\lambda_0(\nu)}$  or $u=u^\nu_{\lambda_0(\nu)}$ in all $\mathcal{C}^\nu$, where $\mathcal{C}^\nu$ is a connected component of $\Sigma^\nu_{\lambda_0(\nu)} \setminus Z^\nu_{\lambda_0(\nu)}$.

Noting that the sharp lower bound of $|\nabla u|$ in Theorems \ref{th2.1} and \ref{th2.1-} indicates that the critical set $Z_u=\{x \in \R^N \mid |\nabla u(x)| = 0 \}$ of the solution $u$ is a closed set belonging to $B_{R_0}(0)$, i.e.,
$$Z^\nu_{\lambda_0(\nu)}\subset Z_u \subset B_{R_0}.$$
Meanwhile, it follows from Corollary \ref{re2333} that $|Z_{u}|=0$ (see Remark \ref{re2334}), and from Lemma \ref{lm.7} that $\Omega\setminus Z_u$ is connected for any smooth bounded domain $\Omega\subset\mathbb{R}^{N}$ with connected boundary such that $B_{R_0}(0)\subseteq\Omega$. As a consequence, one has $\mathbb{R}^{N}\setminus Z_u$ is connected. Due to the closeness of $Z_{u}$ and the fact that $|Z_{u}|=0$, we can deduce that $\mathbb{R}^{N}\setminus Z^\nu_{\lambda_{0}(\nu)}$ is connected.

\smallskip

If $\Sigma^\nu_{\lambda_0(\nu)} \setminus Z^\nu_{\lambda_0(\nu)}$ has at least two different connected components $\mathcal{C}^{\nu}_{1}$ and $\mathcal{C}^{\nu}_{2}$ such that $u=u^\nu_{\lambda_0(\nu)}$ in $\mathcal{C}^{\nu}_{1}$ and $u>u^\nu_{\lambda_0(\nu)}$ in $\mathcal{C}^{\nu}_{2}$. Then, by symmetry, $\mathcal{C}^{\nu}_{1}\bigcup R^{\nu}_{\lambda_{0}(\nu)}(\mathcal{C}^{\nu}_{1})\subsetneq \mathbb{R}^{N}\setminus Z^\nu_{\lambda_{0}(\nu)}$ contains at least one connected component of $\mathbb{R}^{N}\setminus Z^\nu_{\lambda_{0}(\nu)}$, which is absurd. If we assume that
\begin{align}\label{eq2305009}
u > u^\nu_{\lambda_0(\nu)} \quad\,\,\,\mbox{in}\,\,\Sigma^\nu_{\lambda_0(\nu)} \setminus Z^\nu_{\lambda_0(\nu)},
\end{align}
%If there exists a point $x_0\in \Sigma^\nu_{\lambda_0(\nu)}$ such that $u(x_0)> u^\nu_{\lambda_0} (x_0)$. Now we will prove that {\bf $(ii)$} occurs.
%Suppose by contradiction that there does not exist any component $C^\nu$ of $\Sigma^\nu_{\lambda_0(\nu)} \setminus Z^\nu_{\lambda_0(\nu)}$ such that $C^\nu \cap B(P^\nu_{\lambda_0(\nu)},\overline{R})\neq\emptyset$ and $u\equiv u^\nu_{\lambda_0(\nu)}$ in $C^\nu$. Using Theorem \ref{th3.2}, we have $u>u^\nu_{\lambda_0}$ on $(B(P^\nu_{\lambda_0(\nu)},\overline{R}) \cap \Sigma^\nu_{\lambda_0(\nu)})\setminus Z^\nu_{\lambda_0(\nu)}$.
then Lemma \ref{lm.4.2} implies that $u\geq u_\lambda^\nu$ in $\Sigma_\lambda^\nu$ for any $\lambda\in(\lambda_0(\nu)-\theta_1, \lambda_0(\nu))$ with $\theta_{1}>0$ small, which contradicts the definition of $\lambda_0(\nu)$. As a consequence, we have proved that
$$ u\equiv u^\nu_{\lambda_0(\nu)} \,\,\quad \mbox{in}\,\, \Sigma^\nu_{\lambda_0(\nu)} \setminus Z^\nu_{\lambda_0(\nu)}.$$
Furthermore, by the definition of $Z^\nu_{\lambda_0(\nu)}$ and $u\in C_{loc}^{1,\alpha}(\R^N)$, it follows that
\begin{align}\label{eq23050091}
u\equiv u^\nu_{\lambda_0(\nu)} \quad\,\,\mbox{in} \,\,\Sigma^\nu_{\lambda_0(\nu)}.
\end{align}

On the other hand, according to the strong comparison principle Lemma \ref{th3.2}, one knows that
$ u > u^\nu_{\lambda}$ in $\Sigma^\nu_\lambda\setminus Z^\nu_{\lambda(\nu)}$ for any $\lambda > \lambda_0(\nu)$.    %consequently $u$ is monotone increasing in $\Sigma^\nu_{\lambda_0(\nu)} \setminus Z^\nu_{\lambda_0(\nu)}$ so that $u_{x_1} \geq 0$ in $\Sigma^\nu_{\lambda_0(\nu)} \setminus Z^\nu_{\lambda_0(\nu)}$.
Consequently, $u$ is strictly monotone increasing in $\Sigma^\nu_{\lambda_0(\nu)}\setminus Z^\nu_{\lambda_0(\nu)}$ for any direction $\nu$, and hence $\frac{\partial u}{\partial \nu} \geq 0$ in $\Sigma^\nu_{\lambda_0(\nu)} \setminus Z^\nu_{\lambda_0(\nu)}$.
%
%Furthermore $ u > u^\nu_{\lambda}$ in $\Sigma^\nu_\lambda\setminus Z^\nu_{\lambda(\nu)}$ for any $\lambda > \lambda_0(\nu)$ and  %consequently $u$ is monotone increasing in $\Sigma^\nu_{\lambda_0(\nu)} \setminus Z^\nu_{\lambda_0(\nu)}$ so that $u_{x_1} \geq 0$ in $\Sigma^\nu_{\lambda_0(\nu)} \setminus Z^\nu_{\lambda_0(\nu)}$.
%consequently $u$ in $\Sigma^\nu_{\lambda_0(\nu)}\setminus Z^\nu_{\lambda_0(\nu)}$ is monotonically increasing in the $\nu$ direction, i.e. $\frac{\partial u}{\partial \nu} \geq 0$ in $\Sigma^\nu_{\lambda_0(\nu)} \setminus Z^\nu_{\lambda_0(\nu)}$.

\medskip

{\bf Step 3.} We will show $u$ is radial and (strictly) radially decreasing about some point $x_0\in\R^N$.

It is known that for any direction $\nu$, $u$ is symmetric about the plane $T^\nu_{\lambda_0(\nu)}$. Now consider all the $N$ orthogonal directions $\{e_k\}_{k=1}^N$ in $\R^N$, and one can obtain in entirely similar way that $u$ is strictly increasing in each direction $e_k$ in $\Sigma^{e_k}_{\lambda_0(e_k)}$ and symmetric about the plane $T^{e_k}_{\lambda_0(e_k)}$. This shows that $u$ is symmetric with respect to a point $x_0\in\R^N$ $(x_0 = \bigcap^N_{k=1}T^{e_k}_{\lambda_0(e_k)})$, which is the unique critical point for $u$. Furthermore, by exploiting the moving plane procedure in any direction $\nu\in S^{N-1}$, we finally get that $u$ is radial and (strictly) radially decreasing w.r.t. $x_{0}$. That is, positive solution $u$ must assume the form
\[u(x)=\lambda^{\frac{N-p}{p}}U(\lambda(x-x_{0}))\]
for $\lambda:=u(0)^{\frac{p}{N-p}}>0$ and some $x_{0}\in\mathbb{R}^{N}$, where $U(x)=U(r)$ with $r=|x|$ is the positive radial solution to \eqref{eq1.1} with $U(0)=1$, $U'(0)=0$ and $U'(r)\leq0$ for any $r>0$.

\smallskip

Next, for $1<p<2$, we will prove that
\begin{equation}\label{final}
  U^\prime (r) < 0, \quad \,\,\,\forall \,\, r>0.
\end{equation}
We will prove \eqref{final} by contradiction arguments. We assume on the contrary that there exists $r_0>0$ such that $U^\prime (r_0) = 0$. Since Lemma \ref{re2333} and Remark \ref{re2334} implies that $|Z_U|=0$, one has $U'(r)<0$ for $r\in(0,r_{0})\cup(r_{0},+\infty)$, so $U(r)>U(r_0)$ in $[0,r_0)$. Using the Strong Maximum Principle and H\"{o}pf's Lemma in Lemma \ref{hopf} (c.f. \cite{JLV}, see also Theorem 2.1 in \cite{LDBS04}, Theorem 2.4 in \cite{LDSMLMSB} or \cite{LD,PS}) to the positive radial solution $W(x):=U(x)-U(r_0)=U(|x|)-U(r_{0})$ of
\begin{align*}
\left\{ \begin{array}{ll} \displaystyle
-\Delta_p W  = \int_{\R^N} \frac{U^p(|y|)}{|x-y|^{2p}}\mathrm{d}y \cdot U^{p-1}(x) > 0  \,\,\,\,&\mbox{in}\, \,B_{r_0}, \\
W>0   &\mbox{in}\,\,  B_{r_0},\\
W=0   &\mbox{on}\,\, \partial B_{r_0},
\end{array}
\right.\hspace{1cm}
\end{align*}
we obtain $W^\prime (r_0)=U'(r_{0})<0$, which is absurd. Hence \eqref{final} holds, i.e., $U'(r)<0$ for any $r>0$. This completes our proof of Theorem \ref{th2} in the singular elliptic case $1<p<2$.

This concludes our proof of Theorem \ref{th2}.
\end{proof}

\section{Proof of Theorem \ref{gth2}}\label{sc5}

\begin{proof}[Proof of Theorem \ref{gth2}]

We will first show that $V(x):=\left(|x|^{-\sigma}\ast u^{p_{\sigma}^{\star}}\right)(x)u^{p_{\sigma}^{\star}-p}(x)$ satisfies all the assumptions in Theorem \ref{th2.1}, and hence derive the regularity results and the sharp asymptotic estimates on $D^{1,p}(\mathbb{R}^{N})$-weak solution $u$ to the general nonlocal quasi-linear equation \eqref{gnqe} and $|\nabla u|$ from Theorem \ref{th2.1}.

\medskip

In fact, note that $0\leq V(x):=\left(|x|^{-\sigma}\ast u^{p_{\sigma}^{\star}}\right)(x)u^{p_{\sigma}^{\star}-p}(x)\in L^{\frac{N}{p}}(\mathbb{R}^{N})$, by Theorem \ref{th2.1}, we know that $u\in L^{r}_{loc}(\mathbb{R}^{N})$ for any $0<r<+\infty$ and the decay property in \eqref{a1} holds for sufficiently large $\bar{p}\geq \hat{p}$ and $R>R_{0}>1$. First, suppose $u\in L^{r}_{loc}(\mathbb{R}^{N})$ for any $0<r<+\infty$, then we can deduce from the Hardy-Littlewood-Sobolev inequality and the decay property in \eqref{a1} that, for any $q>\frac{N}{p}$ large enough, there exists $\bar{p}\geq\hat{p}\geq p^{\star}$ such that, for any $R>R_{0}$,
\begin{eqnarray}\label{1g}
  && \|V\|_{L^{q}(B_{R}(0))}\leq C\|u\|^{2p^{\star}_{\sigma}-p}_{L^{\bar{p}}(\mathbb{R^{N}})}\leq C\left\{\|u\|^{2p^{\star}_{\sigma}-p}_{L^{\bar{p}}(B_{2R}(0))}+\|u\|^{2p^{\star}_{\sigma}-p}_{L^{\bar{p}}(\mathbb{R}^{N}\setminus B_{2R}(0))}\right\} \\
 \nonumber  && \qquad\qquad\quad\,\,\, \leq C\left\{\|u\|^{2p^{\star}_{\sigma}-p}_{L^{\bar{p}}(B_{2R}(0))}+\left(\frac{C}{R^{\frac{N-p}{p}+\tau}}\right)^{2p^{\star}_{\sigma}-p}\right\}<+\infty,
\end{eqnarray}
that is, $V\in L^{q}_{loc}(\mathbb{R}^{N})$ for any $\frac{N}{p}<q<+\infty$.

\medskip

Second, assume that the decay property \eqref{a1} holds for sufficiently large $\bar{p}\geq \hat{p}$ and $R>R_{0}>1$, then by the Hardy-Littlewood-Sobolev inequality and the H\"{o}lder's inequality, we have, for any given $q>\frac{N}{p}$ large enough and $\widetilde{R}=4R_{0}$,
\begin{eqnarray}\label{2g}
  &&\quad \|V\|_{L^{q}(B_{1}(x))}\\
  \nonumber && \leq \Big\|\left[|x|^{-\sigma}\ast (u^{p^{\star}_{\sigma}}\chi_{\mathbb{R}^{N}\setminus B_{4R_{0}}(0)})\right]u^{p^{\star}_{\sigma}-p}\Big\|_{L^{q}(B_{1}(x))}+\Big\|\left[|x|^{-\sigma}\ast (u^{p^{\star}_{\sigma}}\chi_{B_{4R_{0}}(0)})\right]u^{p^{\star}_{\sigma}-p}\Big\|_{L^{q}(B_{1}(x))} \\
 \nonumber && \leq C\|u\|^{2p^{\star}_{\sigma}-p}_{L^{\bar{p}}(\mathbb{R}^{N}\setminus B_{4R_{0}}(0))}+\left(|B_{1}(0)|\right)^{\frac{p^{\star}_{\sigma}}{\bar{p}}-\frac{N-\sigma}{N}}R_{0}^{-\sigma}
 \|u\|^{p^{\star}_{\sigma}}_{L^{p^{\star}_{\sigma}}(B_{4R_{0}}(0))}\|u\|^{p^{\star}_{\sigma}-p}_{L^{\bar{p}}(\mathbb{R}^{N}\setminus B_{4R_{0}}(0))} \\
 \nonumber  && \leq C\left\{\left(\frac{C}{(2R_{0})^{\frac{N-p}{p}+\tau}}\right)^{2p^{\star}_{\sigma}-p}
 +R_{0}^{-\frac{\sigma}{2}}\left(\frac{C}{(2R_{0})^{\frac{N-p}{p}+\tau}}\right)^{p^{\star}_{\sigma}-p}\|u\|^{p^{\star}_{\sigma}}_{L^{p^{\star}}(\mathbb{R}^{N})}\right\} \\
 \nonumber  && \leq CR_{0}^{-p}\leq C, \qquad\quad \forall\,\, |x|>2\widetilde{R},
\end{eqnarray}
where the constant $C>0$ is independent of $x$ and $\widetilde{R}$. That is, the uniform boundedness in \eqref{a2} holds.

\medskip

Finally, assume that $u(x)\leq C|x|^{-\gamma}$ for some $C>0$, $\gamma>\frac{N-p}{p}$ and $|x|$ large (actually, the better preliminary estimate $u(x)\leq\frac{C}{1+|x|^{\frac{N-p}{p}+\tau}}$ with $\tau>0$ in Lemma \ref{lm.5} indicates that this assumption holds for $\gamma=\frac{N-p}{p}+\tau$), then the basic estimates on the convolution $V(x)=\left(|x|^{-\sigma}\ast u^{p_{\sigma}^{\star}}\right)(x)u^{p_{\sigma}^{\star}-p}(x)$ in (ii) of Lemma \ref{lm.6} (with $0<\sigma<N$, $\sigma\leq 2p$ and $q=p_{\sigma}^{\star}$) implies that, for $|x|>R$ large,
\begin{equation}\label{Vg}
  V(x)=\left(|x|^{-\sigma}\ast u^{p_{\sigma}^{\star}}\right)(x)u^{p_{\sigma}^{\star}-p}(x)\leq C|x|^{-(p _{\sigma}^{\star}\gamma+\sigma-N)}|x|^{-(p_{\sigma}^{\star}-p)\gamma}=:C|x|^{-\beta},
\end{equation}
where $\beta:=p_{\sigma}^{\star}\gamma+\sigma-N+(p_{\sigma}^{\star}-p)\gamma=p+(2p_{\sigma}^{\star}-p)\tau>p$. Thus Theorem \ref{th2.1} can be applied to the general nonlocal quasi-linear equation \eqref{gnqe} and hence positive weak solution $u$ to \eqref{gnqe} satisfies the regularity results in Theorem \ref{th2.1}, and the sharp asymptotic estimates
$$u\sim\left(1+|x|^\frac{N-p}{p-1}\right)^{-1} \quad \text{in} \,\, \mathbb{R}^{N} \quad\,\,\, \text{and} \quad\,\,\, |\nabla u|\sim |x|^{-\frac{N-1}{p-1}} \quad \text{in} \,\, \mathbb{R}^{N}\setminus B_{R_{0}}(0).$$

\medskip

Thanks to the sharp asymptotic estimates on $u$ and $|\nabla u|$, the radial symmetry and strictly radial monotonicity in Theorem \ref{gth2} can be proved via the method of moving planes in entirely the same way as in the proof of Theorem \ref{th2} in Section \ref{sc4} by replacing $V(x)=|x|^{-2p}\ast u^{p}$ with $V(x)=\left(|x|^{-\sigma}\ast u^{p_{\sigma}^{\star}}\right)(x)u^{p_{\sigma}^{\star}-p}(x)$ and making slightly suitable modifications accordingly. For instance, we only need to use the decay estimate \eqref{Vg} instead of \eqref{V}, use Lemma \ref{lm.6} and Remark \ref{rm.2.7} with general $\sigma\in(0,N)\cap(0,2p]$ and $q=p^{\star}_{\sigma}\in(\frac{p^{\star}}{2},p^{\star})\cap[p,p^{\star})$ instead of the endpoint case $\sigma=2p$ and $q=p$, and use the following identity
\begin{align*}
V(x)-V(x^{\nu}_\lambda)&=\int_{\R^N} \frac{u^{p_{\sigma}^{\star}}(y)}{|x-y|^{\sigma}}\mathrm{d}y\left[u^{p_{\sigma}^{\star}-p}(x)-(u^{\nu}_\lambda)^{p_{\sigma}^{\star}-p}(x)\right] \\
&\quad+\int_{\Sigma^{\nu}_\lambda}\left(\frac{1}{|x-y|^{\sigma}}-\frac{1}{|x^{\nu}_\lambda-y|^{\sigma}}\right)\left[u^{p_{\sigma}^{\star}}(y)-(u^{\nu}_\lambda)^{p_{\sigma}^{\star}}(y)\right]\mathrm{d}y \cdot(u^{\nu}_\lambda)^{p_{\sigma}^{\star}-p}(x) \nonumber
\end{align*}
instead of \eqref{eq0805} in Lemma \ref{lm.1}, where $\nu$ is any direction in $\R^N$, and so on. So we omit the details in the rest of the proof for the sake of simplicity. This concludes our proof of Theorem \ref{gth2}.
\end{proof}


\begin{thebibliography}{99}

\bibitem{A} C. O. Alves, {\it Existence of positive solutions for a problem with lack of compactness involving the $p$-Laplacian}, Nonlinear Anal., \textbf{51} (2002), no. 7, 1187-1206.

\bibitem{AY} C. O. Alves and M. Yang, {\it Investigating the multiplicity and concentration behaviour of solutions for a quas-linear Choquard equation via the penalization method}, Proc. Roy. Soc. Edinburgh Sect. A, \textbf{146} (2016), no. 1, 23-58.

\bibitem{B} M. F. Bidaut-V\'{e}ron, {\it Local and global behavior of solutions of quasilinear equations of Emden-Fowler type}, Arch. Ration. Mech. Anal., \textbf{107} (1989), no. 4, 293-324.

\bibitem{CGS} L. Caffarelli, B. Gidas and J. Spruck, {\it Asymptotic symmetry and local behavior of semilinear elliptic equations with critical Sobolev growth}, Comm. Pure Appl. Math., \textbf{42} (1989), no. 3, 271-297.

\bibitem{CD} D. Cao and W. Dai, {\it Classification of nonnegative solutions to a bi-harmonic equation with Hartree type nonlinearity}, Proc. Royal Soc. Edinburgh-A: Math., \textbf{149} (2019), 979-994.

\bibitem{CDZ} D. Cao, W. Dai and Y. Zhang, {\it Existence and symmetry of solutions to 2-D Schr\"{o}dinger-Newton equations}, Dyn. Partial Differ. Equ., \textbf{18} (2021), no. 2, 113-156.

\bibitem{CDPSYS} D. Cao, S. Peng and S. Yan, {\it Infinitely many solutions for $p$-Laplacian equation involving critical Sobolev growth}, Journal of Functional Analysis, 2012, 262(6): 2861-2902.

\bibitem{CMR} G. Catino, D. D. Monticelli and A. Roncoroni, {\it On the critical $p$-Laplace equation}, Adv. Math., \textbf{433} (2023), Paper No. 109331, 38 pp.

\bibitem{CL} W. Chen and C. Li, {\it Classification of solutions of some nonlinear elliptic equations}, Duke Math. J., \textbf{62} (1991), no. 3, 615-622.

\bibitem{CL1} W. Chen and C. Li, {\it A priori estimates for prescribing scalar curvature equations}, Ann. of Math., \textbf{145} (1997), no. 3, 547-564.

\bibitem{CLL} W. Chen, C. Li and Y. Li, {\it A direct method of moving planes for the fractional Laplacian}, Adv. Math., \textbf{308} (2017), 404-437.

\bibitem{CLO} W. Chen, C. Li and B. Ou, {\it Classification of solutions for an integral equation}, Comm. Pure Appl. Math., \textbf{59} (2006), no. 3, 330-343.

\bibitem{CDQ} W. Chen, W. Dai and G. Qin, {\it Liouville type theorems, a priori estimates and existence of solutions for critical and super-critical order Hardy-H\'{e}non type equations in $\mathbb{R}^n$}, Math. Z., \textbf{303} (2023), Paper No. 104, 36 pp.

\bibitem{CV} S. Cingolani and G. Vannella, {\it Multiple positive solutions for a critical quasilinear equation via Morse theory}, Ann. Inst. H. Poincar\'{e} C Anal. Non Lin\'{e}aire, \textbf{26} (2009), no. 2, 397-413.

\bibitem{CFR} G. Ciraolo, A. Figalli and A. Roncoroni, {\it Symmetry results for critical anisotropic $p$-Laplacian equations in convex cones}, Geom. Funct. Anal., \textbf{30} (2020), no. 3, 770-803.

\bibitem{DFHQW} W. Dai, Y. Fang, J. Huang, Y. Qin and B. Wang, {\it Regularity and classification of solutions to static Hartree equations involving fractional Laplacians}, Discrete and Continuous Dynamical Systems - A, \textbf{39} (2019), no. 3, 1389-1403.

\bibitem{DFQ} W. Dai, Y. Fang and G. Qin, {\it Classification of positive solutions to fractional order Hartree equations via a direct method of moving planes}, J. Differential Equations, \textbf{265} (2018), 2044-2063.

\bibitem{DL} W. Dai and Z. Liu, {\it Classification of nonnegative solutions to static Schr\"{o}dinger-Hartree and Schr\"{o}dinger-Maxwell equations with combined nonlinearities}, Calc. Var. Partial Differential Equations, \textbf{58} (2019), no. 4, Paper No. 156, 24 pp.

\bibitem{DLQ} W. Dai, Z. Liu and G. Qin, {\it Classification of nonnegative solutions to static Schr\"{o}dinger-Hartree-Maxwell type equations}, SIAM J. Math. Anal., \textbf{53} (2021), no. 2, 1379-1410.

\bibitem{DQ} W. Dai and G. Qin, {\it Classification of nonnegative classical solutions to third-order equations}, Adv. Math., \textbf{328} (2018), 822-857.

\bibitem{LD} L. Damascelli, {\it Comparison theorems for some quasilinear degenerate elliptic operators and applications to symmetry and monotonicity results}, Ann. Inst. Henri Poincar\'{e}, Anal. Non Lin\'{e}aire., \textbf{15} (1998), no. 4, 493-516.

\bibitem{DFSV} L. Damascelli, A. Farina, B. Sciunzi and E. Valdinoci, {\it Liouville results for $m$-Laplace equations of Lane-Emden-Fowler type}, Ann. Inst. H. Poincar\'{e} Anal. Non Lin\'{e}aire, \textbf{26} (2009), no. 4, 1099-1119.

\bibitem{DP} L. Damascelli and F. Pacella, {\it Monotonicity and symmetry of solutions of $p$-Laplace equations via the moving plane method}, Ann. Sc. Norm. Super. Pisa Cl. Sci. 26 (4) (1998) 689-707.

\bibitem{LDSMLMSB} L. Damascelli, S. Merch\'{a}n, L. Montoro and B. Sciunzi, {\it Radial symmetry and applications for a problem involving the $-\Delta_p(\cdot)$ operator and critical nonlinearity in $\R^N$}, Advances in Mathematics, \textbf{265} (2014), 313-335.

\bibitem{DLPFRM} L. Damascelli, F. Pacella and M. Ramaswamy, {\it Symmetry of ground states of $p$-Laplace equations via the moving plane method}, Arch. Ration. Mech. Anal., \textbf{148} (1999), no. 4, 291-308.

\bibitem{LDMR} L. Damascelli and M. Ramaswamy, {\it Symmetry of $C^1$ solutions of $p$-Laplace equations in $\R^N$}, Adv. Nonlinear Stud., \textbf{1} (2001), no. 1, 40-64.

\bibitem{LDBS04} L. Damascelli and B. Sciunzi, {\it Regularity, monotonicity and symmetry of positive solutions of $m$-Laplace equations}, J. Differential Equations, \textbf{206} (2004), no. 2, 483-515.

\bibitem{LDBS} L. Damascelli and B. Sciunzi, {\it Harnack inequalities, maximum and comparison principles, and regularity of positive solutions of $m$-Laplace equations}, Calc. Var. Partial Differ. Equ., \textbf{25} (2006), no. 2, 139-159.

\bibitem{DYZ} E. N. Dancer, H. Yang and W. Zou, {\it Liouville-type results for a class of quasilinear elliptic systems and applications. (English summary)}, J. Lond. Math. Soc., \textbf{99} (2019), no. 2, 273-294.

\bibitem{ED83} E. DiBenedetto, {\it $C^{1+\alpha}$ local regularity of weak solutions of degenerate elliptic equations}, Nonlinear Anal., \textbf{7} (1983), no. 8, 827-850.

\bibitem{Di} S. Dipierro, {\it Geometric inequalities and symmetry results for elliptic systems}, Discrete Contin. Dyn. Syst.-A, \textbf{33} (2013), no. 8, 3473-3496.

\bibitem{DMPS} S. Dipierro, L. Montoro, I. Peral and B. Sciunzi, {\it Qualitative properties of positive solutions to nonlocal critical problems involving the Hardy-Leray potential}, Calc. Var. Partial Differential Equations, \textbf{55} (2016), no. 4, Paper No. 99, 29 pp.

\bibitem{DPZZ} H. Dong, F. Peng, Y. R.-Y. Zhang and Y. Zhou, {\it Hessian estimates for equations involving $p$-Laplacian via a fundamental inequality}, Adv. Math., \textbf{370} (2020), Paper No. 107212, 40 pp.

\bibitem{FSV} A. Farina, B. Sciunzi and E. Valdinoci, {\it On a Poincar\'{e} type formula for solutions of singular and degenerate elliptic equations}, Manuscripta Math., \textbf{132} (2010), no. 3-4, 335-342.

\bibitem{FV} F. Ferrari and E. Valdinoci, {\it Some weighted Poincar\'{e} inequalities}, Indiana Univ. Math. J., \textbf{58} (2009), no. 4, 1619-1637.

\bibitem{FL1} R. L. Frank and E. H. Lieb, {\it Inversion positivity and the sharp Hardy-Littlewood-Sobolev inequality}, Calc. Var. \& Partial Differential Equations, \textbf{39} (2010), 85-99.

\bibitem{FL2} R. L. Frank and E. H. Lieb, {\it A new, rearrangement-free proof of the sharp Hardy-Littlewood-Sobolev inequality. Spectral theory, function spaces and inequalities}, Oper. Theory Adv. Appl., Birkh\"{a}user/Springer Basel AG, Basel, \textbf{219} (2012), 55-67.

\bibitem{FL} J. Frohlich, E. Lenzmann, {\it Mean-field limit of quantum bose gases and nonlinear Hartree equation}, in: Sminaire E. D. P. (2003-2004), Expos nXVIII. 26p.

\bibitem{GNN} B. Gidas, W. M. Ni and L. Nirenberg, {\it Symmetry and related properties via the maximum principle}, Comm. Math. Phys., \textbf{68} (1979), no. 3, 209-243.

\bibitem{DGNT} D. Gilbarg and N. S. Trudinger, {\it Elliptic Partial Differential Equations of Second Order}, 2nd Edition, Springer, Berlin, 1983.


\bibitem{GV} M. Guedda and L. Veron, {\it Local and global properties of solutions of quasilinear elliptic equations}, J. Differential Equations, \textbf{76} (1988), no. 1, 159-189.


\bibitem{GHPS} L. Guo, T. Hu, S. Peng and W. Shuai, {\it Existence and uniqueness of solutions for Choquard equation involving Hardy-Littlewood-Sobolev critical exponent}, Calc. Var. Partial Differential Equations, \textbf{58} (2019), no. 4, Paper No. 128, 34 pp.

\bibitem{GL} Y. Guo and J. Liu, {\it Solutions of $p$-sublinear $p$-Laplacian equation via Morse theory. (English summary)}, J. London Math. Soc., \textbf{72} (2005), no. 3, 632-644.

\bibitem{HL} Q. Han and F. Lin, {\it Elliptic partial differential equations. Second edition. Courant Lecture Notes in Mathematics}, Courant Institute of Mathematical Sciences, New York; American Mathematical Society, Providence, RI, 2011. x+147 pp. ISBN: 978-0-8218-5313-9.

\bibitem{KM} T. Kuusi and G. Mingione, {\it Universal potential estimates}, J. Funct. Anal., \textbf{262} (2012), 4205-4269.

\bibitem{LPLDHT} P. Le and D. H. T. Le, {\it Classification of positive solutions to $p$-Laplace equations with critical Hardy-Sobolev exponent}, Nonlinear Analysis: Real World Applications, \textbf{74} (2023), Paper No. 103949.

\bibitem{Lei} Y. Lei, {\it Qualitative analysis for the static Hartree-type equations}, SIAM J. Math. Anal., \textbf{45} (2013), no. 1, 388-406.

\bibitem{LMZ} D. Li, C. Miao and X. Zhang, {\it The focusing energy-critical Hartree equation}, J. Diff. Equations, \textbf{246} (2009), 1139-1163.

\bibitem{LE76} E. Lieb, {\it Existence and uniqueness of the minimizing solution of Choquard's nonlinear equation}, Studies in Appl. Math., \textbf{57} (1977), no. 2, 93-105.

\bibitem{Lieb} E. H. Lieb, {\it Sharp constants in the Hardy-Littlewood-Sobolev and related inequalities}, Ann. of Math., \textbf{118} (1983), 349-374.

\bibitem{LS} E. Lieb and B. Simon, {\it The Hartree-Fock theory for Coulomb systems}, Comm. Math. Phys., \textbf{53} (1977), 185-194.

\bibitem{Lieberman} G. M. Lieberman, {\it Boundary regularity for solutions of degenerate elliptic equations}, Nonlinear Anal., \textbf{12} (1988), no. 11, 1203-1219.

\bibitem{Lin} C. S. Lin, {\it A classification of solutions of a conformally invariant fourth order equation in $\mathbb{R}^{n}$}, Comment. Math. Helv., \textbf{73} (1998), 206-231.

\bibitem{LPL80} P. L. Lions, {\it The Choquard equation and related questions}, Nonlinear Anal., \textbf{4} (1980), 1063-1072.

\bibitem{Lions1} P. L. Lions, {\it The concentration-compactness principle in the calculus of variations}, Rev. Mat. Iberoam., \textbf{1} (1985), no. 1, 145-201.

\bibitem{Lions2} P. L. Lions, {\it The concentration-compactness principle in the calculus of variations. The limit case, part 2}, Rev. Mat. Iberoam., \textbf{1} (1985), no. 2, 45-121.

\bibitem{Liu} S. Liu, {\it Regularity, symmetry, and uniqueness of some integral type quasilinear equations}, Nonlinear Anal., \textbf{71} (2009), 1796-1806.

\bibitem{MZ} L. Ma and L. Zhao, {\it Classification of positive solitary solutions of the nonlinear Choquard equation}, Arch. Rational Mech. Anal., \textbf{195} (2010), no. 2, 455-467.

\bibitem{MXZ3} C. Miao, G. Xu, and L. Zhao, {\it Global well-posedness, scattering and blow-up for the energy-critical, focusing Hartree equation in the radial case}, Colloq. Math., \textbf{114} (2009), 213-236.

\bibitem{MS} V. Moroz and J. Van Schaftingen, {\it Groundstates of nonlinear Choquard equations: existence, qualitative properties and decay asymptotics}, J. Funct. Anal., \textbf{265} (2013), no. 2, 153-184.

\bibitem{TMOSER} J. Moser, {\it A new proof of De Giorgi's theorem concerning the regularity problem for elliptic differential equations}, Comm. Pure Appl. Math., \textbf{13} (1960), 457-468.

\bibitem{Ou} Q. Ou, {\it On the classification of entire solutions to the critical $p$-Laplace equation}, preprint, arXiv: 2210.05141.

\bibitem{OSV} F. Oliva, B. Sciunzi and G. Vaira, {\it Radial symmetry for a quasilinear elliptic equation with a critical Sobolev growth and Hardy potential}, J. Math. Pures Appl., \textbf{140} (2020), 89-109.

\bibitem{PPPQPS} P. Pol\'{a}\v{c}ik, P. Quittner and P. Souplet, {\it Singularity and decay estimates in superlinear problems via Liouville-type theorems. I. Elliptic equations and systems}, Duke Math. J.,  \textbf{139} (2007), no. 3, 555-579.

\bibitem{PS} P. Pucci and J. Serrin, {\it The Maximum Principle}, Birkh\"{a}user, Boston, 2007.

\bibitem{BS16} B. Sciunzi, {\it Classification of positive $D^{1,p}(\R^N)$-solutions to the critical $p$-Laplace equation in $\R^N$}, Advances in Mathematics, \textbf{291} (2016), 12-23.

\bibitem{BSR14} B. Sciunzi, {\it Regularity and comparison principles for $p$-Laplace equations with vanishing source term}, Communications in Contemporary Mathematics, \textbf{16} (2014), no. 6, Paper No. 1450013.

\bibitem{JSLB} J. Serrin, {\it Local behavior of solutions of quasi-linear equations}, Acta Math., \textbf{111} (1964), 247-302.

\bibitem{S} J. Serrin, {\it A symmetry problem in potential theory}, Arch. Ration. Mech. Anal., \textbf{43} (1971), no. 4, 304-318.

\bibitem{SJZH02} J. Serrin and H. Zou, {\it Cauchy-Liouville and universal boundedness theorems for quasilinear elliptic equations and inequalities}, Acta Math., \textbf{189} (2002), no. 1, 79-142.

\bibitem{Talenti} G. Talenti, {\it Best constant in Sobolev inequality}, Ann. Mat. Pura Appl., \textbf{110} (1976), no. 4, 353-372.

\bibitem{Tei} E. Teixeira, {\it Regularity for quasilinear equations on degenerate singular sets}, Math. Ann., \textbf{358} (2014), no. 1-2, 241-256.

\bibitem{PT} P. Tolksdorf, {\it Regularity for a more general class of quasilinear elliptic equations}, J. Differential Equations, \textbf{51} (1984), no. 1, 126-150.

\bibitem{Trud} N. S. Trudinger, {\it Remarks concerning the conformal deformation of Riemannian structures on compact manifolds}, Ann. Sc. Norm. Super. Pisa, \textbf{22} (1968), no. 3, 265-274.

\bibitem{JLV} J. L. V\'{a}zquez, {\it A strong maximum principle for some quasilinear elliptic equations}, Appl. Math. Optim., \textbf{12} (1984), no. 3, 191-202.

\bibitem{VJ16} J. V\'{e}tois, {\it A priori estimates and application to the symmetry of solutions for critical $p$--Laplace equations}, J. Differential Equations, \textbf{260} (2016), no. 1, 149-161.

\bibitem{WX} J. Wei and X. Xu, {\it Classification of solutions of higher order conformally invariant equations}, Math. Ann., \textbf{313} (1999), no. 2, 207-228.

\bibitem{XCL} C. L. Xiang, {\it Asymptotic behaviors of solutions to quasilinear elliptic equations with critical Sobolev growth and Hardy potential}, Journal of Differential Equations, \textbf{259} (2015), no. 8, 3929-3954.

\bibitem{ZL} Z. Zhang and S. Li, {\it On sign-changing and multiple solutions of the $p$-Laplacian}, J. Funct. Anal. 197 (2003), no. 2, 447-468.

\end{thebibliography}
\end{document}